\documentclass{amsart}
\usepackage[utf8]{inputenc}

\usepackage{fullpage}
\usepackage[utf8]{inputenc}
\usepackage[T1]{fontenc}
\usepackage{graphicx,subfigure} 
\usepackage{amsmath,amsfonts,amssymb,mathtools}
\usepackage{stmaryrd}
\usepackage{mathrsfs,dsfont}
\usepackage{amsthm}
\usepackage{mathabx}
\usepackage{tabularx}
\usepackage{float}
\usepackage[usenames]{color}
\allowdisplaybreaks
\usepackage{amscd}
\usepackage{ulem}

\usepackage{hyperref}

\newtheorem{theo}{Theorem}
\newtheorem{lemma}{Lemma}[section]
\newtheorem{assum}{Assumption}[section]
\newtheorem{rem}{Remark}[section]
\newtheorem{propo}{Proposition}[section]

\newcommand{\N}{\mathbb{N}}
\newcommand{\R}{\mathbb{R}}
\newcommand{\E}{\mathbb{E}}

\newcommand{\IL}{\Lambda}

\newcommand{\HH}{\mathcal{H}}

\newcommand{\IW}{\mathcal{W}}
\newcommand{\Lf}{{\rm L}_f}
\newcommand{\IZ}{\mathcal{Z}}
\newcommand{\IP}{\mathcal{P}}

\title{Strong and weak rates of convergence in the Smoluchowski--Kramers approximation for stochastic partial differential equations}

\author{Charles-Edouard Br\'ehier}
\address{Universite de Pau et des Pays de l'Adour, E2S UPPA, CNRS, LMAP, Pau, France}
\email{charles-edouard.brehier@univ-pau.fr}

\author{Ziyi Lei}
\address{State Key Laboratory of Mathematical Sciences, Academy of Mathematics and Systems Science, Chinese Academy of Sciences, Beijing 100190, China}
\email{ziyilei@lsec.cc.ac.cn}

\date{\today}

\begin{document}

\begin{abstract}
We consider a class of stochastic damped semilinear wave equations, in the small-mass limit.  While it is well established that the solutions converge in probability to the solution of a stochastic semilinear heat equation, explicit convergence rates have not been obtained in the literature so far. In this work, we establish strong and weak rates of convergence for this Smoluchowski-Kramers approximation, which depend on the regularity of the driving Wiener process. Specifically, for the trace-class noise, both the strong and weak orders of convergence are $1$, whereas for space-time white noise (in dimension $1$) the strong and weak rates of convergence are $1/2$ and $1$ respectively. The proofs require appropriate moment bounds and regularity properties, where the dependence with respect to the small parameter is studied carefully. In addition, the proof of the weak convergence result requires original regularity results on solutions to infinite dimensional Kolmogorov equations, as well as an auxiliary spectral Galerkin approximation and Malliavin calculus techniques.

\end{abstract}

\maketitle

\section{Introduction}\label{sec:intro}

In this work, we are interested in establishing rates of convergence in the Smoluchowski--Kramers approximation for the following class of stochastic partial differential equations in the small $\epsilon$ regime:
\begin{equation}\label{eq:spde-intro-eps}
\epsilon^2\partial_{tt}u^\epsilon(t,x)=\Delta_{x}u^\epsilon(t,x)-\partial_t u^{\epsilon}(t,x)+f(u^\epsilon(t,x))+\dot{W}^{Q}(t,x),
\end{equation}
where $0\le t\le T$ and $x\in\mathcal{D}\subset \R^d$ ($d\in\N$), with homogeneous Dirichlet boundary conditions, and initial values $u^\epsilon(0,x)$ and $v^\epsilon(0,x)=\partial_tu^\epsilon(0,x)$. The nonlinearity $f$ is assumed to be a globally Lipschitz continuous mapping, and the evolution is driven by Gaussian noise which is white in time and possibly colored in space, with a covariance operator denoted by $Q$. As explained below, we treat \eqref{eq:spde-intro-eps} and related stochastic partial differential equations as stochastic evolution equations taking values in Hilbert spaces. In this rigorous framework (detailed in Section~\ref{sec:SPDE}, e.g., the formulation \eqref{eq:sdwe} of~\eqref{eq:spde-intro-eps}), the evolution is driven by a $Q$-Wiener process. We assume that covariance operator $Q$ commutes with the linear second-order operator driving the evolution ($\Delta_x$ with homogeneous Dirichlet boundary conditions for~\eqref{eq:spde-intro-eps} and~\eqref{eq:spde-intro-limit}). A more precise description of the driving noise and the assumptions on coefficient $f$ will be given in Section \ref{sec:setting}.

Equations of type~\eqref{eq:spde-intro-eps} can be used in modeling the evolution of the displacement field of particles suspended in a random continuous medium subject to deterministic and stochastic external forces, where $\epsilon^2$ represents the mass of the particles. The Smoluchowski--Kramers approximation investigates the small mass limit, that is, the limiting behavior of the solution $u^{\epsilon}$, as the mass $\epsilon$ vanishes. As demonstrated in \cite{MR2240691,MR2267703,MR3583470}, letting $\epsilon\to 0$ provides convergence in probability of $u^\epsilon$ to the solution $u^0$ of the semilinear heat equation
\begin{equation}\label{eq:spde-intro-limit}
\partial_tu^0(t,x)=\Delta_{x}u^0(t,x)+f(u^0(t,x))+\dot{W}^Q(t,x),
\end{equation}
for $0\le t\le T$, $x \in \mathcal{D}$, with homogeneous Dirichlet boundary conditions, and initial value
$u^0(0,\cdot)$ determined by the limit of $u^\epsilon(0,\cdot)$ as $\epsilon \to 0$. This means that one has for any $T\in(0,\infty)$, for all $\eta\in(0,1)$ one has
\[
\underset{\epsilon\to 0}\lim~\mathbb{P}\Big(\sup_{0\le t\le T}\int_{\mathcal D}|u^{\epsilon}(t,x)-u^0(t,x)|^2\,dx>\eta\Big)=0.
\]
This result reflects that the inertial effects of the particles become negligible in the small mass limit, allowing the system's displacement field to be fully characterized by a simpler first order reaction-diffusion model. Recent works have further investigated the generalization of this observation in the presence of a magnetic field (cf. \cite{
MR3575542,MR4142946}), for non-constant friction (cf. \cite{MR4993799,MR4759624,MR4657218,MR4413207}), which follows the works~\cite{MR3324144,MR2916095,MR3014105} for stochastic differential equations, and under various constraints (cf. \cite{MR4942808,MR4865023}). The literature features extensive research on these convergence properties. We refer the reader to~\cite{MR4235249,MR4056993,
 MR3531676,MR3245077,MR3916264,MR4908783} and the recent preprint \cite{GV25} for further discussions on the Smoluchowski--Kramers approximation in the context of stochastic partial differential equations and infinite-dimensional stochastic systems. Additionally, for a comprehensive overview of recent efforts examining the validity of this approximation across various types of stochastic differential equations, we refer to the articles~\cite{MR4632567,MR4713516,
MR4419570,MR4054345,MR4102448} and preprints~
\cite{SW24,SLW24,LQW24,ZW24,LLX26,ZLW25}.

To understand more precisely how solutions to equations~ \eqref{eq:spde-intro-eps} and to the limit equation~ \eqref{eq:spde-intro-limit} differ, it is of fundamental importance to establish explicit convergence rates in the Smoluchowski--Kramers approximation as $\epsilon\to 0$, which, to the best of our knowledge, have not been exhibited so far. To achieve this, we explore novel techniques to effectively control the highly oscillatory behavior of the velocity component as the mass parameter vanishes. We establish the different strong and weak rates of convergence in the Smoluchowski--Kramers approximation for \eqref{eq:spde-intro-eps}, employing novel techniques in particular for the weak convergence (Kolmogorov and Poisson equations, Malliavin calculus). Our main results can be described as follows. Under appropriate conditions on the initial values and on the nonlinearity, one has the following strong and weak error estimates: 
\begin{itemize}
    \item  for all $p\in[1,\infty)$, $T\in(0,\infty)$ and $\alpha\in(0,\beta)$, there exists $C_{\alpha,p}(T)\in(0,\infty)$ such that one has
\begin{equation}\label{eq:strong-intro}
\underset{t\in[0,T]}\sup~\bigl(\E[\|u^\epsilon(t,\cdot)-u^0(t,\cdot)\|_{L^2(\mathcal{D})}^p]\bigr)^{\frac1p}\le C_{\alpha,p}(T)\epsilon^\alpha.
\end{equation}
\item for all $T\in(0,\infty)$, $\alpha\in[0,\beta)$, and for function $\varphi\colon H\to\R$ of class $\mathcal{C}^4$, with bounded derivatives up to order $4$, there exists $C_{\alpha}(T,\varphi)\in(0,\infty)$ such that for all $\epsilon\in(0,1)$ one has
\begin{equation}\label{eq:weak-intro}
\underset{0\le t\le T}\sup~\bigl|\E[\varphi(u^{\epsilon}(t))]-\E[\varphi(u^{0}(t))]\bigr|\le C_{\alpha}(T,\varphi)\epsilon^{\min(2\alpha,1)}. 
\end{equation}
\end{itemize}
where the parameter $\beta\in(0,1]$ describes the regularity of the noise (see Assumption~\ref{ass:noise}). For instance, one has $\beta=1$ if the equation is driven by trace-class noise, and $\beta=1/2$ if the equation is driven by space-time white noise in dimension $d=1$. We refer to Theorems~\ref{theo:strong} and~\ref{theo:weak} in Section~\ref{sec:main} for more rigorous and general statements, using the notation and assumptions introduced in Section~\ref{sec:prelim}.
Note that the strong and weak orders coincide only if $\beta=1$, i.e. for trace-class noise. This behavior is the same as for the Smoluchowski--Kramers approximation for finite-dimensional stochastic differential equations (see for instance~\cite{MR4632567}). If $\beta\in(0,1)$, then the weak order of convergence $\min(2\beta,1)$ is strictly larger than the strong order $\beta$. Note that the weak order is limited to $1$ even for finite dimensional systems, this justifies why the weak order is not $2\beta$ in general. If $\beta\in(0,1/2]$ the weak order is twice the strong order, in particular this result holds for space-time white noise in dimension $d=1$.

Compared to the convergence in probability  between equation \eqref{eq:spde-intro-eps} and the limit equation \eqref{eq:spde-intro-limit} obtained in \cite{MR2240691,MR2267703,MR3583470}, deriving a strong convergence rate for the same framework poses significant analytical challenges. The primary difficulties lie in overcoming the coupling between parameter $\epsilon$ and the nonlinearity and the stochastic perturbation term, and establishing further the uniform bounds with respect to $\epsilon$ in the regularity estimates for the solutions associated with the stochastic damped wave equations (see e.g., Proposition~\ref{propo:sdwe-momentbounds}). In addition, a careful analysis of properties of the semigroups associated with the damped wave equation and the heat equation is needed to compute the error bounds in the strong approximation. With the help of properties and convergence on the semigroups derived in Section~\ref{sec:aux}, we obtain the rate of strong convergence in \eqref{eq:strong-intro}.

In contrast, deriving a precise rate of weak convergence (in the distributional sense) between equation \eqref{eq:spde-intro-eps} and the limit equation \eqref{eq:spde-intro-limit}  involves a more complicated and technical analysis. To obtain the optimal rate of weak convergence as shown in \eqref{eq:weak-intro}, it requires not only the careful analysis of properties of the semigroups derived in Section~\ref{sec:aux}, but also the techniques in obtaining the regularity properties for solutions to the Kolmogorv equation associated to \eqref{eq:spde-intro-eps} (see e.g., Proposition~\ref{propo:regularity-PhiepsilonN}), and dealing with the singular terms containing $\frac{1}{\epsilon}, \frac{1}{\epsilon^2}$, and the highly oscillatory stochastic convolution terms (see e.g., $E_{1,2,2,3}^{\epsilon,N}$ and $E_{1,2,3,1,3}^{\epsilon, N}$). As shown in the graph below,
\[
\begin{CD}
u^{\epsilon,N}(t)@>{\epsilon\to 0}>> u^{0,N}(t) \\
@VV{N\to\infty}V        @VV{N\to\infty}V\\
u^{\epsilon}(t)  @>{\epsilon\to 0}>> u^{0}(t),
\end{CD}
\]
 using the auxiliary spectral Galerkin approximation, we divide the proof of \eqref{eq:weak-intro} into the weak convergence analysis of the spectral Galerkin approximations for the stochastic damped wave equation and stochastic heat equation respectively: \[\sup_{t\in[0,T]}|\E[\varphi(u^{\epsilon}(t))]-\E[\varphi(u^{\epsilon,N}(t))]|,~~~\quad \sup_{t\in[0,T]}|\E[\varphi(u^{0}(t))]-\E[\varphi(u^{0,N}(t))]|,\]
and the Smoluchowski--Kramers approximation in finite dimension:
$\sup_{t\in[0,T]}|\E[\varphi(u^{\epsilon,N}(t))]-\E[\varphi(u^{0,N}(t))]|$. The proofs of the three parts are based on the Kolmogorov equations approach associated to equations~\eqref{eq:spde-intro-limit} and~\eqref{eq:spde-intro-eps}. Whereas the regularity properties for solutions to the Kolmogorov equation associated to~\eqref{eq:spde-intro-limit} are standard, in Section~\ref{sec:auxweak} we obtain original regularity properties for solutions to the Kolmogorov equation associated to~\eqref{eq:spde-intro-eps}. The proof of \eqref{eq:weak-intro}, presented in Section~\ref{sec:proof-theo-weak}, is quite technical. In particular, it requires the auxiliary Poisson equation and Malliavin calculus techniques to achieve the rate of convergence $\min(2\beta,1)$ instead of $\beta$. The techniques developed in Sections~\ref{sec:auxweak} and~\ref{sec:proof-theo-weak} may be of interest to study other questions, such as convergence of asymptotic preserving schemes in the spirit of~\cite{MR4632567}.

This manuscript is organized as follows. Section~\ref{sec:prelim} provides the necessary notation and assumptions, and contains a description of the considered stochastic evolution equations. The main result, Theorem~\ref{theo:strong}, is stated in Section~\ref{sec:main}. Section~\ref{sec:aux} presents technical properties of the semigroup associated with the heat and the damped wave equations, where the dependence with respect to the parameter $\epsilon$ is analyzed carefully. Section~\ref{sec:properties} is devoted to present moment bounds and regularity properties of solutions to the stochastic partial differential equations~\eqref{eq:spde-intro-eps} (uniformly with respect to $\epsilon$) and~\eqref{eq:spde-intro-limit}. The proof of Theorem~\ref{theo:strong} is provided in Section~\ref{sec:proof-theo-strong}. Section~\ref{sec:auxweak} then presents additional results required for the proof of Theorem~\ref{theo:weak}. The proof of Theorem~\ref{theo:weak} is then provided in Section~\ref{sec:proof-theo-weak}.

\section{Preliminaries}\label{sec:prelim}

\subsection{Setting}\label{sec:setting}


\subsubsection{Notation}\label{sec:notation}
Let $d\in\N$ be a positive integer and let $\mathcal{D}\subset\R^d$ be an open bounded domain with polygonal boundary $\partial\mathcal{D}$, for instance $\mathcal{D}=(0,1)^d$. Define the separable Hilbert space $H=L^2(\mathcal{D})$. The inner product and the norm in $H$ are denoted by $\langle \cdot,\cdot \rangle_H$ and $\|\cdot\|_H$ respectively. Owing to the Riesz theorem, the dual space of $H$ is identified with $H$.

Let us denote by $-\IL$ the Laplace operator endowed with homogeneous Dirichlet boundary conditions. Then $\IL$ is a self-adjoint unbounded linear operator from $H$ to $H$, with domain $D(\IL)=H^2(\mathcal{D})\cap H_0^1(\mathcal{D})$, and there exists a complete orthonormal system $\bigl(e_n\bigr)_{n\in\N}$ of $H$ and a nondecreasing sequence $\bigl(\lambda_n\bigr)_{n\in\N}$ of positive real numbers such that one has
\begin{equation}\label{eq:Laplace}
\IL e_n=\lambda_n e_n.
\end{equation}
Note that the eigenvalue $\lambda_1$ is positive.
In addition, there exists ${\rm c}_{\IL,d}\in(0,\infty)$ such that $\lambda_n\sim {\rm c}_{\IL,d}n^{2/d}$ when $n\to\infty$.

For any nonnegative real number $\alpha\in[0,\infty)$, define
\[
H^\alpha=\{u\in H;~\sum_{n=1}^{\infty}\lambda_n^{\alpha}\langle u,e_n\rangle_H^2<\infty\}.
\]
Moreover, for all $u_1,u_2,u\in H^\alpha$, set
\[
\langle u_1,u_2 \rangle_{H^\alpha}=\sum_{n=1}^{\infty}\lambda_n^{\alpha}\langle u_1,e_n \rangle_H \langle u_2,e_n \rangle_H,\quad
\|u\|_{H^\alpha}^2=\langle u,u \rangle_{H^\alpha}=\sum_{n=1}^{\infty}\lambda_n^{\alpha}\langle u,e_n \rangle_H^2.
\]
For any $\alpha\in[0,\infty)$, the space $H^\alpha$, equipped with the inner product $\langle \cdot,\cdot \rangle_{H^\alpha}$ and the norm $\|\cdot\|_{H^\alpha}$, is a separable Hilbert space.

For any nonnegative real number $\alpha\in[0,\infty)$, the space $H^{-\alpha}$ is defined as the closure of the space
\[
\{u\in H;~\sum_{n=1}^{\infty}\lambda_n^{-\alpha}\langle u,e_n\rangle_H^2<\infty\},
\]
with inner product $\langle\cdot,\cdot\rangle_{H^{-\alpha}}$ and norm $\|\cdot\|_{H^{-\alpha}}$ defined as follows: for all $u_1,u_2,u\in H^{-\alpha}$, set
\[
\langle u_1,u_2 \rangle_{H^{-\alpha}}=\sum_{n=1}^{\infty}\lambda_n^{-\alpha}\langle u_1,e_n \rangle_H \langle u_2,e_n \rangle_H,\quad
\|u\|_{H^{-\alpha}}^2=\langle u,u \rangle_{H^{-\alpha}}=\sum_{n=1}^{\infty}\lambda_n^{-\alpha}\langle u,e_n \rangle_H^2.
\]
For any $\alpha\in[0,\infty)$, the space $H^{-\alpha}$ equipped with the inner product $\langle \cdot,\cdot \rangle_{H^{-\alpha}}$ and the norm $\|\cdot\|_{H^{-\alpha}}$ is a separable Hilbert space. It can be identified with the dual space of $H^{\alpha}$. Note also that $H^0=H$.

For any real number $\alpha\in\mathbb{R}$, introduce the product space
\[
\HH^\alpha=H^\alpha \times H^{\alpha-1}.
\]
Moreover, for all $x_1=(u_1,v_1), x_2=(u_2,v_2),x=(u,v)\in\HH^\alpha$, define
\begin{align*}
\langle x_1,x_2 \rangle_{\HH^\alpha}&=\bigl\langle (u_1,v_1),(u_2,v_2) \bigr\rangle_{\HH^\alpha}=\langle u_1,u_2 \rangle_{H^\alpha}+\langle v_1,v_2 \rangle_{H^{\alpha-1}},\\
\|x\|_{\HH^\alpha}^2&=\langle x, x \rangle_{\HH^\alpha}=\bigl\langle (u,v),(u,v)\bigr\rangle_{\HH^\alpha}=\|u\|_{H^\alpha}^2+\|v\|_{H^{\alpha-1}}^2.
\end{align*}
For any $\alpha\in\mathbb{R}$, the space $\HH^\alpha$, equipped with the inner product $\langle \cdot,\cdot \rangle_{\HH^\alpha}$ and the norm $\|\cdot\|_{\HH^\alpha}$, is a separable Hilbert space. When $\alpha=0$, the notation $\HH=\HH^0$, $\langle \cdot,\cdot \rangle_{\HH}=\langle \cdot,\cdot \rangle_{\HH^0}$ and $\|\cdot\|_{\HH}=\|\cdot\|_{\HH^0}$ is used in the sequel.

Observe that if $\alpha_1<\alpha_2$, one has $H^{\alpha_2}\subset H^{\alpha_1}$ and $\HH^{\alpha_2}\subset \HH^{\alpha_1}$, with continuous embeddings: for all $u\in H^{\alpha_2}$ and all $x\in\HH^{\alpha_2}$, one has
\begin{equation}\label{eq:comparenorms}
\|u\|_{H^{\alpha_1}}\le \lambda_1^{-\frac{\alpha_2-\alpha_1}{2}}\|u\|_{H^{\alpha_2}}~,\quad \|x\|_{\HH^{\alpha_1}}\le \lambda_1^{-\frac{\alpha_2-\alpha_1}{2}}\|x\|_{\HH^{\alpha_2}}.
\end{equation}

Let $\alpha\in\R$ be an arbitrary real number. Define the projection operators $\Pi_u\colon \HH^\alpha\rightarrow H^\alpha$ and $\Pi_v\colon \HH^\alpha\rightarrow H^{\alpha-1}$ such that for all $(u,v)\in\HH^\alpha$ one has
\begin{equation}\label{eq:Pi_uv}
\Pi_u(u,v)=u\ \  \mbox{and} \ \ \Pi_v(u,v)=v.
\end{equation}
The operators $\Pi_u$ and $\Pi_v$ depend on $\alpha$, however the dependence is omitted to simplify notation. Note that for all $\alpha\in\R$ and all $x=(u,v)\in\HH^\alpha$, one has
\begin{equation}\label{eq:Pi-bound}
\max\bigl(\|\Pi_u x\|_{H^\alpha},\|\Pi_v x\|_{H^{\alpha-1}}\bigr)\le \|x\|_{\HH^\alpha}.
\end{equation}

For all $\alpha\in\R$, let the linear operator $\IL^\alpha$ be defined as follows: for all $u\in H^{2\alpha}$, set
\[
\IL^{\alpha}u=\sum_{n\in\N}\lambda_n^{\alpha}\langle u,e_n \rangle_H e_n.
\]
For any positive real number $\alpha\in(0,\infty)$, the linear operator $\IL^\alpha$ is unbounded from $H$ to $H$, and has domain $D(\IL^\alpha)=H^{2\alpha}$. Moreover, for all $u\in H^{2\alpha}$, one has $\|\IL^\alpha u\|_H=\|u\|_{H^{2\alpha}}$.

For any $\epsilon\in(0,\infty)$, the linear operator $A_\epsilon$ defined as follows: for all $\alpha\in\R$ and for all $x=(u,v)\in\HH^{\alpha}$, set
\begin{equation}\label{eq:Aepsilon}
A_\epsilon x=\bigl(\frac{v}{\epsilon}, -\frac{\IL u}{\epsilon}-\frac{v}{\epsilon^2}\bigr)\in \HH^{\alpha-1}.
\end{equation}
Note that the linear operator $A_\epsilon$ can be considered as an unbounded linear operator on $\HH$ with domain $\HH^1$.

\subsubsection{Semigroups}\label{sec:semigroups}

The linear operator $\IL$ generates a semigroup $\bigl(e^{-t\IL}\bigr)_{t\ge 0}$ of bounded linear operators, defined as follows:
\begin{equation}\label{eq:expL}
e^{-t\IL}u=\sum_{n\in\N}e^{-t\lambda_n}\langle u,e_n\rangle_H e_n,\qquad \forall~t\ge 0,\quad \forall~u\in H,
\end{equation}
where for all $n\in\N$ the mapping $t\mapsto u_n(t):=e^{-t\lambda_n}\langle u,e_n\rangle_H$ is the solution of the linear ordinary differential equation
\[
u_n'(t)=-\lambda_n u_n(t),\quad t\ge 0;\quad u_n(0)=\langle u,e_n\rangle.
\]

In addition, for any $\epsilon\in(0,\infty)$, the linear operator $A_\epsilon$ generates a semigroup $\bigl(e^{tA_\epsilon}\bigr)_{t\ge 0}$ of bounded linear operators, defined as follows: 
\begin{equation}\label{eq:expAeps}
e^{tA_\epsilon}x=\sum_{n\in\N}\bigl(f_n^\epsilon(t)e_n,g_n^\epsilon(t)e_n\bigr)
,\qquad \forall~t\ge 0,\quad \forall~x=(u,v)\in\HH,\end{equation}
where for all $n\in\N$ the mapping $t\mapsto \bigl(f_n^\epsilon(t),g_n^\epsilon(t)\bigr)$ is the solution of the linear two-dimensional system of ordinary differential equations
\begin{equation}\label{eq:fngn}
\frac{d}{dt}\begin{pmatrix} f^\epsilon_n(t) \\ g^\epsilon_n(t) \end{pmatrix}=\begin{pmatrix} 0 & \frac{1}{\epsilon} \\ -\frac{\lambda_n}{\epsilon} & -\frac{1}{\epsilon^2} \end{pmatrix}\begin{pmatrix} f^\epsilon_n(t) \\ g^\epsilon_n(t) \end{pmatrix}=\begin{pmatrix}\frac{1}{\epsilon}g^\epsilon_n(t) \\ -\frac{\lambda_n}{\epsilon}f^\epsilon_n(t)-\frac{1}{\epsilon^2}g^\epsilon_n(t)\end{pmatrix},~~t\ge 0~; \quad \begin{pmatrix} f^\epsilon_n(0) \\ g^\epsilon_n(0)\end{pmatrix}=\begin{pmatrix}\langle u,e_n\rangle_H \\ \langle v,e_n\rangle_H \end{pmatrix}.
\end{equation}

Properties of semigroups $\bigl(e^{-t\IL}\bigr)_{t\ge 0}$ and $\bigl(e^{tA_\epsilon}\bigr)_{t\ge 0}$ are studied in Section~\ref{sec:expL} and Section~\ref{sec:expAeps} respectively.

\subsubsection{Nonlinearity}\label{sec:nonlinearity}
The nonlinearity is given by a mapping $f:u\in H\mapsto f(u)\in H$. Given such a function $f$, the mapping $F:\HH\to\HH^1$ is then defined as follows: for all $(u,v)\in\HH$,
\begin{equation}\label{eq:F}
F(u,v)=\bigl(0,f(u)\bigr).
\end{equation}
First, the mapping $f$ is assumed to satisfy the following Lipschitz continuity condition.
\begin{assum}\label{ass:fLip}
The mapping $f$ is globally Lipschitz continuous, with Lipschitz constant $\Lf$: one has
\[
\Lf=\underset{u_1,u_2\in H,u_1\neq u_2}\sup~\frac{\|f(u_2)-f(u_1)\|_H}{\|u_2-u_1\|_H}\in[0,\infty).
\]
\end{assum}
In particular, observe that $f:H\to H$ has at most linear growth. Moreover, note that under Assumption~\ref{ass:fLip}, the mapping $F:\HH\to\HH$ is also globally Lipschitz continuous. Indeed, using the definitions of the norms $\|\cdot\|_\HH$ and $\|\cdot\|_{\HH^1}$ and the inequality~\eqref{eq:comparenorms}, for all $x_1=(u_1,v_1),x_2=(u_2,v_2)\in \HH$, one has
\[
\|F(x_2)-F(x_1)\|_{\HH}\le \lambda_1^{-\frac12}\|F(x_2)-F(x_1)\|_{\HH^{1}} \le \lambda_1^{-\frac12}\|f(u_2)-f(u_1)\|_{H}\le \frac{\Lf}{\sqrt{\lambda_1}}\|u_2-u_1\|_{H}\le \frac{\Lf}{\sqrt{\lambda_1}}\|x_2-x_1\|_{\HH}.
\]

Supplementary regularity conditions are imposed on the nonlinearity $f$ for the proof of Theorem~\ref{theo:weak}.
\begin{assum}\label{ass:mapping-f}
The mapping $f:H\to H$ is $4$ times Fr\'echet differentiable, with bounded derivatives of order $1$ to $4$.
For all $k\in\{1,2,3,4\}$, set
\[
\vvvert f \vvvert_k=\sum_{\ell=1}^{k}\underset{u\in H}\sup~\underset{h_1,\ldots,h_\ell\in H\setminus\{0\}}\sup~\frac{\|D^\ell f(u).(h_1,\ldots,h_\ell)\|_H}{\|h_1\|_H \cdots\|h_\ell\|_H}\in[0,\infty),
\]
where $D^\ell f$ denotes the Fr\'echet derivative of $f$ of order $\ell$.

In addition, it is assumed that $f(H^1)\subset H^1$ and that $f:H^1\to H^1$ has at most linear growth, i.e., there exists $C_1\in(0,\infty)$ such that for all $u\in H^1$ one has
\begin{equation}\label{eq:falpha}
\|f(u)\|_{H^1}\le C_\alpha\bigl(1+\|u\|_{H^1}\bigr),
\end{equation}
and that for all $\alpha\in(0,1)$, all $\kappa\in(0,1-\alpha] $, there exists $C_{\alpha,\kappa}\in(0,\infty)$ such that for all $u\in H^{\alpha+\kappa}$ one has
\begin{equation}\label{eq:falpha1}
\|f(u)\|_{H^\alpha}\le C_{\alpha,\kappa}\bigl(1+\|u\|_{H^{{\alpha+\kappa}}}\bigr).
\end{equation}
Finally, for all $\alpha\in[0,1/2)$, there exists $C_\alpha\in(0,\infty)$ such that for all $u_1,u_2\in H^\alpha$ one has
\begin{equation}\label{eq:Dfalpha}
\|f(u_2)-f(u_1)\|_{H^{-1}}\le C_\alpha\bigl(1+\|u_1\|_{H^\alpha}+\|u_2\|_{H^{\alpha}}\bigr)\|u_2-u_1\|_{H^{-\alpha}}
\end{equation}
and for all $u\in H^{\alpha}$ and $h\in H^{-\alpha}$ one has
\begin{equation}\label{eq:DfalphaBis}
\|Df(u).h\|_{H^{-1}}\le C_\alpha\bigl(1+\|u\|_{H^\alpha}\bigr)\|h\|_{H^{-\alpha}}.
\end{equation}

\end{assum}

Note that the condition~\eqref{eq:Dfalpha} from Assumption~\ref{ass:mapping-f} implies the following additional regularity result: if $\beta\in[\frac12,1]$, for all $u_1,u_2\in H^\beta$, one has
\begin{equation}\label{eq:Dfalpha2}
\|f(u_2)-f(u_1)\|_{H^{-1}}\le C_\beta\bigl(1+\|u_1\|_{H^\beta}+\|u_2\|_{H^{\beta}}\bigr)\|u_2-u_1\|_{H^{\beta-1}}.
\end{equation}
That result follows from applying~\eqref{eq:Dfalpha} with $\alpha=1-\beta\in[0,\frac12]$, noting that $\beta\ge 1-\beta$ and using the inequality~\eqref{eq:comparenorms}. Similarly, for all $u\in H^\beta$ and all $h\in H^{\beta-1}$ one has
\begin{equation}\label{eq:DfalphaBis2}
\|Df(u).h\|_{H^{-1}}\le C_\beta\bigl(1+\|u\|_{H^\beta}\bigr)\|h\|_{H^{\beta-1}}.
\end{equation}

\subsubsection{The Wiener process}\label{sec:Wiener}

Let $\bigl(\Omega,\mathcal{F},\mathbb{P})$ be a probability space, where the expectation operator is denoted by $\E[\cdot]$. A filtration $\bigl(\mathcal{F}_t\bigr)_{t\ge 0}$ satisfying the usual conditions is considered. Let $\bigl(\beta_n\bigr)_{n\in\N}$ be a family of independent standard real-valued Wiener processes, adapted to the filtration $\bigl(\mathcal{F}_t\bigr)_{t\ge 0}$.

Given a sequence $\bigl(q_n\bigr)_{n\in\N}$ of nonnegative real numbers, let us define for all $t\ge 0$
\begin{equation}\label{eq:WQ}
W^Q(t)=\sum_{n\in\N}\sqrt{q_n}\beta_n(t)e_n.
\end{equation}
The stochastic process $(W^Q(t))_{t\ge0}$ is then a $Q$-Wiener process, with the covariance operator $Q$ defined by
\[
Qh=\sum_{n\in\N} q_n \langle h,e_n\rangle_{H}e_n.
\]
More precisely, the following conditions are imposed.

\begin{assum}\label{ass:noise}
The sequence $\bigl(q_n\bigr)_{n\in\N}$ is bounded.
Moreover, there exists $\beta\in(0,1]$ such that, one has
\begin{equation}\label{eq:condition_beta} 
\sum_{n\in\N}q_n\lambda_n^{\alpha-1}<\infty,\quad \forall~\alpha\in[0,\beta).
\end{equation}
\end{assum}

Under Assumption~\ref{ass:noise}, the covariance operator $Q$ is a bounded linear self-adjoint operator on $H$. In addition,  its square root $Q^{\frac12}$ is defined as the bounded linear operator on $H$ given by
\[
Q^{\frac12}h=\sum_{n\in\N} \sqrt{q}_n \langle h,e_n\rangle_{H}e_n,\qquad \forall~h\in H.
\]
The condition~\eqref{eq:condition_beta} can then be rewritten as
\[
\|\IL^{\frac{\alpha-1}{2}}Q^{\frac12}\|_{\mathcal{L}_2(H)}^2=\sum_{n\in\N}\|\IL^{\frac{\alpha-1}{2}}Q^{\frac12}e_n\|_{H}^2 ~= \sum_{n\in\N}q_n\lambda_n^{\alpha-1}<\infty
\]
where $\mathcal{L}_2(H)$ denotes the space of Hilbert--Schmidt operators from $H$ to $H$, and $\|\cdot\|_{\mathcal{L}_2(H)}$ denotes the associated norm. The condition~\eqref{eq:condition_beta} ensures that the Wiener process $\bigl(W^Q(t)\bigr)_{t\ge 0}$ takes values in $H^{\alpha-1}$ for all $\alpha\in[0,\beta)$.

The $Q$-Wiener process $\bigl(W^Q(t)\bigr)_{t\ge 0}$ can be written as $W^Q(t)=Q^{\frac12}W(t)$, where $\bigl(W(t)\bigr)_{t\ge 0}$ is a cylindrical Wiener process derfined as
\begin{equation}\label{eq:W}
W(t)=\sum_{n\in\N}\beta_n(t)e_n.
\end{equation}

In addition, note that the covariance operator $Q$ commutes with the linear operator $\IL$.

For any $\epsilon>0$, the stochastic evolution equations considered in this paper are driven by the $\HH$-valued Wiener process $\bigl(\IW^Q(t)\bigr)_{t\ge 0}$ given by
\begin{equation}\label{eq:IWQ}
\IW^Q(t)=\bigl(0,W^Q(t)\bigr)=\sum_{n\in\N}\sqrt{q_n}\beta_n(t)\bigl(0,e_n\bigr),\quad \forall~t\ge 0.
\end{equation}
Under the condition~\eqref{eq:condition_beta}, the Wiener process $\bigl(\IW^Q(t)\bigr)_{t\ge 0}$ takes values in $\mathcal{H}^\alpha$ for all $\alpha\in[0,\beta)$. Moreover, for all $\alpha\in[0,\beta)$ and $p\in[1,\infty)$, there exists $C_{\alpha,p}\in(0,\infty)$ such that for all $t\ge s\ge 0$ one has
\begin{equation}\label{eq:incrementsWQ}
\E[\|\IW^Q(t)-\IW^Q(s)\|_{\HH^\alpha}^p]=C_{\alpha,p}(t-s)^{\frac{p}{2}}.
\end{equation}

For instance, one may consider the identity operator $Q=I$ in dimension $d=1$, in which case the condition~\eqref{eq:condition_beta} is satisfied with $\beta=1/2$. If $Q$ is a trace-class operator, i.e. if $\sum_{n=1}^{\infty}q_n<\infty$, then the condition~\eqref{eq:condition_beta} is satisfied with $\beta=1$.

\subsubsection{Spectral Galerkin approximation}\label{sec:Galerkin}

For any positive integer $N\in\N$, let us consider the finite dimensional spaces $H_N\subset H$ and $\HH_N\subset \HH$ defined as
\begin{equation}\label{eq:HN-HHN}
H_N={\rm span}\left\{e_1,\ldots,e_N\right\}~,\quad \HH_N={\rm span}\left\{(e_1,0),(0,e_1),\ldots,(e_N,0),(0,e_N)\right\}
\end{equation}
Observe that for all $n\in\N$ one has $\|e_n\|_H=\|(0,e_n)\|_{\HH}=1$ and $\|(0,e_n)\|_\HH=\|e_n\|_{H^{-1}}=\lambda_n^{-\frac12}$. As a result, $\{e_1,\ldots,e_N\}$ is an orthonormal system of the finite dimensional space $H_N$. Similarly, one checks that $\{(e_1,0),(0,\sqrt{\lambda_1}e_1),\ldots,(e_N,0),(0,\sqrt{\lambda_N}e_N)\}$ is an orthonormal system of the finite dimensional space $\HH_N$. Note that one has $H_N\subset H^\alpha$ and $\HH_N\subset\HH^\alpha$ for all $N\in\N$ and $\alpha\in\R$.

In addition, for any positive integer $N\in\N$, introduce the associated orthogonal projection operators denoted by $P_N$ and $\IP_N$: for all $u=\sum_{n\in\N}\langle u,e_n\rangle_H e_n\in H$, set
\begin{equation}\label{eq:PN}
P_Nu=\sum_{n=1}^{N}\langle u,e_n\rangle_H e_n,
\end{equation}
and for all $x=(u,v)\in\HH$, set
\begin{equation}\label{eq:IPN}
\IP_Nx=\bigl(P_Nu,P_Nv\bigr)=\Bigl(\sum_{n=1}^{N}\langle u,e_n\rangle_H e_n,\sum_{n=1}^{N}\langle v,e_n\rangle_H e_n\Bigr)=\sum_{n=1}^{N}\langle x,(e_n,0)\rangle_{\HH}(e_n,0)\!+\!\sum_{n=1}^{N}\lambda_n\langle x,(0,e_n)\rangle_{\HH}(0,e_n).
\end{equation}
Note that the orthogonal projection operators $P_N$ and $\IP_N$ satisfy the following property: for all $\alpha\in\R$
\begin{equation}\label{eq:boundPN}
\underset{u\in H^\alpha\setminus\{0\}}\sup~\frac{\|P_N u\|_{H^\alpha}}{\|u\|_{H^\alpha}}=1,\quad
\underset{x\in\HH^\alpha\setminus\{0\}}\sup~\frac{\|\IP_N x\|_{\HH^\alpha}}{\|x\|_{\HH^\alpha}}=1,\quad\forall~N\in\N.
\end{equation}
Moreover, one has the following result: for all $\alpha\in[0,\infty)$, for all $u\in H^\alpha$, all $x\in\HH^\alpha$ and all $N\in\N$, one has
\begin{equation}\label{eq:errorPN}
\|P_Nu-u\|_{H}\le \lambda_N^{-\frac{\alpha}{2}}\|u\|_{H^\alpha},\quad
\|\IP_Nx-x\|_{\HH}\le \lambda_N^{-\frac{\alpha}{2}}\|x\|_{\HH^\alpha}.
\end{equation}
The proof of~\eqref{eq:errorPN} is elementary, however it is provided below for completeness.

Owing to the definitions of operators $P_N$, $\IP_N$ and noting that the sequence $\bigl(\lambda_n\bigr)_{n\in\N}$ is non-decreasing, for all $u\in H^\alpha$, one has
\begin{align*}
\|P_Nu-u\|_{H}^2=\sum_{n=N+1}^{\infty}\langle u,e_n\rangle_H^2\le \lambda_N^{-\alpha}\sum_{n=N+1}^{\infty}\lambda_n^{\alpha}\langle u,e_n\rangle_H^2\le \lambda_N^{-\alpha}\|u\|_{H^\alpha}^2.
\end{align*}
Similarly, for all $x\in\HH^\alpha$, one has
\begin{align*}
\|\IP_Nx-x\|_{\HH}^2&=\Bigl\|\Bigl(\sum_{n=N+1}^{\infty}\langle u,e_n\rangle_H e_n,\sum_{n=N+1}^{\infty}\langle v,e_n\rangle_H e_n\Bigr)\Bigr\|_{\HH}^2=\sum_{n=N+1}^{\infty}\langle u,e_n\rangle_H^2+\sum_{n=N+1}^{\infty}\lambda_n^{-1}\langle v,e_n\rangle_{H}^2\\
&\le \lambda_N^{-\alpha}\Bigl(\sum_{n=N+1}^{\infty}\lambda_n^{\alpha}\langle u,e_n\rangle_H^2+\sum_{n=N+1}^{\infty}\lambda_n^{\alpha-1}\langle v,e_n\rangle_{H}^2\Bigr)\le \lambda_N^{-\alpha}(\|u\|_{H^\alpha}^2+\|v\|_{H^{\alpha-1}}^2)= \lambda_N^{-\alpha}\|x\|_{\HH^\alpha}^2.
\end{align*}

Note that combining~\eqref{eq:boundPN} and~\eqref{eq:errorPN}, for all $u\in H^\alpha$, $x\in\HH^\alpha$, and $K\ge N$, one obtains
\begin{equation}\label{eq:errorPNK}
\|P_Nu-P_Ku\|_{H}\le \lambda_N^{-\frac{\alpha}{2}}\|u\|_{H^\alpha},\quad
\|\IP_Nx-\IP_Kx\|_{\HH}\le \lambda_N^{-\frac{\alpha}{2}}\|x\|_{\HH^\alpha}.
\end{equation}

In addition, one has the following inverse inequalities: for all $\alpha\in[0,\infty)$, for all $u\in H^{-\alpha}$, all $x\in\HH^{-\alpha}$, and all $N\in\N$, one has
\begin{equation}\label{eq:inverseinequality}
\|P_Nu\|_H\le \lambda_N^{\frac{\alpha}{2}}\|u\|_{H^{-\alpha}},\quad \|\IP_Nx\|_{\HH}\le \lambda_N^{\frac{\alpha}{2}}\|x\|_{\HH^{-\alpha}}.
\end{equation}

It is worth mentioning that the linear operators $\IL$ and $e^{-t\IL}$, for all $t\ge 0$, commute with the orthogonal projection operator $P_N$ for all $N\in\N$. Similarly, for all $\epsilon\in(0,1)$, the linear operators $A_\epsilon$ and $e^{t A_\epsilon}$, for all $t\ge 0$, commute with the orthogonal projection operator $\IP_N$ for all $N\in\N$. Moreover, note that the covariance operator $Q$ of the Wiener process $\bigl(W^Q(t)\bigr)_{t\ge 0}$ given by~\eqref{eq:WQ} also commutes with the orthogonal projection operator $P_N$ for all $N\in\N$. As a result, for all $t\ge 0$ and $N\in\N$, one has
\begin{align*}
P_NW^{Q}(t)&=\sum_{n=1}^{N}\sqrt{q_n}\beta_n(t)e_n,\\
\IP_N\IW^{Q}(t)&=\sum_{n=1}^{N}\sqrt{q_n}\beta_n(t)\bigl(0,e_n\bigr).
\end{align*}

Finally, for all $N\in\N$, introduce the auxiliary mappings $f_N:H_N\to H_N$ and $F_N:\HH_N\to\HH_N$ defined by
\begin{align*}
f_N(u)&=P_Nf(u),\quad\forall~u\in H_N,\\
F_N(x)&=\IP_NF(x)=(0,f_N(u)),\quad\forall~x=(u,v)\in \HH_N.
\end{align*}

\subsubsection{Malliavin calculus}\label{sec:Malliavin}

The proof of the weak error estimates requires Malliavin calculus techniques. We thus recall basic definitions and properties from Malliavin calculus. For a comprehensive presentation we refer to the classical monographs~\cite{D.2006The,DE2018}.

For any $\vartheta\in L^2((0,T);H)$, define 
\[
W(\vartheta)=\int_{0}^{T}\langle \vartheta(s),dW(s)\rangle_H,
\]
where $\bigl(W(t)\bigr)_{t\ge 0}$ is the cylindrical Wiener process given by~\eqref{eq:W}.

Let $\mathcal{S}(\R)$ be the set of smooth real-valued cylindrical random variables defined as
\[
\mathcal{S}(\R)=\left\{{G}=g\bigl(W(\vartheta_1),\ldots,W(\vartheta_L));~{g}\in \mathcal{C}^\infty_0(\R^{L};\R),~\vartheta_\ell\in L^2((0,T);H), 1\le \ell\le L,~L\in\N \right\}
\]
For any ${G}=g\bigl(W(\vartheta_1),\ldots,W(\vartheta_L)\bigr)\in \mathcal{S}(\R)$, its Malliavin derivative $\mathcal{D}G$ is defined as
\[
\mathcal{D}_s{G}\!=\!\sum_{\ell=1}^L \partial_{\ell} {g}(W(\vartheta_1),\ldots,W(\vartheta_L))\vartheta_\ell(s),\quad \forall~s\in(0,T).
\]
The process $\bigl(\mathcal{D}_s{G}\bigr)_{s\in(0,T)}$ takes values in $L^2((0,T);H)$. For all $h\in H$ and all $s\in(0,T)$, set ${D}_s^h\mathcal{G}=\langle {D}_s\mathcal{G},h\rangle_{H}$.

For any ${G}\in\mathcal{S}(\R)$, set
\[
\|{G}\|_{\mathbb{D}^{1,2}(\R)}^{2}=\E[|{G}|^2]+\int_{0}^{T}\E[\|\mathcal{D}_tG\|_H^2]dt.
\]
The definition of the Malliavin derivative can be extended for $G\in\mathbb{D}^{1,2}(\R)$, where $\mathbb{D}^{1,2}(\R)$ is the closure of $\mathcal{S}(\R)$ with respect to the norm $\|\cdot\|_{\mathbb{D}^{1,2}(\R)}$.

For all $N\in\N$, the definition of the Malliavin derivative is next extended to $H_N$-valued random variables. Let
\[
\mathbb{D}^{1,2}(H_N)=\left\{\mathcal{G}=\sum_{n=1}^{N}{G}_ne_n;~{G}_n\in\mathbb{D}^{1,2}(\R), 1\le n\le N \right\}.
\]

For all $\mathcal{G}=\sum_{n=1}^{N}{G}_ne_n\in\mathbb{D}^{1,2}(H_N)$, set
\[
\mathcal{D}_t^h\mathcal{G}=\sum_{n=1}^{N}\mathcal{D}_t^h{G}_ne_n,\quad \forall~t\in(0,T),~h\in H.
\]

In the sequel, the following properties of the Malliavin derivative are used. First one has the chain rule: if $\varphi\in \mathcal{C}_b^1(H_N;\R)$, for all $\mathcal{G}\in\mathbb{D}^{1,2}(H_N)$, one has $\varphi(\mathcal{G})\in\mathbb{D}^{1,2}(\R)$ and
\begin{equation}\label{eq:chainrule}
\mathcal{D}_t^h\bigl(\varphi(\mathcal{G})\bigr)=D\varphi(\mathcal{G}).\mathcal{D}_t^h\mathcal{G},\quad\forall~t\in(0,T),~h\in H.
\end{equation}

Let $\mathcal{L}(H,H_N)$ be the set of bounded linear operators from $H$ to $H_N$. Assume that $\bigl(\Theta(t)\bigr)_{t\in[0,T]}$ is a predictable square-integrable process with values in $\mathcal{L}(H,H_N)$. First, the Malliavin derivative of the It\^o integral $\int_0^t\Theta(r)dW(r)$ satisfies
\begin{align}\label{eq:stochasticintegral-W}
\mathcal{D}_s^h\int_0^t\Theta(r)dW(r)=\int_s^t\mathcal{D}_s^h\Theta(r)dW(r)+\Theta(s)h,\qquad \forall t\ge s.
\end{align}
Finally, one has the following integration by parts formula: if $\mathcal{G}\in \mathbb{D}^{1,2}(H_N)$, $\varphi\in\mathcal{C}_b^2(H_N;\R)$, then one has
\begin{equation}\label{prop:Malliavinderivative-W}
\E\Big[\Big\langle D\varphi(\mathcal{G}), \int_0^T\Theta(t)dW(t)\Big\rangle_H\Big]=\sum_{n\in\N}\E\Big[\int_0^T D^2\varphi(\mathcal{G}).\big(\mathcal D^{e_n}_t\mathcal{G},  \Theta(t)e_n\big)\,dt\Big].
\end{equation}

The properties above can be adapted when the $Q$-Wiener process $\bigl(W^Q(t)\bigr)_{t\ge 0}$ is considered, writing $W^Q(t)=Q^{\frac12}W(t)$.

\subsection{Stochastic partial differential equations}\label{sec:SPDE}

The aim of this article is to study the behavior when $\epsilon\to 0$ of the solution of the stochastic semilinear damped wave equation driven by additive noise
\begin{equation}\label{eq:sdwe}
	\left\lbrace
	\begin{aligned}
	      &du^\epsilon (t)=\frac{v^\epsilon(t)}{\epsilon}dt,~t\ge 0,\\
	      &dv^\epsilon(t)=\Bigl(-\frac{\IL u^\epsilon(t)}{\epsilon}-\frac{v^\epsilon(t)}{\epsilon^2}\Bigr)dt+\frac{f(u^\epsilon(t))}{\epsilon}dt+\frac{1}{\epsilon}dW^Q(t),~t\ge 0,\\
	      &u^\epsilon(0)=u_0^\epsilon,\quad v^\epsilon(0)=v_0^\epsilon.
   \end{aligned}
   \right.
\end{equation}
The linear operator $\IL$, the mapping $f$ and the Wiener process $\bigl(W^Q(t)\bigr)_{t\ge 0}$ are described in Section~\ref{sec:setting}. For all $\epsilon\in(0,1)$, the unknowns $(u^\epsilon(t))_{t\ge 0}$ and $(v^\epsilon(t))_{t\ge 0}$ are $H$ and $H^{-1}$-valued continuous stochastic processes, adapted to the filtration $\bigl(\mathcal{F}_t\bigr)_{t\ge 0}$. The initial values $u_0^\epsilon\in H$ and $v_0^\epsilon\in H^{-1}$ are $\mathcal{F}_0$-measurable random variables and are thus independent of the Wiener process $\bigl(W^Q(t)\bigr)_{t\ge 0}$. They are allowed to depend on $\epsilon$, more precise conditions are given below.

Employing the notation introduced in Section~\ref{sec:setting}, the stochastic evolution system~\eqref{eq:sdwe} can be equivalently rewritten as the stochastic evolution equation for the unknown $X^\epsilon(t)=(u^\epsilon(t),v^\epsilon(t))\in\HH$
\begin{equation}\label{eq:sdwe1}
\left\lbrace
     \begin{aligned}
&dX^\epsilon(t)=A_\epsilon X^\epsilon(t)dt+ \frac{1}{\epsilon}F(X^\epsilon(t))dt+\frac{1}{\epsilon}d\IW^Q(t),~t\ge 0,\\
&X^\epsilon(0)=x_0^\epsilon,  
     \end{aligned}
\right.
\end{equation}
with $A_\epsilon$ given by~\eqref{eq:Aepsilon}, $F$ given by~\eqref{eq:F} and the Wiener process $\bigl(\IW^Q(t)\bigr)_{t\ge 0}$ given by~\eqref{eq:IWQ}. For all $\epsilon\in(0,1)$, the initial value is given by $x_0^\epsilon=(u_0^\epsilon,v_0^\epsilon)\in\HH$. For all $\epsilon\in(0,1)$, the unknown $(X^\epsilon(t))_{t\ge 0}$ is a $\HH$-valued continuous stochastic process, adapted to the filtration $\bigl(\mathcal{F}_t\bigr)_{t\ge 0}$.

Recall that a $\HH$-valued stochastic process $\bigl(X^\epsilon(t)\bigr)_{t\ge 0}$ (adapted to the filtration $\bigl(\mathcal{F}_t\bigr)_{t\ge 0}$ and with continuous trajectories) is a mild solution of the stochastic evolution equation~\eqref{eq:sdwe1} with initial value $X^\epsilon(0)=x^\epsilon_0$, if for all $t\ge 0$ almost surely one has
\begin{equation}\label{eq:mild-sdwe}
X^\epsilon(t)=e^{tA_{\epsilon}}x_0^\epsilon+\frac{1}{\epsilon}\int_0^te^{(t-s)A_{\epsilon}}F(X^\epsilon(s))\,ds+\frac{1}{\epsilon}\int_0^te^{(t-s)A_{\epsilon}}\,d\IW^Q(s),
\end{equation}
where $(e^{tA_{\epsilon}})_{t\ge0}$ is the semigroup generated by the linear operator $A_\epsilon$, given by~\eqref{eq:expAeps}, see Section~\ref{sec:semigroups}.

For all $\epsilon\in(0,1)$ and all $t\ge 0$, set
\begin{equation}\label{eq:IZ}
\IZ^\epsilon(t)=\frac{1}{\epsilon}\int_0^te^{(t-s)A_{\epsilon}}\,d\IW^Q(s).
\end{equation}
The process $\bigl(\IZ^\epsilon(t)\bigr)_{t\ge 0}$ is called the stochastic convolution. Under Assumption~\ref{ass:noise}, it will be seen below that this process is a Gaussian continuous $\HH$-valued stochastic process. In addition, the mapping $F:\HH\to\HH$ is globally Lipschitz continuous, therefore by a standard fixed point procedure, it is straightforward to prove that for all $\epsilon\in(0,1)$, there exists a unique global mild solution~\eqref{eq:mild-sdwe} of the stochastic evolution equation~\eqref{eq:sdwe1}. More details, in particular regularity properties and moment bounds, are provided in Section~\ref{sec:properties} below. It is worth mentioning that properties which are uniform with respect to $\epsilon\in(0,1)$ are required, and obtaining them requires to employ the results presented in Section~\ref{sec:aux}.

When $\epsilon\to 0$, assuming that $u_0^\epsilon\to u_0^0$ in $H$, then for all $t\ge 0$, $u^\epsilon(t)=\Pi_uX^\epsilon(t)$ converges to the solution $u^0(t)$ at time $t$ of the stochastic semilinear heat equation driven by additive noise
\begin{equation}\label{eq:she}
	\left\lbrace
	\begin{aligned}
		&du^0(t)=-\IL u^0(t)dt+f(u^0(t))dt+dW^Q(t),\\
		&u^0(0)=u_0^0.
	\end{aligned}
   \right.
\end{equation}
The objective of this article is to provide strong error estimates, see Section~\ref{sec:main} for a precise statement.

Recall that an $H$-valued stochastic process $\bigl(u^0(t)\bigr)_{t\ge 0}$ (adapted to the filtration $\bigl(\mathcal{F}_t\bigr)_{t\ge 0}$ and with continuous trajectories) is a mild solution of the stochastic evolution equation~\eqref{eq:she} with initial value $u^0(0)=u^0_0$, if for all $t\ge 0$ almost surely one has
\begin{equation}\label{eq:mild-she}
u^0(t)=e^{-t\IL}u_0^0+\int_0^te^{-(t-s)\IL}f(u^0(s))\,ds+\int_0^te^{-(t-s)\IL}\,dW^Q(s),
\end{equation}
where $(e^{-t\IL})_{t\ge0}$ is the semigroup generated by the linear operator $-\IL$, given by~\eqref{eq:expL}, see Section~\ref{sec:semigroups}.

For all $t\ge 0$, set
\begin{equation}\label{eq:Z}
Z(t)=\int_0^te^{-(t-s)\IL}\,dW^Q(s).
\end{equation}
The process $\bigl(Z(t)\bigr)_{t\ge 0}$ is called the stochastic convolution. Under Assumption~\ref{ass:noise}, it will be seen below that this process is a Gaussian continuous $H$-valued stochastic process. In addition, the mapping $f:H\to H$ is globally Lipschitz continuous (see Assumption~\ref{ass:fLip}), therefore by a standard fixed point procedure, it is straightforward to prove that there exists a unique global mild solution~\eqref{eq:mild-she} of the stochastic evolution equation~\eqref{eq:she}. We refer to Section~\ref{sec:properties} for additional details, in particular on regularity properties and moment bounds.

To conclude this section, let us apply the spectral Galerkin approximation procedure introduced in Section~\ref{sec:Galerkin}. Let $N\in\N$. On the one hand, from~\eqref{eq:sdwe1} one obtains the following stochastic evolution equation
\begin{equation}\label{eq:sdwe1-galerkin}
\left\lbrace
\begin{aligned}
	dX^{\epsilon,N}(t)&=\big(A_{\epsilon}X^{\epsilon,N}(t)+\frac{1}{\epsilon}F_N(X^{\epsilon,N}(t))\big)dt+\frac{1}{\epsilon}\mathcal{P}_Nd\IW^Q(t),~t\ge 0,\\
X^{\epsilon,N}(0)&=\mathcal{P}_Nx_0^\epsilon,
\end{aligned}
\right.
\end{equation}
with the unknown $t\ge 0\mapsto X^{\epsilon,N}(t)$ taking value in $\HH_N$, where the orthogonal projection operator $\IP_N$ is given by~\eqref{eq:IPN}. Equivalently, the stochastic evolution equation~\eqref{eq:sdwe1-galerkin} can be written as the stochastic evolution system for $(u^{\epsilon,N}(t),v^{\epsilon,N}(t))=(\Pi_uX^{\epsilon,N}(t),\Pi_vX^{\epsilon,N}(t))\in \HH_N$
\begin{equation}\label{eq:sdwe-galerkin}
\left\lbrace
\begin{aligned}
	&du^{\epsilon,N}(t)=\frac{v^{\epsilon,N}(t)}{\epsilon}dt,~t\ge 0,\\
	&dv^{\epsilon,N}(t)=\Bigl(-\frac{\IL u^{\epsilon,N}(t)}{\epsilon}-\frac{v^{\epsilon,N}(t)}{\epsilon^2}\Bigr)dt+\frac{f_N(u^{\epsilon,N}(t))}{\epsilon}dt+\frac{1}{\epsilon}P_NdW^Q(t),~t\ge 0,\\
	&u^{\epsilon,N}(0)=P_Nu_0^\epsilon,\quad v^{\epsilon,N}(0)=P_Nv_0^\epsilon,
\end{aligned}
\right.
\end{equation}
with the unknowns $t\ge 0\mapsto u^{\epsilon,N}(t)$ and $t\ge 0\mapsto v^{\epsilon,N}(t)$ taking value in $H_N$, where the orthogonal projection operator $P_N$ is given by~\eqref{eq:PN}. For all $N\in\N$ and $\epsilon\in(0,1)$, there exists a unique solution $\bigl(X^{\epsilon,N}(t)\bigr)_{t\ge0}$ of~\eqref{eq:sdwe1-galerkin}, which is given by the mild formulation
\begin{equation}\label{eq:mild-sdwe-galerkin} 
    X^{\epsilon,N}(t)=e^{tA_{\epsilon}}\IP_Nx^\epsilon_0+\frac{1}{\epsilon}\int_0^te^{(t-s)A_{\epsilon}}F_N(X^{\epsilon,N}(s))\,ds+\frac{1}{\epsilon}\int_0^te^{(t-s)A_{\epsilon}}\IP_N\,d\IW^Q(s),\quad\forall~t\ge 0.
\end{equation}

On the other hand, from~\eqref{eq:she} one obtains the following stochastic evolution equation
\begin{equation}\label{eq:she-galerkin}
\left\lbrace
\begin{aligned}
     &du^{0,N}(t)=-\IL u^{0,N}(t)dt+f_N(u^{0,N}(t))dt+P_NdW^Q(t),~t\ge 0,\\
     &u^{0,N}(0)=P_Nu_0^0,
\end{aligned}
\right.
\end{equation}
with the unknown $t\ge 0\mapsto u^{0,N}(t)$ taking values in $H_N$, where the orthogonal projection operator $P_N$ is given by~\eqref{eq:PN}. For all $N\in\N$, there exists a unique solution $\bigl(u^{0,N}(t)\bigr)_{t\ge 0}$ of~\eqref{eq:she-galerkin}, which is given by the mild formulation
\begin{equation}\label{eq:mild-she-galerkin}
    u^{0,N}(t)=e^{-t\IL}P_Nu^0_0+\int_0^t e^{-(t-s)\IL}f_N(u^{0,N}(s))\,ds+\int_0^t e^{-(t-s)\IL}P_N\,dW^Q(s),\quad\forall~t\ge 0.
\end{equation}

It is well-known that the spectral Galerkin approximation converges when $N\to\infty$, for instance in the following sense: for all $\epsilon\in(0,1)$ one has
\[
\underset{t\in[0,T]}\sup~\E[\|X^{\epsilon,N}(t)-X^{\epsilon}(t)\|_{\HH}^2]\underset{N\to\infty}\to 0,
\]
and
\[
\underset{t\in[0,T]}\sup~\E[\|u^{0,N}(t)-u^{0}(t)\|_{H}^2]\underset{N\to\infty}\to 0.
\]

\section{Main results}\label{sec:main}

The statements of the main results require to impose some  assumptions in the initial values $x_0^\epsilon=(u_0^\epsilon,v_0^\epsilon)$.

For the strong error estimates given in Theorem~\ref{theo:strong}, Assumption~\ref{ass:init-strong} is needed.
\begin{assum}\label{ass:init-strong}
For all $\alpha\in[0,\beta)$, one has $x_0^\epsilon\in \HH^\alpha$ and $u_0^0\in H^\alpha$ almost surely, and for all $p\in[1,\infty)$ there exists $M_{p,\alpha}\in(0,\infty)$ such that
\begin{align}
&\underset{\epsilon\in(0,1)}\sup~\E[\|x_0^\epsilon\|_{\HH^\alpha}^p]+\E[\|u_0^0\|_{H^\alpha}^p]\le M_{p,\alpha},\label{eq:init-strong-bound}\\
&\bigl(\E[\|u_0^\epsilon-u^0_0\|_{H}^p]\bigr)^{\frac{1}{p}}\le M_\alpha \epsilon^{\alpha},\quad\forall~\epsilon\in(0,1).\label{eq:init-strong-error}
\end{align}
\end{assum}

\begin{theo}\label{theo:strong}
Let Assumptions~\ref{ass:fLip},~\ref{ass:noise} and~\ref{ass:init-strong} be satisfied.

For all $p\in[1,\infty)$, $T\in(0,\infty)$ and $\alpha\in[0,\beta)$ (where $\beta\in(0,1]$ is given by the condition~\eqref{eq:condition_beta} from Assumption~\ref{ass:noise}), there exists $C_{p,\alpha}(T)\in(0,\infty)$ such that for all $\epsilon\in(0,1)$ one has  
\begin{equation}\label{eq:theo-strong}
\underset{0\le t\le T}\sup~\bigl(\E[\|u^{\epsilon}(t)-u^{0}(t)\|_H^{p}]\bigr)^{\frac{1}{p}}\le C_{p,\alpha}(T) \epsilon^\alpha.
\end{equation}
\end{theo}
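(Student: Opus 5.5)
We compare the mild formulations~\eqref{eq:mild-sdwe} and~\eqref{eq:mild-she} after projecting on the first component. For all $t\in[0,T]$ we write
\[
u^\epsilon(t)-u^0(t)=\bigl(\Pi_u e^{tA_\epsilon}x_0^\epsilon-e^{-t\IL}u_0^0\bigr)+\Bigl(\frac1\epsilon\int_0^t\Pi_u e^{(t-s)A_\epsilon}F(X^\epsilon(s))ds-\int_0^te^{-(t-s)\IL}f(u^0(s))ds\Bigr)+\bigl(\Pi_u\IZ^\epsilon(t)-Z(t)\bigr).
\]
The structure of the proof is a Gronwall argument applied to the nonlinear term, with the initial-value and stochastic-convolution terms treated as explicit error sources.

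The key ingredient is the mode-by-mode analysis of~\eqref{eq:fngn} from Section~\ref{sec:aux}. The eigenvalues of the $2\times2$ matrix are $\mu_\pm=\frac{-1\pm\sqrt{1-4\epsilon^2\lambda_n}}{2\epsilon^2}$. The slow one is $\mu_+\approx-\lambda_n$ when $\epsilon^2\lambda_n$ is small, and all modes are damped at rate $\gtrsim\min(\lambda_n,\epsilon^{-2})$. We need two bounds on the semigroups. First, an estimate of the form $\|\Pi_u e^{tA_\epsilon}(u,v)-e^{-t\IL}u\|_H\le C\epsilon^\alpha\|(u,v)\|_{\HH^\alpha}$ for $\alpha\in[0,1]$, uniformly in $t$. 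Second, a bound on the forcing kernel: $\frac1\epsilon\Pi_u e^{tA_\epsilon}(0,\cdot)$ is the scalar kernel $k_n^\epsilon(t)=\frac{1}{\epsilon^2}\frac{e^{\mu_+t}-e^{\mu_-t}}{\mu_+-\mu_-}$, to be compared with $e^{-\lambda_nt}$. We aim for the $L^2$-in-time estimate $\int_0^\infty|k_n^\epsilon(t)-e^{-\lambda_nt}|^2dt\le C\min(\epsilon^2,\lambda_n^{-1})\le C\epsilon^{2\alpha}\lambda_n^{\alpha-1}$, together with the $L^1$ estimate $\int_0^\infty|k_n^\epsilon-e^{-\lambda_n\cdot}|dt\le C\epsilon^{2\alpha}\lambda_n^{\alpha-1}$ for $\alpha\in[0,1]$. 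Both are obtained by splitting into the cases $4\epsilon^2\lambda_n\le1$ (real eigenvalues, expand $\mu_+$) and $4\epsilon^2\lambda_n>1$ (oscillatory regime, where both kernels are bounded by $e^{-t/(2\epsilon^2)}$-type terms).

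\emph{Stochastic convolution.} By the Itô isometry and the independence of the modes,
\[
\E\|\Pi_u\IZ^\epsilon(t)-Z(t)\|_H^2=\sum_n q_n\int_0^t|k_n^\epsilon(s)-e^{-\lambda_ns}|^2ds\le C\epsilon^{2\alpha}\sum_nq_n\lambda_n^{\alpha-1},
\]
which is finite by~\eqref{eq:condition_beta} for $\alpha<\beta$. The $L^p$ bounds follow from Gaussianity.

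\emph{Initial data.} We bound $\Pi_u e^{tA_\epsilon}x_0^\epsilon-e^{-t\IL}u_0^0$ by $\|\Pi_u e^{tA_\epsilon}x_0^\epsilon-e^{-t\IL}u_0^\epsilon\|_H+\|u_0^\epsilon-u_0^0\|_H$. The first term is $\le C\epsilon^\alpha\|x_0^\epsilon\|_{\HH^\alpha}$ by the first semigroup estimate (the $v$-part contributes $\epsilon|g_n|$-type terms, giving $\epsilon\lambda_n^{1/2}\lambda_n^{-1/2}$ after the natural scaling). The second term is handled by~\eqref{eq:init-strong-error}. Moments are controlled by~\eqref{eq:init-strong-bound}.

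\emph{Nonlinear term.} We split it as
\[
\int_0^t\bigl(\tfrac1\epsilon\Pi_ue^{(t-s)A_\epsilon}(0,\cdot)-e^{-(t-s)\IL}\bigr)f(u^\epsilon(s))ds+\int_0^te^{-(t-s)\IL}\bigl(f(u^\epsilon(s))-f(u^0(s))\bigr)ds.
\]
The second integral is bounded by $\Lf\int_0^t\|u^\epsilon(s)-u^0(s)\|_Hds$, which feeds Gronwall. The first is the delicate one. The operator kernel difference, measured from $H$ to $H$, has $L^1$-in-time norm only $O(\epsilon^{2\alpha}\lambda_n^{\alpha-1})$ per mode with $\alpha\le1$. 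Taking $\alpha=1/2$ gives a uniform $O(\epsilon)$ bound (indeed $\sup_n\min(\epsilon^2,\lambda_n^{-1})\lambda_n\cdot\lambda_n^{-1}$-type quantities are $\le\epsilon$ after optimizing). We therefore expect $\sup_n\int_0^\infty|k_n^\epsilon-e^{-\lambda_n\cdot}|\le C\epsilon$ directly, which gives this term $\le C\epsilon\sup_s\|f(u^\epsilon(s))\|_H\le C\epsilon(1+\sup_s\|u^\epsilon(s)\|_H)$. Uniform-in-$\epsilon$ moment bounds for $u^\epsilon$ in $H$ (Proposition~\ref{propo:sdwe-momentbounds}) then suffice, and since $\alpha\le1$ this is compatible with $\epsilon^\alpha$. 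Combining the three estimates in $L^p(\Omega)$ and applying Gronwall's lemma to $t\mapsto\sup_{s\le t}(\E\|u^\epsilon(s)-u^0(s)\|_H^p)^{1/p}$ yields~\eqref{eq:theo-strong}.

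The main obstacle is the sharp, uniform-in-$n$ control of $k_n^\epsilon-e^{-\lambda_n\cdot}$ in both the $L^1$ and $L^2$ time norms, including the boundary layer near $t=0$. In that layer $k_n^\epsilon(0)=0$ while $e^{0}=1$, which costs $\int_0^{\epsilon^2}1\,dt=\epsilon^2$ in $L^1$ and $\epsilon^2$ in $L^2$ squared, i.e.\ order $\epsilon$ in $L^2$; this is exactly consistent with the limiting rate $\alpha<\beta\le1$. We would first verify these scalar estimates carefully in the regime $4\epsilon^2\lambda_n$ close to $1$, where $\mu_+-\mu_-$ degenerates and the quotient must be treated as a divided difference.
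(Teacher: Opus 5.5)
\textbf{Gap in the nonlinear term.} Your decomposition, initial-data estimate and Gronwall closure coincide with the paper's, and your stochastic-convolution argument is valid. The per-mode bound $\int_0^\infty|k_n^\epsilon(\tau)-e^{-\lambda_n\tau}|^2\,d\tau\le C\min(\epsilon^2,\lambda_n^{-1})\le C\epsilon^{2\alpha}\lambda_n^{\alpha-1}$, combined with the It\^o isometry, is a slightly sharper alternative to the paper's weighted pointwise bound with $\delta=\frac12-\frac{\beta-\alpha}{4}$. Per-mode estimates suffice there precisely because the It\^o isometry decouples the modes.

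The gap is in the deterministic term $\int_0^t K^\epsilon(t-s)f(u^\epsilon(s))\,ds$, where $K^\epsilon(\tau)$ is the diagonal operator with entries $k_n^\epsilon(\tau)-e^{-\lambda_n\tau}$. Here nothing decouples the modes. A bound on $\sup_n\int_0^\infty|k_n^\epsilon-e^{-\lambda_n\cdot}|$ does not give $\|\int_0^tK^\epsilon(t-s)g(s)\,ds\|_H\le C\epsilon\sup_s\|g(s)\|_H$. For a counterexample, take $K_n\ge 0$ to be unit-mass bumps such that the time windows $\{s: K_n(t-s)\neq 0\}$ are pairwise disjoint. Put $g(s)=e_n$ on the window matched to $K_n$ and $g(s)=0$ elsewhere. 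Then $\sup_s\|g(s)\|_H=1$ and $\sup_n\|K_n\|_{L^1}=1$, yet every mode of the convolution equals $1$ at time $t$.

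What Minkowski's inequality actually needs is
\[
\int_0^t\|K^\epsilon(\tau)\|_{\mathcal{L}(H)}\,d\tau=\int_0^t\sup_n|k_n^\epsilon(\tau)-e^{-\lambda_n\tau}|\,d\tau,
\]
i.e.\ a pointwise-in-time bound that is uniform in $n$. This is exactly the paper's estimate~\eqref{eq:lem_semigroups-convergence-v} with $\delta=\alpha/2$, which gives $\|K^\epsilon(\tau)\|_{\mathcal{L}(H)}\le C\epsilon^\alpha\tau^{-\alpha/2}$. The singularity at $\tau=0$ is unavoidable, because of the boundary layer you point out yourself: $\sup_n|k_n^\epsilon(\tau)-e^{-\lambda_n\tau}|$ is of order $1$ for $\tau\lesssim\epsilon^2$. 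Neither your $L^1$ nor your $L^2$ per-mode estimates produce this bound, so the scalar lemmas you plan to prove are not the ones the nonlinear term requires.

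\textbf{Two related problems.}
\begin{enumerate}
\item Your pathwise bound by $C\epsilon\sup_s\|f(u^\epsilon(s))\|_H$ would require $\E[\sup_{s\le T}\|u^\epsilon(s)\|_H^p]$. Proposition~\ref{propo:sdwe-momentbounds} only controls $\sup_{t\le T}\E[\|X^\epsilon(t)\|_{\HH}^{2p}]$. With the pointwise kernel bound you instead get $C\epsilon^\alpha\int_0^t(t-s)^{-\alpha/2}(1+\|u^\epsilon(s)\|_H)\,ds$, and Minkowski in $L^p(\Omega)$ then needs only $\sup_s\E$.
\item Your general claim $\int_0^\infty|k_n^\epsilon-e^{-\lambda_n\cdot}|\le C\epsilon^{2\alpha}\lambda_n^{\alpha-1}$ is false for $\alpha<1/2$. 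When $\epsilon^2\lambda_n\gg1$, the oscillatory kernel $k_n^\epsilon$ has $L^1$ norm of order $\epsilon\lambda_n^{-1/2}$, which is much larger than $\epsilon^{2\alpha}\lambda_n^{\alpha-1}=\epsilon\lambda_n^{-1/2}(\epsilon^2\lambda_n)^{\alpha-1/2}$. The claim holds only for $\alpha\in[1/2,1]$, which is the range you actually use.
\end{enumerate}
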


For the weak error estimates given in Theorem~\ref{theo:weak}, Assumption~\ref{ass:init-weak} is needed.
\begin{assum}\label{ass:init-weak}
For all $\alpha\in[0,\beta)$, one has $x_0^\epsilon\in\HH^{2\alpha}$ and $u_0^0\in H^{2\alpha}$ almost surely, and there exists $M_\alpha\in(0,\infty)$ such that
\begin{align}
&\underset{\epsilon\in(0,1)}\sup~\E[\|x_0^\epsilon\|_{\HH^{2\alpha}}^3]+\E[\|u_0^0\|_{H^{2\alpha}}^3]\le M_\alpha,\label{eq:init-weak-bound}\\
&\E[\|u_0^\epsilon-u^0_0\|_{H}^2]\le M_\alpha \epsilon^{2\min(2\alpha,1)},\quad\forall~\epsilon\in(0,1).\label{eq:init-weak-error}
\end{align}
\end{assum}

\begin{theo}\label{theo:weak}
Let Assumptions~\ref{ass:fLip}, \ref{ass:mapping-f},~\ref{ass:noise} and \ref{ass:init-weak} be satisfied.

Then for all $T\in(0,\infty)$, $\alpha\in[0,\beta)$ (where $\beta\in(0,1]$ is given by the condition~\eqref{eq:condition_beta} from Assumption~\ref{ass:noise}), and all mapping $\varphi\colon H\to\R$ of class $\mathcal{C}^4$, with bounded derivatives up to order $4$, there exists $C_\alpha(T,\varphi)\in(0,\infty)$ such that for all $\epsilon\in(0,1)$ one has
\begin{equation}\label{eq:theo-weak}
\underset{0\le t\le T}\sup~\bigl|\E[\varphi(u^{\epsilon}(t))]-\E[\varphi(u^{0}(t))]\bigr|\le C_{\alpha}(T,\varphi)\epsilon^{\min(2\alpha,1)}. 
\end{equation}
\end{theo}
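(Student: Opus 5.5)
The weak convergence statement is Theorem~\ref{theo:weak}, and I will prove it through the Galerkin diagram from the introduction. For fixed $\epsilon$, $t\le T$ and $N\in\N$, split
\[
\E[\varphi(u^\epsilon(t))]-\E[\varphi(u^0(t))]=\bigl(\E[\varphi(u^\epsilon(t))]-\E[\varphi(u^{\epsilon,N}(t))]\bigr)+\bigl(\E[\varphi(u^{\epsilon,N}(t))]-\E[\varphi(u^{0,N}(t))]\bigr)+\bigl(\E[\varphi(u^{0,N}(t))]-\E[\varphi(u^{0}(t))]\bigr).
\]
The first and third terms are bounded by $\|D\varphi\|_\infty$ times strong Galerkin errors. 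These errors go to $0$ as $N\to\infty$ for fixed $\epsilon$; this is the convergence recalled at the end of Section~\ref{sec:SPDE}. The whole difficulty is therefore to bound the middle term by $C\epsilon^{\min(2\alpha,1)}$ with $C$ independent of $N$, and then let $N\to\infty$.

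For the middle term I use the Kolmogorov equation of the damped wave Galerkin system. Set $\Phi^{\epsilon,N}(t,x)=\E[\varphi(\Pi_u X^{\epsilon,N}_x(t))]$ for $x\in\HH_N$. Itô's formula applied to $s\mapsto\Phi^{\epsilon,N}(t-s,(u^{0,N}(s),\epsilon^{\!}\,\cdot))$ would require a velocity variable for the heat solution, so I choose the lift differently. I compare along the process $Y(s)=(u^{0,N}(s),\,\epsilon(-\IL u^{0,N}(s)+f_N(u^{0,N}(s))))$, i.e. I insert the quasi-static velocity, or more simply $(u^{0,N}(s),0)$ with the initial velocity mismatch handled separately.

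Applying Itô's formula on $[0,t]$, the telescoping produces error terms of the following types:
\begin{itemize}
\item[(i)] $\epsilon^{-1}D_v\Phi^{\epsilon,N}$ paired with the mismatch of drifts;
\item[(ii)] second-order terms $\frac{1}{2\epsilon^2}\mathrm{Tr}(D^2_{vv}\Phi^{\epsilon,N}Q)$ against $\frac12\mathrm{Tr}(D^2_{uu}\ldots Q)$;
\item[(iii)] the initial-value term, which is controlled by Assumption~\ref{ass:init-weak} and the Lipschitz bound of $\Phi^{\epsilon,N}$ in $u$.
\end{itemize}
The key ingredient is regularity of $\Phi^{\epsilon,N}$, uniform in $\epsilon$ and $N$ (Proposition~\ref{propo:regularity-PhiepsilonN}). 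Its derivatives in the $v$-direction must gain a factor $\epsilon$, e.g. $|D_v\Phi^{\epsilon,N}(t,x).k|\le C\epsilon\, t^{-\theta}\|k\|_{H^{-1-2\alpha'}}$, with analogous bounds for second derivatives in weaker norms. These follow from the semigroup estimates of Section~\ref{sec:aux} for $e^{tA_\epsilon}$ and the moment bounds of Proposition~\ref{propo:sdwe-momentbounds}, applied to the first and second variation processes.

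To reach order $\min(2\alpha,1)$ rather than $\alpha$, I proceed in three steps.
\begin{itemize}
\item[1.] The fast velocity fluctuation enters linearly through terms like $\E[D\Phi(\ldots).(\text{oscillating stochastic convolution})]$. I remove it by solving an auxiliary Poisson equation for the fast Ornstein--Uhlenbeck velocity, which amounts to integrating by parts in time using $\epsilon^2\, dv=\ldots$. This leaves $\epsilon\times$ bounded terms plus martingales.
\item[2.] The remaining terms contain products of $D^2\Phi$ with the stochastic convolution $\IZ^\epsilon$, and would only give order $\alpha$ if estimated strongly. I handle them with the Malliavin integration by parts formula~\eqref{prop:Malliavinderivative-W} together with~\eqref{eq:stochasticintegral-W}. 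This converts $\E[\langle D\varphi(\mathcal G),\int\Theta\,dW\rangle]$ into a trace of $D^2\varphi$ against $\mathcal D\mathcal G$ and $\Theta$.
\item[3.] The Malliavin derivatives of $X^{\epsilon,N}$ and $u^{0,N}$ are estimated through the semigroups. The gain $\epsilon^{2\alpha}$ comes from pairing two factors each of size $\epsilon^\alpha$ measured in $H^{-\alpha}$-type norms, using~\eqref{eq:Dfalpha} and~\eqref{eq:DfalphaBis} to control the nonlinear contributions in negative Sobolev norms.
\end{itemize}
The cap at $1$ arises from the $\epsilon$-order deterministic corrections produced by the Poisson equation.

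The main obstacle is the combination of (a) the $v$-derivative bounds on $\Phi^{\epsilon,N}$ with the correct power of $\epsilon$ and integrable singularities in $t$, and (b) the singular $\epsilon^{-1},\epsilon^{-2}$ terms together with the highly oscillatory stochastic convolution in the Itô expansion. To handle (b) I would first isolate the purely linear Ornstein--Uhlenbeck part, where explicit computation with $f_n^\epsilon,g_n^\epsilon$ from~\eqref{eq:fngn} gives the rate mode by mode. Summing over modes with weights $q_n\lambda_n^{2\alpha-1}$, which is summable only when $2\alpha<\beta$-type conditions hold, may force some interpolation to recover exactly $\min(2\alpha,1)$ for all $\alpha<\beta$. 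The nonlinear part would then be treated perturbatively via the Malliavin argument.
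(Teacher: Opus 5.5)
Your ingredients are the right ones: Galerkin reduction, Poisson equations in the velocity, and Malliavin integration by parts. The step that fails is the reduction to a bound on $\E[\varphi(u^{\epsilon,N}(t))]-\E[\varphi(u^{0,N}(t))]$ of size $C\epsilon^{\min(2\alpha,1)}$ with $C$ independent of $N$, followed by $N\to\infty$ using only qualitative Galerkin convergence. Nothing in your outline yields this uniformity, and the expansion you describe cannot.

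\begin{itemize}
\item The Poisson-equation argument treats $\epsilon^{-1}\mathcal{A}^{(N)}$, which contains $-\IL u$, as a perturbation of the fast generator $\epsilon^{-2}\mathcal{L}^{(N)}$. This is only effective on modes with $\lambda_n\epsilon^2\lesssim 1$.
\item The boundary and correction terms it produces grow with $N$ when $\beta<1$. Examples are $\epsilon\,\E[D_u\Phi^{(0,N)}.v^{\epsilon,N}]$, $\epsilon^2\,\E[D^2_{uu}\Phi^{(0,N)}.(v^{\epsilon,N},v^{\epsilon,N})]$ and $\epsilon^2\sum_{n\le N}q_n$. By equipartition, $\E[\langle v^{\epsilon,N}(t),e_n\rangle^2]\approx q_n/2$ for every mode once $t\gg\epsilon^2$. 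Hence $\E[\|v^{\epsilon,N}(t)\|_H^2]\approx\frac12\sum_{n\le N}q_n$, which equals $N/2$ for space-time white noise.
\end{itemize}

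This is the summability obstruction you notice at the end.

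The paper accepts this $N$-dependence. Theorem~\ref{theo:weak-N} gives $\epsilon^{\min(2\alpha,1)}+\epsilon\lambda_N^{\max(\frac12-\alpha,0)}+\epsilon^2\lambda_N^{1-\alpha}$. The Malliavin integration by parts is what lowers the terms linear in the stochastic convolution from the direct bound $\epsilon\bigl(\sum_{n\le N}q_n\bigr)^{1/2}$ to $\epsilon\lambda_N^{\max(\frac12-\alpha,0)}$. The rate $\epsilon^{\min(2\alpha,1)}$ then comes from the frequency cut $\lambda_N\sim\epsilon^{-2}$, not from pairing two factors of size $\epsilon^\alpha$.

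Once $N$ is tied to $\epsilon$, you can no longer send $N\to\infty$. The two outer terms of your splitting then need quantitative bounds uniform in $\epsilon$:
\begin{itemize}
\item $\lambda_N^{-\alpha}$ for the heat equation (Lemma~\ref{lem:weakerror-0-galerkin});
\item $\epsilon^{\min(2\alpha,1)}+\lambda_N^{-\alpha}$ for the damped wave equation (Lemma~\ref{lem:weakerror-epsilon-galerkin}). This requires its own Kolmogorov-equation argument, using $\Phi^{(\epsilon,K)}$, the $\epsilon$-weighted derivative bounds of Proposition~\ref{propo:regularity-PhiepsilonN}, and a second truncation level tied to $\epsilon$.
\end{itemize}
This balancing step and these two estimates are what your plan is missing.

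Independently, the comparison you set up does not match the steps you then carry out. Evaluating $\Phi^{(\epsilon,N)}$ along $(u^{0,N}(s),0)$ involves neither a fast velocity nor the oscillating convolution $\IZ^\epsilon$. It\^o's formula and the Kolmogorov equation~\eqref{eq:kolmogorovPhiepsilonN} leave
\[
\int_0^t\E\Bigl[\bigl(D_u\Phi^{(\epsilon,N)}-\tfrac{1}{\epsilon}D_v\Phi^{(\epsilon,N)}\bigr).\bigl(-\IL u^{0,N}(s)+f_N(u^{0,N}(s))\bigr)+\tfrac12\sum_{n=1}^{N}q_n\bigl(D^2_{uu}\Phi^{(\epsilon,N)}-\tfrac{1}{\epsilon^2}D^2_{vv}\Phi^{(\epsilon,N)}\bigr).(e_n,e_n)\Bigr]\,ds,
\]
with derivatives evaluated at $(t-s,u^{0,N}(s),0)$. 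Proposition~\ref{propo:regularity-PhiepsilonN} bounds each derivative separately, not these differences. It also controls $D_u\Phi^{(\epsilon,N)}.h$ only through $\epsilon^{\alpha}\|h\|_H+\|h\|_{H^{-\alpha}}$ with $\alpha\le 1$, while $\|\IL u^{0,N}\|_{H^{-1}}=\|u^{0,N}\|_{H^1}$ is not bounded uniformly in $N$. Making this route work would need a quantitative estimate of these differences in rough directions. Heuristically $\Phi^{(\epsilon,N)}(t,u,v)\approx\Phi^{(0,N)}(t,u+\epsilon v)$, since $d(u^\epsilon+\epsilon v^\epsilon)=(-\IL u^\epsilon+f(u^\epsilon))\,dt+dW^Q$; proving such an estimate is essentially the theorem itself. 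Your steps 1--3 instead belong to the paper's direction: $\Phi^{(0,N)}$ evaluated along $(u^{\epsilon,N},v^{\epsilon,N})$. There the Poisson equations are solved explicitly via $\mathcal{L}^{(N)}\Theta^{(N)}_1=-\Theta^{(N)}_1$ and $\mathcal{L}^{(N)}\Psi_{n,m}=-2\Psi_{n,m}$.
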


Comparing the strong and weak error estimates from Theorems~\ref{theo:strong} and~\ref{theo:weak}, one observes that in general the weak order of convergence is strictly larger than the strong order of convergence. The only case where the two orders of convergence coincide is the trace-class noise case, when $\beta=1$: in that case both orders of convergence are equal to $1$, which coincides with the strong and weak orders of convergence for finite dimensional stochastic differential equations. The weak order of convergence is not always the double of the strong order of convergence since the orders of convergence cannot be larger than $1$. This is why the weak order of convergence is equal to $\min(2\beta,1)$ in~\eqref{eq:theo-weak}. If $\beta\in(0,1/2)$, then the strong and weak orders of convergence are $\beta$ and $2\beta$ respectively. This encompasses in particular the space-time white noise case in dimension $d=1$, for which $\beta=1/2$, thus for which the strong and weak orders of convergence are $1/2$ and $1$ respectively.

The proofs of Theorems~\ref{theo:strong} and~\ref{theo:weak} are postponed to Sections~\ref{sec:proof-theo-strong} and~\ref{sec:proof-theo-weak} respectively. Sections~\ref{sec:aux} and~\ref{sec:properties} provide auxiliary results which are used in both proofs.

\section{Auxiliary results}\label{sec:aux}

This section presents several auxiliary results which play important roles in the error analysis.

\subsection{Properties of the heat semigroup}\label{sec:expL}

Let us first recall standard properties concerning the heat semigroup $\bigl(e^{-t\IL}\bigr)_{t\ge 0}$ defined by~\eqref{eq:expL} in Section~\ref{sec:semigroups}.

\begin{lemma}\label{lem:expL}
For all $\alpha\in\R$, all $u\in H^\alpha$ and all $t\ge 0$, one has
\begin{equation}\label{eq:lem_expL-bound}
\|e^{-t\IL}u\|_{H^\alpha}\le e^{-t\lambda_1}\|u\|_{H^\alpha}.
\end{equation}
In addition, the following smoothing property is satisfied: for all $\alpha\ge 0$, there exists $C_\alpha\in(0,\infty)$ such that for all $t>0$ and all $u\in H$ one has
\begin{equation}\label{eq:lem_expL-smoothing}
\|e^{-t\IL}u\|_{H^\alpha}\le C_\alpha t^{-\frac{\alpha}{2}}\|u\|_H.
\end{equation}
\end{lemma}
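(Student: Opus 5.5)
Both estimates follow by working in the eigenbasis $(e_n)_{n\in\N}$ of $\IL$ from~\eqref{eq:Laplace}, using the explicit formula~\eqref{eq:expL}. For $u\in H^\alpha$ we have $\langle e^{-t\IL}u,e_n\rangle_H=e^{-t\lambda_n}\langle u,e_n\rangle_H$. By the definition of the norms on $H^\alpha$ (or on $H^{-\alpha}$ when $\alpha<0$, which is the same formula with $\lambda_n^{\alpha}$), this gives
\[
\|e^{-t\IL}u\|_{H^\alpha}^2=\sum_{n\in\N}\lambda_n^{\alpha}e^{-2t\lambda_n}\langle u,e_n\rangle_H^2 .
\]

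For~\eqref{eq:lem_expL-bound}, the sequence $(\lambda_n)_{n\in\N}$ is nondecreasing with $\lambda_1>0$, so $e^{-2t\lambda_n}\le e^{-2t\lambda_1}$ for all $n$ and all $t\ge0$. Taking this factor out of the sum leaves $e^{-2t\lambda_1}\|u\|_{H^\alpha}^2$, and taking square roots concludes. When $\alpha<0$ one first argues on the dense subspace used to define $H^{\alpha}$ and then extends by continuity. This extension is the only point that needs some care.

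For~\eqref{eq:lem_expL-smoothing}, fix $\alpha\ge0$ and $t>0$ and bound the scalar factor uniformly in $n$. Setting $x=t\lambda_n>0$,
\[
\lambda_n^{\alpha}e^{-2t\lambda_n}=t^{-\alpha}x^{\alpha}e^{-2x}\le t^{-\alpha}\sup_{x>0}x^{\alpha}e^{-2x}=t^{-\alpha}\Bigl(\frac{\alpha}{2e}\Bigr)^{\alpha},
\]
with the convention that the supremum equals $1$ when $\alpha=0$. Summing over $n$ and applying Parseval gives $\|e^{-t\IL}u\|_{H^\alpha}^2\le C_\alpha^2t^{-\alpha}\|u\|_H^2$ with $C_\alpha=(\alpha/(2e))^{\alpha/2}$, which is the claim after taking square roots. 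Both steps are elementary calculus, so I expect no real obstacle.
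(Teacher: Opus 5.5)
Your proposal is correct and follows the paper's proof essentially verbatim. Both arguments use the spectral expansion of $\|e^{-t\IL}u\|_{H^\alpha}^2$, the bound $e^{-2t\lambda_n}\le e^{-2t\lambda_1}$ coming from monotonicity of $(\lambda_n)$, and the boundedness of $x\mapsto x^\alpha e^{-2x}$ for the smoothing estimate. You go slightly further by computing the explicit constant $C_\alpha=(\alpha/(2e))^{\alpha/2}$ and by noting the density/extension step for $\alpha<0$, which the paper leaves implicit.
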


The results stated in Lemma~\ref{lem:expL} are standard and the elements of proof below are given for completeness.

\begin{proof}[Proof of Lemma~\ref{lem:expL}]
First, owing to the definitions~\eqref{eq:expL} of $e^{-t\IL}u$ and of the norm $\|\cdot\|_{H^\alpha}$ on $H^\alpha$ (see Section~\ref{sec:notation}), one obtains
\[
\|e^{-t\IL}u\|_{H^\alpha}^2=\sum_{n=1}^\infty \lambda_n^{\alpha}e^{-2t\lambda_n}\langle u, e_n\rangle_H^2\le e^{-2t\lambda_1}\sum_{n=1}^\infty \lambda_n^{\alpha}\langle u, e_n\rangle_H^2=e^{-2t\lambda_1}\|u\|_{H^\alpha}^2.
\]
This concludes the proof of~\eqref{eq:lem_expL-bound}. In order to prove~\eqref{eq:lem_expL-smoothing}, note that for all $\alpha\ge 0$ the mapping $x\ge 0\mapsto x^\alpha e^{-x}$ is bounded. As a result, there exists $C_\alpha\in(0,\infty)$ such that for all $u\in H$ and all $t>0$ one has
\[
\|e^{-t\IL}u\|_{H^\alpha}^2=t^{-\alpha}\sum_{n=1}^\infty t^{\alpha}\lambda_n^{\alpha}e^{-2t\lambda_n}\langle u, e_n\rangle_H^2\le C_\alpha^2 t^{-\alpha}\sum_{n=1}^\infty\langle u, e_n\rangle_H^2=C_\alpha^2 t^{-\alpha}\|u\|_H^2.
\]
This concludes the proof of~\eqref{eq:lem_expL-smoothing}. The proof of Lemma~\ref{lem:expL} is thus completed.
\end{proof}

\subsection{Properties of the damped wave semigroup}\label{sec:expAeps}

The objective of this section is to study properties of the damped wave semigroup $\bigl(e^{tA_\epsilon}\bigr)_{t\ge 0}$ given by~\eqref{eq:expAeps} in Section~\ref{sec:semigroups}. The main difficulty is to obtain upper bounds which are either uniform with respect to the parameter $\epsilon\in(0,1)$, or which contain terms which vanish when $\epsilon\to 0$. We obtain variants of the smoothing property~\eqref{eq:lem_expL-smoothing}.

First, the semigroup satisfies the following bounds which are uniform with respect to $\epsilon\in(0,1)$ and $t\in[0,\infty)$.
\begin{lemma}\label{lem:expAeps-bound}
For all $\alpha\in\R$ and all $x\in\HH^\alpha$, one has
\begin{equation}\label{eq:lem_expAeps-bound}
\underset{\epsilon\in(0,1)}\sup~\underset{t\ge 0}\sup~\|e^{tA_\epsilon}x\|_{\HH^\alpha}\le \|x\|_{\HH^\alpha}. 
\end{equation}
\end{lemma}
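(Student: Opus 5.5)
The plan is to reduce the claim to a mode-by-mode energy estimate for the two-dimensional system~\eqref{eq:fngn}. Fix $\alpha\in\R$, $\epsilon\in(0,1)$ and $x=(u,v)\in\HH^\alpha$. By~\eqref{eq:expAeps} and the definition of the norm on $\HH^\alpha=H^\alpha\times H^{\alpha-1}$, one has
\[
\|e^{tA_\epsilon}x\|_{\HH^\alpha}^2=\sum_{n\in\N}\lambda_n^{\alpha-1}\bigl(\lambda_n f_n^\epsilon(t)^2+g_n^\epsilon(t)^2\bigr),
\]
and at $t=0$ this equals $\|x\|_{\HH^\alpha}^2$. It therefore suffices to show that for every $n$ the quantity $\mathcal{E}_n(t)=\lambda_n f_n^\epsilon(t)^2+g_n^\epsilon(t)^2$ is nonincreasing in $t$. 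Multiplying by the fixed weight $\lambda_n^{\alpha-1}>0$ and summing over $n$ then gives~\eqref{eq:lem_expAeps-bound}, uniformly in $t\ge0$ and $\epsilon\in(0,1)$.

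The key step is a direct differentiation using~\eqref{eq:fngn}:
\[
\frac{d}{dt}\mathcal{E}_n(t)=2\lambda_n f_n^\epsilon\frac{g_n^\epsilon}{\epsilon}+2g_n^\epsilon\Bigl(-\frac{\lambda_n}{\epsilon}f_n^\epsilon-\frac{1}{\epsilon^2}g_n^\epsilon\Bigr)=-\frac{2}{\epsilon^2}\bigl(g_n^\epsilon(t)\bigr)^2\le 0.
\]
The cross terms cancel exactly because of the choice of weights: $\lambda_n$ on the $u$-component and $1$ on the $v$-component, which is precisely the structure built into $\HH^\alpha$. Hence $\mathcal{E}_n(t)\le\mathcal{E}_n(0)=\lambda_n\langle u,e_n\rangle_H^2+\langle v,e_n\rangle_H^2$ for all $t\ge0$.

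There is no real obstacle; the only point needing care is justifying the series manipulation for general $x\in\HH^\alpha$, including negative $\alpha$, where the series defining $e^{tA_\epsilon}x$ converges in $\HH^\alpha$. Since each mode evolves independently and the bound holds termwise, the series for $e^{tA_\epsilon}x$ converges in $\HH^\alpha$ by comparison with that of $x$. This also shows that the semigroup extends to a contraction on each $\HH^\alpha$, which is the statement.
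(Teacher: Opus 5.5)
Your proof is correct and is essentially the paper's argument. The paper derives the same mode-wise dissipation identity $\frac{d}{dt}\bigl(\lambda_n|f_n^\epsilon(t)|^2+|g_n^\epsilon(t)|^2\bigr)=-\frac{2}{\epsilon^2}|g_n^\epsilon(t)|^2$ by multiplying $\epsilon^2(f_n^\epsilon)''+(f_n^\epsilon)'+\lambda_n f_n^\epsilon=0$ by $2(f_n^\epsilon)'$ and using $g_n^\epsilon=\epsilon(f_n^\epsilon)'$, whereas you differentiate the first-order system directly; both then weight by $\lambda_n^{\alpha-1}$ and sum over $n$, so the difference is only cosmetic.
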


\begin{proof}[Proof of Lemma~\ref{lem:expAeps-bound}]
Recall that for all $x=(u,v)\in\HH^\alpha$ and all $t\ge 0$ one has
\[
e^{tA_\epsilon}x=\sum_{n\in\N}\bigl(f^\epsilon_n(t)e_n, g^\epsilon_n(t)e_n\bigr)
\]
where for all $n\in\N$ the mapping $t\ge 0\mapsto (f^\epsilon_n(t),g^\epsilon_n(t))$ solves~\eqref{eq:fngn}.

Let $n\in\N$. Observe that $f\mapsto f_n^\epsilon(t)$ solves the second-order linear differential equation
\begin{equation}\label{eq:f}
\epsilon^2(f^\epsilon_n)''(t)+(f^\epsilon_n)'(t)+\lambda_n f^\epsilon_n(t)=0.
\end{equation}
It then follows that one has
\[
\epsilon^2\frac{d}{dt}|(f^\epsilon_n(t))'|^2+2|(f^\epsilon_n(t))'|^2+\lambda_n\frac{d}{dt}|f^\epsilon_n(t)|^2=2\big\langle\epsilon^2(f^\epsilon_n(t))''+(f^\epsilon_n(t))'+\lambda_n f^\epsilon_n(t), (f^\epsilon_n(t))'\big\rangle=0.
\]
By integration one obtains for all $t\ge 0$
\[
\epsilon^2|(f^\epsilon_n(t))'|^2+2\int_0^t|(f^\epsilon_n(s))'|^2\,ds+\lambda_n|f^\epsilon_n(t)|^2=\epsilon^2|(f^\epsilon_n(0))'|^2+\lambda_n|f^\epsilon_n(0)|^2.
\]
For all $t\ge 0$ one has $g^\epsilon_n(t)=\epsilon(f^\epsilon_n(t))'$ owing to~\eqref{eq:fngn}. Therefore multiplying both sides by $\lambda_n^{\alpha-1}$ and omitting the integral term in the left-hand side, the equality above yields the following inequality: for all $n\in\N$ and all $t\ge 0$, one has
\[
\lambda_n^{\alpha-1}|g^\epsilon_n(t)|^2+\lambda_n^\alpha|f^\epsilon_n(t)|^2\le \lambda_n^{\alpha-1}|g^\epsilon_n(0)|^2+\lambda_n^\alpha|f^\epsilon_n(0)|^2.
\]
Finally, using the identity
\begin{equation}
\|e^{tA_\epsilon}x\|_{\HH^\alpha}^2=\sum_{n\in\N}\lambda_n^\alpha|f^\epsilon_n(t)|^2+\sum_{n\in\N}\lambda_n^{\alpha-1}|g^\epsilon_n(t)|^2
\end{equation}
and the inequality above gives
\[
\|e^{tA_\epsilon}x\|_{\HH^\alpha}^2\le \|x\|_{\HH^\alpha}^2.
\]
This concludes the proof of Lemma~\ref{lem:expAeps-bound}.
\end{proof}

Let us now state two further auxiliary results, which are variants of the smoothing property~\eqref{eq:lem_expL-smoothing} for the heat semigroup, but require more attention. Those results play an important role in the analysis below.

\begin{lemma}\label{lem:expAeps-smoothing1}
For all $\delta\in[0,\frac12]$ and $\rho\in[2\delta,1]$, there exists $C_{\delta,\rho}\in(0,\infty)$ such that, for all $\alpha\in\R$, all $\epsilon\in(0,1)$, all $t>0$ and all $u\in H^{\alpha+\rho-2\delta-1}$, 
\begin{equation}\label{eq:lem_expAeps-smoothing1}
\|e^{tA_\epsilon}(0,u)\|_{\HH^\alpha}\le C_{\delta,\rho}t^{-\delta}\epsilon^{\rho}\Bigl(1+\epsilon^{2\delta-\rho}e^{-\frac{t}{2\epsilon^2}}\Bigr)\|u\|_{H^{\alpha+\rho-2\delta-1}}.
\end{equation}
\end{lemma}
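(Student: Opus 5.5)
The plan is to argue Fourier mode by mode and then sum. For $x=(0,u)$, each pair $(f^\epsilon_n,g^\epsilon_n)$ solves~\eqref{eq:fngn} with $f^\epsilon_n(0)=0$ and $g^\epsilon_n(0)=u_n:=\langle u,e_n\rangle_H$. Since $\|e^{tA_\epsilon}(0,u)\|_{\HH^\alpha}^2=\sum_n\bigl(\lambda_n^\alpha|f^\epsilon_n(t)|^2+\lambda_n^{\alpha-1}|g^\epsilon_n(t)|^2\bigr)$, it suffices to prove, uniformly in $n$, $\epsilon\in(0,1)$ and $t>0$, the scalar bounds
\[
\lambda_n^{\frac12}|f^\epsilon_n(t)|+|g^\epsilon_n(t)|\le C_{\delta,\rho}\,t^{-\delta}\epsilon^{\rho}\bigl(1+\epsilon^{2\delta-\rho}e^{-\frac{t}{2\epsilon^2}}\bigr)\lambda_n^{\frac{\rho-2\delta}{2}}|u_n|.
\]
Multiplying by $\lambda_n^{\frac{\alpha-1}{2}}$ and summing then gives~\eqref{eq:lem_expAeps-smoothing1} for every $\alpha\in\R$ at once.

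The function $f^\epsilon_n$ solves~\eqref{eq:f} with $(f^\epsilon_n)'(0)=u_n/\epsilon$. With the roots $r_\pm=\bigl(-1\pm\sqrt{1-4\epsilon^2\lambda_n}\bigr)/(2\epsilon^2)$ one has
\[
f^\epsilon_n(t)=\frac{u_n}{\epsilon}\,\frac{e^{r_+t}-e^{r_-t}}{r_+-r_-},\qquad g^\epsilon_n=\epsilon (f^\epsilon_n)'.
\]
I split the modes into two regimes.

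\emph{Overdamped regime, $4\epsilon^2\lambda_n\le\frac12$.} Here $r_+\le -\lambda_n$, $r_-\le-\frac{1}{2\epsilon^2}$, and $r_+-r_-\ge c\epsilon^{-2}$. Hence
\[
|f^\epsilon_n(t)|\lesssim \epsilon e^{-\lambda_n t}|u_n|,\qquad |g^\epsilon_n(t)|\lesssim \bigl(\epsilon^2\lambda_n e^{-\lambda_n t}+e^{-\frac{t}{\epsilon^2}}\bigr)|u_n|.
\]
For the $f$-term, write $\epsilon=\epsilon^\rho\epsilon^{1-\rho}$ and use $\epsilon^{1-\rho}\le C\lambda_n^{-\frac{1-\rho}{2}}$, valid since $\epsilon^2\lambda_n$ is bounded, together with $e^{-\lambda_nt}\le C_\delta(\lambda_nt)^{-\delta}$. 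This gives $\lambda_n^{1/2}|f^\epsilon_n|\lesssim t^{-\delta}\epsilon^\rho\lambda_n^{(\rho-2\delta)/2}|u_n|$.

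The term $\epsilon^2\lambda_ne^{-\lambda_nt}$ is handled the same way, using $\epsilon^{2-\rho}\lambda_n^{1-\rho/2}=(\epsilon^2\lambda_n)^{1-\rho/2}\le C$. For the fast term, bound $e^{-t/\epsilon^2}\le C_\delta(\epsilon^2/t)^\delta e^{-t/(2\epsilon^2)}$. This is $t^{-\delta}\epsilon^\rho\cdot\epsilon^{2\delta-\rho}e^{-t/(2\epsilon^2)}$, and the missing factor $\lambda_n^{(\rho-2\delta)/2}\ge\lambda_1^{(\rho-2\delta)/2}$ is harmless because $\rho\ge 2\delta$. This fast term is exactly where the second summand of~\eqref{eq:lem_expAeps-smoothing1} comes from.

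\emph{High-frequency regime, $4\epsilon^2\lambda_n>\frac12$.} I expect this to be the main technical point: the roots may be complex or nearly double, so $(r_+-r_-)^{-1}$ cannot be used directly. I would instead use
\[
\Bigl|\frac{e^{r_+t}-e^{r_-t}}{r_+-r_-}\Bigr|\le t\,e^{t\max\Re r_\pm},
\]
together with $\max\Re r_\pm\le -c/\epsilon^2$ in this regime. This holds because $\Re r_\pm=-\frac1{2\epsilon^2}$ when the roots are complex, and $r_+=-2\lambda_n/(1+\sqrt{1-4\epsilon^2\lambda_n})\le-\lambda_n\le -\frac{1}{8\epsilon^2}$ when they are real. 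Doing the same for $g^\epsilon_n=\epsilon (f^\epsilon_n)'$, and alternatively interpolating with the energy bound from the proof of Lemma~\ref{lem:expAeps-bound}, yields
\[
\lambda_n^{\frac12}|f^\epsilon_n(t)|+|g^\epsilon_n(t)|\le C e^{-c't/\epsilon^2}|u_n|,
\]
since $t e^{-ct/\epsilon^2}\le C\epsilon^2e^{-c't/\epsilon^2}$ and $\lambda_n^{1/2}\epsilon\lesssim1$ on the relevant bounds.

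Then $e^{-c't/\epsilon^2}\le C_\delta t^{-\delta}\epsilon^{2\delta}$. Moreover, in this regime $\lambda_n\ge c\epsilon^{-2}$, so $\epsilon^{2\delta}\le C\epsilon^{\rho}\lambda_n^{(\rho-2\delta)/2}$ because $\rho-2\delta\ge0$. The high modes are therefore controlled by the first summand $t^{-\delta}\epsilon^\rho$ alone, and no specific decay constant is needed there. Combining the two regimes and summing over $n$ concludes the proof.
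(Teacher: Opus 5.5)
Your overall architecture is the paper's. You reduce to a uniform scalar bound on $\lambda_n^{1/2}|f^\epsilon_n(t)|+|g^\epsilon_n(t)|$, split the modes according to $\epsilon^2\lambda_n$, use $\sup_{x\ge0}x^{\delta}e^{-x}<\infty$, and let the fast overdamped component produce the summand $\epsilon^{2\delta-\rho}e^{-t/(2\epsilon^2)}$. The overdamped regime is fine, with one small correction. From $r_-\le-\frac{1}{2\epsilon^2}$ alone you would only get $e^{-t/(2\epsilon^2)}$ for the fast part of $g^\epsilon_n$, and that rate is \emph{not} enough for the step $e^{-ct/\epsilon^2}\le C_\delta(\epsilon^2/t)^{\delta}e^{-t/(2\epsilon^2)}$ when $\delta>0$. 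What you actually have is $|r_-|=\frac{1+s}{2\epsilon^2}\ge\frac{1+2^{-1/2}}{2\epsilon^2}$ with $s=\sqrt{1-4\epsilon^2\lambda_n}\ge 2^{-1/2}$, and this suffices.

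The genuine gap is in the high-frequency regime, at exactly the step you flagged. Your claim $\lambda_n^{1/2}\epsilon\lesssim 1$ is false there: that regime is $\epsilon^2\lambda_n>\frac18$, and $\epsilon^2\lambda_n\to\infty$ as $n\to\infty$.

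\begin{itemize}
\item With complex roots, $\max\Re r_\pm=-\frac{1}{2\epsilon^2}$ and $|r_\pm|=\lambda_n^{1/2}/\epsilon$. Your divided-difference bound then gives only $\lambda_n^{1/2}|f^\epsilon_n(t)|\le(\epsilon^2\lambda_n)^{1/2}\frac{t}{\epsilon^2}e^{-t/(2\epsilon^2)}|u_n|$.
\item Writing $g^\epsilon_n=u_n\bigl(e^{r_+t}+r_-\frac{e^{r_+t}-e^{r_-t}}{r_+-r_-}\bigr)$ gives the same loss of a factor $(\epsilon^2\lambda_n)^{1/2}$.
\item The energy bound $\lambda_n|f^\epsilon_n(t)|^2+|g^\epsilon_n(t)|^2\le|u_n|^2$ has no decay in $t$. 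Interpolating with it yields at best $\min\bigl\{1,(\epsilon^2\lambda_n)^{1/2}(1+t/\epsilon^2)e^{-t/(2\epsilon^2)}\bigr\}$, which equals $1$ for $n$ large at any fixed $t/\epsilon^2$.
\end{itemize}

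This breaks the endpoint $\rho=2\delta>0$, e.g.\ $(\delta,\rho)=(\frac12,1)$, which the paper uses repeatedly later. There the required high-mode bound is $\lambda_n^{1/2}|f^\epsilon_n|+|g^\epsilon_n|\lesssim(t/\epsilon^2)^{-\delta}|u_n|$, uniformly in $n$. For $\rho>2\delta$ the extra factor $\lambda_n^{(\rho-2\delta)/2}$ can absorb the loss after some work. You do not argue this, however, and the resulting constant degenerates as $\rho\downarrow 2\delta$.

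The missing idea is to use the separation of the roots. For $4\epsilon^2\lambda_n\ge2$ one has $|r_+-r_-|=\sqrt{4\epsilon^2\lambda_n-1}/\epsilon^2\ge\sqrt{2\lambda_n}/\epsilon$. Bounding $|e^{r_\pm t}|=e^{-t/(2\epsilon^2)}$ and dividing by $|r_+-r_-|$ gives $|f^\epsilon_n(t)|\le\sqrt2\,\lambda_n^{-1/2}e^{-t/(2\epsilon^2)}|u_n|$ and $|g^\epsilon_n(t)|\le\sqrt2\,e^{-t/(2\epsilon^2)}|u_n|$. Keep the bound $t\,e^{t\max\Re r_\pm}$ only on the band $\frac12<4\epsilon^2\lambda_n<2$, where $\epsilon^2\lambda_n$ is bounded and your argument works.

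This is exactly the role of the paper's device $|\sin z|\le |z|/\max(1,|z|)$ for $f^{\epsilon,0,1}_n$. For $g^{\epsilon,0,1}_n$ the paper instead uses weighted energy identities for $e^{t/(4\epsilon^2)}f^{\epsilon}_n$. With this repair, the rest of your argument goes through.
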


As stated in Proposition~\ref{propo:sdwe-momentbounds} below, the result~\eqref{eq:lem_expAeps-smoothing1} will be used to analyze the moment bounds of stochastic convolution $\mathcal{Z}^{\epsilon}$ in the moment estimates of $X^{\epsilon}$, with $\delta=\frac{1-\delta_0}{2}, \rho=1$, where $\delta_0\in(0,\beta-\alpha]$ is arbitrarily small. In addition to this, for each application of Lemma~\ref{lem:expAeps-smoothing1}, the appropriate values of the parameters $\delta, \rho$ will be specified.

\begin{lemma}\label{lem:expAeps-smoothing2}
For all $\alpha,\alpha_1, \alpha_2\in[0,1]$ and all $\delta\in[0,\frac{\alpha}{2}]$, there exist $C_{\alpha,\delta},C_{\alpha_1},C_{\alpha_2}\in(0,\infty)$ such that, for all $\epsilon\in(0,1)$ and all $t>0$, one has
\begin{align}
&\|\Pi_u e^{tA_\epsilon}(0,v)\|_{H}\le C_{\alpha,\delta}\epsilon^{\alpha}t^{-\delta}\|v\|_{H^{\alpha-1-2\delta}}, \quad \forall~ v\in H^{\alpha-1-2\delta},\label{eq:lem_expAeps-smoothing2-1}\\
&\|\Pi_u e^{tA_\epsilon}(u,0)\|_{H}\le C_{\alpha_1}t^{-\frac{\alpha_1}{2}}\epsilon^{\alpha_1}\|u\|_H+C_{\alpha_2}t^{-\frac{\alpha_2}{2}}\|u\|_{H^{-\alpha_2}}, \quad \forall~u\in H.\label{eq:lem_expAeps-smoothing2-2}
\end{align}
\end{lemma}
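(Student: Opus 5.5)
Both estimates are scalar statements for the modes $n\in\N$, which we then sum using the definitions of the $H^\gamma$ norms. For a fixed mode, the first component $f_n^\epsilon$ solves the second-order equation~\eqref{eq:f}, $\epsilon^2 f''+f'+\lambda_n f=0$, with $f(0)=\langle u,e_n\rangle_H$ and $f'(0)=\epsilon^{-1}\langle v,e_n\rangle_H$. The characteristic roots are
\[
r_\pm=\frac{-1\pm\sqrt{1-4\epsilon^2\lambda_n}}{2\epsilon^2}.
\]
The plan is to obtain explicit pointwise bounds on the two fundamental solutions:
\[
\phi_n(t)=\frac{e^{r_+t}-e^{r_-t}}{r_+-r_-}\quad(\text{response to }f'(0)=1,\ f(0)=0),\qquad \psi_n(t)=\frac{r_+e^{r_-t}-r_-e^{r_+t}}{r_+-r_-}\quad(\text{response to }f(0)=1,\ f'(0)=0).
\]
With these, $\Pi_u e^{tA_\epsilon}(0,v)=\sum_n \epsilon^{-1}\phi_n(t)\langle v,e_n\rangle_H e_n$ and $\Pi_u e^{tA_\epsilon}(u,0)=\sum_n\psi_n(t)\langle u,e_n\rangle_He_n$. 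Both estimates then reduce to
\[
\epsilon^{-1}|\phi_n(t)|\le C\epsilon^{\alpha}t^{-\delta}\lambda_n^{\frac{\alpha-1}{2}-\delta},\qquad |\psi_n(t)|\le C\bigl(t^{-\alpha_1/2}\epsilon^{\alpha_1}+t^{-\alpha_2/2}\lambda_n^{-\alpha_2/2}\bigr),
\]
uniformly in $n$, $t>0$ and $\epsilon\in(0,1)$.

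We split into the overdamped regime $4\epsilon^2\lambda_n\le 1$ and the underdamped regime $4\epsilon^2\lambda_n>1$.

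\emph{Overdamped regime.} Here $r_+\le -\lambda_n$, since $r_+=-\lambda_n/(\epsilon^2|r_-|)$ and $|r_-|\le\epsilon^{-2}$; also $r_-\le -1/(2\epsilon^2)$, and $r_+-r_-\ge0$. Consider first $\epsilon^{-1}\phi_n$. By the mean value theorem, $0\le\phi_n(t)\le te^{r_+t}\le te^{-\lambda_nt}$. Alternatively, $\phi_n\le e^{r_+t}/(r_+-r_-)$ when $r_+-r_-\ge (2\epsilon^2)^{-1}$, which gives $\phi_n\le 2\epsilon^2 e^{-\lambda_n t}$. In general $\phi_n\le\min(t,2\epsilon^2)e^{-\lambda_nt}$, where the degenerate case near $4\epsilon^2\lambda_n=1$ is handled by the bound $t e^{r_+ t}$ together with $r_+\le -1/(2\epsilon^2)$ there. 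Interpolating, $\phi_n\le t^{1-\theta}(2\epsilon^2)^{\theta}e^{-\lambda_nt}$. Choose $\theta=(1+\alpha)/2$ and use $(\lambda_nt)^{\gamma}e^{-\lambda_nt}\le C$ with $\gamma=\theta-\delta$. This yields $\epsilon^{-1}\phi_n\le C\epsilon^{\alpha}t^{-\delta}\lambda_n^{\delta-\theta}$, and $\delta-\theta=\frac{\alpha-1}{2}-\delta$ is exactly the required power.

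For $\psi_n$, write $\psi_n=e^{r_+t}+r_+\phi_n$ (up to sign conventions). The first term satisfies $e^{r_+t}\le e^{-\lambda_nt}\le C(t\lambda_n)^{-\alpha_2/2}$. For the second term, use $|r_+|\le 2\lambda_n$ together with the bound on $\phi_n$, giving $|r_+|\phi_n\le C\lambda_n\min(t,\epsilon^2)e^{-\lambda_n t}$; since $\lambda_n\epsilon^2\le 1/4$, this is $\le C\min(1,\lambda_n t)e^{-\lambda_n t}$, which is bounded by the $\alpha_2$ term.

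\emph{Underdamped regime.} The roots are complex with real part $-1/(2\epsilon^2)$ and imaginary part $\omega=\sqrt{4\epsilon^2\lambda_n-1}/(2\epsilon^2)$. The energy identity from the proof of Lemma~\ref{lem:expAeps-bound} (at $\alpha=1$, multiplied by the decay factor) gives $|\phi_n|\le C\epsilon\lambda_n^{-1/2}e^{-t/(2\epsilon^2)}$, up to a polynomial correction near the critical case which the exponential absorbs after halving the rate. Since $\lambda_n^{-1}\le 4\epsilon^2$ and $\epsilon^{2\delta}\le C t^{-\delta}e^{t/(4\epsilon^2)}\cdot$const, we obtain
\[
\epsilon^{-1}|\phi_n|\le C\lambda_n^{-1/2}e^{-t/(2\epsilon^2)}\le C\lambda_n^{\frac{\alpha-1}{2}-\delta}\epsilon^{\alpha+2\delta}e^{-t/(2\epsilon^2)}\le C\epsilon^\alpha t^{-\delta}\lambda_n^{\frac{\alpha-1}{2}-\delta}.
\]
Similarly, $|\psi_n|\le Ce^{-t/(4\epsilon^2)}\le C(t/\epsilon^2)^{-\alpha_1/2}$, which gives the $\alpha_1$ term.

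Summing squares over $n$ then yields \eqref{eq:lem_expAeps-smoothing2-1} and \eqref{eq:lem_expAeps-smoothing2-2}. The main obstacle is obtaining bounds on $\phi_n,\psi_n$ that are uniform near the critical damping $4\epsilon^2\lambda_n\approx1$, where $r_+-r_-\to0$. There I would use the mean-value bound $te^{\max\Re r\, t}$ and absorb the factor $t$ with half of the exponential decay $e^{-t/(4\epsilon^2)}$, at the cost of an $\epsilon^2$ factor, which is harmless.
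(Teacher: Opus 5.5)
Your strategy is the paper's. Your $\epsilon^{-1}\phi_n$ and $\psi_n$ are exactly $f_n^{\epsilon,0,1}$ and $f_n^{\epsilon,1,0}$. As in the paper, everything reduces to mode-wise bounds in the overdamped regime $4\epsilon^2\lambda_n\le 1$ (decay $e^{-c\lambda_n t}$) and the underdamped regime $4\epsilon^2\lambda_n>1$ (decay $e^{-ct/\epsilon^2}$). The only real difference is that you derive the overdamped bounds from the explicit roots and the mean value theorem, rather than from the energy identities~\eqref{eq:tildew1}--\eqref{eq:tildew2} behind Lemma~\ref{lem:fngn}. Your interpolation with the branch $\phi_n\le te^{-\lambda_n t}$ is unnecessary: $\phi_n\le C\epsilon^2e^{-\lambda_nt}$ together with $\epsilon^{1-\alpha}\le C\lambda_n^{-\frac{1-\alpha}{2}}$ (which follows from $4\epsilon^2\lambda_n\le 1$) already gives the overdamped part, and this is what the paper does.

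Several exponent manipulations are wrong as written and need fixing.

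(i) In the overdamped case, from $\epsilon^{-1}\phi_n\le C\epsilon^{\alpha}t^{\frac{1-\alpha}{2}}e^{-c\lambda_nt}$ you need $(\lambda_nt)^{\gamma}e^{-c\lambda_nt}\le C$ with $\gamma=1-\theta+\delta=\frac{1-\alpha}{2}+\delta$, not $\gamma=\theta-\delta$. Your identity $\delta-\theta=\frac{\alpha-1}{2}-\delta$ holds only when $\delta=\frac{\alpha}{2}$.

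(ii) Near criticality, the inequality $r_+\le-\frac{1}{2\epsilon^2}$ is reversed: throughout the overdamped regime $|r_+|=\frac{2\lambda_n}{1+\sqrt{1-4\epsilon^2\lambda_n}}\le\frac{1}{2\epsilon^2}$. What you can use instead is that $|r_+|\ge\frac43\lambda_n$ and $\lambda_n\ge\frac{3}{16\epsilon^2}$ when $4\epsilon^2\lambda_n\ge\frac34$. This gives $te^{r_+t}\le C\epsilon^2e^{-\lambda_nt}$, with a constant larger than $2$.

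(iii) Most importantly, in the underdamped case the inequality $\lambda_n^{-1/2}\le C\lambda_n^{\frac{\alpha-1}{2}-\delta}\epsilon^{\alpha+2\delta}$ is false for $\delta>0$: take $\epsilon=(2\lambda_n)^{-1/2}$ and let $n\to\infty$. From $4\epsilon^2\lambda_n>1$ and $\delta\le\frac{\alpha}{2}$ one only gets $\lambda_n^{-1/2}\le C\lambda_n^{\frac{\alpha-1}{2}-\delta}\epsilon^{\alpha-2\delta}$. The leftover factor $\epsilon^{-2\delta}$ must then be absorbed by the exponential via $(t/\epsilon^2)^{\delta}e^{-t/(4\epsilon^2)}\le C$. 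This is exactly the step where the hypothesis $\delta\le\frac{\alpha}{2}$ is used, which your write-up never identifies.

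With these corrections the argument is complete.
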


The proofs of Lemmas~\ref{lem:expAeps-smoothing1} and~\ref{lem:expAeps-smoothing2} are based on a refined analysis of properties of the solutions of the systems~\eqref{eq:fngn}, depending on the values of $\epsilon\in(0,1)$ and $n\in\N$. For arbitrary $n\in\N$ and $\epsilon\in(0,1)$, let $t\ge 0\mapsto (f_n^{\epsilon,0,1}(t),g_n^{\epsilon,0,1}(t))$ and $t\ge 0\mapsto (f_n^{\epsilon,1,0}(t),g_n^{\epsilon,1,0}(t))$ denote the solutions of the linear two-dimensional system of ordinary differential equations~\eqref{eq:fngn} with initial values $(f_n^{\epsilon,0,1}(0),g_n^{\epsilon,0,1}(0))=(0,1)$ and $(f_n^{\epsilon,1,0}(0),g_n^{\epsilon,1,0}(0))=(1,0)$ respectively.

\begin{lemma}\label{lem:fngn}
There exists $C\in(0,\infty)$ such that the following holds.
\begin{itemize}
\item If $1-4\lambda_n\epsilon^2<0$, then for all $t\ge 0$,
   \begin{align}
&     |f_n^{\epsilon,0,1}(t)|\le C\lambda_n^{-1/2}e^{-\frac{t}{4\epsilon^2}},~|g_n^{\epsilon,0,1}(t)|\le Ce^{-\frac{t}{4\epsilon^2}},\label{eq:lem_fngn01-1}\\
&     |f_n^{\epsilon,1,0}(t)|\le Ce^{-\frac{t}{4\epsilon^2}}.\label{eq:lem_fngn10-1}
     \end{align}
\item If $1-4\lambda_n\epsilon^2\ge 0$, then for all $t\ge 0$ one has
  \begin{align}
&	|f_n^{\epsilon,0,1}(t)|\le C\epsilon e^{-\lambda_n t},~|g_n^{\epsilon,0,1}(t)|\le Ce^{-\frac{t}{\epsilon^2}}+C\lambda_n\epsilon^2e^{-\lambda_n t},\label{eq:lem_fngn01-2}\\
&	|f_n^{\epsilon,1,0}(t)|\le Ce^{-t\lambda_n}.\label{eq:lem_fngn10-2}
  \end{align}
\end{itemize}
\end{lemma}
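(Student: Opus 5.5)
The proof is an explicit computation with the two-dimensional system~\eqref{eq:fngn}. As in the proof of Lemma~\ref{lem:expAeps-bound}, $f=f_n^\epsilon$ solves $\epsilon^2 f''+f'+\lambda_n f=0$ and $g=\epsilon f'$, with $f(0)=0$, $f'(0)=1/\epsilon$ for the $(0,1)$ data and $f(0)=1$, $f'(0)=0$ for the $(1,0)$ data. The characteristic roots are
\[
r_\pm=\frac{-1\pm\sqrt{1-4\lambda_n\epsilon^2}}{2\epsilon^2}.
\]
I will write the three functions $f_n^{\epsilon,0,1}$, $g_n^{\epsilon,0,1}=\epsilon (f_n^{\epsilon,0,1})'$ and $f_n^{\epsilon,1,0}$ in closed form in each regime and estimate them directly. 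The only difficulty is to keep the constants uniform near the critical damping $4\lambda_n\epsilon^2=1$, where the formulas contain the small denominator $r_+-r_-$.

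\emph{Underdamped case $4\lambda_n\epsilon^2>1$.} Set $\omega=\sqrt{4\lambda_n\epsilon^2-1}/(2\epsilon^2)$. Then
\begin{align*}
f_n^{\epsilon,0,1}(t)&=\frac{e^{-\frac{t}{2\epsilon^2}}\sin(\omega t)}{\epsilon\omega},\\
g_n^{\epsilon,0,1}(t)&=e^{-\frac{t}{2\epsilon^2}}\Bigl(\cos(\omega t)-\frac{\sin(\omega t)}{2\epsilon^2\omega}\Bigr),\\
f_n^{\epsilon,1,0}(t)&=e^{-\frac{t}{2\epsilon^2}}\Bigl(\cos(\omega t)+\frac{\sin(\omega t)}{2\epsilon^2\omega}\Bigr).
\end{align*}
I split according to $4\lambda_n\epsilon^2\ge 2$ or $4\lambda_n\epsilon^2\in(1,2)$.
\begin{itemize}
\item In the first subcase $4\lambda_n\epsilon^2-1\ge 2\lambda_n\epsilon^2$, hence $\epsilon\omega\ge c\sqrt{\lambda_n}$ and $2\epsilon^2\omega\ge c$. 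Using $|\sin|\le 1$ then gives all three bounds, even with $e^{-t/(2\epsilon^2)}$.
\item In the second subcase I use $|\sin(\omega t)|\le \omega t$. This gives $|f_n^{\epsilon,0,1}(t)|\le \epsilon^{-1}te^{-t/(2\epsilon^2)}$ and $|\sin(\omega t)|/(2\epsilon^2\omega)\le t/(2\epsilon^2)$. The elementary bound $(t/\epsilon^2)e^{-t/(4\epsilon^2)}\le C$ turns these into $C\epsilon e^{-t/(4\epsilon^2)}$ and $Ce^{-t/(4\epsilon^2)}$. Finally $\epsilon\le C\lambda_n^{-1/2}$ in this range, which gives~\eqref{eq:lem_fngn01-1}--\eqref{eq:lem_fngn10-1}.
\end{itemize}

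\emph{Overdamped or critical case $4\lambda_n\epsilon^2\le 1$.} Set $s=\sqrt{1-4\lambda_n\epsilon^2}\in[0,1]$. Two facts about the roots are used:
\[
r_+=\frac{-2\lambda_n}{1+s}\in[-2\lambda_n,-\lambda_n],\qquad r_-=-\frac{1+s}{2\epsilon^2}\le -\frac{1}{2\epsilon^2}.
\]
The solutions are
\begin{align*}
f_n^{\epsilon,0,1}(t)&=\frac{\epsilon\,(e^{r_+t}-e^{r_-t})}{s},\\
g_n^{\epsilon,0,1}(t)&=\frac{\epsilon^2\,(r_+e^{r_+t}-r_-e^{r_-t})}{s},\\
f_n^{\epsilon,1,0}(t)&=\frac{r_+e^{r_-t}-r_-e^{r_+t}}{r_+-r_-}.
\end{align*}
I split according to $s\ge\frac12$ or $s<\frac12$.
\begin{itemize}
\item If $s\ge \tfrac12$, the denominators are harmless. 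One gets $|f_n^{\epsilon,0,1}|\le 2\epsilon e^{-\lambda_n t}$, $\epsilon^2|r_+|\le 2\lambda_n\epsilon^2$, and $\epsilon^2|r_-|\le 1$. For $f_n^{\epsilon,1,0}$, both $|r_-|/(r_+-r_-)$ and $|r_+|/(r_+-r_-)$ are bounded.
\item If $s<\tfrac12$, then $\lambda_n\epsilon^2\in(3/16,1/4]$, so $\lambda_n\simeq\epsilon^{-2}$ and all the exponentials $e^{r_\pm t}$ are at most $e^{-ct/\epsilon^2}$. I write each difference quotient as a derivative via the mean value theorem, e.g. $(e^{r_+t}-e^{r_-t})/(r_+-r_-)=te^{\xi t}$ with $\xi\le r_+\le-\lambda_n$. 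Then $t/\epsilon^2$ is absorbed into half of the exponential decay, and $e^{-ct/\epsilon^2}$ is compared with $\lambda_n\epsilon^2 e^{-\lambda_n t}$ using $\lambda_n\simeq\epsilon^{-2}$. The same treatment applies to $f_n^{\epsilon,1,0}=e^{r_+t}-r_+\,\frac{e^{r_+t}-e^{r_-t}}{r_+-r_-}$.
\end{itemize}

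\emph{Main obstacle.} The delicate step is the exact exponents in~\eqref{eq:lem_fngn01-2}--\eqref{eq:lem_fngn10-2}. Near critical damping the mean value argument naturally produces $e^{-\lambda_n t/2}$ rather than $e^{-\lambda_n t}$, and it is here that $\lambda_n\epsilon^2\simeq 1$ must be exploited. For $g_n^{\epsilon,0,1}$ in the strongly overdamped regime $\lambda_n\epsilon^2\ll 1$, the fast mode only decays like $e^{-(1+s)t/(2\epsilon^2)}$, not $e^{-t/\epsilon^2}$.

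I will first try to absorb the discrepancy into the second term $C\lambda_n\epsilon^2e^{-\lambda_n t}$. Should that fail, I would state the bound with $e^{-t/(2\epsilon^2)}$ and $e^{-\lambda_n t/2}$, with an $\epsilon$-independent constant. That weaker version suffices for all subsequent uses in Lemmas~\ref{lem:expAeps-smoothing1} and~\ref{lem:expAeps-smoothing2}, since only decay rates of the form $e^{-ct/\epsilon^2}$ and $e^{-c\lambda_n t}$ matter there.
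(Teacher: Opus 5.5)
Your underdamped case is complete; for $f_n^{\epsilon,0,1}$ and $f_n^{\epsilon,1,0}$ it is essentially the paper's computation with $|\sin z|\le\min(1,|z|)$. The gap is in the regime $4\lambda_n\epsilon^2\le1$. As written, the proposal does not reach the exponents in \eqref{eq:lem_fngn01-2}--\eqref{eq:lem_fngn10-2}, and the fallback proves a weaker lemma. That fallback is not entirely free downstream either: the factor $e^{-t/(2\epsilon^2)}$ in \eqref{eq:lem_expAeps-smoothing1} would degrade to $e^{-t/(4\epsilon^2)}$.

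Two steps fall short. First, in your bullet $s\ge\frac12$, the estimate $\epsilon^2|r_-|\le1$ only controls the \emph{coefficient} of the fast mode of $g_n^{\epsilon,0,1}$. The mode itself is $e^{r_-t}=e^{-(1+s)t/(2\epsilon^2)}$, e.g.\ $e^{-3t/(4\epsilon^2)}$ at $s=\frac12$, and this is not bounded by $Ce^{-t/\epsilon^2}$. Second, for $s<\frac12$, spending half of the decay $e^{\xi t}$ on the factor $t/\epsilon^2$ leaves only $e^{-\lambda_nt/2}$.

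Both points are settled by the absorption you mention but do not carry out. Since $r_++r_-=-\epsilon^{-2}$ and $|r_+|\le2\lambda_n\le\frac{1}{2\epsilon^2}$, using $e^x-1\le xe^x$ and $\lambda_n\le\frac{1}{4\epsilon^2}$ one gets
\[
0\le e^{r_-t}-e^{-\frac{t}{\epsilon^2}}=e^{-\frac{t}{\epsilon^2}}\bigl(e^{|r_+|t}-1\bigr)\le |r_+|\,t\,e^{-\frac{t}{2\epsilon^2}}\le 2\lambda_n\epsilon^2\,\frac{t}{\epsilon^2}e^{-\frac{t}{4\epsilon^2}}\,e^{-\frac{t}{4\epsilon^2}}\le C\lambda_n\epsilon^2e^{-\lambda_nt},
\]
so the fast mode has exactly the form allowed in \eqref{eq:lem_fngn01-2}.

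Near criticality, use $r_+=-\frac{2\lambda_n}{1+s}\le-\frac43\lambda_n$ for $s<\frac12$. Every mean-value point $\xi\in(r_-,r_+)$ then satisfies $e^{\xi t}\le e^{-\lambda_nt}e^{-\lambda_nt/3}$. Since $\lambda_n\epsilon^2>\frac{3}{16}$, one has $e^{-\lambda_nt/3}\le e^{-t/(16\epsilon^2)}$, which absorbs the factors $t/\epsilon^2$ and $|r_+|t$ while the full $e^{-\lambda_nt}$ is kept. For instance, $g_n^{\epsilon,0,1}(t)=(1+\xi t)e^{\xi t}$ with $|\xi|\le\epsilon^{-2}$ gives $|g_n^{\epsilon,0,1}(t)|\le Ce^{-\lambda_nt}\le C\lambda_n\epsilon^2e^{-\lambda_nt}$.

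With these two observations your explicit-root route proves the lemma as stated, at the price of the case split in $s$. The paper avoids roots and small denominators altogether. For $\tilde w=e^{\lambda_nt}f_n^\epsilon$, the energy identities \eqref{eq:tildew1}--\eqref{eq:tildew2} give $|\tilde w^{0,1}_{n,\epsilon}|\le4\epsilon$ and $|\tilde w^{1,0}_{n,\epsilon}|\le\sqrt2$ uniformly up to critical damping. The bound on $g_n^{\epsilon,0,1}$ then comes from the Duhamel formula \eqref{eq:exactsolution-g}. Its free part is exactly $e^{-t/\epsilon^2}$, and its forcing part is at most $C\lambda_n\epsilon^2e^{-\lambda_nt}$ since $\epsilon^{-2}-\lambda_n\ge\frac{3}{4\epsilon^2}$ --- this is the absorption above, done automatically.
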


\subsubsection{Proof of Lemma~\ref{lem:fngn}}

A few supplementary ingredients are required for the proof of Lemma~\ref{lem:fngn}.

First, one can provide the expressions of $f_n^\epsilon(t)$, for all $n\in\N$, all $\epsilon\in(0,1)$, and for all $t\ge 0$, for any initial value $(f_n^\epsilon(0),g_n^\epsilon(0))$. The expressions depend on the sign of $1-4\lambda_n\epsilon^2$. In fact, using~\eqref{eq:f} it is straightforward to check that the mapping $t\ge 0\mapsto \tilde{f}_n^\epsilon(t)=e^{\frac{t}{2\epsilon^2}}f_n^\epsilon(t)$ is solution of the second order linear differential equation
\[
(\tilde{f}^\epsilon_n)''(t)+\frac{4\epsilon^2\lambda_n-1}{4\epsilon^4}\tilde{f}^\epsilon_n(t)=0,
\]
with initial values $\tilde{f}^\epsilon_n(0)=f_n^\epsilon(0)$ and $(\tilde{f}_n^\epsilon)'(0)=(f_n^\epsilon)'(0)+\frac{1}{2\epsilon^2}f_n^\epsilon(0)=\frac{2\epsilon g_n^\epsilon(0)+f_n^\epsilon(0)}{2\epsilon^2}$. Solving the second order linear differential equation above yields the following expressions for $f_n^\epsilon(t)=e^{-\frac{t}{2\epsilon^2}}\tilde{f}_n^\epsilon(t)$ and for $g_n^\epsilon(t)=\epsilon \bigl(f_n^\epsilon\bigr)'(t)$, see~\eqref{eq:fngn}.

\begin{itemize}
\item If $1-4\lambda_n\epsilon^2<0$, then for all $t\ge 0$ one has
\begin{equation}\label{eq:fn1}
\left\{
\begin{aligned}
f^\epsilon_n(t)&=e^{-\frac{t}{2\epsilon^2}}\Big[f^\epsilon_n(0)\cos\bigl(\frac{\sqrt{4\lambda_n\epsilon^2-1}}{2\epsilon^2}t\bigr)+ \frac{2\epsilon g^\epsilon_n(0)+f^\epsilon_n(0)}{\sqrt{4\lambda_n\epsilon^2-1}}\sin\bigl(\frac{\sqrt{4\lambda_n\epsilon^2-1}}{2\epsilon^2}t\big)\Bigr],\\
g^\epsilon_n(t)&=e^{-\frac{t}{2\epsilon^2}}\Big[g^\epsilon_n(0)\cos\bigl(\frac{\sqrt{4\lambda_n\epsilon^2-1}}{2\epsilon^2}t\bigr)-\frac{2\lambda_n\epsilon f^\epsilon_n(0)+g^\epsilon_n(0)}{\sqrt{4\lambda_n\epsilon^2-1}}\sin\bigl(\frac{\sqrt{4\lambda_n\epsilon^2-1}}{2\epsilon^2}t\big)\Big].
\end{aligned}
\right.
\end{equation}

\item If $1-4\lambda_n\epsilon^2=0$, then for all $t\ge 0$ one has
\begin{equation}\label{eq:fn2}
\left\{
\begin{aligned}
f^\epsilon_n(t)&=e^{-\frac{t}{2\epsilon^2}}\Big[f^\epsilon_n(0)\Big(1+\frac{t}{2\epsilon^2}\Big)+\frac{t}{\epsilon}g^\epsilon_n(0)\Big],\\
g^\epsilon_n(t)&=e^{-\frac{t}{2\epsilon^2}}\Big[g^\epsilon_n(0)\Big(1-\frac{t}{2\epsilon^2}\Big)-\frac{t}{4\epsilon^3}f^\epsilon_n(0)\Big].
\end{aligned}
\right.
\end{equation}

\item If $1-4\lambda_n\epsilon^2>0$, then for all $t\ge 0$ one has
\begin{equation}\label{eq:fn3}
\left\{
\begin{aligned}
f^\epsilon_n(t)&=e^{-\frac{t}{2\epsilon^2}}\Big[f^\epsilon_n(0)\cosh\bigl(\frac{\sqrt{1-4\lambda_n\epsilon^2}}{2\epsilon^2}t\bigr)+ \frac{2\epsilon g^\epsilon_n(0)+f^\epsilon_n(0)}{\sqrt{1-4\lambda_n\epsilon^2}}\sinh\bigl(\frac{\sqrt{1-4\lambda_n\epsilon^2}}{2\epsilon^2}t\big)\Bigr],\\
g^\epsilon_n(t)&=e^{-\frac{t}{2\epsilon^2}}\Big[g^\epsilon_n(0)\cosh\bigl(\frac{\sqrt{1-4\lambda_n\epsilon^2}}{2\epsilon^2}t\bigr)-\frac{2\lambda_n\epsilon f^\epsilon_n(0)+ g^\epsilon_n(0)}{\sqrt{1-4\lambda_n\epsilon^2}}\sinh\bigl(\frac{\sqrt{1-4\lambda_n\epsilon^2}}{2\epsilon^2}t\big)\Big].
\end{aligned}
\right.
\end{equation}
\end{itemize}

To prove some inequalities from Lemma~\ref{lem:fngn}, the following expression of $g_n^\epsilon(t)$ is employed: for all $n\in\N$, $\epsilon\in(0,1)$ and $t\ge 0$, one has
\begin{equation}\label{eq:exactsolution-g}
	g^\epsilon_n(t)=e^{-\frac{t}{\epsilon^2}}g^\epsilon_n(0)-\int_0^t\frac{\lambda_n}{\epsilon}e^{-\frac{t-s}{\epsilon^2}}f^\epsilon_n(s)\,ds.
\end{equation}
The proof of the identity~\eqref{eq:exactsolution-g} is straightforward: from~\eqref{eq:fngn} one obtains
\[
\frac{d}{dt}\Bigl(\epsilon e^{\frac{t}{\epsilon^ 2}}g_n^\epsilon(t)\Bigr)=\frac{d}{dt}\Bigl(\epsilon^2 e^{\frac{t}{\epsilon^ 2}}(f_n^\epsilon)'(t)\Bigr)=-\lambda_n e^{\frac{t}{\epsilon^2}}f_n^\epsilon(t),
\]
which yields by integration for all $t\ge 0$
\[
\epsilon e^{\frac{t}{\epsilon^2}}g^\epsilon_n(t)=\epsilon g^\epsilon_n(0)-\int_0^t \lambda_n e^{\frac{s}{\epsilon^2}}f^\epsilon_n(s)\,ds,
\]
and the identity~\eqref{eq:exactsolution-g} then follows.

Finally, for all $n\in\N$ and $\epsilon\in(0,1)$, let us define the auxiliary parameters
\begin{align*}
&\theta_{n,\epsilon}=\min\Bigl(\frac{1}{4\epsilon^2},\lambda_n\Bigr)=
\begin{cases}
\frac{1}{4\epsilon^2}, \text{if}~1-4\lambda_n\epsilon^2\le 0\\
\lambda_n, \text{if}~1-4\lambda_n\epsilon^2\ge 0
\end{cases},\\
&\Theta_{n,\epsilon}=\lambda_n-\theta_{n,\epsilon}+\epsilon^2\theta_{n,\epsilon}^2.
\end{align*}
In addition, for all $t\ge 0$ set
\[
\tilde{w}_{n,\epsilon}(t)=e^{\theta_{n,\epsilon}t}f^\epsilon_n(t).
\]
By elementary computations, using~\eqref{eq:fngn} it is straightforward to check that the auxiliary mapping $\tilde{w}_{n,\epsilon}$ is solution of the linear second-order ordinary differential equation
\begin{equation}\label{eq:tildew}
\tilde{w}_{n,\epsilon}''(t)+\big(\frac{1}{\epsilon^2}-2\theta_{n,\epsilon}\big)\tilde{w}_{n,\epsilon}'(t)+\frac{1}{\epsilon^2}\Theta_{n,\epsilon}\tilde{w}_{n,\epsilon}(t)=0.
\end{equation}
with initial values
\[
\tilde{w}_{n,\epsilon}(0)=f^\epsilon_n(0),~\tilde{w}_{n,\epsilon}'(0)=\theta_{n,\epsilon}f^\epsilon_n(0)+(f_n^\epsilon)'(0)=\theta_{n,\epsilon}f^\epsilon_n(0)+\frac{1}{\epsilon}g^\epsilon_n(0).
\]
Two useful identities are obtained using~\eqref{eq:tildew}. First, one has
\begin{align*}
\frac12\frac{d}{dt}|\tilde{w}_{n,\epsilon}'(t)|^2&+\big(\frac{1}{\epsilon^2}-2\theta_{n,\epsilon}\big)|\tilde{w}_{n,\epsilon}'(t)|^2+\frac{\Theta_{n,\epsilon}}{2\epsilon^2}\frac{d}{dt}|\tilde{w}_{n,\epsilon}(t)|^2\\
&=\Bigl\langle \tilde{w}_{n,\epsilon}''(t)+\big(\frac{1}{\epsilon^2}-2\theta_{n,\epsilon}\big)\tilde{w}_{n,\epsilon}'(t)+\frac{\Theta_{n,\epsilon}}{\epsilon^2}\tilde{w}_{n,\epsilon}(t),\tilde{w}_{n,\epsilon}'(t)\Bigr\rangle\\
&=0.
\end{align*}
One then obtains the following result by integration and using the initial values for $\tilde{w}_{n,\epsilon}(0)$ and $\tilde{w}_{n,\epsilon}'(0)$: for all $t\ge 0$ one has
\begin{align} \label{eq:tildew1}
\epsilon^2|\tilde{w}_{n,\epsilon}'(t)|^2+2(1-2\epsilon^2\theta_{n,\epsilon})\int_{0}^{t}|\tilde{w}_{n,\epsilon}'(s)|^2\,ds+\Theta_{n,\epsilon}|\tilde{w}_{n,\epsilon}(t)|^2\nonumber&=\epsilon^2|\tilde{w}_{n,\epsilon}'(0)|^2+\Theta_{n,\epsilon}|\tilde{w}_{n,\epsilon}(0)|^2 \nonumber\\
&=|\epsilon\theta_{n,\epsilon}f^\epsilon_n(0)+g^\epsilon_n(0)|^2+\Theta_{n,\epsilon}|f^\epsilon_n(0)|^2.
\end{align}
Second, one has
\begin{align*}
\frac{1}{2}\frac{d}{dt}\bigl|\epsilon^2\tilde{w}_{n,\epsilon}'(t)+(1-2\epsilon^2\theta_{n,\epsilon})\tilde{w}_{n,\epsilon}(t)\bigr|^2&=\Bigl\langle \epsilon^2\tilde{w}_{n,\epsilon}''(t)+(1-2\epsilon^2\theta_{n,\epsilon})\tilde{w}_{n,\epsilon}'(t), \epsilon^2\tilde{w}_{n,\epsilon}'(t)+(1-2\epsilon^2\theta_{n,\epsilon})\tilde{w}_{n,\epsilon}(t)  \Bigr\rangle\\
&=-\Bigl\langle\Theta_{n,\epsilon}\tilde{w}_{n,\epsilon}(t), \epsilon^2\tilde{w}_{n,\epsilon}'(t)+(1-2\epsilon^2\theta_{n,\epsilon})\tilde{w}_{n,\epsilon}(t)\Bigr\rangle\\
&=-\frac{\epsilon^2}{2}\Theta_{n,\epsilon}\frac{d}{dt}|\tilde{w}_{n,\epsilon}(t)|^2-(1-2\epsilon^2\theta_{n,\epsilon})\Theta_{n,\epsilon}|\tilde{w}_{n,\epsilon}(t)|^2.
\end{align*}
One then obtains the following result by integration and using the initial values for $\tilde{w}_{n,\epsilon}(0)$ and $\tilde{w}_{n,\epsilon}'(0)$: for all $t\ge 0$ one has
\begin{align}\label{eq:tildew2}
	&\bigl|\epsilon^2\tilde{w}_{n,\epsilon}'(t)+(1-2\epsilon^2\theta_{n,\epsilon})\tilde{w}_{n,\epsilon}(t)\bigr|^2+\epsilon^2\Theta_{n,\epsilon}|\tilde{w}_{n,\epsilon}(t)|^2+2(1-2\epsilon^2\theta_{n,\epsilon})\Theta_{n,\epsilon}\int_0^t|\tilde{w}_{n,\epsilon}(s)|^2\,ds \nonumber\\
   &=\bigl|\epsilon^2\tilde{w}_{n,\epsilon}'(0)+(1-2\epsilon^2\theta_{n,\epsilon})\tilde{w}_{n,\epsilon}(0)\bigr|^2+\epsilon^2\Theta_{n,\epsilon}|\tilde{w}_{n,\epsilon}(0)|^2\nonumber\\
   &=\bigl|\epsilon g^\epsilon_n(0)+(1-\epsilon^2\theta_{n,\epsilon})f^\epsilon_n(0)\bigr|^2+\epsilon^2(\lambda_n-\theta_{n,\epsilon}+\epsilon^2\theta_{n,\epsilon}^2)|f^\epsilon_n(0)|^2.
\end{align}

Set $\tilde{w}_{n,\epsilon}^{0,1}(t)=e^{\theta_{n,\epsilon}t}f_n^{\epsilon,0,1}(t)$ and $\tilde{w}_{n,\epsilon}^{1,0}(t)=e^{\theta_{n,\epsilon}t}f_n^{\epsilon,1,0}(t)$ for all $t\ge 0$.

The proof of Lemma~\ref{lem:fngn} can now be performed using the auxiliary tools introduced above.

\begin{proof}[Proof of Lemma~\ref{lem:fngn}]
Let $n\in\N$ and $\epsilon\in(0,1)$. First, let us assume that $1-4\lambda_n\epsilon^2<0$.
\begin{itemize}
\item Proof of the inequality~\eqref{eq:lem_fngn01-1}. 
It is straightforward to check that for all $z\in\R$ one has
\[
|\sin(z)|\le \frac{|z|}{\max(1,|z|)},
\]
by combining the upper bounds $|\sin(z)|\le 1$ and $|\sin(z)|\le |z|$. Using the expression~\eqref{eq:fn1} of $f_n^{\epsilon,0,1}(t)$ and the inequality above with $z=\frac{\sqrt{4\lambda_n\epsilon^2-1}}{2\epsilon^2}t$, one obtains
\begin{align*}
 	|f_n^{\epsilon,0,1}(t)|^2&=\Bigl|e^{-\frac{t}{2\epsilon^2}}\frac{2\epsilon}{\sqrt{4\lambda_n\epsilon^2-1}}\sin\big(\frac{\sqrt{4\lambda_n\epsilon^2-1}}{2\epsilon^2}t\big)\Bigr|^2\\
 	&\le e^{-\frac{t}{\epsilon^2}}\frac{t^2}{\epsilon^2}\frac{1}{\max(1,\frac{(4\lambda_n\epsilon^2-1)t^2}{4\epsilon^4})}.
\end{align*}
Then writing
\[
\frac{t^2}{\epsilon^2}=\frac{1}{4\lambda_n}\frac{4\lambda_n\epsilon^2 t^2}{\epsilon^4}=\frac{1}{4\lambda_n}\frac{(4\lambda_n\epsilon^2-1)t^2}{\epsilon^4}+\frac{1}{4\lambda_n}\frac{t^2}{\epsilon^4},
\]
and the upper bound $\underset{x\ge 0}\sup~(1+x^2)e^{-\frac{x}{2}}<\infty$, one obtains the inequality
\[
|f_n^{\epsilon,0,1}(t)|^2\le \frac{1}{4\lambda_n}e^{-\frac{t}{\epsilon^2}}\big(1+\frac{t^2}{\epsilon^4}\big)\le \frac{C}{\lambda_n}e^{-\frac{t}{2\epsilon^2}}.
\]
This concludes the proof of the first inequality in~\eqref{eq:lem_fngn01-1}. To prove the second inequality, note that due to the assumption $1-4\lambda_n\epsilon^2<0$ one has
\[
\theta_{n,\epsilon}=\frac{1}{4\epsilon^2}>0,\quad \Theta_{n,\epsilon}=\lambda_n-\theta_{n,\epsilon}+\epsilon^2\theta_{n,\epsilon}^2=\lambda_n-\frac{3}{16\epsilon^2}>\frac{\lambda_n}{4},\quad 1-2\epsilon^2\theta_{n,\epsilon}=\frac12.
\]
As a result, using the inequalities~\eqref{eq:tildew1} and~\eqref{eq:tildew2}, with the initial values $f_n^{\epsilon,0,1}(0)=0$ and $g_n^{\epsilon,0,1}(0)=1$ one obtains for all $t\ge 0$
\begin{align*}
&\epsilon|(\tilde{w}_{n,\epsilon}^{0,1})'(t)|\le 1\\
&|\epsilon^2(\tilde{w}_{n,\epsilon}^{0,1})'(t)+\frac{1}{2}\tilde{w}_{n,\epsilon}^{0,1}(t)\big|\le \epsilon,
\end{align*}
and using the triangle inequality then gives
\[
|\tilde{w}_{n,\epsilon}^{0,1}(t)|\le 2\Bigl(\big|\epsilon^2(\tilde{w}_{n,\epsilon}^{0,1})'(t)+\frac{1}{2}\tilde{w}_{n,\epsilon}^{0,1}(t)\big|+\epsilon^2|(\tilde{w}_{n,\epsilon}^{0,1})'(t)|\Bigr)\le 4\epsilon.
\]
Finally, using the identity
\[
g_n^{\epsilon,0,1}(t)=\epsilon (f_n^{\epsilon,0,1})'(t)=\epsilon e^{-\theta_{n,\epsilon}t}\bigl((\tilde{w}_{n,\epsilon}^{0,1})'(t)-\theta_{n,\epsilon}\tilde{w}_{n,\epsilon}^{0,1}(t)\bigr),
\]
and the value $\theta_{n,\epsilon}=\frac{1}{4\epsilon^2}$, one obtains for all $t\ge 0$
\[
|g^{\epsilon,0,1}_n(t)|\le \epsilon e^{-\frac{t}{4\epsilon^2}}|(\tilde{w}_{n,\epsilon}^{0,1})'(t)|+\frac{1}{4\epsilon}e^{-\frac{t}{4\epsilon^2}}|\tilde{w}_{n,\epsilon}^{0,1}(t)|\le 2e^{-\frac{t}{4\epsilon^2}}.
\]
This concludes the proof of the second inequality in~\eqref{eq:lem_fngn01-1}. The proof of~\eqref{eq:lem_fngn01-1} is thus completed.

\item Proof of the inequality~\eqref{eq:lem_fngn10-1}. Using the expression~\eqref{eq:fn1} of $f_n^{\epsilon,1,0}(t)$ and the inequalities $|\cos(z)|\le 1$ and $|\sin(z)|\le |z|$ for all $z\in\R$, one has
\begin{align*}
	|f_n^{\epsilon,1,0}(t)|&= e^{-\frac{t}{2\epsilon^2}}\Big|\cos(\frac{\sqrt{4\lambda_n\epsilon^2-1}}{2\epsilon^2}t)+\frac{1}{\sqrt{4\lambda_n\epsilon^2-1}}\sin\big(\frac{\sqrt{4\lambda_n\epsilon^2-1}}{2\epsilon^2}t\big)\Big|\\
	&\le e^{-\frac{t}{2\epsilon^2}}\big(1+\frac{t}{2\epsilon^2}\big).
\end{align*}
Using then the upper bound $\underset{x\ge 0}\sup~(1+x)e^{-\frac{x}{2}}<\infty$ then gives
\[
|f_n^{\epsilon,1,0}(t)|\le Ce^{-\frac{t}{4\epsilon^2}}.
\]
The proof of~\eqref{eq:lem_fngn10-1} is thus completed.
\end{itemize}
Let us now assume that $1-4\lambda_n\epsilon^2\ge 0$.
\begin{itemize}
\item Proof of the inequality~\eqref{eq:lem_fngn01-2}. Note that due to the assumption $1-4\lambda_n\epsilon^2\ge 0$ one has
\[
\theta_{n,\epsilon}=\lambda_n>0,\quad \Theta_{n,\epsilon}=\lambda_n-\theta_{n,\epsilon}+\epsilon^2\theta_{n,\epsilon}^2=\lambda_n^2\epsilon^2>0,~\quad 1-2\epsilon^2\theta_{n,\epsilon}\ge 2\lambda_n\epsilon^2.
\]
As a result, using the inequalities~\eqref{eq:tildew1} and~\eqref{eq:tildew2} with the initial values $f_n^{\epsilon,0,1}(0)=0$ and $g_n^{\epsilon,0,1}(0)=1$, one obtains for all $t\ge 0$
\begin{align*}
&\epsilon|(\tilde{w}_{n,\epsilon}^{0,1})'(t)|\le 1\\
&\bigl|\epsilon^2(\tilde{w}_{n,\epsilon}^{0,1})'(t)+(1-2\epsilon^2\theta_{n,\epsilon})\tilde{w}_{n,\epsilon}^{0,1}(t)\bigr|\le \epsilon.
\end{align*}
Moreover, using the assumption $1-4\lambda_n\epsilon^2\ge 0$, one has the lower bound
\[
1-2\lambda_n\epsilon^2\ge \frac12.
\]
Therefore, using the triangle inequality and the two inequalities above, for all $t\ge 0$ one obtains
\begin{align*}
|\tilde{w}_{n,\epsilon}^{0,1}(t)|&\le 2(1-2\lambda_n\epsilon^2)|\tilde{w}_{n,\epsilon}^{0,1}(t)|\\
&\le 2\Bigl(\big|\epsilon^2(\tilde{w}_{n,\epsilon}^{0,1})'(t)+(1-2\lambda_n\epsilon^2)\tilde{w}_{n,\epsilon}^{0,1}(t)\big|+\epsilon^2|(\tilde{w}_{n,\epsilon}^{0,1})'(t)|\Bigr)\\
&\le 4\epsilon.
\end{align*}
Thus for all $t\ge 0$ one has
\[
|f_n^{\epsilon,0,1}(t)|=e^{-\theta_{n,\epsilon}t}|\tilde{w}_{n,\epsilon}^{0,1}(t)|\le 4\epsilon e^{-\lambda_n t}.
\]
This provides the first inequality from~\eqref{eq:lem_fngn01-2}. To prove the second inequality, owing to the equality~\eqref{eq:exactsolution-g}, recalling that $g_n^{\epsilon,0,1}(0)=1$ and using the inequality proved above for $|f_n^{\epsilon,0,1}(t)|$, one has for all $t\ge 0$
\begin{align*}
	|g_n^{\epsilon,0,1}(t)|&\le e^{-\frac{t}{\epsilon^2}}+\int_0^t\frac{\lambda_n}{\epsilon}e^{-\frac{t-s}{\epsilon^2}}|f_n^{\epsilon,0,1}(s)|\,ds\\
	&\le e^{-\frac{t}{\epsilon^2}}+4\lambda_n\int_0^te^{-\frac{t-s}{\epsilon^2}}e^{-\lambda_ns}\,ds\\
	&\le e^{-\frac{t}{\epsilon^2}}+\frac{4\lambda_n\epsilon^2}{1-\lambda_n\epsilon^2}\big(e^{-\lambda_nt}-e^{-\frac{t}{\epsilon^2}}\big)\\
	&\le e^{-\frac{t}{\epsilon^2}}+C\lambda_n\epsilon^2e^{-\lambda_nt},
\end{align*}
using the fact that $1-\lambda_n\epsilon^2\ge\frac34$ owing to the assumption $1-4\lambda_n\epsilon^2\ge 0$. This concludes the proof of the second inequality in~\eqref{eq:lem_fngn01-2}. The proof of~\eqref{eq:lem_fngn01-2} is thus completed.
\item Proof of the inequality~\eqref{eq:lem_fngn10-2}. Using the inequality~\eqref{eq:tildew1}, with the initial values $f_n^{\epsilon,1,0}(0)=1$ and $g_n^{\epsilon,1,0}(0)=0$, one obtains for all $t\ge 0$
\[
\Theta_{n,\epsilon}|\tilde{w}_{n,\epsilon}^{1,0}(t)|^2\le (\epsilon \theta_{n,\epsilon})^2+\Theta_{n,\epsilon}.
\]
Recalling that $\theta_{n,\epsilon}=\lambda_n$ and $\Theta_{n,\epsilon}=\lambda^2\epsilon^2$ owing to the assumption $1-4\lambda_n\epsilon^2\ge 0$, one has for all $t\ge 0$
\[
|\tilde{w}_{n,\epsilon}^{1,0}(t)|^2\le 2.
\]
Therefore for all $t\ge 0$ one has
\[
|f_n^{\epsilon,1,0}(t)|=e^{-\theta_{n,\epsilon}t}|\tilde{w}_{n,\epsilon}^{1,0}(t)|\leq \sqrt{2}e^{-t\lambda_n}.
\]
The proof of~\eqref{eq:lem_fngn10-2} is thus completed.
\end{itemize}
The proof of Lemma~\ref{lem:fngn} is now completed.
\end{proof}

\subsubsection{Proofs of Lemma~\ref{lem:expAeps-smoothing1} and Lemma~\ref{lem:expAeps-smoothing2}}

\begin{proof}[Proof of Lemma~\ref{lem:expAeps-smoothing1}]
Let $u\in H^{\alpha+\rho-2\delta-1}$, then one has the following expression for $e^{tA_\epsilon}(0,u)$: for all $t\ge 0$,
\[
e^{tA_\epsilon}(0,u)=\sum_{n\in\N}\langle u,e_n\rangle f_n^{\epsilon,0,1}(t)(e_n,0)+\sum_{n\in\N}\langle u,e_n\rangle g_n^{\epsilon,0,1}(t)(0,e_n),
\]
using~\eqref{eq:expAeps} and the notation introduced above the statement of Lemma~\ref{lem:fngn}. Alternatively, this may be written as
\[
\Pi_u\Bigl(e^{tA_\epsilon}(0,u)\Bigr)=\sum_{n\in\N}\langle u,e_n\rangle f_n^{\epsilon,0,1}(t)e_n,\quad \Pi_v\Bigl(e^{tA_\epsilon}(0,u)\Bigr)=\sum_{n\in\N}\langle u,e_n\rangle g_n^{\epsilon,0,1}(t)e_n,
\]
and as a result one has
\begin{align*}
\big\|e^{tA_\epsilon}\big(0,u\big)\big\|_{\HH^\alpha}^2
&=\big\|\Pi_ue^{tA_\epsilon}\big(0,u\big)\big\|^2_{H^\alpha}+\big\|\Pi_ve^{tA_\epsilon}\big(0,u\big)\big\|_{H^{\alpha-1}}^2\\
&=\sum_{n\in\N}\lambda_n^\alpha \langle u,e_n\rangle^2 |f_n^{\epsilon,0,1}(t)|^2+\sum_{n\in\N}\lambda_n^{\alpha-1}\langle u,e_n\rangle^2 |g_n^{\epsilon,0,1}(t)|^2\\
&=\sum_{n\in\N}\lambda_n^{\alpha+\rho-2\delta-1}\langle u,e_n\rangle^2 \lambda_n^{1+2\delta-\rho}|f_n^{\epsilon,0,1}(t)|^2+\sum_{n\in\N}\lambda_n^{\alpha+\rho-2\delta-1}\langle u,e_n\rangle^2 \lambda_n^{2\delta-\rho}|g_n^{\epsilon,0,1}(t)|^2\\
&\le \|u\|_{H^{\alpha-2\delta+\rho-1}}^2 \underset{n\in\N}\sup~\Bigl(\lambda_n^{1+2\delta-\rho}|f_n^{\epsilon,0,1}(t)|^2+\lambda_n^{2\delta-\rho}|g_n^{\epsilon,0,1}(t)|^2\Bigr).
\end{align*}

To proceed, one needs to treat separately the cases $1-4\lambda_n\epsilon^2<0$ and $1-4\lambda_n\epsilon^2\ge 0$: for all $n\in\N$, $\epsilon\in(0,1)$ and $t\in(0,\infty)$, using the inequalities~\eqref{eq:lem_fngn01-1} and~\eqref{eq:lem_fngn01-2} from Lemma~\ref{lem:fngn}, one obtains
\begin{align*}
\lambda_n^{1+2\delta-\rho}|f_n^{\epsilon,0,1}(t)|^2+\lambda_n^{2\delta-\rho}|g_n^{\epsilon,0,1}(t)|^2&=\bigl(\lambda_n^{1+2\delta-\rho}|f_n^{\epsilon,0,1}(t)|^2+\lambda_n^{2\delta-\rho}|g_n^{\epsilon,0,1}(t)|^2\bigr)\mathds{1}_{1-4\lambda_n\epsilon^2<0}\\
&+\bigl(\lambda_n^{1+2\delta-\rho}|f_n^{\epsilon,0,1}(t)|^2+\lambda_n^{2\delta-\rho}|g_n^{\epsilon,0,1}(t)|^2\bigr)\mathds{1}_{1-4\lambda_n\epsilon^2\ge 0}\\
&\le C\lambda_n^{2\delta-\rho}e^{-\frac{t}{2\epsilon^2}}\mathds{1}_{1-4\lambda_n\epsilon^2<0}\\
&+C\Bigl(\epsilon^2\lambda_n^{1+2\delta-\rho}e^{-2\lambda_nt}+\lambda_n^{2\delta-\rho}e^{-\frac{2t}{\epsilon^2}}+\lambda_n^{2\delta-\rho+2}\epsilon^4e^{-2\lambda_nt}\Bigr)\mathds{1}_{1-4\lambda_n\epsilon^2\ge 0}.
\end{align*}
On the one hand, using the inequality $\underset{x\ge 0}\sup~x^{2\delta} e^{-x}<\infty$, one obtains
\begin{align*}
C\lambda_n^{2\delta-\rho}e^{-\frac{t}{2\epsilon^2}}\mathds{1}_{1-4\lambda_n\epsilon^2<0}&\le C\lambda_n^{2\delta-\rho}\frac{2\epsilon^{4\delta}}{t^{2\delta}}\mathds{1}_{1-4\lambda_n\epsilon^2<0}\\
&\le Ct^{-2\delta}\epsilon^{2\rho}\bigl(\lambda_n\epsilon^2\bigr)^{2\delta-\rho}\mathds{1}_{1-4\lambda_n\epsilon^2<0}\\
&\le Ct^{-2\delta}\epsilon^{2\rho},
\end{align*}
using the conditions $2\delta-\rho\le 0$ and $4\lambda_n\epsilon^2>1$ in the last step.

On the other hand, using the inequality $\underset{x\ge 0}\sup~x^{2\delta} e^{-x}<\infty$, one obtains
\begin{align*}
\epsilon^2\lambda_n^{1+2\delta-\rho}e^{-2\lambda_nt}\mathds{1}_{1-4\lambda_n\epsilon^2\ge 0}&\le Ct^{-2\delta}\epsilon^2\lambda_n^{1-\rho}\mathds{1}_{1-4\lambda_n\epsilon^2\ge 0}\\
&\le Ct^{-2\delta}\epsilon^{2\rho}\bigl(\lambda_n\epsilon^2\bigr)^{1-\rho}\mathds{1}_{1-4\lambda_n\epsilon^2\ge 0}\\
&\le Ct^{-2\delta}\epsilon^{2\rho}.
\end{align*}
Finally, observe that the assumption $1-4\lambda_n\epsilon^2\ge 0$ ensures that
\[
\lambda_n^{2\delta-\rho+2}\epsilon^4e^{-2\lambda_nt}\mathds{1}_{1-4\lambda_n\epsilon^2\ge 0}\le \epsilon^2\lambda_n^{1+2\delta-\rho}e^{-2\lambda_nt}\mathds{1}_{1-4\lambda_n\epsilon^2\ge 0}\le Ct^{-2\delta}\epsilon^{2\rho},
\]
and using again the inequality $\underset{x\ge 0}\sup~x^{2\delta} e^{-x}<\infty$, one obtains
\begin{align*}
\lambda_n^{2\delta-\rho}e^{-\frac{2t}{\epsilon^2}}\mathds{1}_{1-4\lambda_n\epsilon^2\ge 0}&\le C\lambda_n^{2\delta-\rho}\frac{\epsilon^{4\delta}}{t^{2\delta}}e^{-\frac{t}{\epsilon^2}}\mathds{1}_{1-4\lambda_n\epsilon^2\ge 0}\\
&\le Ct^{-2\delta}\epsilon^{2\rho}\lambda_n^{2\delta-\rho}\epsilon^{2(2\delta-\rho)}e^{-\frac{t}{\epsilon^2}}\mathds{1}_{1-4\lambda_n\epsilon^2\ge 0}\\
&\le Ct^{-2\delta}\epsilon^{2\rho}\epsilon^{2(2\delta-\rho)}e^{-\frac{t}{\epsilon^2}},
\end{align*}
using the condition $2\delta-\rho\le 0$.

Gathering the estimates yields for all $t\in(0,\infty)$ the upper bound
\[
\underset{n\in\N}\sup~\Bigl(\lambda_n^{1+2\delta-\rho}|f_n^{\epsilon,0,1}(t)|^2+\lambda_n^{2\delta-\rho}|g_n^{\epsilon,0,1}(t)|^2\Bigr)\le Ct^{-2\delta}\epsilon^{2\rho}\Bigl(1+\epsilon^{2(2\delta-\rho)}e^{-\frac{t}{\epsilon^2}}\Bigr).
\]
Due to the inequality above, one obtains~\eqref{eq:lem_expAeps-smoothing1} and the proof of Lemma~\ref{lem:expAeps-smoothing1} is thus completed.
\end{proof}

\begin{proof}[Proof of Lemma~\ref{lem:expAeps-smoothing2}]
Like in the proof of Lemma~\ref{lem:expAeps-smoothing1} above, for all $v\in H^{\alpha-1-2\delta}$ and for all $t\ge 0$ one has
\[
\Pi_u\Bigl(e^{tA_\epsilon}(0,v)\Bigr)=\sum_{n\in\N}\langle v,e_n\rangle f_n^{\epsilon,0,1}(t)e_n
\]
and as a result one has
\begin{align*}
\|\Pi_v\Bigl(e^{tA_\epsilon}(0,v)\Bigr)\|_H^2&=\sum_{n\in\N}\langle v,e_n\rangle^2 |f_n^{\epsilon,0,1}(t)|^2\\
&=\sum_{n\in\N}\lambda_n^{\alpha-1-2\delta} \langle v,e_n\rangle^2 \lambda_n^{1+2\delta-\alpha}|f_n^{\epsilon,0,1}(t)|^2\\
&\le \|v\|_{H^{\alpha-1-2\delta}}^2 \underset{n\in\N}\sup~\Bigl(\lambda_n^{1+2\delta-\alpha}|f_n^{\epsilon,0,1}(t)|^2\Bigr).
\end{align*}
To proceed, one needs to treat separately the cases $1-4\lambda_n\epsilon^2<0$ and $1-4\lambda_n\epsilon^2\ge 0$: for all $n\in\N$, $\epsilon\in(0,1)$ and $t\in(0,\infty)$, using the inequalities~\eqref{eq:lem_fngn01-1} and~\eqref{eq:lem_fngn01-2} from Lemma~\ref{lem:fngn}, one obtains
\begin{align*}
\lambda_n^{1+2\delta-\alpha}|f_n^{\epsilon,0,1}(t)|^2&=\lambda_n^{1+2\delta-\alpha}|f_n^{\epsilon,0,1}(t)|^2\mathds{1}_{1-4\lambda_n\epsilon^2<0}+\lambda_n^{1+2\delta-\alpha}|f_n^{\epsilon,0,1}(t)|^2\mathds{1}_{1-4\lambda_n\epsilon^2\ge 0}\\
&\le C\lambda_n^{2\delta-\alpha}e^{-\frac{t}{2\epsilon^2}}\mathds{1}_{1-4\lambda_n\epsilon^2<0}+C\lambda_n^{1+2\delta-\alpha}\epsilon^2e^{-2\lambda_nt}\mathds{1}_{1-4\lambda_n\epsilon^2\ge 0}.
\end{align*}
On the one hand, using the inequality $\underset{x\ge 0}\sup~x^{2\delta} e^{-x}<\infty$, one obtains
\begin{align*}
\lambda_n^{2\delta-\alpha}e^{-\frac{t}{2\epsilon^2}}\mathds{1}_{1-4\lambda_n\epsilon^2<0}&\le C\lambda_n^{2\delta-\alpha}\frac{\epsilon^{4\delta}}{t^{2\delta}}\mathds{1}_{1-4\lambda_n\epsilon^2<0}\\
&\le Ct^{-2\delta}\epsilon^{2\alpha}\bigl(\lambda_n\epsilon^2\bigr)^{2\delta-\alpha}\mathds{1}_{1-4\lambda_n\epsilon^2<0}\\
&\le Ct^{-2\delta}\epsilon^{2\alpha},
\end{align*}
using the conditions $2\delta-\alpha\le 0$ and $4\lambda_n\epsilon^2>1$ in the last step.

On the other hand, using the inequality $\underset{x\ge 0}\sup~x^{2\delta} e^{-x}<\infty$, one obtains
\begin{align*}
\lambda_n^{1+2\delta-\alpha}\epsilon^2e^{-2\lambda_nt}\mathds{1}_{1-4\lambda_n\epsilon^2\ge 0}&\le Ct^{-2\delta}\lambda_n^{1-\alpha}\epsilon^2\mathds{1}_{1-4\lambda_n\epsilon^2\ge 0}\\
&\le Ct^{-2\delta}\epsilon^{2\alpha}\bigl(\lambda_n\epsilon^2\bigr)^{1-\alpha}\mathds{1}_{1-4\lambda_n\epsilon^2\ge 0}\\
&\le Ct^{-2\delta}\epsilon^{2\alpha}.
\end{align*}
Gathering the estimates yields for all $t\in(0,\infty)$ the upper bound
\[
\underset{n\in\N}\sup~\Bigl(\lambda_n^{1+2\delta-\alpha}|f_n^{\epsilon,0,1}(t)|^2\Bigr)\le Ct^{-2\delta}\epsilon^{2\alpha}.
\]
Due to the inequality above, one obtains~\eqref{eq:lem_expAeps-smoothing2-1}.

It remains to prove the inequality~\eqref{eq:lem_expAeps-smoothing2-2}. Observe that for all $u\in H$ and all $t\ge 0$ one has
\[
\Pi_u\Bigl(e^{tA_\epsilon}(u,0)\Bigr)=\sum_{n\in\N}\langle u,e_n\rangle f_n^{\epsilon,1,0}(t)e_n
\]
and as a result one has
\begin{align*}
\|\Pi_u\Bigl(e^{tA_\epsilon}(u,0)\Bigr)\|_H^2&=\sum_{n\in\N}\langle u,e_n\rangle^2|f_n^{\epsilon,1,0}(t)|^2\\
&=\sum_{n\in\N}\langle u,e_n\rangle^2|f_n^{\epsilon,1,0}(t)|^2\mathds{1}_{1-4\lambda_n\epsilon^2<0}+\sum_{n\in\N}\langle u,e_n\rangle^2|f_n^{\epsilon,1,0}(t)|^2\mathds{1}_{1-4\lambda_n\epsilon^2\ge 0}.
\end{align*}
On the one hand, using the inequality~\eqref{eq:lem_fngn10-1} from Lemma~\ref{lem:fngn} and the inequality $\underset{x\ge 0}\sup~x^{\alpha_1}e^{-x}<\infty$, for all $n\in\N$, $\epsilon\in(0,1)$ and $t\in(0,\infty)$ one has
\[
|f_n^{\epsilon,1,0}(t)|^2\mathds{1}_{1-4\lambda_n\epsilon^2<0}\le Ce^{-\frac{t}{2\epsilon^2}}\le C\epsilon^{2\alpha_1}t^{-\alpha_1}.
\]
On the other hand, using the inequality~\eqref{eq:lem_fngn10-2} from Lemma~\ref{lem:fngn} and the inequality $\underset{x\ge 0}\sup~x^{\alpha_2}e^{-x}<\infty$, for all $n\in\N$, $\epsilon\in(0,1)$ and $t\in(0,\infty)$ one has
\begin{align*}
|f_n^{\epsilon,1,0}(t)|^2\mathds{1}_{1-4\lambda_n\epsilon^2\ge 0}\le Ce^{-2\lambda_nt}\mathds{1}_{1-4\lambda_n\epsilon^2\ge 0}\le Ct^{-\alpha_2}\lambda_n^{-\alpha_2}.
\end{align*}
Therefore one obtains for all $t\in(0,\infty)$ and all $u\in H$
\begin{align*}
\|\Pi_u\Bigl(e^{tA_\epsilon}(u,0)\Bigr)\|_H^2&\le C\epsilon^{2\alpha_1}t^{-\alpha_1}\sum_{n\in\N}\langle u,e_n\rangle^2+Ct^{-\alpha_2}\sum_{n\in\N}\lambda_n^{-\alpha_2}\langle u,e_n\rangle^2\\
&\le C\epsilon^{2\alpha_1}t^{-\alpha_1}\|u\|_{H}^2+Ct^{-\alpha_2}\|u\|_{H^{-\alpha_2}}^2.
\end{align*}
This concludes the proof of~\eqref{eq:lem_expAeps-smoothing2-2}. The proof of Lemma~\ref{lem:expAeps-smoothing2} is thus completed.
\end{proof}

\subsection{Convergence and error bounds on the semigroups}\label{sec:errorboundssemigroups}

The objective of this section is to provide precise results to justify the convergence properties
\[
\Pi_u e^{tA_\epsilon}(u,0)\underset{\epsilon\to 0}\to e^{-t\IL}u
\]
and
\[
\Pi_u e^{tA_\epsilon}(0,\frac{v}{\epsilon})\underset{\epsilon\to 0}\to e^{-t\IL}v,
\]
and to obtain error bounds, depending on the regularity properties of $u$ and $v$.
\begin{lemma}\label{lem:semigroups-convergence}
For all $\alpha\in[0,1]$ and all $\delta\in[0,\frac{\alpha}{2}]$, there exists $C_{\alpha,\delta}\in(0,\infty)$ such that, for all $\epsilon\in(0,1)$, all $t>0$ and all $u\in H^{\alpha-2\delta}$ one has
\begin{equation}\label{eq:lem_semigroups-convergence-u}
\|\Pi_u e^{tA_\epsilon}(u,0)-e^{-t\IL}u\|_{H}\le C_{\alpha,\delta}\epsilon^{\alpha}t^{-\delta}\|u\|_{H^{\alpha-2\delta}}.
\end{equation}
Moreover, for all $\alpha\in[0,1]$ and all $\delta\in[\frac{\alpha}{2},\frac12]$, there exists $C_{\alpha,\delta}\in(0,\infty)$ such that, for all $\epsilon\in(0,1)$, all $t>0$ and all $v\in H^{\alpha-2\delta}$ one has
\begin{equation}\label{eq:lem_semigroups-convergence-v}
\|\Pi_u e^{tA_\epsilon}(0,\frac{v}{\epsilon})-e^{-t\IL}v\|_{H}\le C_{\alpha,\delta}\epsilon^{\alpha}t^{-\delta}\|v\|_{H^{\alpha-2\delta}}.
\end{equation}
\end{lemma}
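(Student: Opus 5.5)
The plan is to work mode by mode, as in the proofs of Lemmas~\ref{lem:expAeps-smoothing1} and~\ref{lem:expAeps-smoothing2}. Since
\[
\Pi_u e^{tA_\epsilon}(u,0)-e^{-t\IL}u=\sum_{n\in\N}\langle u,e_n\rangle_H\bigl(f_n^{\epsilon,1,0}(t)-e^{-\lambda_nt}\bigr)e_n,
\]
both estimates reduce to scalar bounds. For~\eqref{eq:lem_semigroups-convergence-u} it suffices to show that
\[
\sup_{n\in\N}\lambda_n^{2\delta-\alpha}\bigl|f_n^{\epsilon,1,0}(t)-e^{-\lambda_nt}\bigr|^2\le C\epsilon^{2\alpha}t^{-2\delta}.
\]
For~\eqref{eq:lem_semigroups-convergence-v} it suffices to show the same bound with $\epsilon^{-1}f_n^{\epsilon,0,1}(t)-e^{-\lambda_nt}$ in place of $f_n^{\epsilon,1,0}(t)-e^{-\lambda_nt}$. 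As usual, I split according to the sign of $1-4\lambda_n\epsilon^2$.

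\textbf{High modes, $4\lambda_n\epsilon^2>1$.} No cancellation is needed; I use the triangle inequality. By~\eqref{eq:lem_fngn10-1}, $|f_n^{\epsilon,1,0}(t)|\le Ce^{-t/(4\epsilon^2)}$. Together with $e^{-\lambda_nt}\le e^{-t/(4\epsilon^2)}$ and $\sup_x x^{2\delta}e^{-x}<\infty$, this gives a bound $C\epsilon^{4\delta}t^{-2\delta}$. Multiplying by $\lambda_n^{2\delta-\alpha}=\epsilon^{2\alpha-4\delta}(\lambda_n\epsilon^2)^{2\delta-\alpha}\le C\epsilon^{2\alpha-4\delta}$, which uses $2\delta\le\alpha$, yields $C\epsilon^{2\alpha}t^{-2\delta}$.

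For the $v$-estimate, \eqref{eq:lem_fngn01-1} gives $\epsilon^{-1}|f_n^{\epsilon,0,1}|\le C\epsilon^{-1}\lambda_n^{-1/2}e^{-t/(4\epsilon^2)}\le Ce^{-t/(4\epsilon^2)}$. Now $2\delta-\alpha\ge0$, so the factor $\lambda_n^{2\delta-\alpha}$ is no longer favourable. Instead I absorb it into the exponential: $\lambda_n^{2\delta-\alpha}e^{-t/(2\epsilon^2)}$ is small only if $t$ is large relative to $\epsilon^2$. I would therefore keep the $e^{-\lambda_nt}$ and $e^{-t/\epsilon^2}$ pieces separately. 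The term $e^{-\lambda_nt}$ is harmless, since $\lambda_n^{2\delta-\alpha}e^{-2\lambda_nt}\le Ct^{-2\delta}\lambda_n^{-\alpha}\le Ct^{-2\delta}\epsilon^{2\alpha}$. The oscillatory term is handled by writing $e^{-t/(2\epsilon^2)}\le C(\epsilon^2/t)^{2\delta}$ and $\lambda_n^{2\delta-\alpha}\epsilon^{4\delta}$. I expect a residual factor $\lambda_n^{-1}$ from the exact form in~\eqref{eq:fn1}, $\frac{2}{\sqrt{4\lambda_n\epsilon^2-1}}\sin(\cdot)$, to be needed near the threshold $\delta=1/2$. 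Here I would use~\eqref{eq:fn1} directly rather than only Lemma~\ref{lem:fngn}.

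\textbf{Low modes, $4\lambda_n\epsilon^2\le1$.} This is the main step, and cancellation is essential. I use the explicit formula~\eqref{eq:fn3}. With $\mu_\pm=\frac{1\pm\sqrt{1-4\lambda_n\epsilon^2}}{2\epsilon^2}$, one has $f_n^{\epsilon,1,0}(t)=\frac{\mu_+e^{-\mu_-t}-\mu_-e^{-\mu_+t}}{\mu_+-\mu_-}$ and $\epsilon^{-1}f_n^{\epsilon,0,1}(t)=\frac{e^{-\mu_-t}-e^{-\mu_+t}}{\epsilon^2(\mu_+-\mu_-)}$. Here $\mu_+\ge 1/(2\epsilon^2)$, $\epsilon^2(\mu_+-\mu_-)=\sqrt{1-4\lambda_n\epsilon^2}$, and $\mu_-=\frac{2\lambda_n}{1+\sqrt{1-4\lambda_n\epsilon^2}}\in[\lambda_n,2\lambda_n]$ satisfies $|\mu_--\lambda_n|\le C\lambda_n^2\epsilon^2$.

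The difference from $e^{-\lambda_nt}$ then splits into three pieces:
\begin{itemize}
\item[(i)] $|e^{-\mu_-t}-e^{-\lambda_nt}|\le t|\mu_--\lambda_n|e^{-\lambda_nt}\le C\lambda_n^2\epsilon^2te^{-\lambda_nt}$;
\item[(ii)] prefactor errors $\bigl|\frac{1}{\sqrt{1-4\lambda_n\epsilon^2}}-1\bigr|e^{-\mu_-t}$, together with the analogous term in $\frac{\mu_+}{\mu_+-\mu_-}-1=\frac{\mu_-}{\mu_+-\mu_-}$;
\item[(iii)] fast terms of order $e^{-t/(2\epsilon^2)}$.
\end{itemize}
Piece (i) gives $C(\lambda_n\epsilon^2)\lambda_nte^{-\lambda_nt}$. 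Interpolating with the trivial bound $2$ gives $C(\lambda_n\epsilon^2)^{\alpha/2}\cdot(\cdot)$, and the powers are matched using $\lambda_n\epsilon^2\le 1/4$ together with $\sup_x x^{\gamma}e^{-x}<\infty$.

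The genuine obstacle is the prefactor $1/\sqrt{1-4\lambda_n\epsilon^2}$, which degenerates near the threshold. There the difference $e^{-\mu_-t}-e^{-\mu_+t}$ is itself small, of order $(\mu_+-\mu_-)t\,e^{-\mu_-t}$. I would therefore treat the band $1/8<\lambda_n\epsilon^2\le1/4$ like the high modes, using the crude bounds~\eqref{eq:lem_fngn01-2}--\eqref{eq:lem_fngn10-2} plus $e^{-\lambda_nt}$ and the fact that $\lambda_n\sim\epsilon^{-2}$ there. On $\lambda_n\epsilon^2\le1/8$ the prefactors are smooth, with error $O(\lambda_n\epsilon^2)$, and yield $C\lambda_n\epsilon^2e^{-\lambda_nt/2}$. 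Multiplying by $\lambda_n^{2\delta-\alpha}$ and writing $\lambda_n\epsilon^2\le(\lambda_n\epsilon^2)^{\alpha}$ (here $\alpha\le1$ and $\lambda_n\epsilon^2\le1$) bounds the result by $C\epsilon^{2\alpha}\lambda_n^{2\delta}e^{-\lambda_nt}\le C\epsilon^{2\alpha}t^{-2\delta}$. For the fast terms in the $v$-case, the factor $\lambda_n^{2\delta-\alpha}\le C\epsilon^{-2(2\delta-\alpha)}$ combined with $e^{-t/(2\epsilon^2)}\le C(\epsilon^2/t)^{2\delta}$ again gives $\epsilon^{2\alpha}t^{-2\delta}$. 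In the $u$-case the fast term carries the factor $\mu_-/\mu_+\le C\lambda_n\epsilon^2$, which plays the same role.

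Summing the squared mode bounds against $\lambda_n^{\alpha-2\delta}\langle\cdot,e_n\rangle_H^2$ then gives both estimates.
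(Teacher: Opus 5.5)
Your argument works, and for $4\lambda_n\epsilon^2\le 1$ it takes a genuinely different route from the paper. One step in the high-mode $v$-estimate needs repair.

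\textbf{The high-mode $v$-estimate.} There you write $\epsilon^{-1}|f_n^{\epsilon,0,1}(t)|\le Ce^{-t/(4\epsilon^2)}$, which discards the factor $(\lambda_n\epsilon^2)^{-1/2}$ supplied by \eqref{eq:lem_fngn01-1}. You are then left with $\lambda_n^{2\delta-\alpha}e^{-t/(2\epsilon^2)}$. Whenever $2\delta>\alpha$, this bound is unbounded in $n$ for every fixed $t>0$. That is true for all such $\delta$, not only near $\delta=\frac12$, and even when $t\gg\epsilon^2$. So the step does not close as written.

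You do not need to return to \eqref{eq:fn1}. It suffices to keep the factor that Lemma~\ref{lem:fngn} already gives:
\[
\lambda_n^{2\delta-\alpha}\epsilon^{-2}|f_n^{\epsilon,0,1}(t)|^2\le C\epsilon^{2\alpha-4\delta}(\lambda_n\epsilon^2)^{2\delta-\alpha-1}e^{-\frac{t}{2\epsilon^2}}\le C\epsilon^{2\alpha-4\delta}e^{-\frac{t}{2\epsilon^2}}\le C\epsilon^{2\alpha}t^{-2\delta}.
\]
The middle inequality uses $2\delta-\alpha-1\le 0$ and $4\lambda_n\epsilon^2>1$. This is exactly what the paper does.

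\textbf{Comparison for $4\lambda_n\epsilon^2\le 1$.} Your decomposition through the roots $\mu_\pm$ is correct. It separates a slow rate $\mu_-=\lambda_n+O(\lambda_n^2\epsilon^2)$, prefactor errors of size $O(\lambda_n\epsilon^2)$ on $\lambda_n\epsilon^2\le\frac18$, and a fast rate $\mu_+\ge 1/(2\epsilon^2)$. The buffer band $\frac18<\lambda_n\epsilon^2\le\frac14$ is also handled correctly by \eqref{eq:lem_fngn01-2}, \eqref{eq:lem_fngn10-2} and $\lambda_n^{-\alpha}\le C\epsilon^{2\alpha}$.

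The paper never uses the explicit roots.
\begin{itemize}
\item For the $u$-estimate, it writes $f_n^{\epsilon,1,0}(t)-e^{-\lambda_nt}=e^{-\lambda_nt}\bigl(\tilde{w}_{n,\epsilon}^{1,0}(t)-\tilde{w}_{n,\epsilon}^{1,0}(0)\bigr)$. It then bounds the increment by Cauchy--Schwarz and the dissipated energy in \eqref{eq:tildew1}, which gives the bound $2t\lambda_n^2\epsilon^2e^{-2\lambda_nt}$ on the square.
\item For the $v$-estimate, it introduces $F_n^{\epsilon,0,1}=f_n^{\epsilon,0,1}+\epsilon g_n^{\epsilon,0,1}$, which satisfies $(F_n^{\epsilon,0,1})'=-\lambda_nf_n^{\epsilon,0,1}$. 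An integration by parts then exhibits the cancellation $\epsilon^{-1}F_n^{\epsilon,0,1}-e^{-\lambda_nt}=O\bigl(\epsilon^2\lambda_n(1+\lambda_nt)e^{-\lambda_nt}\bigr)$. The remainder $g_n^{\epsilon,0,1}$ is controlled by Lemma~\ref{lem:fngn}.
\end{itemize}
These ODE and energy arguments are uniform up to the critical value $4\lambda_n\epsilon^2=1$. The paper therefore needs neither a buffer band nor a separate treatment of the degenerate prefactor $1/\sqrt{1-4\lambda_n\epsilon^2}$.

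Your route pays for that extra case split. In return, it makes the two-time-scale structure fully explicit: a slow heat-like mode plus an initial layer of width $\epsilon^2$. That also shows transparently why $\delta\ge\alpha/2$ is needed in the $v$-estimate.
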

Notice that one has $\alpha-2\delta\ge 0$ for~\eqref{eq:lem_semigroups-convergence-u} but $\alpha-2\delta\le 0$ for~\eqref{eq:lem_semigroups-convergence-v}.

The proof of Lemma~\ref{lem:semigroups-convergence} is a straightforward consequence of the following auxiliary result. The statement and the proof of Lemma~\ref{lem:aux-conv} exploit the notation introduced in Section~\ref{sec:expAeps} for the proof of Lemma~\ref{lem:fngn}.

\begin{lemma}\label{lem:aux-conv}
For all $\alpha\in[0,1]$ and all $\delta\in[0,\frac{\alpha}{2}]$, there exists $C_{\alpha,\delta}\in(0,\infty)$ such that, for all $\epsilon\in(0,1)$ and all $t>0$ one has
\begin{equation}\label{eq:lem_aux-conv-1}
\underset{n\in\N}\sup~\Bigl(\lambda_n^{2\delta-\alpha}\big|f_n^{\epsilon,1,0}(t)-e^{-t\lambda_n}\big|^2\Bigr)\le Ct^{-2\delta}\epsilon^{2\alpha}.
\end{equation}

Moreover, for all $\alpha\in[0,1]$ and all $\delta\in[\frac{\alpha}{2},\frac12]$, there exists $C_{\alpha,\delta}\in(0,\infty)$ such that, for all $\epsilon\in(0,1)$ and all $t>0$ one has
\begin{equation}\label{eq:lem_aux-conv-2}
\sup~\Bigl(\lambda_n^{2\delta-\alpha}\big|\frac{f_n^{\epsilon,0,1}(t)}{\epsilon}-e^{-t\lambda_n}\big|^2\Bigr)\le Ct^{-2\delta}\epsilon^{2\alpha}.
\end{equation}
\end{lemma}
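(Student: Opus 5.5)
The plan is to prove both estimates mode by mode. For each $n$, I split according to the size of $\lambda_n\epsilon^2$, treating the overdamped regime $\lambda_n\epsilon^2<\frac18$ with the explicit formula~\eqref{eq:fn3}, and everything else with the crude bounds of Lemma~\ref{lem:fngn}. Throughout I use the elementary inequality $\sup_{x\ge0}x^{\gamma}e^{-x}<\infty$ in two forms: $e^{-ct/\epsilon^2}\le C\epsilon^{4\delta}t^{-2\delta}$ and $\lambda_n^{2\delta}e^{-\lambda_n t}\le Ct^{-2\delta}$.

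\emph{Regime $\lambda_n\epsilon^2\ge\frac18$.} Here $\lambda_n\sim\epsilon^{-2}$ up to constants or larger.
\begin{itemize}
\item If $4\lambda_n\epsilon^2\le1$, \eqref{eq:lem_fngn10-2} and \eqref{eq:lem_fngn01-2} give $|f_n^{\epsilon,1,0}(t)|+|f_n^{\epsilon,0,1}(t)|/\epsilon\le Ce^{-\lambda_nt}$. Hence the square of either difference, multiplied by $\lambda_n^{2\delta-\alpha}$, is bounded by $C\lambda_n^{2\delta}e^{-2\lambda_nt}\lambda_n^{-\alpha}\le Ct^{-2\delta}\lambda_n^{-\alpha}\le Ct^{-2\delta}\epsilon^{2\alpha}$.
\item If $4\lambda_n\epsilon^2>1$, the term $e^{-\lambda_nt}$ is handled exactly as in the previous item.
\item For \eqref{eq:lem_aux-conv-1} in that case, \eqref{eq:lem_fngn10-1} gives $\lambda_n^{2\delta-\alpha}e^{-t/(2\epsilon^2)}$. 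Since $2\delta-\alpha\le0$, this is at most $C\epsilon^{2\alpha-4\delta}\epsilon^{4\delta}t^{-2\delta}$.
\item For \eqref{eq:lem_aux-conv-2} in that case, the bound $\lambda_n^{-1/2}e^{-t/(4\epsilon^2)}$ from \eqref{eq:lem_fngn01-1} is not sharp enough, because $2\delta-\alpha\ge0$. Instead I reuse the sharper inequality derived inside the proof of Lemma~\ref{lem:fngn}, namely $|f_n^{\epsilon,0,1}(t)|^2\le \frac{C}{\lambda_n}e^{-t/(2\epsilon^2)}$. This gives $\lambda_n^{2\delta-\alpha}\epsilon^{-2}|f_n^{\epsilon,0,1}|^2\le C(\lambda_n\epsilon^2)^{2\delta-\alpha-1}\epsilon^{2\alpha-4\delta}e^{-t/(2\epsilon^2)}$. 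Since $2\delta-\alpha-1\le0$ and $\lambda_n\epsilon^2>\frac14$, the right-hand side is at most $Ct^{-2\delta}\epsilon^{2\alpha}$.
\end{itemize}

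\emph{Regime $\lambda_n\epsilon^2<\frac18$.} This is the main step, and the one I expect to be the real obstacle. I rewrite~\eqref{eq:fn3} in terms of the roots $r_\pm=\frac{-1\pm\sqrt{1-4\lambda_n\epsilon^2}}{2\epsilon^2}$. The root $r_+=-\frac{2\lambda_n}{1+\sqrt{1-4\lambda_n\epsilon^2}}$ lies between $-\frac32\lambda_n$ and $-\lambda_n$ and satisfies $|r_++\lambda_n|\le C\lambda_n^2\epsilon^2$. The other root satisfies $r_-\le -\frac{1}{2\epsilon^2}$. The two solutions then read
\[
f_n^{\epsilon,1,0}(t)=\frac{r_-e^{r_+t}-r_+e^{r_-t}}{r_--r_+},\qquad \frac{f_n^{\epsilon,0,1}(t)}{\epsilon}=\frac{e^{r_+t}-e^{r_-t}}{\sqrt{1-4\lambda_n\epsilon^2}}.
\]
The following ingredients, all following from $\sqrt{1-4\lambda_n\epsilon^2}\ge 1/\sqrt2$, control each piece:
\begin{itemize}
\item $|e^{r_+t}-e^{-\lambda_nt}|\le t|r_++\lambda_n|e^{-\lambda_nt}\le C\lambda_n\epsilon^2(\lambda_nt)e^{-\lambda_nt}$;
\item $\bigl|\frac{r_-}{r_--r_+}-1\bigr|+\bigl|\frac{1}{\sqrt{1-4\lambda_n\epsilon^2}}-1\bigr|\le C\lambda_n\epsilon^2$;
\item $\bigl|\frac{r_+}{r_--r_+}\bigr|\le C\lambda_n\epsilon^2$.
\end{itemize}
Combining them, the difference for $f_n^{\epsilon,1,0}$ is at most $C\lambda_n\epsilon^2 e^{-\lambda_nt/2}+C\lambda_n\epsilon^2e^{-t/(2\epsilon^2)}$. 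The difference for $f_n^{\epsilon,0,1}/\epsilon$ is at most $C\lambda_n\epsilon^2e^{-\lambda_nt/2}+Ce^{-t/(2\epsilon^2)}$.

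It remains to multiply the squares by $\lambda_n^{2\delta-\alpha}$.
\begin{itemize}
\item The common term gives $\lambda_n^{2\delta}e^{-\lambda_nt}\lambda_n^{2-\alpha}\epsilon^4\le Ct^{-2\delta}\epsilon^{2\alpha}(\lambda_n\epsilon^2)^{2-\alpha}$, which is at most $Ct^{-2\delta}\epsilon^{2\alpha}$.
\item The boundary-layer term for $f_n^{\epsilon,1,0}$ gives $(\lambda_n\epsilon^2)^{2+2\delta-\alpha}\epsilon^{2\alpha-4\delta}e^{-t/\epsilon^2}\le Ct^{-2\delta}\epsilon^{2\alpha}$, using $2+2\delta-\alpha\ge0$.
\item The boundary-layer term for $f_n^{\epsilon,0,1}/\epsilon$ gives $(\lambda_n\epsilon^2)^{2\delta-\alpha}\epsilon^{2\alpha-4\delta}e^{-t/\epsilon^2}\le Ct^{-2\delta}\epsilon^{2\alpha}$. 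This is exactly where the restriction $\delta\ge\frac{\alpha}{2}$ in~\eqref{eq:lem_aux-conv-2} is needed: the $e^{r_-t}$ layer in $f_n^{\epsilon,0,1}/\epsilon$ has an $O(1)$ coefficient, so the nonnegative exponent $2\delta-\alpha$ on $\lambda_n\epsilon^2<\frac18$ is what makes the estimate close.
\end{itemize}
Taking the supremum over $n$ yields both \eqref{eq:lem_aux-conv-1} and \eqref{eq:lem_aux-conv-2}.
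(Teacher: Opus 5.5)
Your proof is correct. In the regime $4\lambda_n\epsilon^2>1$ it essentially coincides with the paper's argument, but in the overdamped regime you take a different route.

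\textbf{The paper's route.} The paper splits exactly at $4\lambda_n\epsilon^2=1$ and never uses the explicit formula~\eqref{eq:fn3}.
\begin{itemize}
\item For~\eqref{eq:lem_aux-conv-1} it applies the energy identity~\eqref{eq:tildew1} to $\tilde w_{n,\epsilon}^{1,0}=e^{\lambda_nt}f_n^{\epsilon,1,0}$. This gives $\int_0^t|(\tilde w_{n,\epsilon}^{1,0})'(s)|^2\,ds\le 2\lambda_n^2\epsilon^2$, and by Cauchy--Schwarz $|f_n^{\epsilon,1,0}(t)-e^{-\lambda_nt}|^2\le 2t\lambda_n^2\epsilon^2e^{-2\lambda_nt}$.
\item For~\eqref{eq:lem_aux-conv-2} it introduces $F_n^{\epsilon,0,1}=f_n^{\epsilon,0,1}+\epsilon g_n^{\epsilon,0,1}$, which solves $(F_n^{\epsilon,0,1})'=-\lambda_nf_n^{\epsilon,0,1}$. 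It writes a Duhamel formula, integrates by parts, and then subtracts $g_n^{\epsilon,0,1}$ using~\eqref{eq:lem_fngn01-2}.
\end{itemize}
This treats the critically damped modes uniformly and reuses the machinery of Lemma~\ref{lem:fngn}. Its bound for~\eqref{eq:lem_aux-conv-1} is only of order $(\lambda_n\epsilon^2)^{1/2}$ per mode, which suffices precisely because $\alpha\le1$.

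\textbf{What your route buys.} Your threshold $\lambda_n\epsilon^2=\frac18$ keeps $\sqrt{1-4\lambda_n\epsilon^2}\ge 1/\sqrt2$, so the two-root representation is uniformly nondegenerate. The band $\frac18\le\lambda_n\epsilon^2\le\frac14$ is absorbed by the crude bounds, since there $\lambda_n^{-\alpha}\le 8^\alpha\epsilon^{2\alpha}$ and no cancellation is needed. In exchange you obtain an $O(\lambda_n\epsilon^2)$ bulk error for both differences. You also get a transparent explanation of why $\delta\ge\frac{\alpha}{2}$ is needed in~\eqref{eq:lem_aux-conv-2}: the $e^{r_-t}$ boundary layer in $f_n^{\epsilon,0,1}/\epsilon$ has an $O(1)$ coefficient. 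In the paper this appears as the $e^{-2t/\epsilon^2}$ contribution of $|g_n^{\epsilon,0,1}|^2$.

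\textbf{One inaccurate remark.} It is harmless to the argument. Squaring~\eqref{eq:lem_fngn01-1} gives exactly $|f_n^{\epsilon,0,1}(t)|^2\le C\lambda_n^{-1}e^{-t/(2\epsilon^2)}$. So the stated bound already suffices in the underdamped case, as in the paper, and nothing sharper needs to be extracted from the proof of Lemma~\ref{lem:fngn}.
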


\begin{proof}[Proof of Lemma~\ref{lem:aux-conv}]

Let us first prove the inequality~\eqref{eq:lem_aux-conv-1}, and assume that $\delta\in[0,\frac{\alpha}{2}]$. Let $n\in\N$ and $\epsilon\in(0,1)$.

On the one hand, if $1-4\lambda_n\epsilon^2<0$, owing to the inequality~\eqref{eq:lem_fngn10-1} and to the lower bound $\lambda_n\ge \epsilon^2/4$, one has for all $t\ge 0$
\begin{align*}
|f_n^{\epsilon,1,0}(t)-e^{-t\lambda_n}|^2&\le 2|f_n^{\epsilon,1,0}(t)|^2+2e^{-2t\lambda_n}\\
&\le Ce^{-\frac{t}{2\epsilon^2}}.
\end{align*}
As a result, using the inequality $\underset{x\in\R}\sup~x^{2\delta}e^{-x}<\infty$, one obtains for all $t>0$
\begin{align*}
\lambda_n^{2\delta-\alpha}\big|f_n^{\epsilon,1,0}(t)-e^{-t\lambda_n}\big|^2\mathds{1}_{1-4\lambda_n\epsilon^2<0}&\le C\lambda_n^{2\delta-\alpha}\frac{\epsilon^{4\delta}}{t^{2\delta}}\mathds{1}_{1-4\lambda_n\epsilon^2<0}\\
&\le Ct^{-2\delta}\epsilon^{2\alpha}(\lambda_n\epsilon^2)^{2\delta-\alpha}\mathds{1}_{1-4\lambda_n\epsilon^2<0}\\
&\le Ct^{-2\delta}\epsilon^{2\alpha},
\end{align*}
using the conditions $2\delta-\alpha\le 0$ and $4\lambda_n\epsilon^2>1$ in the last step.

On the other hand, if $1-4\lambda_n\epsilon^2\ge 0$, recalling that $\tilde{w}_{n,\epsilon}^{1,0}(t)=e^{\theta_{n,\epsilon}t}f_n^{\epsilon,1,0}(t)$ with $\theta_{n,\epsilon}=\lambda_n$, one has for all $t\ge 0$
\[
f_n^{\epsilon,1,0}(t)-e^{-t\lambda_n}=e^{-t\lambda_n}\bigl(\tilde{w}_{n,\epsilon}^{1,0}(t)-1\bigr)=e^{-t\lambda_n}\bigl(\tilde{w}_{n,\epsilon}^{1,0}(t)-\tilde{w}_{n,\epsilon}^{1,0}(0)\bigr).
\]
Using the inequality~\eqref{eq:tildew1} and the Cauchy--Schwarz inequality, one has
\begin{align*}
\big|\tilde{w}_{n,\epsilon}^{1,0}(t)-\tilde{w}_{n,\epsilon}^{1,0}(0)\big|^2&=\Big|\int_0^t(\tilde{w}_{n,\epsilon}^{1,0})'(s)\,ds\Big|^2\\
&\le t\int_0^t\big|(\tilde{w}_{n,\epsilon}^{1,0})'(s)\big|^2\,ds\\
&\le t\frac{\lambda_n^2\epsilon^2}{1-2\lambda_n\epsilon^2}\\
&\le 2t\lambda_n^2\epsilon^2,
\end{align*}
using the lower bound $1-2\lambda_n\epsilon^2\ge \frac12$ in the last step.

As a result, using the inequality $\underset{x\in\R}\sup~x^{1+2\delta}e^{-x}<\infty$, one obtains for all $t>0$
\begin{align*}
\lambda_n^{2\delta-\alpha}\big|f_n^{\epsilon,1,0}(t)-e^{-t\lambda_n}\big|^2\mathds{1}_{1-4\lambda_n\epsilon^2\ge 0}&\le Ct\lambda_n^{2+2\delta-\alpha}\epsilon^2e^{-2t\lambda_n}\mathds{1}_{1-4\lambda_n\epsilon^2\ge 0}\\
&\le Ct^{-2\delta}\lambda_n^{1-\alpha}\epsilon^2\mathds{1}_{1-4\lambda_n\epsilon^2\ge 0}\\
&\le Ct^{-2\delta}\epsilon^{2\alpha}\bigl(\lambda_n\epsilon^2\bigr)^{1-\alpha}\mathds{1}_{1-4\lambda_n\epsilon^2\ge 0}\\
&\le Ct^{-2\delta}\epsilon^{2\alpha},
\end{align*}
using the condition $4\lambda_n\epsilon^2\le 1$ in the last step.

Gathering the estimates, one thus obtains for all $t>0$ and all $\epsilon\in(0,1)$
\[
\underset{n\in\N}\sup~\Bigl(\lambda_n^{2\delta-\alpha}\big|f_n^{\epsilon,1,0}(t)-e^{-t\lambda_n}\big|^2\Bigr)\le Ct^{-2\delta}\epsilon^{2\alpha}.
\]
This concludes the proof of the inequality~\eqref{eq:lem_aux-conv-1}.

It remains to prove the inequality~\eqref{eq:lem_aux-conv-2}. Assume that $\delta\in[\frac{\alpha}{2},\frac12]$ and let $n\in\N$ and $\epsilon\in(0,1)$.

On the one hand, if $1-4\lambda_n\epsilon^2<0$, owing to the inequality~\eqref{eq:lem_fngn01-1} from Lemma~\ref{lem:fngn}, one has for all $t\ge 0$
\begin{align*}
\big|\frac{f^{\epsilon,0,1}_n(t)}{\epsilon}-e^{-t\lambda_n}\big|^2&\le \frac{2}{\epsilon^2}\big|f^{\epsilon,0,1}_n(t)\big|^2+2e^{-2t\lambda_n}\le C\frac{1}{\lambda_n\epsilon^2}e^{-\frac{t}{2\epsilon^2}}+2e^{-2t\lambda_n}.
\end{align*}
As a result, using the inequality $\underset{x\in\R}\sup~x^{2\delta}e^{-x}<\infty$, one obtains for all $t>0$
\begin{align*}
\lambda_n^{2\delta-\alpha}\big|f_n^{\epsilon,0,1}(t)-e^{-t\lambda_n}\big|^2\mathds{1}_{1-4\lambda_n\epsilon^2<0}&\le C\Bigl(\lambda_n^{2\delta-\alpha-1}t^{-2\delta}\epsilon^{4\delta-2}+\lambda_n^{-\alpha}t^{-2\delta}\Bigr)\mathds{1}_{1-4\lambda_n\epsilon^2<0}\\
&\le Ct^{-2\delta}\epsilon^{2\alpha}\Bigl(\bigl(\lambda_n\epsilon^2\bigr)^{2\delta-\alpha-1}+\bigl(\lambda_n\epsilon^2)^{-\alpha}\Bigr)\mathds{1}_{1-4\lambda_n\epsilon^2<0}\\
&\le Ct^{-2\delta}\epsilon^{2\alpha},
\end{align*}
using the conditions $1+\alpha-2\delta\ge 0$, $\alpha\ge 0$ and $4\lambda_n\epsilon^2>1$ in the last step.

On the other hand, if $1-4\lambda_n\epsilon^2\ge 0$, set
\[
F_n^{\epsilon,0,1}(t)=f_n^{\epsilon,0,1}(t)+\epsilon g_n^{\epsilon,0,1}(t).
\]
Note that owing to~\eqref{eq:fngn} for all $t\ge 0$ one has
\begin{align*}
(F_n^{\epsilon,0,1})'(t)&=(f_n^{\epsilon,0,1})'(t)+\epsilon (g_n^{\epsilon,0,1})'(t)\\
&=-\lambda_n f_n^{\epsilon,0,1}(t)\\
&=-\lambda_n F_n^{\epsilon,0,1}(t)+\epsilon \lambda_n g_n^{\epsilon,0,1}(t).
\end{align*}
Since $F_n^{\epsilon,0,1}(0)=f_n^{\epsilon,0,1}(0)+\epsilon g_n^{\epsilon,0,1}(0)=\epsilon$, one has for all $t\ge 0$
\[
F_n^{\epsilon,0,1}(t)=\epsilon e^{-\lambda_nt}+\epsilon\lambda_n\int_0^te^{-\lambda_n(t-s)}g_n^{\epsilon,0,1}(s)\,ds.
\]
Moreover, one has $g_n^{\epsilon,0,1}(s)=\epsilon (f_n^{\epsilon,0,1})'(s)$ for all $s\ge 0$, therefore integrating by parts one obtains
\begin{align*}
F_n^{\epsilon,0,1}(t)-\epsilon e^{-\lambda_nt}&=\epsilon\lambda_n\int_0^te^{-\lambda_n(t-s)}g_n^{\epsilon,0,1}(s)\,ds\\
&=\epsilon^2\lambda_n\int_0^te^{-\lambda_n(t-s)}(f_n^{\epsilon,0,1})'(s)\,ds\\
&=\epsilon^2\lambda_n\Bigl(f_n^{\epsilon,0,1}(t)-\lambda_n\int_0^te^{-\lambda_n(t-s)}f_n^{\epsilon,0,1}(s)\,ds\Bigr).
\end{align*}
As a result, using the inequality~\eqref{eq:lem_fngn01-2} from Lemma~\ref{lem:fngn}, for all $t\ge 0$ one has
\begin{align*}
\bigl|\frac{1}{\epsilon}F_n^{\epsilon,0,1}(t)-e^{-\lambda_nt}\bigr|=\frac{1}{\epsilon}\bigl|F_n^{\epsilon,0,1}(t)-\epsilon e^{-\lambda_nt}\bigr|&\le\epsilon\lambda_n\Bigl(|f_n^{\epsilon,0,1}(t)|+\lambda_n\int_0^te^{-\lambda_n(t-s)}|f_n^{\epsilon,0,1}(s)|\,ds\Bigr)\\
&\le C\epsilon^2\lambda_n e^{-\lambda_nt}+C\epsilon^2\lambda_n^2\int_0^te^{-\lambda_n(t-s)}e^{-\lambda_ns}\,ds\\
&\le C\epsilon^2\lambda_n e^{-\lambda_nt}+C\epsilon^2\lambda_n^2te^{-\lambda_nt}.
\end{align*}
Finally, using the identity $f_n^{\epsilon,0,1}(t)=F_n^{\epsilon,0,1}(t)-\epsilon g_n^{\epsilon,0,1}(t)$, the inequality above and the inequality~\eqref{eq:lem_fngn01-2} from Lemma~\ref{lem:fngn}, one obtains for all $t\ge 0$
\begin{align*}
	 \big|\frac{1}{\epsilon}f_n^{\epsilon,0,1}(t)-e^{-t\lambda_n}\big|^2&\le 2\big|\frac{1}{\epsilon}F_n^{\epsilon,0,1}(t)-e^{-t\lambda_n}\big|^2+2|g_n^{\epsilon,0,1}(t)|^2\\
	 &\le C\Bigl(\epsilon^4\lambda_n^2 e^{-2\lambda_nt}+\epsilon^4\lambda_n^4t^2e^{-2\lambda_nt}+e^{-\frac{2t}{\epsilon^2}}\Bigr).
\end{align*}
As a result, using the inequalities $\underset{x\in\R}\sup~x^{2\delta}e^{-x}<\infty$ and $\underset{x\in\R}\sup~x^{2+2\delta}e^{-x}<\infty$, one obtains for all $t>0$
\begin{align*}
\lambda_n^{2\delta-\alpha}\big|\frac{f_n^{\epsilon,0,1}(t)}{\epsilon}-e^{-t\lambda_n}\big|^2\mathds{1}_{1-4\lambda_n\epsilon^2\ge 0}&\le C\lambda_n^{2\delta-\alpha}\Bigl(\epsilon^4\lambda_n^2 e^{-2\lambda_nt}+\epsilon^4\lambda_n^4t^2e^{-2\lambda_nt}+e^{-\frac{2t}{\epsilon^2}}\Bigr)\mathds{1}_{1-4\lambda_n\epsilon^2\ge 0}\\
&\le Ct^{-2\delta}\Bigl(\epsilon^4\lambda_n^{2-\alpha}+\lambda_n^{2\delta-\alpha}\epsilon^{4\delta}\Bigr)\mathds{1}_{1-4\lambda_n\epsilon^2\ge 0}\\
&\le Ct^{-2\delta}\epsilon^{2\alpha}\Bigl(\bigl(\lambda_n\epsilon^2\bigr)^{2-\alpha}+\bigl(\lambda_n\epsilon^2\bigr)^{2\delta-\alpha}\Bigr)\mathds{1}_{1-4\lambda_n\epsilon^2\ge 0}\\
&\le Ct^{-2\delta}\epsilon^{2\alpha},
\end{align*}
using the conditions $2-\alpha\ge 0$, $2\delta-\alpha\ge 0$ and $4\lambda_n\epsilon^2\le 1$ in the last step.

Gathering the estimates, one thus obtains for all $t>0$ and all $\epsilon\in(0,1)$
\[
\underset{n\in\N}\sup~\Bigl(\lambda_n^{2\delta-\alpha}\big|\frac{f_n^{\epsilon,0,1}(t)}{\epsilon}-e^{-t\lambda_n}\big|^2\Bigr)\le Ct^{-2\delta}\epsilon^{2\alpha}.
\]
This concludes the proof of the inequality~\eqref{eq:lem_aux-conv-2}.

The proof of Lemma~\ref{lem:aux-conv} is thus completed.
\end{proof}

\begin{proof}[Proof of Lemma~\ref{lem:semigroups-convergence}]
Let us first prove the inequality~\eqref{eq:lem_semigroups-convergence-u}. Assume that $\delta\in[0,\frac{\alpha}{2}]$. Let $u\in H^{\alpha-2\delta}$, then for all $t\ge 0$ one has
\[
\Pi_u\Bigl(e^{tA_\epsilon}(u,0)\Bigr)=\sum_{n\in\N}\langle u,e_n\rangle f_n^{\epsilon,1,0}(t)e_n
\]
where $t\ge 0\mapsto (f_n^{\epsilon,1,0}(t),g_n^{\epsilon,1,0}(t))$ is the solution of~\eqref{eq:fngn} with initial values $f_n^{\epsilon,1,0}(0)=1$ and $g_n^{\epsilon,1,0}(0)=0$. As a result, one has
\begin{align*}
\|\Pi_u\Bigl(e^{tA_\epsilon}(u,0)\Bigr)-e^{-t\IL}u\|_{H}^2&=\sum_{n\in\N}\big|f_n^{\epsilon,1,0}(t)-e^{-t\lambda_n}\big|^2 \langle u,e_n\rangle^2\\
&=\sum_{n\in\N}\lambda_n^{2\delta-\alpha}\big|f_n^{\epsilon,1,0}(t)-e^{-t\lambda_n}\big|^2 \lambda_n^{\alpha-2\delta}\langle u,e_n\rangle^2\\
&\le \underset{n\in\N}\sup~\Bigl(\lambda_n^{2\delta-\alpha}\big|f_n^{\epsilon,1,0}(t)-e^{-t\lambda_n}\big|^2\Bigr)\|u\|_{H^{\alpha-2\delta}}^2.
\end{align*}
Employing the inequality~\eqref{eq:lem_aux-conv-1} from Lemma~\ref{lem:aux-conv} then yields the inequality~\eqref{eq:lem_semigroups-convergence-u}.

It remains to prove the inequality~\eqref{eq:lem_semigroups-convergence-v}. Assume that $\delta\in[\frac{\alpha}{2},\frac{1}{2}]$. Let $v\in H^{\alpha-2\delta}$, then for all $t\ge 0$ one has
\[
\Pi_u\Bigl(e^{tA_\epsilon}(0,\frac{v}{\epsilon})\Bigr)=\sum_{n\in\N}\langle v,e_n\rangle \frac{f_n^{\epsilon,0,1}(t)}{\epsilon}e_n
\]
where $t\ge 0\mapsto (f_n^{\epsilon,0,1}(t),g_n^{\epsilon,0,1}(t))$ is the solution of~\eqref{eq:fngn} with initial values $f_n^{\epsilon,0,1}(0)=0$ and $g_n^{\epsilon,0,1}(0)=1$. As a result, one has
\begin{align*}
\|\Pi_u\Bigl(e^{tA_\epsilon}(0,\frac{v}{\epsilon})\Bigr)-e^{-t\IL}v\|_{H}^2&=\sum_{n\in\N}\big|\frac{f_n^{\epsilon,0,1}(t)}{\epsilon}-e^{-t\lambda_n}\big|^2 \langle v,e_n\rangle^2\\
&=\sum_{n\in\N}\lambda_n^{2\delta-\alpha}\big|\frac{f_n^{\epsilon,0,1}(t)}{\epsilon}-e^{-t\lambda_n}\big|^2 \lambda_n^{\alpha-2\delta}\langle v,e_n\rangle^2\\
&\le \underset{n\in\N}\sup~\Bigl(\lambda_n^{2\delta-\alpha}\big|\frac{f_n^{\epsilon,0,1}(t)}{\epsilon}-e^{-t\lambda_n}\big|^2\Bigr)\|v\|_{H^{\alpha-2\delta}}^2.
\end{align*}
Employing the inequality~\eqref{eq:lem_aux-conv-2} from Lemma~\ref{lem:aux-conv} then yields the inequality~\eqref{eq:lem_semigroups-convergence-v}.

The proof of Lemma~\ref{lem:semigroups-convergence} is thus completed.
\end{proof}

\section{Properties of the solutions to the stochastic partial differential equations}\label{sec:properties}

In Section~\ref{sec:SPDE}, we have introduced stochastic evolution equations and stated well-posedness results for mild solutions. Using the properties on the semigroups presented in Sections~\ref{sec:expL} and~\ref{sec:expAeps}, it is possible to prove rigorously these results by standard fixed point approaches, the details are omitted. In this section, the aim is to provide some moment bounds and regularity properties, for which the properties given by Lemma~\ref{lem:expL} for $\bigl(e^{-t\IL})_{t\ge 0}$ and Lemmas~\ref{lem:expAeps-bound} and~\ref{lem:expAeps-smoothing1} for $\bigl(e^{tA_\epsilon}\bigr)_{t\ge 0}$ play an important role.

\subsection{Properties of the solutions to the stochastic heat equation}

Let $u_0^0$ be $\HH$-valued $\mathcal{F}_0$-measurable random variable. Recall that $\bigl(u^{0}(t)\bigr)_{t\ge 0}$ denotes the unique mild solution of~\eqref{eq:she} -- the mild formulation is given by~\eqref{eq:mild-she} -- with initial value $u^0(0)=u_0^0$.

\begin{propo}\label{propo:she-momentbounds}
Let $\beta\in(0,1]$ such that the condition~\eqref{eq:condition_beta} from Assumption~\ref{ass:noise} holds. For all $p\in[1,\infty)$, $T\in(0,\infty)$ and all $\alpha\in[0,\beta)$, there exists $C_{p,\alpha}(T)\in(0,\infty)$ such that, if the initial value $u_0^0$ is a $\mathcal{F}_0$-measurable random variable, which satisfies $\E[\|u_0^0\|_{H^\alpha}^{2p}]<\infty$, then one has
\begin{equation}\label{eq:propo-she-momentbounds}
\underset{0\le t\le T}\sup~\E[\|u^{0}(t)\|_{H^\alpha}^{2p}]\le C_{p,\alpha}(T)(1+\E[\|u_0\|_{H^{\alpha}}^{2p}]).
\end{equation}
\end{propo}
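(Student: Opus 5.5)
The plan is to work directly from the mild formulation~\eqref{eq:mild-she}. We split $u^0(t)=e^{-t\IL}u_0^0+\int_0^te^{-(t-s)\IL}f(u^0(s))\,ds+Z(t)$ and bound each of the three terms in $L^{2p}(\Omega;H^\alpha)$, uniformly for $t\in[0,T]$, then close the estimate with a Gronwall-type argument.

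\textbf{Initial value term.} By~\eqref{eq:lem_expL-bound} from Lemma~\ref{lem:expL}, one has $\|e^{-t\IL}u_0^0\|_{H^\alpha}\le \|u_0^0\|_{H^\alpha}$. This term therefore contributes $\E[\|u_0^0\|_{H^\alpha}^{2p}]$.

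\textbf{Stochastic convolution.} The process $Z(t)$ is Gaussian, so by the Gaussian moment equivalence (or the Burkholder--Davis--Gundy inequality) it suffices to treat the second moment:
\[
\E[\|Z(t)\|_{H^\alpha}^2]=\sum_{n\in\N}q_n\lambda_n^{\alpha}\int_0^te^{-2(t-s)\lambda_n}\,ds\le \frac12\sum_{n\in\N}q_n\lambda_n^{\alpha-1}.
\]
The right-hand side is finite by~\eqref{eq:condition_beta}, since $\alpha<\beta$. This bound is uniform in $t\ge0$, and it gives $\sup_{t}\E[\|Z(t)\|_{H^\alpha}^{2p}]\le C_{p,\alpha}$.

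\textbf{Nonlinear term.} This is the step that needs care, because $f$ is only assumed Lipschitz on $H$ (Assumption~\ref{ass:fLip}), not on $H^\alpha$. The approach is to use the smoothing property~\eqref{eq:lem_expL-smoothing} together with the linear growth $\|f(u)\|_H\le C(1+\|u\|_H)$:
\[
\Big\|\int_0^te^{-(t-s)\IL}f(u^0(s))\,ds\Big\|_{H^\alpha}\le C_\alpha\int_0^t(t-s)^{-\frac{\alpha}{2}}\bigl(1+\|u^0(s)\|_H\bigr)\,ds.
\]
Since $\alpha<\beta\le1$, the singularity $(t-s)^{-\alpha/2}$ is integrable. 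Minkowski's inequality in $L^{2p}(\Omega)$ then bounds the $2p$-th moment by $C_{p,\alpha}(T)\bigl(1+\sup_{s\le T}\E[\|u^0(s)\|_H^{2p}]\bigr)$.

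\textbf{Reduction to the case $\alpha=0$.} It thus remains to establish the bound for $\alpha=0$. We apply the same decomposition in $H$, using $\|e^{-(t-s)\IL}\|\le1$ and linear growth. This gives
\[
\bigl(\E\|u^0(t)\|_H^{2p}\bigr)^{\frac{1}{2p}}\le C\bigl(1+(\E\|u_0^0\|_H^{2p})^{\frac{1}{2p}}\bigr)+C\int_0^t\bigl(\E\|u^0(s)\|_H^{2p}\bigr)^{\frac{1}{2p}}\,ds,
\]
and Gronwall's lemma yields the $H$-moment bound. The norm comparison~\eqref{eq:comparenorms} gives $\E\|u_0^0\|_H^{2p}\le C\,\E\|u_0^0\|_{H^\alpha}^{2p}$.

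To apply Gronwall rigorously we need $\sup_{t\le T}\E\|u^0(t)\|_H^{2p}<\infty$ a priori. This follows from the fixed-point construction of the mild solution; if necessary, we would instead use a stopping-time or truncation argument, or run the same estimate on the Picard iterates. Inserting the $\alpha=0$ bound into the nonlinear estimate and combining the three terms gives~\eqref{eq:propo-she-momentbounds}.
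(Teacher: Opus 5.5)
Your proposal is correct and follows the paper's proof essentially step by step. The paper uses the same Gaussian second-moment computation for $Z(t)$, the same Minkowski--Gronwall argument in $H$ for $\alpha=0$, and the same smoothing bound $(t-s)^{-\alpha/2}$ combined with linear growth in $H$ to pass to $\alpha\in(0,\beta)$. Your remarks on the a priori finiteness needed for Gronwall, and on bounding $\E[\|u_0^0\|_H^{2p}]$ by $\E[\|u_0^0\|_{H^\alpha}^{2p}]$ via~\eqref{eq:comparenorms}, make explicit points that the paper leaves implicit.
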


The result stated in Proposition~\ref{propo:she-momentbounds} is standard and may be found in many articles and books. For completeness and for pedagogical reasons, a short proof is given below.

\begin{proof}
We recall that the stochastic convolution $Z(t)$ is given by~\eqref{eq:Z}.

Using the It\^o isometry property, one has, for all $t\ge 0$ and all $\alpha\in[0,\beta)$,
\begin{align*}
\E[\|Z(t)\|_{H^\alpha}^2]&=\sum_{n=1}^{\infty}q_n\int_0^t\bigl\|e^{-(t-s)\IL}e_n\bigr\|_{H^\alpha}^2\,ds\\
&=\sum_{n=1}^{\infty}q_n\lambda_n^{\alpha}\int_0^t e^{-2(t-s)\lambda_n}\,ds\\
&=\frac12\sum_{n\in\N}q_n\lambda_n^{\alpha-1}(1-e^{-2t\lambda_n}).
\end{align*}
Therefore one has for all $\alpha\in[0,\beta)$
\begin{equation}\label{eq:boundZ}
\underset{t\ge 0}\sup~\E[\|Z(t)\|_{H^\alpha}^2]<\infty.
\end{equation}
Since $Z(t)$ is a Gaussian random variable for all $t\ge 0$, it is sufficient to prove moment bounds for $p=1$, and from~\eqref{eq:boundZ} one obtains for all $p\in[1,\infty)$
\begin{equation}\label{eq:boundZp}
\underset{t\ge 0}\sup~\E[\|Z(t)\|_{H^\alpha}^{2p}]<\infty.
\end{equation}

Let us first prove the inequality~\eqref{eq:propo-she-momentbounds} for $u^{0}(t)$, when $\alpha=0$.

Using the mild formulation~\eqref{eq:mild-she}, the inequality~\eqref{eq:lem_expL-bound} from Lemma~\ref{lem:expL} and the Lipschitz continuity of $f$ (Assumption~\ref{ass:fLip}, one obtains for all $t\ge 0$
\begin{align*}
   \|u^{0}(t)\|_{H}&\le \|e^{-t\IL}u_0^0\|_{H}+\int_0^t\bigl\|e^{-(t-s)\IL}f(u^{0}(s))\bigr\|_{H}\,ds+\|Z(t)\|_{H}\\
   &\le \|u_0^0\|_{H}+\int_0^t\|f(u^{0}(s))\|_{H}\,ds+\|Z(t)\|_{H}\\
   &\le \|u_0^0\|_{H}+C\int_0^t\big(1+\|u^{0}(s)\|_{H}\big)\,ds+\|Z(t)\|_{H}.
\end{align*}
Applying the Minkowski inequality and using the moment bounds~\eqref{eq:boundZp} obtained above for the stochastic convolution $Z(t)$, one obtains for all $t\ge 0$
\begin{align*}
   \big(\E[\|u^{0}(t)\|_{H}^{2p}]\big)^{\frac{1}{2p}}&\le  \big(\E[\|u_0\|_{H}^{2p}]\big)^{\frac{1}{2p}}+C\int_0^t\big(1+\big(\E[\|u^{0}(s)\|_{H}^{2p}]\big)^{\frac{1}{2p}}\big)\,ds+\big(\E[\|Z(t)\|_{H}^{2p}]\big)^{\frac{1}{2p}}\\
   &\le C_p\big(1+\big(\E[\|u_0\|_{H}^{2p}]\big)^{\frac{1}{2p}}\big)+C\int_0^t\big(\E[\|u^{0}(s)\|_{H}^{2p}]\big)^{\frac{1}{2p}}\,ds.
\end{align*}
Finally, applying the Gr\"onwall inequality, there exists $C_{p,0}(T)\in(0,\infty)$ such that for all $t\in[0,T]$ one has 
\[
\big(\E[\|u^{0}(t)\|_{H}^{2p}]\big)^{\frac{1}{2p}}\le C_{p,0}(T)\big(1+\big(\E[\|u_0^0\|_{H}^{2p}]\big)^{\frac{1}{2p}}\big).
\]
This concludes the proof of the inequality~\eqref{eq:propo-she-momentbounds} when $\alpha=0$. Let us then deal with the case $\alpha\in(0,\beta)$. Using the mild formulation~\eqref{eq:mild-she}, the inequality~\eqref{eq:lem_expL-bound} and the smoothing inequality~\eqref{eq:lem_expL-smoothing} from Lemma~\ref{lem:expL} and the Lipschitz continuity property of $f$ (Assumption~\ref{ass:fLip}), one has, for all $t\ge 0$
\begin{align*}
    \|u^{0}(t)\|_{H^{\alpha}}&\le \|e^{-t\IL} u_0^0\|_{H^{\alpha}}+\int_0^t\bigl\|e^{-(t-s)\IL}f(u^{0}(s))\bigr\|_{H^{\alpha}}\,ds+\|Z(t)\|_{H^{\alpha}}\\
    &\le \|u_0\|_{H^{\alpha}}+C\int_0^t(t-s)^{-\frac{\alpha}{2}}\|f(u^{0}(s))\|_{H}\,ds+\|Z(t)\|_{H^{\alpha}}\\
    &\le \|u_0\|_{H^{\alpha}}+C\int_0^t(t-s)^{-\frac{\alpha}{2}}\big(1+\|u^{0}(s)\|_H\big)\,ds+\|Z(t)\|_{H^{\alpha}}.
\end{align*}
Applying the Minkowski inequality and using the moment bounds~\eqref{eq:boundZp} obtained above for the stochastic convolution $Z(t)$ and the moment bounds for $\|u^{0}(t)\|_H$ proved above in the case $\alpha=0$, one obtains for all $t\in[0,T]$
\begin{align*}
    \big(\E[\|u^{0}(t)\|_{H^{\alpha}}^{2p}]\big)^{\frac{1}{2p}}&\le  \big(\E[\|u_0\|_{H^{\alpha}}^{2p}]\big)^{\frac{1}{2p}}+C\int_0^t(t-s)^{-\frac{\alpha}{2}}\big(1+ \big(\E[\|u^{0}(s)\|_{H}^{2p}]\big)^{\frac{1}{2p}}\big)\,ds+C_{p,\alpha}\\
    &\le \big(\E[\|u_0\|_{H^{\alpha}}^{2p}]\big)^{\frac{1}{2p}}+C_{p,0}(T)\bigl(1+(\E[\|u_0\|_H^{2p}])^{\frac{1}{2p}}\bigr)\int_0^t(t-s)^{-\frac{\alpha}{2}}\,ds+C_{p,\alpha}\\
    &\le C_{p,\alpha}(T)(1+(\E[\|u_0\|_{H^{\alpha}}^{2p}])^{\frac{1}{2p}}),
\end{align*}
for some $C_{p,\alpha}(T)\in(0,\infty)$. This concludes the proof of the inequality~\eqref{eq:propo-she-momentbounds} when $\alpha\in(0,\beta)$.

The proof of Proposition~\ref{propo:she-momentbounds} is thus completed.
\end{proof}

\subsection{Properties of the solutions to the stochastic damped wave equation}

For all $\epsilon\in(0,1)$ let $x_0^\epsilon=(u_0^\epsilon,v_0^\epsilon)$ be an $\HH$-valued $\mathcal{F}_0$-measurable random variable. Recall that $\bigl(X^{\epsilon}(t)\bigr)_{t\ge 0}$ denotes the unique mild solution of~\eqref{eq:sdwe1}, with the mild formulation given by~\eqref{eq:mild-sdwe}, with initial value $X^\epsilon(0)=x_0^\epsilon$.

\begin{propo}\label{propo:sdwe-momentbounds}
Let $\beta\in(0,1]$ such that the condition~\eqref{eq:condition_beta} from Assumption~\ref{ass:noise} holds. For all $p\in[1,\infty)$, $T\in(0,\infty)$ and all $\alpha\in[0,\beta)$, there exists $C_{p,\alpha}(T)\in(0,\infty)$ such that, if for all $\epsilon\in(0,1)$ the initial value $x_0^\epsilon$ is a $\mathcal{F}_0$-measurable random variable, which satisfies $\underset{\epsilon\in(0,1)}\sup~\E[\|x_0^\epsilon\|_{\HH^\alpha}^{2p}]<\infty$, then one has
\begin{equation}\label{eq:propo-sdwe-momentbounds}
\underset{\epsilon\in(0,1)}\sup~\underset{0\le t\le T}\sup~\E[\|X^{\epsilon}(t)\|_{\HH^\alpha}^{2p}]\le C_{p,\alpha}(T)\bigl(1+\underset{\epsilon\in(0,1)}\sup~\E[\|x_0^\epsilon\|_{\HH^\alpha}^{2p}]\bigr).
\end{equation}
\end{propo}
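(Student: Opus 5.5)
We follow the same route as in the proof of Proposition~\ref{propo:she-momentbounds}: bound each term of the mild formulation~\eqref{eq:mild-sdwe} in $\HH^\alpha$, uniformly in $\epsilon\in(0,1)$, and close with a Gr\"onwall argument. The initial-value term is handled by Lemma~\ref{lem:expAeps-bound}, which gives $\|e^{tA_\epsilon}x_0^\epsilon\|_{\HH^\alpha}\le\|x_0^\epsilon\|_{\HH^\alpha}$. The two remaining terms each carry a factor $\frac1\epsilon$, and the point is to absorb it with the smoothing bound of Lemma~\ref{lem:expAeps-smoothing1} taken with $\rho=1$.

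\textbf{Stochastic convolution.} For $\IZ^\epsilon(t)$ defined by~\eqref{eq:IZ}, the It\^o isometry gives
\[
\E[\|\IZ^\epsilon(t)\|_{\HH^\alpha}^2]=\frac{1}{\epsilon^2}\sum_{n\in\N}q_n\int_0^t\|e^{sA_\epsilon}(0,e_n)\|_{\HH^\alpha}^2\,ds.
\]
Fix $\delta_0\in(0,\beta-\alpha)$. Apply Lemma~\ref{lem:expAeps-smoothing1} with $\rho=1$ and $\delta=\frac{1-\delta_0}{2}$; this is admissible since $2\delta=1-\delta_0\le 1=\rho$. It yields
\[
\|e^{sA_\epsilon}(0,e_n)\|_{\HH^\alpha}^2\le C s^{-(1-\delta_0)}\epsilon^2\bigl(1+\epsilon^{-\delta_0}e^{-\frac{s}{2\epsilon^2}}\bigr)^2\lambda_n^{\alpha+\delta_0-1}.
\]
The prefactor $\epsilon^2$ cancels the $\epsilon^{-2}$. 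The sum $\sum_n q_n\lambda_n^{\alpha+\delta_0-1}$ is finite by~\eqref{eq:condition_beta}, because $\alpha+\delta_0<\beta$. It then remains to check that
\[
\int_0^t s^{\delta_0-1}\bigl(1+\epsilon^{-2\delta_0}e^{-\frac{s}{\epsilon^2}}\bigr)\,ds
\]
is bounded uniformly in $\epsilon$. The first part is harmless. For the second, the substitution $s=\epsilon^2 r$ gives
\[
\epsilon^{-2\delta_0}\int_0^t s^{\delta_0-1}e^{-s/\epsilon^2}\,ds=\int_0^{t/\epsilon^2}r^{\delta_0-1}e^{-r}\,dr\le\Gamma(\delta_0).
\]
So $\sup_{\epsilon}\sup_{t\le T}\E[\|\IZ^\epsilon(t)\|_{\HH^\alpha}^2]<\infty$. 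Since $\IZ^\epsilon(t)$ is Gaussian, this upgrades to all moments of order $2p$.

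\textbf{Nonlinear term.} We have $\frac1\epsilon e^{(t-s)A_\epsilon}F(X^\epsilon(s))=\frac1\epsilon e^{(t-s)A_\epsilon}(0,f(u^\epsilon(s)))$. Apply Lemma~\ref{lem:expAeps-smoothing1} with $\rho=1$ and $\delta=\frac{\alpha}{2}$, which is admissible since $\alpha<1$. The required norm of $f(u^\epsilon(s))$ is the $H^{\alpha+1-\alpha-1}=H$ norm, so
\[
\Bigl\|\frac1\epsilon e^{(t-s)A_\epsilon}(0,f(u^\epsilon(s)))\Bigr\|_{\HH^\alpha}\le C(t-s)^{-\frac\alpha2}\bigl(1+\epsilon^{\alpha-1}e^{-\frac{t-s}{2\epsilon^2}}\bigr)\bigl(1+\|u^\epsilon(s)\|_H\bigr),
\]
using the linear growth of $f$ from Assumption~\ref{ass:fLip}. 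Define
\[
K_\epsilon(r)=r^{-\frac\alpha2}\bigl(1+\epsilon^{\alpha-1}e^{-\frac{r}{2\epsilon^2}}\bigr).
\]
Its integral over $[0,T]$ is bounded uniformly in $\epsilon$: the scaling $r=\epsilon^2\tau$ gives
\[
\epsilon^{\alpha-1}\int_0^T r^{-\frac\alpha2}e^{-\frac{r}{2\epsilon^2}}\,dr\le\epsilon^{\alpha-1}\epsilon^{2-\alpha}C=C\epsilon.
\]

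\textbf{Closing the estimate.} Since $\|u^\epsilon(s)\|_H\le\|X^\epsilon(s)\|_{\HH^\alpha}$, Minkowski's inequality gives, with $\phi_\epsilon(t)=(\E\|X^\epsilon(t)\|^{2p}_{\HH^\alpha})^{1/2p}$,
\[
\phi_\epsilon(t)\le C\Bigl(1+\sup_{\epsilon}\bigl(\E[\|x_0^\epsilon\|_{\HH^\alpha}^{2p}]\bigr)^{\frac{1}{2p}}\Bigr)+C\int_0^tK_\epsilon(t-s)\phi_\epsilon(s)\,ds.
\]
A Gr\"onwall lemma for this $\epsilon$-dependent but uniformly integrable kernel then concludes. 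To apply it we first assume $\sup_{s\le T}\phi_\epsilon(s)<\infty$ for fixed $\epsilon$, which holds by the fixed-point construction. One could alternatively first do the case $\alpha=0$, where $K_\epsilon\le C(1+\epsilon^{-1}e^{-r/2\epsilon^2})$ is integrable, and then bootstrap to $\alpha>0$ exactly as in Proposition~\ref{propo:she-momentbounds}.

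The main obstacle is the kernel singularity in $\epsilon$. Iterating the convolution inequality naively is not obviously uniform, so I would instead split $[0,T]$ into short intervals of length $h$. The constant $C\int_0^hK_\epsilon$ is smaller than $\frac12$ for $h$ small, uniformly in $\epsilon$, because $\int_0^h K_\epsilon\le Ch^{1-\alpha/2}+C\epsilon$, up to also taking $\epsilon$ small; for $\epsilon$ bounded below everything is standard. This gives a uniform bound on each interval, and we propagate it over $T/h$ steps.
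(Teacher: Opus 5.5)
Your proof is correct and has the same skeleton as the paper's. Both use the mild formulation, the contraction bound of Lemma~\ref{lem:expAeps-bound} for the initial datum, Lemma~\ref{lem:expAeps-smoothing1} with $\rho=1$, $\delta=\frac{1-\delta_0}{2}$ plus Gaussianity for $\IZ^\epsilon$, and a Gr\"onwall closure.

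The one real difference is the drift term. The paper takes $\delta=\frac12$ (with $\rho=1$), so that $\epsilon^{2\delta-\rho}=1$ and
$\frac1\epsilon\|e^{(t-s)A_\epsilon}(0,f(u^\epsilon(s)))\|_{\HH^\alpha}\le C(t-s)^{-\frac12}\|f(u^\epsilon(s))\|_{H^{\alpha-1}}\le C(t-s)^{-\frac12}(1+\|X^\epsilon(s)\|_{\HH})$,
using $\alpha\le 1$. The kernel is then $\epsilon$-free. The paper closes the case $\alpha=0$ by iterating the inequality once (Beta function) and applying the classical Gr\"onwall lemma. It then obtains $\alpha\in(0,\beta)$ directly from the $\alpha=0$ bound, with no second Gr\"onwall step.

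Your choice $\delta=\frac{\alpha}{2}$ keeps the $H$-norm on $f(u^\epsilon)$ but produces the $\epsilon$-dependent kernel $K_\epsilon$. Your interval-splitting argument does handle it correctly. However, the ``main obstacle'' you identify is self-inflicted. Writing $x=r/\epsilon^2$ and using $\sup_{x\ge0}x^{\frac{1-\alpha}{2}}e^{-x/2}<\infty$ gives $\epsilon^{\alpha-1}e^{-\frac{r}{2\epsilon^2}}\le C_\alpha r^{-\frac{1-\alpha}{2}}$. Hence $K_\epsilon(r)\le r^{-\frac{\alpha}{2}}+C_\alpha r^{-\frac12}\le C_{\alpha}(T)r^{-\frac12}$ on $(0,T]$, uniformly in $\epsilon\in(0,1)$. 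This is exactly the paper's kernel. With this one-line domination, the standard singular Gr\"onwall argument applies, and you need neither the splitting into short intervals nor a separate treatment of $\epsilon$ bounded below.
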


\begin{proof}
We recall that the stochastic convolution $\IZ^\epsilon(t)$ is given by~\eqref{eq:IZ}.

Using the It\^o isometry property, one has
\[
\E[\|\IZ^{\epsilon}(t)\|_{\HH^\alpha}^2]=\frac{1}{\epsilon^2}\sum_{n=1}^{\infty}q_n\int_0^t\bigl\|e^{(t-s)A_\epsilon}\bigl(0,e_n\bigr)\bigr\|_{\HH^\alpha}^2\,ds.
\]
Let $\alpha\in[0,\beta)$, and let $\delta_0\in(0,\beta-\alpha)$. Applying the inequality~\eqref{eq:lem_expAeps-smoothing1} from Lemma~\ref{lem:expAeps-smoothing1} with $\delta=\frac{1-\delta_0}{2}$ and $\rho=1$, one obtains
\begin{align*}
\bigl\|e^{(t-s)A_\epsilon}\bigl(0,e_n\bigr)\bigr\|_{\HH^\alpha}^2&\le C_{\delta_0}(t-s)^{\delta_0-1}\epsilon^2\bigl(\epsilon^{-2\delta_0}e^{-\frac{t-s}{\epsilon^2}}+1\bigr)\|e_n\|_{H^{\alpha+\delta_0-1}}^2\\
&\le C_{\delta_0}(t-s)^{\delta_0-1}\epsilon^2\bigl(\epsilon^{-2\delta_0}e^{-\frac{t-s}{\epsilon^2}}+1\bigr)\lambda_n^{\alpha+\delta_0-1}.
\end{align*}
Owing to the condition $\delta_0\in(0,\beta-\alpha)$ and to Assumption~\ref{ass:noise}, one has
\[
\sum_{n\in\N}q_n\lambda_n^{\alpha+\delta_0-1}<\infty.
\]
As a result, for all $t\ge 0$ one has
\begin{align*}
\E[\|\IZ^{\epsilon}(t)\|_{\HH^\alpha}^2]&\le C_{\alpha,\delta_0}\Bigl(\int_0^t(t-s)^{\delta_0-1}\epsilon^{-2\delta_0}e^{-\frac{t-s}{\epsilon^2}}\,ds+\int_0^t(t-s)^{\delta_0-1}\,ds\Bigr)\\
&\le C_{\alpha,\delta_0}\Bigl(\int_0^\infty r^{\delta_0-1}e^{-r}\,dr+\int_0^tr^{\delta_0-1}\,dr\Bigr).
\end{align*}
Therefore one has for all $T\in(0,\infty)$, $\alpha\in[0,\beta)$
\begin{equation}\label{eq:boundIZ}
\underset{\epsilon\in(0,1)}\sup~\underset{t\in[0,T]}\sup~\E[\|\IZ^{\epsilon}(t)\|_{H^\alpha}^2]<\infty.
\end{equation}
Since $\IZ^{\epsilon}(t)$ is a Gaussian random variable for all $t\ge 0$, it is sufficient to prove moment bounds for $p=1$, and from~\eqref{eq:boundIZ} one obtains for all $p\in[1,\infty)$
\begin{equation}\label{eq:boundIZp}
\underset{\epsilon\in(0,1)}\sup~\underset{t\in[0,T]}\sup~\E[\|\IZ^{\epsilon}(t)\|_{H^\alpha}^{2p}]<\infty.
\end{equation}

Let us first prove the inequality~\eqref{eq:propo-sdwe-momentbounds} for $X^{\epsilon}(t)$, when $\alpha=0$.

Using the mild formulation~\eqref{eq:mild-sdwe}, the inequality~\eqref{eq:lem_expAeps-bound} from Lemma~\ref{lem:expAeps-bound} and the inequality~\eqref{eq:lem_expAeps-smoothing1} from Lemma~\ref{lem:expAeps-smoothing1} with $\delta=1/2$ and $\rho=1$, one obtains for all $t\ge 0$
   \begin{align*}
       \|X^{\epsilon}(t)\|_{\HH}&\le \|e^{tA_\epsilon}P_Nx_0^\epsilon\|_{\HH}+\frac{1}{\epsilon}\int_0^t\bigl\|e^{(t-s)A_{\epsilon}}\bigl(0,f(u^{\epsilon}(s))\bigr)\bigr\|_{\HH}\,ds+\|\IZ^{\epsilon}(t)\|_{\HH}\\
       &\le \|x_0^\epsilon\|_{\HH}+C\int_0^t(t-s)^{-\frac12}\|f(u^{\epsilon}(s))\|_{H^{-1}}\,ds+\|\IZ^{\epsilon}(t)\|_{\HH}\\
       &\le \|x^\epsilon_0\|_{\HH}+C\int_0^t(t-s)^{-\frac12}(1+\|X^{\epsilon}(s)\|_{\HH})\,ds+\|\IZ^{\epsilon}(t)\|_{\HH},
   \end{align*}
using the Lipschitz continuity of $f$ in the last step, more precisely using the inequalities
\[
\|f(u)\|_{H^{-1}}\le C\|f(u)\|_{H}\le C(1+\|u\|_H)\le C(1+\|x\|_{\HH}),\quad \forall~x=(u,v)\in\HH.
\]
Applying the Minkowski inequality and using the moment bounds~\eqref{eq:boundIZp} obtained above for the stochastic convolution $\IZ^{\epsilon,N}(t)$, one obtains for all $t\in[0,T]$
   \begin{align*}
(\E[\|X^{\epsilon}(t)\|_{\HH}^{2p}])^{\frac{1}{2p}}&\le (\E[\|x_0^\epsilon\|_{\HH}^{2p}])^{\frac{1}{2p}}+C\int_0^t(t-s)^{-\frac12}\bigl(1+(\E[\|X^{\epsilon}(s)\|_{\HH}^{2p}])^{\frac{1}{2p}}\bigr)\,ds+(\E[\|\IZ^{\epsilon}(t)\|_{\HH}^{2p}])^{\frac{1}{2p}}\\
    &\le C_p(T)\bigl(1+(\E[\|x_0^\epsilon\|_{\HH}^{2p}])^{\frac{1}{2p}}\bigr)+C\int_0^t(t-s)^{-\frac12}(\E[\|X^{\epsilon}(s)\|_{\HH}^{2p}])^{\frac{1}{2p}}\,ds.
   \end{align*}
Using that inequality, the Fubini theorem, and a scaling property of the beta function, one has for all $t\ge 0$
\begin{align*}
(\E[\|X^{\epsilon}(t)\|_{\HH}^{2p}])^{\frac{1}{2p}}&\le C_p(T)\bigl(1+(\E[\|x_0^\epsilon\|_{\HH}^{2p}])^{\frac{1}{2p}}\bigr)\Bigl(1+\int_0^t(t-s)^{-\frac12}\,ds\Bigr)\\
&\quad+C\int_0^t\int_0^s(t-s)^{-\frac12}(s-r)^{-\frac12}(\E[\|X^{\epsilon}(r)\|_{\HH}^{2p}])^{\frac{1}{2p}}\,dr\,ds\\
&\le C_p(T)\bigl(1+(\E[\|x_0^\epsilon\|_{\HH}^{2p}])^{\frac{1}{2p}}\bigr)+C\int_0^t\int_r^t(t-s)^{-\frac12}(s-r)^{-\frac12}\,ds~(\E[\|X^{\epsilon}(r)\|_{\HH}^{2p}])^{\frac{1}{2p}}\,dr\\
&\le C_p(T)\bigl(1+(\E[\|x_0^\epsilon\|_{\HH}^{2p}])^{\frac{1}{2p}}\bigr)+C\int_0^t(\E[\|X^{\epsilon}(r)\|_{\HH}^{2p}])^{\frac{1}{2p}}\,dr.
\end{align*}
Finally, applying the Gr\"onwall inequality, there exists $C_{p,0}(T)\in(0,\infty)$ such that for all $\epsilon\in(0,1)$ and for all $t\in[0,T]$ one has 
\[
\big(\E[\|X^{\epsilon}(t)\|_{\HH}^{2p}]\big)^{\frac{1}{2p}}\le C_{p,0}(T)\big(1+\big(\E[\|X_0^\epsilon\|_{H}^{2p}]\big)^{\frac{1}{2p}}\big).
\]
This concludes the proof of the inequality~\eqref{eq:propo-sdwe-momentbounds} when $\alpha=0$, and let us then deal with the case $\alpha\in(0,\beta)$. Using the mild formulation~\eqref{eq:mild-sdwe}, the inequality~\eqref{eq:lem_expAeps-bound} from Lemma~\ref{lem:expAeps-bound} and the inequality~\eqref{eq:lem_expAeps-smoothing1} from Lemma~\ref{lem:expAeps-smoothing1} with $\delta=1/2$ and $\rho=1$, one obtains for all $t\ge 0$
    \begin{align*}
       \|X^{\epsilon}(t)\|_{\HH^{\alpha}}&\le \|e^{tA_\epsilon}x_0^\epsilon\|_{\HH^{\alpha}}+\frac{1}{\epsilon}\int_0^t\bigl\|e^{(t-s)A_{\epsilon}}\bigl(0,f(u^{\epsilon}(s))\bigr)\bigr\|_{\HH^{\alpha}}\,ds+\|\IZ^{\epsilon}(t)\|_{\HH^{\alpha}}\\
       &\le \|x_0^\epsilon\|_{\HH^{\alpha}}+C\int_0^t(t-s)^{-\frac12}\|f(u^{\epsilon}(s))\|_{H^{\alpha-1}}\,ds+\|\IZ^{\epsilon}(t)\|_{\HH^{\alpha}}\\
       &\le \|x_0^\epsilon\|_{\HH^{\alpha}}+C\int_0^t(t-s)^{-\frac12}(1+\|X^{\epsilon}(s)\|_{\HH})\,ds+\|\IZ^{\epsilon,N}(t)\|_{\HH^{\alpha}},
   \end{align*}
using the condition $\alpha\le 1$ to have the inequality
\[
\|f(u)\|_{H^{\alpha-1}}\le C\|f(u)\|_{H}\le C(1+\|u\|_H)\le C(1+\|x\|_{\HH}),\quad \forall~x=(u,v)\in\HH
\]
in the last step. Applying the Minkowski inequality and using the moment bounds~\eqref{eq:boundIZp} obtained above for the stochastic convolution $\IZ^{\epsilon}(t)$ and the moment bounds for $\|X^{\epsilon}(t)\|_{\mathcal H}$ proved above in the case $\alpha=0$, one obtains for all $t\in[0,T]$
\begin{align*}
    (\E[ \|X^{\epsilon}(t)\|_{\HH^{\alpha}}^{2p}])^{\frac{1}{2p}}&\le C_{p,\alpha}(T)\bigl(1+(\E[ \|x_0^\epsilon\|_{\HH^{\alpha}}^{2p}])^{\frac{1}{2p}}\bigr)+C\int_0^t(t-s)^{-\frac12}(\E[ \|X^{\epsilon}(s)\|_{\HH}^{2p}])^{\frac{1}{2p}}\,ds\\
    &\le C_{p,\alpha}(T)\bigl(1+(\E[ \|x_0^\epsilon\|_{\HH^\alpha}^{2p}])^{\frac{1}{2p}}\bigr),
\end{align*}
for some $C_{p,\alpha}(T)\in(0,\infty)$. This concludes the proof of the inequality~\eqref{eq:propo-sdwe-momentbounds} when $\alpha\in(0,\beta)$.

The proof of Proposition~\ref{propo:sdwe-momentbounds} is thus completed.
\end{proof}

\subsection{Properties of solutions to spectral Galerkin approximations}

Propositions~\ref{propo:she-momentbounds} and~\ref{propo:sdwe-momentbounds} can be extended when $u^0$ and $X^\epsilon$ are replaced by the solutions $u^{0,N}$ and $X^{\epsilon,N}$ to~\eqref{eq:she-galerkin} and~\eqref{eq:sdwe-galerkin} obtained by the application of the spectral Galerkin approximation. The bounds are uniform with respect to $N\in\N$.

\begin{propo}\label{propo:Galerkin}
Let $\beta\in(0,1]$ such that the condition~\eqref{eq:condition_beta} from Assumption~\ref{ass:noise} holds. For all $p\in[1,\infty)$, $T\in(0,\infty)$ and all $\alpha\in[0,\beta)$, there exists $C_{p,\alpha}(T)\in(0,\infty)$ such that, if the initial value $u_0^0$ is a $\mathcal{F}_0$-measurable random variable, which satisfies $\E[\|u_0^0\|_{H^\alpha}^{2p}]<\infty$, then one has
\begin{equation}\label{eq:propo-she-momentbounds-Galerkin}
\underset{N\in\N}\sup~\underset{0\le t\le T}\sup~\E[\|u^{0,N}(t)\|_{H^\alpha}^{2p}]\le C_{p,\alpha}(T)(1+\E[\|u_0\|_{H^{\alpha}}^{2p}]),
\end{equation}
and if the initial value $x_0^\epsilon$ is a $\mathcal{F}_0$-measurable random variable, which satisfies $\underset{\epsilon\in(0,1)}\sup~\E[\|x_0^\epsilon\|_{\HH^\alpha}^{2p}]<\infty$, then one has
\begin{equation}\label{eq:propo-sdwe-momentbounds-Galerkin}
\underset{N\in\N}\sup~\underset{\epsilon\in(0,1)}\sup~\underset{0\le t\le T}\sup~\E[\|X^{\epsilon,N}(t)\|_{\HH^\alpha}^{2p}]\le C_{p,\alpha}(T)\bigl(1+\underset{\epsilon\in(0,1)}\sup~\E[\|x_0^\epsilon\|_{\HH^\alpha}^{2p}]\bigr).
\end{equation}
\end{propo}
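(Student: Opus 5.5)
The plan is to repeat the proofs of Propositions~\ref{propo:she-momentbounds} and~\ref{propo:sdwe-momentbounds} verbatim for the Galerkin solutions, using the mild formulations~\eqref{eq:mild-she-galerkin} and~\eqref{eq:mild-sdwe-galerkin}. At each step we check that the constants do not depend on $N$. Only three facts are needed for this. First, $P_N$ and $\IP_N$ commute with $e^{-t\IL}$, $e^{tA_\epsilon}$ and $Q$. Second, by~\eqref{eq:boundPN}, $P_N$ and $\IP_N$ are contractions in every $H^\alpha$ and $\HH^\alpha$. Third, $f_N=P_Nf$ satisfies $\|f_N(u)\|_H\le\|f(u)\|_H\le C(1+\|u\|_H)$ with the same constant as $f$.

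\textbf{Stochastic convolutions.} Set $Z^N(t)=\int_0^te^{-(t-s)\IL}P_N\,dW^Q(s)=P_NZ(t)$ and $\IZ^{\epsilon,N}(t)=\IP_N\IZ^\epsilon(t)$. The It\^o isometry gives the same series as before, truncated to $n\le N$. These truncated series are bounded by the full series, so~\eqref{eq:boundZ} and~\eqref{eq:boundIZ} hold uniformly in $N$. The wave case uses Lemma~\ref{lem:expAeps-smoothing1} with $\delta=\frac{1-\delta_0}{2}$, $\rho=1$, and the bounds are also uniform in $\epsilon$. Alternatively, the bounds follow directly from $\|P_NZ(t)\|_{H^\alpha}\le\|Z(t)\|_{H^\alpha}$ and the analogous inequality for $\IZ^{\epsilon,N}$. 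Gaussianity then upgrades the second moments to all moments $2p$.

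\textbf{Heat equation.} For $\alpha=0$, estimate the terms of~\eqref{eq:mild-she-galerkin} using $\|e^{-t\IL}P_Nu_0^0\|_H\le\|u_0^0\|_H$ and the linear growth of $f_N$. Minkowski's inequality and Gr\"onwall's lemma give a constant depending only on $T$, $p$, $\Lf$. For $\alpha\in(0,\beta)$, use the smoothing estimate~\eqref{eq:lem_expL-smoothing} on the drift term, together with $\|e^{-t\IL}P_Nu_0^0\|_{H^\alpha}\le\|u_0^0\|_{H^\alpha}$ and the $\alpha=0$ bound. This yields~\eqref{eq:propo-she-momentbounds-Galerkin}.

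\textbf{Damped wave equation.} For $\alpha=0$, bound $\|e^{tA_\epsilon}\IP_Nx_0^\epsilon\|_{\HH}\le\|x_0^\epsilon\|_{\HH}$ by Lemma~\ref{lem:expAeps-bound} and~\eqref{eq:boundPN}. For the drift term, $F_N(x)=(0,f_N(u))$, so Lemma~\ref{lem:expAeps-smoothing1} with $\delta=\rho=\frac12$ absorbs the factor $\frac1\epsilon$. Note that $\rho=1$ is not admissible with $\delta=\frac12$ unless $\rho\ge2\delta$, which holds; in fact $\rho=1$, $\delta=\frac12$ is allowed. This gives
\[
\tfrac1\epsilon\|e^{(t-s)A_\epsilon}(0,f_N(u))\|_{\HH}\le C(t-s)^{-\frac12}(1+\epsilon^{0}e^{-\frac{t-s}{2\epsilon^2}})\|f_N(u)\|_{H^{-1}}\le C(t-s)^{-\frac12}(1+\|u\|_H),
\]
uniformly in $\epsilon$ and $N$. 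The resulting singular Volterra inequality is handled by iterating it once with the beta-function identity, exactly as in the proof of Proposition~\ref{propo:sdwe-momentbounds}. Gr\"onwall's lemma then closes the bound. The case $\alpha\in(0,\beta)$ follows from the same estimate in $\HH^\alpha$, using $\|f_N(u)\|_{H^{\alpha-1}}\le C\|f(u)\|_H$ (valid since $\alpha\le1$) and the $\alpha=0$ result. This gives~\eqref{eq:propo-sdwe-momentbounds-Galerkin}.

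\textbf{Main obstacle.} The only delicate point is the uniformity in $\epsilon$ of the drift term in the wave equation. This is entirely provided by Lemma~\ref{lem:expAeps-smoothing1}, since the factor $\epsilon^{2\delta-\rho}$ equals $1$ for $\delta=\frac12$, $\rho=1$. Uniformity in $N$ is then automatic from the contraction and commutation properties of $P_N$ and $\IP_N$.
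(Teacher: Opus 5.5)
Your proposal is correct and follows the paper's route. The paper omits this proof because it is just the argument of Propositions~\ref{propo:she-momentbounds} and~\ref{propo:sdwe-momentbounds} rerun with constants uniform in $N$, and you obtain that uniformity the same way, from the commutation and contraction properties of $P_N$, $\IP_N$ and from $\|f_N(u)\|_H\le\|f(u)\|_H$.

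One thing to clean up in the drift step: the admissible parameters in Lemma~\ref{lem:expAeps-smoothing1} are $\delta=\frac12$, $\rho=1$, which is what your displayed estimate actually uses. The choice $\delta=\rho=\frac12$ that you state first violates $\rho\ge2\delta$.
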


The proof of Proposition~\ref{propo:Galerkin} is omitted as the arguments are similar to those used in the proofs of Propositions~\ref{propo:she-momentbounds} and~\ref{propo:sdwe-momentbounds}.

\section{Proof of Theorem~\ref{theo:strong}}\label{sec:proof-theo-strong}

\begin{proof}[Proof of Theorem~\ref{theo:strong}]
Let $p\in[1,\infty)$ and $T\in(0,\infty)$. For all $\epsilon\in(0,1)$, owing to the mild formulation~\eqref{eq:mild-sdwe} for the solution $\bigl(X^\epsilon(t))_{t\ge 0}$ of~\eqref{eq:sdwe1}, one has for all $t\in[0,T]$
\begin{align*}
u^\epsilon(t)&=\Pi_uX^\epsilon(t)\\
&=\Pi_ue^{tA_{\epsilon}}x_0^\epsilon+\frac{1}{\epsilon}\int_0^t\Pi_ue^{(t-s)A_{\epsilon}}F(X^\epsilon(s))\,ds+\frac{1}{\epsilon}\int_0^t\Pi_ue^{(t-s)A_{\epsilon}}\,d\IW^Q(s).
\end{align*}
Recall that $F(X^\epsilon(s))=\big(0,f(u^\epsilon(s))\big)$. As a result, owing to the mild formulation~\eqref{eq:mild-she} for the solution $\bigl(u^0(t)\bigr)_{t\ge 0}$ of~\eqref{eq:she}, one obtains the following decomposition of the error $r^\epsilon(t)=u^{\epsilon}(t)-u^{0}(t)$: for all $t\ge 0$ one has
\begin{equation}\label{eq:decomp-strong}
r^\epsilon(t)=u^{\epsilon}(t)-u^{0}(t)=r_1^{\epsilon}(t)+r_2^{\epsilon}(t)+r_3^{\epsilon}(t),
\end{equation}
with the error terms defined by
\begin{align}
r_1^{\epsilon}(t)&=\Pi_ue^{tA_{\epsilon}}x_0^\epsilon-e^{-t\IL}u_0, \label{eq:decomp-strong-r1}\\
r_2^{\epsilon}(t)&=\frac{1}{\epsilon}\int_0^t\Pi_ue^{(t-s)A_{\epsilon}}(0,f(u^\epsilon(s)))\,ds-\int_0^t e^{-(t-s)\IL}f(u^0(s))\,ds, \label{eq:decomp-strong-r2}\\
r_3^{\epsilon}(t)&=\frac{1}{\epsilon}\int_0^t\Pi_ue^{(t-s)A_{\epsilon}}\,d\IW^Q(s)-\int_0^t e^{-(t-s)\IL}\,dW^Q(s).\label{eq:decomp-strong-r3}
\end{align}
We claim that the following error bounds hold: for all $\alpha\in[0,\beta)$, there exists $C_{p,\alpha}(T)\in(0,\infty)$ such that for all $\epsilon\in(0,1)$ one has
\begin{align}
\underset{0\le t\le T}\sup~\bigl(\E[\|r_{1}^{\epsilon}(t)\|_{H}^{p}]\bigr)^{\frac{1}{p}}&\le C_{p,\alpha}(T) \epsilon^\alpha,\label{eq:error-strong-r1}\\
\underset{0\le t\le T}\sup~\bigl(\E[\|r_{2}^{\epsilon}(t)\|_{H}^{p}]\bigr)^{\frac{1}{p}}&\le C_{p,\alpha}(T) \epsilon^\alpha+C\int_0^t\bigl(\E[\|r^{\epsilon}(s)\|_H^{p}]\bigr)^{\frac{1}{p}}\,ds,\label{eq:error-strong-r2}\\
\underset{0\le t\le T}\sup~\bigl(\E[\|r_{3}^{\epsilon}(t)\|_{H}^{p}]\bigr)^{\frac{1}{p}}&\le C_{p,\alpha}(T) \epsilon^\alpha.\label{eq:error-strong-r3}
\end{align}

$\bullet$ Proof of the inequality~\eqref{eq:error-strong-r1}.

The error term $r_1^{\epsilon}(t)$ defined by~\eqref{eq:decomp-strong-r1} is decomposed as
\[
r_1^{\epsilon}(t)=r_{1,1}^{\epsilon}(t)+r_{1,2}^{\epsilon}(t)+r_{1,3}^{\epsilon}(t)
\]
where for all $\epsilon\in(0,1)$ and all $t\ge 0$ one has
\begin{align*}
r_{1,1}^{\epsilon}(t)&=\Pi_ue^{tA_{\epsilon}}(u_0^\epsilon,0)-\Pi_ue^{tA_{\epsilon}}(u_0^0,0),\\
r_{1,2}^{\epsilon}(t)&=\Pi_ue^{tA_{\epsilon}}(u_0^0,0)-e^{-t\IL}u_0^0,\\
r_{1,3}^{\epsilon}(t)&=\Pi_ue^{tA_{\epsilon}}(0,v_0^\epsilon).
\end{align*}
First, owing to the inequality~\eqref{eq:lem_expAeps-bound} from Lemma~\ref{lem:expAeps-bound}, one obtains for all $t\ge 0$
\[
\|r_{1,1}^{\epsilon}(t)\|_{H}=\|\Pi_ue^{tA_{\epsilon}}(u_0^\epsilon-u_0^0,0)\|_H\le \|e^{tA_{\epsilon}}(u_0^\epsilon-u_0^0,0)\|_{\HH}\le \|(u_0^\epsilon-u_0^0,0)\|_{\HH}=\|u_0^\epsilon-u_0\|_H.
\]
Therefore using the condition~\eqref{eq:init-strong-error} from Assumption~\ref{ass:init-strong}, for all $\epsilon\in(0,1)$ one obtains
\[
\underset{t\ge 0}\sup~\bigl(\E[\|r_{1,1}^{\epsilon}(t)\|_{H}^{p}]\bigr)^{\frac{1}{p}}\le C_{p,\alpha} \epsilon^\alpha.
\]
Second, using the inequality~\eqref{eq:lem_semigroups-convergence-u} from Lemma~\ref{lem:semigroups-convergence} with $\delta=0$, one obtains for all $t\ge 0$
\[
\|r_{1,2}^{\epsilon}(t)\|_H=\|\Pi_ue^{tA_{\epsilon}}(u_0^0,0)-e^{-t\IL}u_0^0\|_H\le C_\alpha \epsilon^\alpha\|u_0^0\|_{H^\alpha}.
\]
Therefore using the condition~\eqref{eq:init-strong-bound} from Assumption~\ref{ass:init-strong}, for all $\epsilon\in(0,1)$ one obtains
\[
\underset{t\ge 0}\sup~\bigl(\E[\|r_{1,2}^{\epsilon}(t)\|_{H}^{p}]\bigr)^{\frac{1}{p}}\le C_{p,\alpha} \epsilon^\alpha.
\]
Finally, using the inequality~\eqref{eq:lem_expAeps-smoothing2-1} from Lemma~\ref{lem:expAeps-smoothing2} with $\delta=0$, one obtains for all $t\ge 0$
\[
\|r_{1,3}^{\epsilon}(t)\|_H=\|\Pi_ue^{tA_{\epsilon}}(0,v_0^\epsilon)\|_H\le C_\alpha \epsilon^\alpha\|v_0^\epsilon\|_{H^{\alpha-1}}.
\]
Therefore using the condition~\eqref{eq:init-strong-bound} from Assumption~\ref{ass:init-strong}, for all $\epsilon\in(0,1)$ one obtains
\[
\underset{t\ge 0}\sup~\bigl(\E[\|r_{1,3}^{\epsilon}(t)\|_{H}^{p}]\bigr)^{\frac{1}{p}}\le C_{p,\alpha} \epsilon^\alpha.
\]

Gathering the three estimates obtained above, for all $\epsilon\in(0,1)$ one has
\[
\underset{t\ge 0}\sup~\bigl(\E[\|r_{1}^{\epsilon}(t)\|_{H}^{p}]\bigr)^{\frac{1}{p}}\le \underset{t\ge 0}\sup~\bigl(\E[\|r_{1,1}^{\epsilon}(t)\|_{H}^{p}]\bigr)^{\frac{1}{p}}+\underset{t\ge 0}\sup~\bigl(\E[\|r_{1,2}^{\epsilon}(t)\|_{H}^{p}]\bigr)^{\frac{1}{p}}+\underset{t\ge 0}\sup~\bigl(\E[\|r_{1,3}^{\epsilon}(t)\|_{H}^{p}]\bigr)^{\frac{1}{p}}\le C_{p,\alpha} \epsilon^\alpha.
\]
This concludes the proof of the inequality~\eqref{eq:error-strong-r1}.

$\bullet$ Proof of the inequality~\eqref{eq:error-strong-r2}.

The error term $r_2^\epsilon(t)$ defined by~\eqref{eq:decomp-strong-r2} is decomposed as
\[
r_2^\epsilon(t)=r_{2,1}^\epsilon(t)+r_{2,2}^\epsilon(t),
\]
where for all $\epsilon\in(0,1)$ and all $t\ge 0$ one has
\begin{align*}
r_{2,1}^\epsilon(t)&=\int_0^t\Big(\frac{1}{\epsilon}\Pi_u e^{(t-s)A_\epsilon}(0,f(u^{\epsilon}(s)))-e^{-(t-s)\IL}f(u^{\epsilon}(s))\Big)\,ds,\\
r_{2,2}^\epsilon(t)&=\int_0^te^{-(t-s)\IL}\big(f(u^{\epsilon}(s))-f(u^{0}(s))\big)\,ds.
\end{align*}
On the one hand, using the inequality~\eqref{eq:lem_semigroups-convergence-v} from Lemma~\ref{lem:semigroups-convergence} with $\delta=\frac{\alpha}{2}$, the Lipschitz continuity of $f$ from Assumption~\ref{ass:fLip} and the inequality~\eqref{eq:Pi-bound}, one obtains
\begin{align*}
\|r_{2,1}^\epsilon(t)\|_H&\le \int_{0}^{t}\|\frac{1}{\epsilon}\Pi_u e^{(t-s)A_\epsilon}(0,f(u^{\epsilon}(s)))-e^{-(t-s)\IL}f(u^{\epsilon}(s))\|_H\,ds\\
&\le C_\alpha \epsilon^{\alpha}\int_{0}^{t}(t-s)^{-\frac{\alpha}{2}}\|f(u^\epsilon(s))\|_H\,ds\\
&\le C_\alpha \epsilon^{\alpha}\int_{0}^{t}(t-s)^{-\frac{\alpha}{2}}\bigl(1+\|u^\epsilon(s)\|_H\bigr)\,ds\\
&\le C_\alpha \epsilon^{\alpha}\int_{0}^{t}(t-s)^{-\frac{\alpha}{2}}\bigl(1+\|X^\epsilon(s)\|_{\HH}\bigr)\,ds.
\end{align*}
Using the Minkowski inequality, the moment bounds~\eqref{eq:propo-sdwe-momentbounds} from Proposition~\ref{propo:sdwe-momentbounds} and the condition~\eqref{eq:init-strong-bound} from Assumption~\ref{ass:init-strong}, there exists $C_\alpha(T)\in(0,\infty)$ such that for all $t\in[0,T]$ and all $\epsilon\in(0,1)$ one has
\begin{align*}
\bigl(\E[\|r_{2,1}^\epsilon(t)\|_H^{p}]\bigr)^{\frac{1}{p}}&\le C_{p,\alpha}(T) \epsilon^{\alpha}\Bigl(1+\bigl(\E[\|x_0^\epsilon\|_{\HH}^{p}]\bigr)^{\frac{1}{p}}\Bigr)\int_{0}^{t}(t-s)^{-\frac{\alpha}{2}}\,ds\\
&\le C_{p,\alpha}(T)\epsilon^{\alpha}.
\end{align*}

On the other hand, using the inequality~\eqref{eq:lem_expL-bound} from Lemma~\ref{lem:expL} and the Lipschitz continuity of $f$ from Assumption~\ref{ass:fLip}, one obtains for all $t\ge 0$ and all $\epsilon\in(0,1)$
\begin{align*}
\|r_{2,2}^\epsilon(t)\|_H&\le \int_0^t\big\|e^{-(t-s)\IL}\big(f(u^{\epsilon}(s))-f(u^{0}(s))\big)\big\|_H\,ds\\
    &\le C\int_0^t\|u^{\epsilon}(s)-u^{0}(s)\|_H\,ds\\
    &\le C\int_0^t\|r^{\epsilon}(s)\|_H\,ds.
\end{align*}
Using the Minkowski inequality, one then has for all $t\ge 0$ and all $\epsilon\in(0,1)$
\[
\bigl(\E[\|r_{2,2}^\epsilon(t)\|_H^{p}]\bigr)^{\frac{1}{p}}\le C\int_0^t\bigl(\E[\|r^{\epsilon}(s)\|_H^p]\bigr)^{\frac{1}{p}}\,ds.
\]
Gathering the two estimates obtained above, for all $\epsilon\in(0,1)$, one has
\begin{align*}
\underset{t\in[0,T]}\sup~\bigl(\E[\|r_{2}^{\epsilon}(t)\|_{H}^{p}]\bigr)^{\frac{1}{p}}&\le \underset{t\in [0,T]}\sup~\bigl(\E[\|r_{2,1}^{\epsilon}(t)\|_{H}^{p}]\bigr)^{\frac{1}{p}}+\underset{t\in[0,T]}\sup~\bigl(\E[\|r_{2,2}^{\epsilon}(t)\|_{H}^{p}]\bigr)^{\frac{1}{p}}\\
&\le C_{p,\alpha}(T) \epsilon^\alpha+C\int_0^t\bigl(\E[\|r^{\epsilon}(s)\|_H^{p}]\bigr)^{\frac{1}{p}}\,ds.
\end{align*}
This concludes the proof of the inequality~\eqref{eq:error-strong-r2}.

$\bullet$ Proof of the inequality~\eqref{eq:error-strong-r3}.

Observe that $r_{3}^{\epsilon}(t)$ is a Gaussian random variable for all $t\ge 0$. Therefore it is sufficient to prove the inequality~\eqref{eq:error-strong-r3} when $p=1$.

Using the expressions~\eqref{eq:WQ} and~\eqref{eq:IWQ} for $W^Q(s)$ and $\IW^Q(s)$ respectively, the error term $r_3^\epsilon(t)$ defined by~\eqref{eq:decomp-strong-r3} is written as
\begin{align*}
r_3^{\epsilon}(t)&=\frac{1}{\epsilon}\int_0^t\Pi_ue^{(t-s)A_{\epsilon}}\,d\IW^Q(s)-\int_0^t e^{-(t-s)\IL}\,dW^Q(s)\\
&=\sum_{n\in\N}\sqrt{q_n}\int_0^t\Bigl(\Pi_ue^{(t-s)A_{\epsilon}}(0,\frac{e_n}{\epsilon})-e^{-(t-s)\IL}e_n\Bigr)\,d\beta_n(s).
\end{align*}
Therefore using the It\^o isometry property, for all $t\ge 0$ and all $\epsilon\in(0,1)$ one has
\begin{align*}
\E[\|r_3^\epsilon(t)\|_H^2]&=\sum_{n\in\N}q_n\int_{0}^{t}\|\Pi_ue^{(t-s)A_{\epsilon}}(0,\frac{e_n}{\epsilon})-e^{-(t-s)\IL}e_n\|_H^2\,ds,
\end{align*}
and using then the inequality~\eqref{eq:lem_semigroups-convergence-v} from Lemma~\ref{lem:semigroups-convergence} with $\delta=\frac12-\frac{\beta-\alpha}{4}$, one obtains for all $t\in[0,T]$
\begin{align*}
\E[\|r_3^\epsilon(t)\|_H^2]&\le C_{\alpha,\delta}\epsilon^{2\alpha}\sum_{n\in\N}q_n\int_{0}^{t}(t-s)^{-2\delta}\|e_n\|_{H^{\alpha-2\delta}}^2\,ds\\
&\le C_{\alpha,\beta}\epsilon^{2\alpha}\sum_{n\in\N}q_n\lambda_n^{\frac{\alpha+\beta}{2}-1}\int_{0}^{t}(t-s)^{-1+\frac{\beta-\alpha}{2}}\,ds\\
&\le C_{\alpha,\beta}(T)\epsilon^{2\alpha},
\end{align*}
using the condition~\eqref{eq:condition_beta} from Assumption~\ref{ass:noise} in the last step.

Therefore for all $\epsilon\in(0,1)$ one has
\[
\underset{0\le t\le T}\sup~\bigl(\E[\|r_{3}^{\epsilon}(t)\|_{H}^2]\bigr)^{\frac12}\le C_{\alpha,\beta}(T) \epsilon^\alpha.
\]
This concludes the proof of the inequality~\eqref{eq:error-strong-r3}.

$\bullet$ Conclusion.

Recalling the decomposition~\eqref{eq:decomp-strong} of the error, applying the Minkowski inequality and the auxiliary error estimates~\eqref{eq:error-strong-r1},~\eqref{eq:error-strong-r2} and~\eqref{eq:error-strong-r3}, one obtains for all $\epsilon\in(0,1)$ and all $t\in[0,T]$
\begin{align*}
\bigl(\E[\|u^{\epsilon}(t)-u^{0}(t)\|_H^{p}]\bigr)^{\frac{1}{p}}=\bigl(\E[\|r^{\epsilon}(t)\|_H^{p}]\bigr)^{\frac{1}{p}}&\le \bigl(\E[\|r_1^{\epsilon}(t)\|_H^{p}]\bigr)^{\frac{1}{p}}+\bigl(\E[\|r_2^{\epsilon}(t)\|_H^{p}]\bigr)^{\frac{1}{p}}+\bigl(\E[\|r_3^{\epsilon}(t)\|_H^{p}]\bigr)^{\frac{1}{p}}\\
&\le C_{p,\alpha}(T)\epsilon^\alpha+C\int_0^t\bigl(\E[\|u^{\epsilon}(s)-u^{0}(s)\|_H^{p}]\bigr)^{\frac{1}{p}}\,ds.
\end{align*}
Finally, applying the Gr\"onwall inequality,  there exists $C_{p,\alpha}(T)\in(0,\infty)$ such that
\[
\underset{0\le t\le T}\sup~\bigl(\E[\|u^{\epsilon}(t)-u^{0}(t)\|_H^{p}]\bigr)^{\frac{1}{p}}\le C_{p,\alpha}(T) \epsilon^\alpha.
\]
The proof of Theorem~\ref{theo:strong} is thus completed.
\end{proof}

\begin{rem}
If one considers the spectral Galerkin approximation $u^{\epsilon,N}$ and $u^{0,N}$ instead of $u^{\epsilon}$ and $u^{0}$, one obtains the following variant of Theorem~\ref{theo:strong}, uniformly with respect to $N\in\N$. Let Assumptions~\ref{ass:fLip},~\ref{ass:noise} and~\ref{ass:init-strong} be satisfied. For all $p\in[1,\infty)$, $T\in(0,\infty)$ and $\alpha\in[0,\beta)$, there exists $C_{p,\alpha}(T)\in(0,\infty)$ such that for all $\epsilon\in(0,1)$ one has  
\begin{equation}\label{eq:theo-strong-N}
\underset{N\in\N}\sup~\underset{0\le t\le T}\sup~\bigl(\E[\|u^{\epsilon,N}(t)-u^{0,N}(t)\|_H^{p}]\bigr)^{\frac{1}{p}}\le C_{p,\alpha}(T) \epsilon^\alpha.
\end{equation}
The proof of the strong error estimates~\eqref{eq:theo-strong-N} follows the same arguments as the proof of~\eqref{eq:theo-strong}, using the moment bounds from Proposition~\ref{propo:Galerkin} which are uniform with respect to $N\in\N$. The details are omitted.
\end{rem}

\section{Auxiliary results for the proof of Theorem~\ref{theo:weak}}\label{sec:auxweak}

The objective of this section is to provide a series of auxiliary results which are required for the proof of Theorem~\ref{theo:weak}, see Section~\ref{sec:proof-theo-weak}. The results deal with Kolmogorov and Poisson equations associated with solutions to Galerkin approximations~\eqref{eq:she-galerkin} and~\eqref{eq:sdwe-galerkin}. Note that the regularity properties are uniform with respect to the parameter $N\in\N$. Moment bounds for Malliavin derivatives $\mathcal{D}u^{\epsilon,N}$ are also obtained.

Let us first introduce some auxiliary notation.

If $\phi:H\to\R$ is a function of class $\mathcal{C}^k$, with a bounded derivative of order $k$, set
\[
\vvvert \phi \vvvert_k=\sum_{\ell=1}^{k}\underset{u\in H}\sup~\underset{h_1,...,h_\ell\in H\setminus\{0\}}\sup~\frac{|D_u^\ell\phi(u).(h_1,...,h_\ell)|}{\|h_1\|_H ...\|h_\ell\|_H}\in[0,\infty),
\]
where $D_u^k\phi$ denotes the Fr\'echet derivative of order $k$ of $\phi$, for any integer $k\in\N$. The same notation is employed for functions $\phi:H_N\to\R$.

If $\phi:\HH_N\to\R$ is a function of class $\mathcal{C}^2$ for some integer $N\in\N$, for all $x=(u,v)\in \HH_N$ and all $h,k\in H_N$, the following expressions are used to denote partial first and second order derivatives with respect to $u$ and $v$:
\begin{align*}
&D_u\phi(u,v).h=D\phi(u,v).(h,0),~D_v\phi(u,v).h=D\phi(u,v).(0,h),\\
&D_{uu}^2\phi(u,v).(h,k)=D^2\phi(u,v).\bigl((h,0),(k,0)\bigr),~D_{vv}^2\phi(u,v).(h,k)=D^2\phi(u,v).\bigl((0,h),(0,k)\bigr),\\
&D_{uv}^2\phi(u,v).(h,k)=D^2\phi(u,v).\bigl((h,0),(0,k)\bigr),~D_{vu}^2\phi(u,v).(h,k)=D^2\phi(u,v).\bigl((0,h),(k,0)\bigr).
\end{align*}

\subsection{Regularity properties for solutions of the Kolmogorov equation associated with the stochastic heat equation}

Let $N\in\N$ be an arbitrary integer and let $\varphi:H\to\R$ be a function of class $\mathcal{C}^4$, which has bounded derivatives of order $1$ to $4$. Introduce the auxiliary function $\Phi^{0,N}:\R^+\times H_N\to\R$ defined by
\begin{equation}\label{eq:PhiN}
\Phi^{(0,N)}(t,u)=\E_{u}[\varphi(u^{0,N}(t))],
\end{equation}
for all $t\ge 0$ and all $u\in H_N$, where $\bigl(u^{0,N}(t)\bigr)_{t\ge 0}$ denotes the solution of the spectral Galerkin approximation~\eqref{eq:she-galerkin} of the semilinear stochastic heat equation~\eqref{eq:she} with initial value $u^{0,N}(0)=u$.

Let the infinitesimal generator $\mathbb{L}^{(N)}$ associated with the stochastic evolution equation~\eqref{eq:she-galerkin} be defined by
\begin{equation}\label{eq:LN}
	\mathbb{L}^{(N)}\phi(u)=D_u\phi(u).\Bigl(-\Lambda u+f_N(u)\Bigr)+\frac{1}{2}\sum_{n=1}^{N}q_n D^2_{uu}\phi(u).(e_n,e_n),
\end{equation}
for all $u\in H_N$ and for any function $\phi:H_N\to\R$ of class $\mathcal{C}^2$.

The auxiliary function $\Phi^{0,N}$ is the solution of the backward Kolmogorov equation
\begin{equation}\label{eq:kolmogorovPhiN}
\left\lbrace
\begin{aligned}
&\partial_t\Phi^{(0,N)}(t,u)=\mathbb{L}^{(N)}\Phi^{(0,N)}(t,u), \quad \forall~(t,u)\in\R^+\times H_N,\\
&\Phi^{(0,N)}(0,u)=\varphi(u),\quad \forall~u\in H_N.
\end{aligned}
\right.
\end{equation}

Let us state some regularity result for the derivatives of $\Phi^{(0,N)}(t,u)$ with respect to $u$. The bounds below are uniform with respect to $N\in\N$.
\begin{propo}\label{propo:regularity-PhiN}
Let Assumptions~\ref{ass:mapping-f} and~\ref{ass:noise} be satisfied. Let $\varphi:H\to\R$ be of class $\mathcal{C}^4$ with bounded derivatives of order $k\in\{1,2,3,4\}$.

For all $T\in(0,\infty)$, all $k\in\{1,2,3,4\}$ and all $\bigl(\alpha_\ell\bigr)_{1\le \ell\le k}$ such that $\sum_{\ell=1}^{k}\alpha_\ell<2$, there exists a real number $C_{\alpha_1,\ldots,\alpha_k}(T)\in(0,\infty)$ such that for all $N\in\N$, all $t\in(0,T]$ and all $u\in H_N$ and $\bigl(h_\ell\bigr)_{1\le \ell\le k}\in H_N^k$, one has
\begin{equation}\label{eq:propo-regularity-PhiN}
|D_u^k\Phi^{(0,N)}(t,u).(h_1,\ldots,h_k)|\le C_{\alpha_1,\ldots,\alpha_k}(T)\vvvert\varphi\vvvert_k t^{-\frac{\alpha_1+\ldots+\alpha_k}{2}}\|h_1\|_{H^{-\alpha_1}}\ldots\|h_k\|_{H^{-\alpha_k}}.
\end{equation}
\end{propo}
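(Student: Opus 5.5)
Since $\Phi^{(0,N)}(t,u)=\E[\varphi(u^{0,N}(t,u))]$ and the coefficients of~\eqref{eq:she-galerkin} are smooth with bounded derivatives on the finite dimensional space $H_N$, I will differentiate under the expectation and express $D_u^k\Phi^{(0,N)}(t,u).(h_1,\ldots,h_k)$ through the derivatives $\eta^{h}(t)=D_u u^{0,N}(t,u).h$, $\zeta^{h_1,h_2}(t)=D_u^2u^{0,N}(t,u).(h_1,h_2)$, and so on up to order $4$. Since the noise is additive, these processes solve random linear equations, with $\eta^h(0)=h$ and higher derivatives starting from $0$:
\[
\frac{d\eta^h}{dt}=-\IL\eta^h+P_NDf(u^{0,N})\eta^h,\qquad \frac{d\zeta^{h_1,h_2}}{dt}=-\IL\zeta^{h_1,h_2}+P_NDf(u^{0,N})\zeta^{h_1,h_2}+P_ND^2f(u^{0,N}).(\eta^{h_1},\eta^{h_2}).
\]
The chain rule gives for instance $D^2_u\Phi^{(0,N)}.(h_1,h_2)=\E[D^2\varphi(u^{0,N}).(\eta^{h_1},\eta^{h_2})+D\varphi(u^{0,N}).\zeta^{h_1,h_2}]$, and analogous Fa\`a di Bruno sums hold for $k=3,4$. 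All the bounds will be almost sure bounds in $H$ norm, uniform in $N$, since only bounded derivatives of $f$ (Assumption~\ref{ass:mapping-f}) and the contraction and smoothing properties~\eqref{eq:lem_expL-bound},~\eqref{eq:lem_expL-smoothing} (which commute with $P_N$) are used.

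\emph{First order.} I will show that $\|\eta^h(t)\|_H\le C(T)t^{-\alpha/2}\|h\|_{H^{-\alpha}}$ for $\alpha\in[0,2)$. Writing the mild form $\eta^h(t)=e^{-t\IL}h+\int_0^te^{-(t-s)\IL}P_NDf(u^{0,N}(s))\eta^h(s)\,ds$, the smoothing estimate (applied to $\IL^{-\alpha/2}h$) gives $\|e^{-t\IL}h\|_H\le C_\alpha t^{-\alpha/2}\|h\|_{H^{-\alpha}}$. Bounding the integral term by $\vvvert f\vvvert_1\int_0^t\|\eta^h(s)\|_H\,ds$ and applying a singular Gronwall lemma closes the estimate, because $t^{-\alpha/2}$ is integrable when $\alpha<2$.

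\emph{Higher orders.} For $\zeta$, the source term is bounded by $C\|\eta^{h_1}(s)\|_H\|\eta^{h_2}(s)\|_H\le Cs^{-(\alpha_1+\alpha_2)/2}\|h_1\|_{H^{-\alpha_1}}\|h_2\|_{H^{-\alpha_2}}$. This is integrable precisely when $\alpha_1+\alpha_2<2$, and it gives $\int_0^t s^{-(\alpha_1+\alpha_2)/2}ds\le C(T)t^{-(\alpha_1+\alpha_2)/2}$ for $t\le T$. Gronwall then yields $\|\zeta(t)\|_H\le C(T)t^{-(\alpha_1+\alpha_2)/2}\prod\|h_i\|_{H^{-\alpha_i}}$. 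Orders $3$ and $4$ are handled identically. Their sources are products of lower-order derivatives involving $D^3f,D^4f$, and each product carries the total singularity $s^{-\sum_{i\in I}\alpha_i/2}$ over some partition of the indices. Integrability holds because $\sum\alpha_\ell<2$. Inserting these bounds into the Fa\`a di Bruno expansion, with the partial factors $t^{-\sum_{i\in I}\alpha_i/2}\le C(T)t^{-\sum_\ell\alpha_\ell/2}$ on $(0,T]$, gives~\eqref{eq:propo-regularity-PhiN} with the constant $\vvvert\varphi\vvvert_k$.

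\emph{Main obstacle.} The delicate point is the singular Gronwall argument at the highest orders. There the source singularity must stay integrable at $s=0$ and must not degrade the final rate. I would take the time integral without smoothing, so that $e^{-(t-s)\IL}$ is used only as a contraction and the full singularity stays on the $s$ variable. If needed, I would instead split $[0,t]$ into $[0,t/2]$ and $[t/2,t]$. Uniformity in $N$ is automatic, because $P_N$ is a contraction that commutes with $\IL$.
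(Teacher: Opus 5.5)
Your proof is correct and takes the standard route. The paper omits this proof as standard, and your scheme is the one it uses for the damped-wave analogue, Proposition~\ref{propo:regularity-PhiepsilonN}: derivative processes obtained from the variational equations, written in mild form, with the semigroup used only as a contraction in the Duhamel integral. Each source term then carries at most the singularity $s^{-\sum_\ell\alpha_\ell/2}$, which is integrable precisely because $\sum_\ell\alpha_\ell<2$ (with $\alpha_\ell\ge 0$ implicitly, as the statement requires), and Gronwall closes the bound uniformly in $N$.
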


The proof of Proposition~\ref{propo:regularity-PhiN} is omitted as this is a standard result in the literature.

\subsection{Regularity properties for solutions of the Kolmogorov equation associated with the stochastic damped wave equation}

Let $N\in\N$ be an arbitrary integer, let $\epsilon\in(0,1)$, and let $\varphi:H\to\R$ be a function of class $\mathcal{C}^4$, which has bounded derivatives of order $1$ to $4$. Introduce the auxiliary function $\Phi^{\epsilon,N}:\R^+\times \HH_N\to\R$ defined by 
\begin{equation}\label{eq:PhiepsilonN}
\Phi^{(\epsilon,N)}(t,x)=\E_{x}[\varphi(u^{\epsilon,N}(t))],
\end{equation}
for all $t\ge 0$ and all $x\in \HH_N$, where $\bigl(X^{\epsilon,N}(t)\bigr)_{t\ge 0}$ denotes the solution of the spectral Galerkin approximation~\eqref{eq:sdwe1-galerkin} of the semilinear stochastic heat equation~\eqref{eq:sdwe1} with initial value $X^{\epsilon,N}(0)=x$, and with $u^{\epsilon,N}(t)=\Pi_uX^{\epsilon,N}(t)$ for all $t\ge 0$.

Let the infinitesimal generator $\mathcal{L}^{\epsilon,N}$ associated with the stochastic evolution equation~\eqref{eq:sdwe1-galerkin} be defined by
\begin{equation}\label{eq:LepsilonN}
	\mathcal{L}^{\epsilon,N}\phi(u,v)=\frac{1}{\epsilon}D_u\phi(u,v).v\!+\!D_v\phi(u,v).\big(\!-\!\frac{1}{\epsilon}\Lambda u\!-\!\frac{v}{\epsilon^2}\!+\!\frac{1}{\epsilon}f_N(u)\big)\!+\!\frac{1}{2\epsilon^2}\sum_{n=1}^{N}q_nD^2_{vv}\phi(u,v).(e_n,e_n),
\end{equation}
for all $(u,v)\in\HH_N$ and any function $\phi:\HH_N\to\R$ of class $\mathcal{C}^2$. One has the decomposition
\begin{equation}\label{def:operator-dampedwave}
\mathcal{L}^{\epsilon,N}\phi(u,v)=\frac{1}{\epsilon}\mathcal{A}^{(N)}\phi(u,v)+\frac{1}{\epsilon^2}\mathcal{L}^{(N)}\phi(u,v),
\end{equation}
with the auxiliary operators $\mathcal{A}^{(N)}$ and $\mathcal{L}^{(N)}$ defined by
\begin{align}
	\mathcal{A}^{(N)}\phi(u,v)&=D_u\phi(u,v). v+D_v\phi(u,v). (-\Lambda u+f_N(u)),\label{eq:genAN}\\
	\mathcal{L}^{(N)}\phi(u,v)&=-D_v\phi(u,v). v+\frac{1}{2}\sum_{n=1}^{N}q_nD^2_{vv}\phi(u,v).(e_n,e_n).\label{eq:genLN}
\end{align}
The auxiliary function $\Phi^{(\epsilon,N)}$ is the solution of the backward Kolmogorov equation
\begin{equation}\label{eq:kolmogorovPhiepsilonN}
\left\lbrace
\begin{aligned}
&\partial_t\Phi^{(\epsilon,N)}(t,u,v)=\mathcal{L}^{\epsilon,N}\Phi^{(\epsilon,N)}(t,u,v), \quad \forall~(t,u,v)\in\R^+\times H_N\times H_N,\\
&\Phi^{(\epsilon,N)}(0,u,v)=\varphi(u),\quad \forall~(u,v)\in H_N\times H_N.
\end{aligned}
\right.
\end{equation}

The main result of this section provides some regularity results for the first and second order derivatives of $\Phi^{(\epsilon,N)}(t,u,v)$ with respect to $u$ and $v$.

\begin{propo}\label{propo:regularity-PhiepsilonN}
Let Assumptions~\ref{ass:mapping-f} and~\ref{ass:noise} be satisfied. Let $\varphi:H\to\R$ be of class $\mathcal{C}^4$ with bounded first and second order derivatives.

For all $T\in(0,\infty)$ and all $\alpha\in[0,1]$, there exists a real number $C_\alpha(T)\in(0,\infty)$ such that, for all $\epsilon\in(0,1)$, all $t\in(0,T]$, all $N\in\N$ and all $(u,v)\in \HH_N$, $h\in H_N$, one has
 	\begin{align}
		\big|D_{u}\Phi^{(\epsilon,N)}(t,u,v).h\big|&\le C_\alpha(T)\vvvert \varphi\vvvert_1 t^{-\frac{\alpha}{2}}\bigl(\epsilon^{\alpha}\|h\|_H+\|h\|_{H^{-\alpha}}\bigr),\label{eq:derivative1u-PhiepsilonN}\\ 
	 \frac{1}{\epsilon}|D_v\Phi^{(\epsilon,N)}(t,u,v).h|&\le C_\alpha(T)\vvvert \varphi\vvvert_1 t^{-\frac{\alpha}{2}}\|h\|_{H^{-\alpha}}. \label{eq:derivative1v-PhiepsilonN}
	\end{align}
Moreover, for all $\alpha_1,\alpha_2\in[0,1)$, there exists $C_{\alpha_1,\alpha_2}(T)\in(0,\infty)$ such that, for all $\epsilon\in(0,1)$, all $t\in(0,T]$, all $N\in\N$ and all $(u,v)\in \HH_N$, $h,k\in H_N$, one has
\begin{align}
	&|D^2_{uu}\Phi^{(\epsilon,N)}(t,u,v).(h,k)|\le C_{\alpha_1,\alpha_2}(T)\vvvert\varphi\vvvert_2 t^{-\frac{\alpha_1+\alpha_2}{2}}\bigl(\epsilon^{\alpha_1}\|h\|_H\!+\!\|h\|_{H^{-\alpha_1}}\bigr)\bigl(\epsilon^{\alpha_2}\|k\|_H\!+\!\|k\|_{H^{-\alpha_2}}\bigr),\label{eq:derivative2uu-PhiepsilonN}\\
	&\frac{1}{\epsilon}|D^2_{uv}\Phi^{(\epsilon,N)}(t,u,v).(h,k)|\le C_{\alpha_1,\alpha_2}(T)\vvvert\varphi\vvvert_2 t^{-\frac{\alpha_1+\alpha_2}{2}}\bigl(\epsilon^{\alpha_1}\|h\|_H\!+\!\|h\|_{H^{-\alpha_1}}\bigr)\|k\|_{H^{-\alpha_2}}, \label{eq:derivative2uv-PhiepsilonN}\\
      &\frac{1}{\epsilon}|D^2_{vu}\Phi^{(\epsilon,N)}(t,u,v).(h,k)|\le C_{\alpha_1,\alpha_2}(T)\vvvert\varphi\vvvert_2 t^{-\frac{\alpha_1+\alpha_2}{2}}\|h\|_{H^{-\alpha_1}}\bigl(\epsilon^{\alpha_2}\|k\|_H\!+\!\|k\|_{H^{-\alpha_2}}\bigr),\label{eq:derivative2vu-PhiepsilonN}\\
      &\frac{1}{\epsilon^2}|D^2_{vv}\Phi^{(\epsilon,N)}(t,u,v).(h,k)|\le C_{\alpha_1,\alpha_2}(T)\vvvert\varphi\vvvert_2 t^{-\frac{\alpha_1+\alpha_2}{2}}\|h\|_{H^{-\alpha_1}}\|k\|_{H^{-\alpha_2}}. \label{eq:derivative2vv-PhiepsilonN}
	\end{align}
\end{propo}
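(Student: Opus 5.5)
We will represent the derivatives of $\Phi^{(\epsilon,N)}$ through first and second order variation processes. Writing $X^{\epsilon,N}(t,x)$ for the solution of~\eqref{eq:sdwe1-galerkin} started at $x=(u,v)\in\HH_N$, we have
\[
D\Phi^{(\epsilon,N)}(t,x).y=\E\bigl[D\varphi(u^{\epsilon,N}(t,x)).\Pi_u\eta^y(t)\bigr],
\]
where $\eta^y(t)=D_xX^{\epsilon,N}(t,x).y$ solves the linearized equation $\frac{d}{dt}\eta^y=A_\epsilon\eta^y+\frac1\epsilon(0,P_NDf(u^{\epsilon,N}).\Pi_u\eta^y)$, $\eta^y(0)=y$. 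Its mild form is
\[
\Pi_u\eta^y(t)=\Pi_ue^{tA_\epsilon}y+\int_0^t\Pi_ue^{(t-s)A_\epsilon}\bigl(0,\tfrac1\epsilon Df(u^{\epsilon,N}(s)).\Pi_u\eta^y(s)\bigr)ds.
\]
Since $\varphi$ has bounded derivatives, the first order bounds~\eqref{eq:derivative1u-PhiepsilonN}--\eqref{eq:derivative1v-PhiepsilonN} reduce to \emph{pathwise} bounds on $\|\Pi_u\eta^y(t)\|_H$ for $y=(h,0)$ and $y=(0,h)$, uniform in $N$ and $\epsilon$.

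For $y=(h,0)$, Lemma~\ref{lem:expAeps-smoothing2}, inequality~\eqref{eq:lem_expAeps-smoothing2-2} with $\alpha_1=\alpha_2=\alpha$, gives
\[
\|\Pi_ue^{tA_\epsilon}(h,0)\|_H\le Ct^{-\alpha/2}\bigl(\epsilon^\alpha\|h\|_H+\|h\|_{H^{-\alpha}}\bigr).
\]
For $y=(0,h)$, inequality~\eqref{eq:lem_expAeps-smoothing2-1} with $\alpha=1$, $\delta=\alpha'/2$ gives
\[
\|\Pi_ue^{tA_\epsilon}(0,h)\|_H\le C\epsilon t^{-\alpha'/2}\|h\|_{H^{-\alpha'}},
\]
which supplies the factor $\epsilon$ absorbed by the $\frac1\epsilon$ in~\eqref{eq:derivative1v-PhiepsilonN}. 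The nonlinear term is handled with~\eqref{eq:lem_expAeps-smoothing2-1} for $\alpha=1,\delta=0$: $\|\frac1\epsilon\Pi_ue^{(t-s)A_\epsilon}(0,w)\|_H\le C\|w\|_{H^{-1}}\le C\|w\|_H$, together with $\|Df(u).k\|_H\le\vvvert f\vvvert_1\|k\|_H$. Gronwall's lemma for $\psi(t)=\|\Pi_u\eta^y(t)\|_H$ then closes the argument: $\psi(t)\le a(t)+C\int_0^t\psi(s)\,ds$ with $a(t)\le Ct^{-\alpha/2}(\cdots)$ integrable since $\alpha<2$. The singular Gronwall lemma yields $\psi(t)\le C(T)t^{-\alpha/2}(\cdots)$ for all $\alpha\in[0,1]$. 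This gives both first order estimates. The $H^{-\alpha}$-norm of the initial direction survives because the perturbation is controlled only in $H$ and integrated against the singular weight.

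For second derivatives,
\[
D^2\Phi^{(\epsilon,N)}(t,x).(y,z)=\E\bigl[D^2\varphi(u^{\epsilon,N}).(\Pi_u\eta^y,\Pi_u\eta^z)+D\varphi(u^{\epsilon,N}).\Pi_u\zeta^{y,z}\bigr],
\]
where $\zeta^{y,z}$ solves the same linear equation with zero initial datum and forcing $\frac1\epsilon(0,P_ND^2f(u^{\epsilon,N}).(\Pi_u\eta^y,\Pi_u\eta^z))$. The first term is bounded by the product of the first order pathwise estimates, which produces exactly the four mixed bounds~\eqref{eq:derivative2uu-PhiepsilonN}--\eqref{eq:derivative2vv-PhiepsilonN}, with an $\epsilon$ factor for each $v$-direction. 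For the second term, the same bound $\|\frac1\epsilon\Pi_ue^{rA_\epsilon}(0,w)\|_H\le C\|w\|_H$ and boundedness of $D^2f$ give
\[
\|\Pi_u\zeta(t)\|_H\le C\int_0^t\|\Pi_u\eta^y(s)\|_H\|\Pi_u\eta^z(s)\|_H\,ds+C\int_0^t\|\Pi_u\zeta(s)\|_H\,ds.
\]
The integrand is bounded by $s^{-(\alpha_1+\alpha_2)/2}$ times the norms, and Gronwall applies. This step is the main obstacle: the time singularity $s^{-(\alpha_1+\alpha_2)/2}$ is integrable only when $\alpha_1+\alpha_2<2$, which is why the statement requires $\alpha_1,\alpha_2<1$. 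The resulting bound, $C t^{1-(\alpha_1+\alpha_2)/2}\le C(T)t^{-(\alpha_1+\alpha_2)/2}$, carries the right factors of $\epsilon$ and the negative-norms through the first order estimates. Throughout, the constants depend only on $T$, $\vvvert f\vvvert_2$ and $\alpha$'s, not on $N$ or $\epsilon$, because Lemma~\ref{lem:expAeps-smoothing2} is uniform and $P_N$ is a contraction.
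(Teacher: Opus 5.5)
Your proposal is correct and follows the paper's proof essentially step by step: first and second order variation processes $\eta$ and $\zeta$, the bounds of Lemma~\ref{lem:expAeps-smoothing2} on $\Pi_u e^{tA_\epsilon}$, and Gronwall with the integrable forcing $s^{-\alpha/2}$, resp.\ $s^{-(\alpha_1+\alpha_2)/2}$. The only difference is cosmetic: the paper packages your two cases $y=(h,0)$ and $y=(0,h)$ into the single norm $\epsilon^{\alpha}\|h_u\|_H+\|h_u\|_{H^{-\alpha}}+\epsilon\|h_v\|_{H^{-\alpha}}$.

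One harmless slip: with $(\alpha,\delta)=(1,0)$, \eqref{eq:lem_expAeps-smoothing2-1} gives $\|\tfrac1\epsilon\Pi_u e^{rA_\epsilon}(0,w)\|_H\le C\|w\|_H$ directly. The intermediate bound $C\|w\|_{H^{-1}}$ you wrote does not hold uniformly in $\epsilon$ and $r$. Since you only use the $H$-norm bound, nothing breaks.
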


\begin{proof}
Let $D$ define the derivative operator on $\HH_N$: then for all $\mathbf{h}=(h_u,h_v)\in\HH_N$ and $\mathbf{k}=(k_u,k_v)\in\HH_N$, one has
\begin{align*}
D\Phi^{(\epsilon,N)}(t,u,v).\mathbf{h}&=D_{u}\Phi^{(\epsilon,N)}(t,u,v).h_u+D_{v}\Phi^{(\epsilon,N)}(t,u,v).h_v,\\
D^2\Phi^{(\epsilon,N)}(t,u,v).(\mathbf{h},\mathbf{k})&=D^2_{uu}\Phi^{(\epsilon,N)}(t,u,v).(h_u,k_u)+D^2_{uv}\Phi^{(\epsilon,N)}(t,u,v).(h_u,k_v)\\
     &+D^2_{vu}\Phi^{(\epsilon,N)}(t,u,v).(h_v,k_u)+D^2_{vv}\Phi^{(\epsilon,N)}(t,u,v).(h_v,k_v).
\end{align*}
In order to obtain the regularity estimates stated in Proposition~\ref{propo:regularity-PhiepsilonN}, it suffices to establish the following claims. For any $\alpha\in[0,1]$ and any $\epsilon\in(0,1)$, introduce the auxiliary norm $\|\cdot\|_{\epsilon,\alpha}$ defined by
\[
\|\mathbf{h}\|_{\epsilon,\alpha}=\epsilon^{\alpha}\|h_u\|_H+\|h_u\|_{H^{-\alpha}}+\epsilon \|h_v\|_{H^{-\alpha}}.
\]
Then using the notation introduced in the statement of Proposition~\ref{propo:regularity-PhiepsilonN} one has
\begin{align}
\big|D\Phi^{(\epsilon,N)}(t,u,v).\mathbf{h}\big|&\le C_\alpha(T)\vvvert \varphi\vvvert_1 t^{-\frac{\alpha}{2}}\|\mathbf{h}\|_{\epsilon,\alpha},\label{eq:claim-regulPhiepsilonN1}\\
|D^2\Phi^{(\epsilon,N)}(t,u,v).(\mathbf{h},\mathbf{k})|&\le C_{\alpha_1,\alpha_2}(T)\vvvert\varphi\vvvert_2 t^{-\frac{\alpha_1+\alpha_2}{2}}\|\mathbf{h}\|_{\epsilon,\alpha_1}\|\mathbf{k}\|_{\epsilon,\alpha_2}.\label{eq:claim-regulPhiepsilonN2}
\end{align}
Indeed, the inequalities~\eqref{eq:derivative1u-PhiepsilonN} and~\eqref{eq:derivative1v-PhiepsilonN} follow directly from~\eqref{eq:claim-regulPhiepsilonN1} with $\mathbf{h}=(h,0)$ and $\mathbf{h}=(0,h)$ respectively, whereas the inequalities~\eqref{eq:derivative2uu-PhiepsilonN},~\eqref{eq:derivative2uv-PhiepsilonN},~\eqref{eq:derivative2vu-PhiepsilonN} and~\eqref{eq:derivative2vv-PhiepsilonN} follow directly from~\eqref{eq:claim-regulPhiepsilonN2} with $\bigl(\mathbf{h},\mathbf{k}\bigr)=\bigl((h,0),(k,0)\bigr)$, $\bigl(\mathbf{h},\mathbf{k}\bigr)=\bigl((h,0),(0,k)\bigr)$, $\bigl(\mathbf{h},\mathbf{k}\bigr)=\bigl((0,h),(k,0)\bigr)$ and $\bigl(\mathbf{h},\mathbf{k}\bigr)=\bigl((0,h),(0,k)\bigr)$ respectively.

Note that using the norm $\|\cdot\|_{\epsilon,\alpha}$ introduced above, and applying the inequality~\eqref{eq:lem_expAeps-smoothing2-2} with $\alpha_1=\alpha_2=\alpha\in[0,1]$ and the inequality~\eqref{eq:lem_expAeps-smoothing2-1} with $\alpha=1$ and $\delta=\frac{\alpha}{2}$ (see Lemma~\ref{lem:expAeps-smoothing2}), one obtains the inequality
\begin{equation}\label{eq:newinequality}
\|\Pi_u e^{tA_\epsilon}\mathbf{h}\|_H\le C_\alpha t^{-\frac{\alpha}{2}}\|\mathbf{h}\|_{\epsilon,\alpha},\qquad \forall~h\in\HH_N,~\forall~t\in(0,\infty).
\end{equation}

It thus remains to prove the inequalities~\eqref{eq:claim-regulPhiepsilonN1} and~\eqref{eq:claim-regulPhiepsilonN2}.

$\bullet$ Proof of the inequality~\eqref{eq:claim-regulPhiepsilonN1}.

Using the definition~\eqref{eq:PhiepsilonN} of $\Phi^{(\epsilon,N)}(t,u,v)$, the derivative $D\Phi^{(\epsilon,N)}(t,u,v).\mathbf{h}$ can be expressed as
\[
D\Phi^{(\epsilon,N)}(t,u,v).\mathbf{h}=\E\big[D\varphi(u^{\epsilon,N}(t,u,v)).\eta_u^{\epsilon,N,\mathbf{h}}(t)\big],
\]
where $\eta_u^{\epsilon,N,\mathbf{h}}(t)=\Pi_u\eta^{\epsilon,N,\mathbf{h}}(t)$, and $\bigl(\eta^{\epsilon,N,\mathbf{h}}(t)\bigr)_{t\ge 0}$ is the solution of the first order variational equation
\begin{equation}\label{eq:deta}
\left\lbrace
\begin{aligned}
&\frac{d}{dt}\eta^{\epsilon,N,\mathbf{h}}(t)=A_\epsilon \eta^{\epsilon,N,\mathbf{h}}(t)+\frac{1}{\epsilon}DF_N(X^{N,\epsilon}(t)).\eta^{\epsilon,N,\mathbf{h}}(t),~t\ge 0,\\
&\eta^{\epsilon,N,\mathbf{h}}(0)=\mathbf{h}.
\end{aligned}
\right.
\end{equation}
The solution of~\eqref{eq:deta} satisfies the mild formulation
\[
\eta^{\epsilon,N,\mathbf{h}}(t)=e^{tA_\epsilon}\mathbf{h}+\frac{1}{\epsilon}\int_0^te^{(t-s)A_\epsilon}DF_N(X^{\epsilon,N}(s)).\eta^{\epsilon,N,\mathbf{h}}(s)\,ds.
\]
Note that owing to the definition~\eqref{eq:F} of $F(u,v)=(0,f(u))$, one has
\[
DF_N(X^{N,\epsilon}(t)).\eta^{\epsilon,N,\mathbf{h}}(t)=\bigl(0,Df_N(u^{N,\epsilon}(t)).\eta_u^{\epsilon,N,\mathbf{h}}(t)\bigr).
\]
Therefore one obtains for all $t\ge 0$
\[
\|\eta_u^{\epsilon,N,\mathbf{h}}(t)\|_{H}\le \|\Pi_u e^{tA_\epsilon}\mathbf{h}\|_H+\frac{1}{\epsilon}\int_0^t\big\|\Pi_u e^{(t-s)A_\epsilon}\big(0,Df_N(u^{\epsilon,N}(s)).\eta_u^{\epsilon,N,\mathbf{h}}(s)\big)\big\|_H\,ds.
\]

Applying the inequality~\eqref{eq:newinequality} and using the Lipschitz continuity of $f$ (Assumption~\ref{ass:fLip}), one obtains for all $t>0$
\[
\|\eta_u^{\epsilon,N,\mathbf{h}}(t)\|_{H}\le C_\alpha t^{-\frac{\alpha}{2}}\|\mathbf{h}\|_{\epsilon,\alpha}+C\int_0^t\|\eta_u^{\epsilon,N,\mathbf{h}}(s)\|_H\,ds.
\]
Applying the Gr\"onwall inequality, for all $\alpha\in[0,1]$ there exists $C_\alpha(T)\in(0,\infty)$ such that for all $t\in(0,T]$ one has
\begin{equation}\label{eq:bound-eta}
\|\eta_u^{\epsilon,N,\mathbf{h}}(t)\|_{H}\le C_\alpha(T)t^{-\frac{\alpha}{2}}\|\mathbf{h}\|_{\epsilon,\alpha}.
\end{equation}
Finally, since the first order derivative of $\varphi$ is assumed to be bounded, one obtains
\[
\big|D\Phi^{(\epsilon,N)}(t,u,v).\mathbf{h}\big|=\big|\mathbb{E}\big[D\varphi(u^{\epsilon,N}(t,u,v)).\eta_u^{\epsilon,N,\mathbf{h}}(t)\big]\big|\le C_\alpha(T)\vvvert\varphi\vvvert_1t^{-\frac{\alpha}{2}}\|\mathbf{h}\|_{\epsilon,\alpha}.
\]
This concludes the proof of the inequality~\eqref{eq:claim-regulPhiepsilonN1}.

$\bullet$ Proof of the inequality~\eqref{eq:claim-regulPhiepsilonN2}.

Using the definition~\eqref{eq:PhiepsilonN} of $\Phi^{(\epsilon,N)}(t,u,v)$, the second order derivative $D^2\Phi^{(\epsilon,N)}(t,u,v).(\mathbf{h},\mathbf{k})$ can be expressed as
\[
D^2\Phi^{(\epsilon,N)}(t,u,v).(\mathbf{h},\mathbf{k})=\E\big[D\varphi(u^{\epsilon,N}(t)).\zeta_u^{\epsilon,N,\mathbf{h},\mathbf{k}}(t)\big]+\E\big[D^2\varphi(u^{\epsilon,N}(t)).(\eta^{\epsilon,N,\mathbf{h}}_u(t),\eta^{\epsilon,N,\mathbf{k}}_u(t))\big],
\]
where $\zeta_u^{\epsilon,N,\mathbf{h},\mathbf{k}}(t)=\Pi_u\zeta^{\epsilon,N,\mathbf{h},\mathbf{k}}(t)$, and $\bigl(\zeta^{\epsilon,N,\mathbf{h},\mathbf{k}}(t)\bigr)_{t\ge 0}$ is the solution of the second order variational equation
\begin{equation}\label{eq:dzeta}
\left\lbrace
\begin{aligned}
&\frac{d}{dt}\zeta^{\epsilon,N,\mathbf{h},\mathbf{k}}(t)=A_\epsilon \zeta^{\epsilon,N,\mathbf{h},\mathbf{k}}(t)+DF_N(X^{\epsilon,N}(t)).\zeta^{\epsilon,N,\mathbf{h},\mathbf{k}}(t)+D^2F_N(X^{\epsilon,N}(t)).\big(\eta^{\epsilon,N,\mathbf{h}}(t),\eta^{\epsilon,N,\mathbf{k}}(t)\big),~t\ge 0,\\
&\zeta^{\epsilon,N,\mathbf{h},\mathbf{k}}(0)=0.
\end{aligned}
\right.
\end{equation}
The solution of~\eqref{eq:dzeta} satisfies the mild formulation
\begin{align*}
\zeta^{\epsilon,N,\mathbf{h},\mathbf{k}}(t)&=\frac{1}{\epsilon}\int_0^te^{(t-s)A_\epsilon}D^2F_N(X^{\epsilon,N}(s)).\big(\eta^{\epsilon,N,\mathbf{h}}(s),\eta^{\epsilon,N,\mathbf{k}}(s)\big)\,ds\\
&~~~+\frac{1}{\epsilon}\int_0^te^{(t-s)A_\epsilon}DF_N(X^{\epsilon,N}(s)).\zeta^{\epsilon,N,\mathbf{h},\mathbf{k}}(s)\,ds.
\end{align*}
Note that owing to the definition~\eqref{eq:F} of $F(u,v)=(0,f(u))$, one has
\[
D^2F_N(X^{\epsilon,N}(s)).\big(\eta^{\epsilon,N,\mathbf{h}}(s),\eta^{\epsilon,N,\mathbf{k}}(s)\big)=\Bigl(0,D^2f_N(u^{\epsilon,N}(s)).\big(\eta_u^{\epsilon,N,\mathbf{h}}(s),\eta_u^{\epsilon,N,\mathbf{k}}(s)\big)\Bigr).
\]
Therefore, applying the inequality~\eqref{eq:lem_expAeps-smoothing2-1} from Lemma~\ref{lem:expAeps-smoothing2} with $\alpha=1$ and $\delta=0$ and the Lipschitz continuity of $f$ (Assumption~\ref{ass:fLip}), one obtains for all $t\ge 0$
\begin{align*}
  \|\zeta_u^{\epsilon,N,\mathbf{h},\mathbf{k}}(t)\|_{H}&\le \frac{1}{\epsilon}\int_0^t\big\|\Pi_u e^{(t-s)A_\epsilon}\big(0,D^2f_N(u^{\epsilon,N}(s)).\big(\eta_u^{\epsilon,N,\mathbf{h}}(s),\eta_u^{\epsilon,N,\mathbf{k}}(s)\big)\big)\big\|_{H}\,ds\\
 	&~~~+\frac{1}{\epsilon}\int_0^t\big\|\Pi_u e^{(t-s)A_\epsilon}\big(0,Df_N(u^{\epsilon,N}(s)).\zeta_u^{\epsilon,N,\mathbf{h},\mathbf{k}}(s)\big)\big\|_H\,ds\\
 &\le C\int_0^t\big\|D^2f_N(u^{\epsilon,N}(s)).\big(\eta_u^{\epsilon,N,\mathbf{h}}(s),\eta_u^{\epsilon,N,\mathbf{k}}(s)\big)\big\|_{H}\,ds+C\int_0^t\big\|Df_N(u^{\epsilon,N}(s)).\zeta_u^{\epsilon,N,\mathbf{h},\mathbf{k}}(s)\big)\big\|_H\,ds\\
 &\le C\int_0^t\|\eta_u^{\epsilon,N,\mathbf{h}}(s)\|_H\|\eta_u^{\epsilon,N,\mathbf{k}}(s)\|_H\,ds+C\int_0^t\|\zeta_u^{\epsilon,N,\mathbf{h},\mathbf{k}}(s)\|_H\,ds.
\end{align*}
Under the condition $\alpha_1+\alpha_2<2$, one has the integrability property
\[
\int_0^t s^{-\frac{\alpha_1+\alpha_2}{2}}\,ds\le C_{\alpha_1,\alpha_2}(T),\quad \forall~t\in[0,T],
\]
thus applying the inequality~\eqref{eq:bound-eta} one  obtains for all $t\ge 0$
\[
\|\zeta_u^{\epsilon,N,\mathbf{h},\mathbf{k}}(t)\|_{H}\le C_{\alpha_1,\alpha_2}(T)\|\mathbf{h}\|_{\epsilon,\alpha_1}\|\mathbf{k}\|_{\epsilon,\alpha_2}+C\int_0^t\|\zeta_u^{\epsilon,N,\mathbf{h},\mathbf{k}}(s)\|_H\,ds.
\]
Applying the Gr\"onwall inequality, there exists $C_{\alpha_1,\alpha_2}(T)\in(0,\infty)$ such that for all $t\in[0,T]$ one has
\begin{equation}\label{eq:bound-zeta}
\|\zeta_u^{\epsilon,N,\mathbf{h},\mathbf{k}}(t)\|_{H}\le C_{\alpha_1,\alpha_2}(T)t^{-\frac{\alpha_1+\alpha_2}{2}}\|\mathbf{h}\|_{\epsilon,\alpha_1}\|\mathbf{k}\|_{\epsilon,\alpha_2}.
\end{equation}
Finally, since the first and second order derivatives of $\varphi$ are assumed to be bounded, using~\eqref{eq:bound-eta} and~\eqref{eq:bound-zeta} one obtains
\begin{align*}
\big|D^2\Phi^{(\epsilon,N)}(t,u,v).(\mathbf{h},\mathbf{k})\big|&\le \big|\E\big[D\varphi(u^{\epsilon,N}(t)).\zeta_u^{\epsilon,N,\mathbf{h},\mathbf{k}}(t)\big]\big|+\big|\E\big[D^2\varphi(u^{\epsilon,N}(t)).(\eta^{\epsilon,N,\mathbf{h}}_u(t),\eta^{\epsilon,N,\mathbf{k}}_u(t))\big]\big|\\
&\le \vvvert\varphi\vvvert_1 \E[\|\zeta_u^{\epsilon,N,\mathbf{h},\mathbf{k}}(t)\|]+\vvvert\varphi\vvvert_2 \E[\|\eta_u^{\epsilon,N,\mathbf{h}}(t)\|\|\eta_u^{\epsilon,N,\mathbf{k}}(t)\|]\\
&\le C_{\alpha_1,\alpha_2}(T)\vvvert\varphi\vvvert_2 t^{-\frac{\alpha_1+\alpha_2}{2}}\|\mathbf{h}\|_{\epsilon,\alpha_1}\|\mathbf{k}\|_{\epsilon,\alpha_2}.
\end{align*}
This concludes the proof of the inequality~\eqref{eq:claim-regulPhiepsilonN2}.

The proof of Proposition~\ref{propo:regularity-PhiepsilonN} is thus completed.
\end{proof}

\subsection{Estimates on Malliavin derivatives} 
This subsection is devoted to deriving the estimate for the Malliavin derivatives of the solution $(u^{\epsilon,N}(t))_{t\in[0,T]}$, as stated in the following lemma.

\begin{lemma}\label{lem:Malliavinderivative-moment}
Let Assumptions~\ref{ass:fLip} and \ref{ass:mapping-f} be satisfied. Then for all $h\in H$ and $\kappa\in[0,1)$, there exists a constant $C_{\kappa}(T)\in(0,\infty)$ such that for all $0\le s\le t\le T$ one has 
\begin{equation}\label{eq:lemMalliavinderivative-moment}
\E[\|\mathcal{D}^{h}_su^{\epsilon,N}(t)\|^2_H]\le C_{\kappa}(T)(t-s)^{-\kappa} \|Q^{\frac12}h\|_{H^{-\kappa}}^2. 
\end{equation}
\end{lemma}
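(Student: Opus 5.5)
The plan is to compute the Malliavin derivative of the mild formulation~\eqref{eq:mild-sdwe-galerkin} and close a Gr\"onwall-type estimate using Lemma~\ref{lem:expAeps-smoothing2}. Fix $h\in H$ and $s\in[0,T)$. Differentiate~\eqref{eq:mild-sdwe-galerkin} in the Malliavin sense; the initial value is $\mathcal{F}_0$-measurable, hence independent of the noise, so its contribution vanishes. Writing $W^Q=Q^{\frac12}W$ and using~\eqref{eq:stochasticintegral-W} together with the chain rule~\eqref{eq:chainrule}, one obtains for $t\ge s$ the deterministic-in-structure linear equation
\[
\mathcal{D}^h_sX^{\epsilon,N}(t)=\frac1\epsilon e^{(t-s)A_\epsilon}\bigl(0,P_NQ^{\frac12}h\bigr)+\frac1\epsilon\int_s^te^{(t-r)A_\epsilon}\bigl(0,Df_N(u^{\epsilon,N}(r)).\mathcal{D}^h_su^{\epsilon,N}(r)\bigr)\,dr .
\]
For $t<s$ the derivative is zero. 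The stochastic integral term disappears because the noise is additive, so $\mathcal{D}_s\Theta=0$ in~\eqref{eq:stochasticintegral-W}. Applying $\Pi_u$ gives a closed equation for $\mathcal{D}^h_su^{\epsilon,N}(t)$, which is in fact pathwise, with random coefficients only through $Df_N(u^{\epsilon,N}(r))$.

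Next I bound the two terms in $H$. For the forcing term I would use~\eqref{eq:lem_expAeps-smoothing2-1} with $\alpha=1$ and $\delta=\kappa/2\in[0,\frac12)$:
\[
\Bigl\|\Pi_u e^{(t-s)A_\epsilon}\bigl(0,\tfrac1\epsilon P_NQ^{\frac12}h\bigr)\Bigr\|_H\le C_\kappa(t-s)^{-\frac\kappa2}\|P_NQ^{\frac12}h\|_{H^{-\kappa}}\le C_\kappa(t-s)^{-\frac\kappa2}\|Q^{\frac12}h\|_{H^{-\kappa}},
\]
where the last step uses~\eqref{eq:boundPN}. The factor $\epsilon^{\alpha}=\epsilon$ supplied by the lemma cancels the $\frac1\epsilon$ exactly, so the estimate is uniform in $\epsilon$ and $N$. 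This is the key point. For the nonlinear term I use~\eqref{eq:lem_expAeps-smoothing2-1} with $\alpha=1$, $\delta=0$, together with $\|Df_N(u).k\|_H\le \Lf\|k\|_H$ from Assumption~\ref{ass:fLip}. This gives the bound $C\int_s^t\|\mathcal{D}^h_su^{\epsilon,N}(r)\|_H\,dr$. Setting $\psi(t)=\|\mathcal{D}^h_su^{\epsilon,N}(t)\|_H$, the result is almost surely
\[
\psi(t)\le C_\kappa(t-s)^{-\frac\kappa2}\|Q^{\frac12}h\|_{H^{-\kappa}}+C\int_s^t\psi(r)\,dr .
\]

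Finally I apply Gr\"onwall's inequality. Since $\kappa<1$, the singular kernel $(r-s)^{-\kappa/2}$ is integrable, so a Gronwall lemma with a singular inhomogeneity (or simply one iteration followed by the standard lemma) yields $\psi(t)\le C_\kappa(T)(t-s)^{-\kappa/2}\|Q^{\frac12}h\|_{H^{-\kappa}}$ almost surely. Squaring and taking expectations then gives~\eqref{eq:lemMalliavinderivative-moment}.

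The main obstacle is rigorously justifying the differentiated mild equation. This includes the Malliavin differentiability of $u^{\epsilon,N}(t)$ in finite dimension, for example via Picard iterates in $\mathbb{D}^{1,2}(H_N)$ using the Lipschitz property of $f_N$, and the interchange of $\mathcal{D}$ with the Lebesgue integral. Both are standard for finite-dimensional SDEs with Lipschitz coefficients and additive noise. The genuinely delicate analytic point, namely uniformity in $\epsilon$ despite the $\frac1\epsilon$ prefactors, is already handled by Lemma~\ref{lem:expAeps-smoothing2}.
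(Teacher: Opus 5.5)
Your proposal is correct and follows essentially the same route as the paper. You differentiate the mild formulation, where the additive stochastic convolution contributes only $\frac1\epsilon e^{(t-s)A_\epsilon}\IP_N(0,Q^{\frac12}h)$. You then apply Lemma~\ref{lem:expAeps-smoothing2} with $(\alpha,\delta)=(1,\kappa/2)$ and $(1,0)$, so that the factor $\epsilon$ cancels the $\frac1\epsilon$, and close with Gr\"onwall.

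The only difference is minor. You run Gr\"onwall pathwise on $\|\mathcal{D}^h_su^{\epsilon,N}(t)\|_H$, which even yields an almost sure bound. The paper instead squares, takes expectations, and applies Gr\"onwall to the second moment with the integrable singularity $(t-s)^{-\kappa}$.
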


\begin{proof}
Owing to the property~\eqref{eq:stochasticintegral-W} of the Malliavin derivative, for all $s\ge 0$ and $h\in H$ one has 
\[
\mathcal{D}^{h}_s\int_0^se^{(s-r)A_\epsilon}\mathcal{P}_Nd\IW^Q(r)=\bigl(0,P_NQ^{\frac12}h)=\IP_N(0,Q^{\frac12}h).
\]
Differentiating equation~\eqref{eq:sdwe1-galerkin} of $X^{\epsilon,N}$ in the direction $h\in H$ and using the chain rule~\eqref{eq:chainrule}, one obtains
\[
\left\lbrace
\begin{aligned}
	d\mathcal{D}^{h}_sX^{\epsilon,N}(t)&=\bigl(A_{\epsilon}	\mathcal{D}^{h}_sX^{\epsilon,N}(t)+\frac{1}{\epsilon}DF_N(X^{\epsilon,N}(t)).	\mathcal{D}^{h}_sX^{\epsilon,N}(t)\bigr)\,dt,\quad t\ge s\ge 0,\\
	\mathcal{D}^{h}_sX^{\epsilon,N}(s)&=\frac{1}{\epsilon}\IP_N(0,Q^{\frac12}h).
\end{aligned}
\right.
\]
In addition, by the chain rule one has $\mathcal{D}^{h}_su^{\epsilon,N}(t)=\Pi_u\mathcal{D}^{h}_sX^{\epsilon,N}(t)$. As a result, for all $t\ge s \ge 0$ one has
\begin{align*}
	\mathcal{D}^{h}_su^{\epsilon,N}(t)= \frac{1}{\epsilon}\Pi_ue^{(t-s)A_\epsilon}\IP_N(0,Q^{\frac12}h)+\frac{1}{\epsilon}\int_s^t\Pi_ue^{(t-r)A_\epsilon}\big(0,Df_N(u^{\epsilon,N}(r)).\mathcal{D}_s^h u^{\epsilon,N}(r)\big)\,dr.
\end{align*}
Let $\kappa\in[0,1)$, applying Lemma~\ref{lem:expAeps-smoothing2} with $(\alpha,\delta)=(1,0)$ and $(\alpha,\delta)=(1,\frac{\kappa}{2})$, respectively, and recalling that $Df$ is bounded (since $f$ is globally Lipschitz continuous, see Assumption~\ref{ass:fLip}), there exists $C_{\kappa}\in(0,\infty)$ such that for all $t\ge s\ge 0$ one has
\begin{align*}
	\E[\|\mathcal{D}^{h}_su^{\epsilon,N}(t)\|^2_H]&\le \frac{C}{\epsilon^2} \E\Bigl[\bigl\|\Pi_ue^{(t-s)A_\epsilon}(0,P_NQ^{\frac12}h)\bigr\|^2_H+\Bigl\|\int_s^t\Pi_ue^{(t-r)A_\epsilon}\big(0,Df_N(u^{\epsilon,N}(r)).\mathcal{D}_s^h u^{\epsilon,N}(r)\big)dr\Bigr\|^2_H\Bigr]\\
	&\le C_{\kappa}(t-s)^{-\kappa}\E[\|Q^{\frac12}h\|_{H^{-\kappa}}^2]+C_{\kappa}\int_s^t\E\bigl[\bigl\|Df_N(u^{\epsilon,N}(r)).\mathcal{D}_s^h u^{\epsilon,N}(r)\bigr\|_H^2\bigr]\,dr\\
	&\le C_{\kappa}(t-s)^{-\kappa} \E[\|Q^{\frac12}h\|_{H^{-\kappa}}^2]+C_{\kappa}\int_s^t\E\bigl[\bigl\|\mathcal{D}_s^h u^{\epsilon,N}(r)\bigr\|_H^2\bigr]\,dr.
\end{align*}
Applying the Gr\"onwall inequality, there exists $C_{\kappa}(T)\in(0,\infty)$  such that for all $t\in[0,T]$ one has
\[
\E[\|\mathcal{D}^{h}_su^{\epsilon,N}(t)\|^2_H]\le C_{\kappa}(T)(t-s)^{-\kappa}\E[\|Q^{\frac12}h\|_{H^{-\kappa}}^2].
\]
The proof of Lemma~\ref{lem:Malliavinderivative-moment} is thus completed.
\end{proof}

\section{Proof of Theorem~\ref{theo:weak}}\label{sec:proof-theo-weak}

The objective of this section is to provide the proof of Theorem~\ref{theo:weak}. More precisely, the main task is to establish the following result on the weak error when $\epsilon\to 0$ for the spectral Galerkin approximations $u^{\epsilon,N}$ and $u^{0,N}$ of $u^{\epsilon}$ and $u^{0}$ respectively.

\begin{theo}\label{theo:weak-N}
Let Assumptions~\ref{ass:fLip}, \ref{ass:mapping-f},~\ref{ass:noise} and \ref{ass:init-weak} be satisfied.

Then for all $T\in(0,\infty)$, $\alpha\in[0,\beta)$ (where $\beta\in(0,1]$ is given by the condition~\eqref{eq:condition_beta} from Assumption~\ref{ass:noise}), and all mapping $\varphi\colon H\to\R$ of class $\mathcal{C}^4$, with bounded derivatives up to order $4$, there exists $C_\alpha(T,\varphi)\in(0,\infty)$ such that for all $\epsilon\in(0,1)$ and all $N\in\N$ one has
\begin{equation}\label{eq:theo-weak-N}
\underset{0\le t\le T}\sup~\bigl|\E[\varphi(u^{\epsilon,N}(t))]-\E[\varphi(u^{0,N}(t))]\bigr|\le C_{\alpha}(T,\varphi)\bigl(\epsilon^{\min(2\alpha,1)}+\epsilon\lambda_N^{\max(\frac12-\alpha,0)}+\epsilon^2\lambda_N^{1-\alpha}\bigr).
\end{equation}
\end{theo}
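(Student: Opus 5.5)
The plan is to compare the two Galerkin solutions through the Kolmogorov equation \eqref{eq:kolmogorovPhiN} of the limit equation, and to absorb the singular $\frac1\epsilon$ terms with a Poisson equation for the fast generator $\mathcal{L}^{(N)}$. Fix $t\in(0,T]$ and set $\Phi=\Phi^{(0,N)}$. Since $\E[\varphi(u^{0,N}(t))]=\E[\Phi(t,P_Nu_0^0)]$, we write
\[
\E[\varphi(u^{\epsilon,N}(t))]-\E[\varphi(u^{0,N}(t))]=\E[\Phi(0,u^{\epsilon,N}(t))]-\E[\Phi(t,u^{\epsilon,N}(0))]+\E[\Phi(t,P_Nu_0^\epsilon)-\Phi(t,P_Nu_0^0)].
\]
The last term is controlled by \eqref{eq:init-weak-error} together with the first-derivative bound in Proposition~\ref{propo:regularity-PhiN}, and gives $\epsilon^{\min(2\alpha,1)}$. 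For the first difference we apply It\^o's formula to $s\mapsto\Phi(t-s,u^{\epsilon,N}(s))$. Since $du^{\epsilon,N}=\frac1\epsilon v^{\epsilon,N}ds$ carries no martingale part, this yields
\[
\E\int_0^t\Bigl(D_u\Phi(t-s,u^{\epsilon,N}(s)).\tfrac1\epsilon v^{\epsilon,N}(s)-\mathbb{L}^{(N)}\Phi(t-s,u^{\epsilon,N}(s))\Bigr)ds .
\]
The velocity term is singular, so we remove it with the Poisson equation. Define $\Psi(s,u,v)=-D_u\Phi(t-s,u).v$. This function is linear in $v$ and satisfies $\mathcal{L}^{(N)}\Psi=D_u\Phi.v$. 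Applying It\^o's formula to $\epsilon\Psi(s,u^{\epsilon,N}(s),v^{\epsilon,N}(s))$ with the generator \eqref{def:operator-dampedwave}, the $\frac1\epsilon\mathcal{L}^{(N)}\Psi$ term cancels the singular term. What remains is $\mathcal{A}^{(N)}\Psi$, namely $-D^2_{uu}\Phi.(v,v)+D_u\Phi.(\Lambda u-f_N(u))$, together with the boundary terms $\epsilon\Psi$ at $s=0$ and $s=t$ and the time-derivative term $\epsilon\partial_s\Psi$.

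The drift parts $D_u\Phi.(-\Lambda u+f_N(u))$ cancel exactly against $\mathbb{L}^{(N)}\Phi$. The key remaining error is therefore
\[
\E\int_0^t\Bigl(D^2_{uu}\Phi(t-s,u^{\epsilon,N}).\bigl(v^{\epsilon,N},v^{\epsilon,N}\bigr)-\tfrac12\sum_{n\le N}q_nD^2_{uu}\Phi(t-s,u^{\epsilon,N}).(e_n,e_n)\Bigr)ds ,
\]
a quadratic-variation-type mismatch. We handle it with a second Poisson step. Take $\Psi_2(s,u,v)=-\tfrac12 D^2_{uu}\Phi.(v,v)+c$, where $c$ is chosen so that $\mathcal{L}^{(N)}\Psi_2$ equals the integrand; this works because $v\mapsto v\otimes v$ is mapped by the Ornstein--Uhlenbeck generator to $-2v\otimes v+\sum q_n e_n\otimes e_n$. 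We then apply It\^o to $\epsilon^2\Psi_2$. This produces terms of order $\epsilon^2$ times $\E[\|v\|^2]$-weighted third derivatives of $\Phi$ and $\epsilon$ times $\mathcal{A}^{(N)}\Psi_2$, which involve $D^3_{uuu}\Phi.(v,v,v)$ and $D^2_{uu}\Phi.(v,\Lambda u-f_N(u))$.

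All these remainders are bounded with Proposition~\ref{propo:regularity-PhiN}, putting negative Sobolev norms on $v$ and $\Lambda u$ and using the moment bounds of Proposition~\ref{propo:Galerkin} for $\|X^{\epsilon,N}\|_{\HH^{2\alpha}}$. In particular $v^{\epsilon,N}\in H^{2\alpha-1}$ and $\Lambda u^{\epsilon,N}\in H^{2\alpha-2}$. Where the spatial regularity is insufficient we use the inverse inequality \eqref{eq:inverseinequality}, and this is what produces the factors $\epsilon\lambda_N^{\max(\frac12-\alpha,0)}$ and $\epsilon^2\lambda_N^{1-\alpha}$. The time singularities $(t-s)^{-\sum\alpha_\ell/2}$ stay integrable because the exponents sum to less than $2$.

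The main obstacle is the terms in which $v^{\epsilon,N}$ appears without enough regularity. A crude bound $\E\|v\|_{H^{-1}}\lesssim 1$ only gives order $\epsilon^{\alpha}$ rather than $\epsilon^{2\alpha}$. The terms $\epsilon D_u\Phi.v$ and $\epsilon D^2_{uu}\Phi.(v,\Lambda u)$ are the problematic ones. To gain the extra power we will decompose $v^{\epsilon,N}$ into its stochastic convolution part, namely $\Pi_v\IZ^{\epsilon}$ with $\E\|\Pi_v\IZ^{\epsilon}\|_{H^{2\alpha-1}}^2$ bounded by Lemma~\ref{lem:expAeps-smoothing1}, and the remaining part. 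For the Gaussian oscillatory part, we would first try the Malliavin integration by parts \eqref{prop:Malliavinderivative-W} together with Lemma~\ref{lem:Malliavinderivative-moment}. The expectation of a derivative of $\Phi$ paired with a stochastic integral then becomes a second derivative against $\mathcal{D}u^{\epsilon,N}$ times the kernel $\frac1\epsilon\Pi_ve^{(t-r)A_\epsilon}(0,Q^{\frac12}e_n)$. Integrating this kernel in time gains a factor of order $\epsilon$ times a $\lambda_n^{-\kappa}$ smoothing, and this should yield the order $\epsilon^{\min(2\alpha,1)}$.
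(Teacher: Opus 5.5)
Your architecture matches the paper's. You use the same initial-data term and the same first corrector (your $\Psi$ is $-\Theta^{(N)}_1$), the drift cancels against $\mathbb{L}^{(N)}\Phi^{(0,N)}$ in the same way, the second corrector is the same, and the oscillatory part is handled by Malliavin integration by parts. Taking $c=0$ is admissible and slightly more economical: the paper's $-\frac12\Theta^{(N)}_2$ differs by the $u$-only function $\frac14\sum_nq_nD^2_{uu}\Phi^{(0,N)}.(e_n,e_n)$, which only generates $E^{\epsilon,N}_{1,1,1,2}$, $E^{\epsilon,N}_{1,1,2,2}$ and $E^{\epsilon,N}_{1,1,3,2}$.

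The genuine problem is the regularity input. Proposition~\ref{propo:Galerkin} gives uniform moment bounds only in $\HH^{\alpha'}$ with $\alpha'<\beta$, not in $\HH^{2\alpha}$. The $\HH^{2\alpha}$ regularity of $x_0^\epsilon$ is not propagated, because the noise caps it. The energy identity in the proof of Lemma~\ref{lem:expAeps-bound} gives, for the $n$-th velocity mode of the stochastic convolution, $\frac{q_n}{\epsilon^2}\int_0^t|g_n^{\epsilon,0,1}(r)|^2\,dr=\frac{q_n}{2}\bigl(1-|g_n^{\epsilon,0,1}(t)|^2-\lambda_n|f_n^{\epsilon,0,1}(t)|^2\bigr)$. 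This is close to $\frac{q_n}{2}$ uniformly in $n$ once $t\gg\epsilon^2$. A bound on $\E\|\Pi_v\IP_N\IZ^{\epsilon}(t)\|_{H^{2\alpha-1}}^2$ uniform in $N$ would therefore need $\sum_nq_n\lambda_n^{2\alpha-1}<\infty$. That fails in general for $\alpha\ge\beta/2$ (for space-time white noise in $d=1$, as soon as $\alpha\ge\frac14$), and Lemma~\ref{lem:expAeps-smoothing1} cannot supply it.

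The factors you announce, $\epsilon\lambda_N^{\max(\frac12-\alpha,0)}$ and $\epsilon^2\lambda_N^{1-\alpha}$, are exactly what $\HH^{\alpha}$ bounds and the inverse inequality give, e.g. $\epsilon^2\E\|v^{\epsilon,N}\|_H^2\le\epsilon^2\lambda_N^{1-\alpha}\E\|X^{\epsilon,N}\|_{\HH^\alpha}^2$. With the $H^{2\alpha-1}$ bound you claim, essentially every remainder, including $\epsilon D\varphi(u^{\epsilon,N}(t)).v^{\epsilon,N}(t)$, would follow crudely, and no Malliavin calculus would be needed. Yet $\alpha\in[\beta/2,\beta)$ is precisely the range where the weak rate beats the strong one. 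The $\HH^{2\alpha}$ assumption is usable only where the initial datum itself appears: the boundary terms at $s=0$ and $e^{sA_\epsilon}\IP_Nx_0^\epsilon$.

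Redoing the bookkeeping with $\HH^{\alpha'}$ regularity changes which terms are hard. The term $\epsilon D^2_{uu}\Phi^{(0,N)}(t-s,u^{\epsilon,N}).(v^{\epsilon,N},\Lambda u^{\epsilon,N})$ that you flag is harmless. Split the smoothing of $D^2_{uu}\Phi^{(0,N)}$ (total order $<2$) as $v^{\epsilon,N}\in H^{-1+\frac{\alpha+\beta}{2}}$ and $u^{\epsilon,N}\in H^{1-\alpha}$, then use \eqref{eq:inverseinequality2}; this is how the paper treats $E^{\epsilon,N}_{1,2,3,3}$ and $E^{\epsilon,N}_{1,1,3,1,2}$.

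Two terms genuinely need more:
\begin{itemize}
\item the boundary term $\epsilon D\varphi(u^{\epsilon,N}(t)).v^{\epsilon,N}(t)$, where $\Phi^{(0,N)}(0,\cdot)=\varphi$ gives no smoothing;
\item $\epsilon\int_0^tD_u\Phi^{(0,N)}(t-s,u^{\epsilon,N}(s)).\Lambda v^{\epsilon,N}(s)\,ds$, which your proposal does not identify. It is hidden in $\epsilon\partial_s\Psi$, since $\partial_s\bigl(D_u\Phi^{(0,N)}(t-s,u).v\bigr)=-D_u\bigl(\mathbb{L}^{(N)}\Phi^{(0,N)}\bigr)(t-s,u).v$ contains $D_u\Phi^{(0,N)}(t-s,u).\Lambda v$.
\end{itemize}
For both, $\HH^{\alpha'}$ bounds give at best $\epsilon\lambda_N^{(1-\alpha)/2}$, i.e. order $\epsilon^{\alpha}$ once $\lambda_N\sim\epsilon^{-2}$.

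The paper splits $v^{\epsilon,N}$ by the mild formula. The initial-datum and $F_N$ parts are treated directly, using $x_0^\epsilon\in\HH^{2\alpha}$ and \eqref{eq:falpha1}. The integration by parts \eqref{prop:Malliavinderivative-W} is applied only to the stochastic-convolution parts ($E^{\epsilon,N}_{1,2,2,3}$ and $E^{\epsilon,N}_{1,2,3,1,3}$). There, Lemma~\ref{lem:Malliavinderivative-moment} supplies $\sqrt{q_n}$ times a negative power of $\lambda_n$. Lemma~\ref{lem:expAeps-smoothing1} gives $\|e^{rA_\epsilon}(0,e_n)\|_{\HH^{1+2\kappa}}\le C\epsilon r^{-1/2}\lambda_n^{\kappa}$, which is the source of the factor $\epsilon$. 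In the second term, the smoothing of order $2(1-\kappa)$ of $D^2_{uu}\Phi^{(0,N)}(t-s,\cdot)$ absorbs the $\Lambda$.
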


Decomposing the weak error appearing in the left-hand side of~\eqref{eq:theo-weak} from Theorem~\ref{theo:weak} as
\begin{align*}
\E[\varphi(u^{\epsilon}(t))]-\E[\varphi(u^{0}(t))]&=\E[\varphi(u^{\epsilon,N}(t))]-\E[\varphi(u^{0,N}(t))]\\
&+\E[\varphi(u^{0,N}(t))]-\E[\varphi(u^{0}(t))]\\
&+\E[\varphi(u^{\epsilon}(t))]-\E[\varphi(u^{\epsilon,N}(t))].
\end{align*}
Theorem~\ref{theo:weak} is a consequence of Theorem~\ref{theo:weak-N} above and of the weak error estimates on the spectral Galerkin approximation stated below.

\begin{lemma}\label{lem:weakerror-0-galerkin}
For all $T\in(0,\infty)$,  $\alpha\in[0,\beta)$ (where $\beta\in(0,1]$ is given by the condition~\eqref{eq:condition_beta} from Assumption~\ref{ass:noise}), and any mapping $\varphi\colon H\to\R$ of class $\mathcal{C}^2$, with bounded derivatives up to order $2$, there exists $C_{\alpha}(T,\varphi)\in(0,\infty)$ such that for all $N\in\N$ one has
\[
\underset{t\in[0,T]}\sup~\big|\E[\varphi(u^{0,N}(t))]-\E[\varphi(u^0(t))]\big|\le C_{\alpha}(T,\varphi)\lambda_N^{-\alpha}.
\]
\end{lemma}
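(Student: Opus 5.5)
The plan is to use the Kolmogorov equation approach for the spectral Galerkin error of the stochastic heat equation, taking the regularity estimates of Proposition~\ref{propo:regularity-PhiN} (uniform in $N$) and passing to the limit. Fix $N$ and, for $K\ge N$, compare $u^{0,N}$ with $u^{0,K}$; since $u^{0,K}(t)\to u^0(t)$ in $L^2(\Omega;H)$ and $\varphi$ is Lipschitz, it suffices to prove $|\E[\varphi(u^{0,K}(t))]-\E[\varphi(u^{0,N}(t))]|\le C_\alpha(T,\varphi)\lambda_N^{-\alpha}$ uniformly in $K\ge N$. We work with $\Phi^{(0,K)}$ on $H_K$, which contains $H_N$. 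Note that $u^{0,N}$ solves in $H_K$ the equation $du^{0,N}=(-\IL u^{0,N}+P_Nf(u^{0,N}))dt+P_NdW^Q$. Applying It\^o's formula to $s\mapsto \Phi^{(0,K)}(t-s,u^{0,N}(s))$ on $[0,t]$ and using the Kolmogorov equation~\eqref{eq:kolmogorovPhiN} yields
\begin{align*}
\E[\varphi(u^{0,N}(t))]-\E[\varphi(u^{0,K}(t))]&=\E[\Phi^{(0,K)}(t,P_Nu_0^0)-\Phi^{(0,K)}(t,P_Ku_0^0)]\\
&\quad+\int_0^t\E\bigl[D\Phi^{(0,K)}(t-s,u^{0,N}(s)).\bigl((P_N-P_K)f(u^{0,N}(s))\bigr)\bigr]ds\\
&\quad-\frac12\int_0^t\sum_{n=N+1}^{K}q_n\E\bigl[D^2\Phi^{(0,K)}(t-s,u^{0,N}(s)).(e_n,e_n)\bigr]ds.
\end{align*}

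For the initial term, we use the bounded first derivative ($\alpha_1=0$) together with~\eqref{eq:errorPNK} and the moment bound on $\|u_0^0\|_{H^{2\alpha}}$, which gives $\lambda_N^{-\alpha}$, or even better. For the drift term, we apply~\eqref{eq:propo-regularity-PhiN} with exponent $2\alpha<2$:
\[
|D\Phi^{(0,K)}(t-s,\cdot).h|\le C(t-s)^{-\alpha}\|h\|_{H^{-2\alpha}},
\]
and $\|(P_N-P_K)f(u)\|_{H^{-2\alpha}}\le\lambda_N^{-\alpha}\|f(u)\|_H\le C\lambda_N^{-\alpha}(1+\|u\|_H)$. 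Since $\alpha<1$, the singularity $(t-s)^{-\alpha}$ is integrable, and the moment bounds of Proposition~\ref{propo:Galerkin} conclude this part.

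The noise term is the main obstacle, because the bound must use the noise regularity. We apply~\eqref{eq:propo-regularity-PhiN} with $k=2$ and $\alpha_1=\alpha_2=1-\beta+\alpha'$, for some $\alpha'\in(\alpha,\beta)$, so that $\alpha_1+\alpha_2=2-2\beta+2\alpha'<2$. This gives
\[
\sum_{n>N}q_n|D^2\Phi^{(0,K)}(t-s,\cdot).(e_n,e_n)|\le C(t-s)^{-(1-\beta+\alpha')}\sum_{n>N}q_n\lambda_n^{-(1-\beta+\alpha')}.
\]
The time singularity has exponent $1-\beta+\alpha'<1$, so it is integrable. We bound $\lambda_n^{-(1-\beta+\alpha')}\le \lambda_N^{-\alpha}\lambda_n^{\alpha-(1-\beta+\alpha')}$; this only works if the remaining sum $\sum q_n\lambda_n^{\gamma-1}$, with $\gamma=\beta+\alpha-\alpha'<\beta$, is finite by~\eqref{eq:condition_beta}, which requires $\gamma\ge0$, i.e.\ $\alpha'\le\beta+\alpha$. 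This holds for $\alpha'$ close to $\alpha$.

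If the parameters do not balance this way, we will fall back to $\alpha_1=\alpha_2$ close to $1$ combined with~\eqref{eq:condition_beta} at exponent close to $\beta$. That fallback yields a rate $\lambda_N^{-\alpha}$ for any $\alpha<\beta$, which is what is claimed. Collecting the three estimates and letting $K\to\infty$ completes the proof.
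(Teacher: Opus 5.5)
Your proposal is correct and follows the paper's proof essentially step by step: you use the same comparison of $u^{0,N}$ with $u^{0,K}$ through $\Phi^{(0,K)}$ and It\^o's formula, the same three terms, and the same exponents for the initial-value term ($\alpha_1=0$) and the drift term ($\alpha_1=2\alpha$). In the noise term only $\alpha_1+\alpha_2$ matters because both arguments are $e_n$, so your symmetric choice $\alpha_1=\alpha_2=1-\beta+\alpha'$ is equivalent to the paper's $(1+\alpha-\kappa,1-\alpha)$ with $\kappa=2(\beta-\alpha')$; also, $\gamma=\alpha+\beta-\alpha'\ge 0$ holds for every $\alpha'\in(\alpha,\beta)$ (since $\alpha'<\beta\le\alpha+\beta$), so the ``fallback'' paragraph is unnecessary.
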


\begin{lemma}\label{lem:weakerror-epsilon-galerkin}
For all $T\in(0,\infty)$,  $\alpha\in[0,\beta)$ (where $\beta\in(0,1]$ is given by the condition~\eqref{eq:condition_beta} from Assumption~\ref{ass:noise}), and any mapping $\varphi\colon H\to\R$ of class $\mathcal{C}^2$, with bounded derivatives up to order $2$, there exists $C_{\alpha}(T,\varphi)\in(0,\infty)$ such that for all $N\in\N$ and $\epsilon\in(0,1)$ one has
\[
\underset{t\in[0,T]}\sup~\big|\E[\varphi(u^{\epsilon,N}(t))]-\E[\varphi(u^\epsilon(t))]\big|\le C_{\alpha}(T,\varphi)(\epsilon^{\min(2\alpha,1)}+\lambda_N^{-\alpha}).
\]
\end{lemma}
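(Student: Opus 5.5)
The strategy is a Kolmogorov-equation (Itô-formula) comparison between two Galerkin levels $N\le K$, with bounds uniform in $K$ and $\epsilon$, followed by $K\to\infty$. For each fixed $\epsilon$, $u^{\epsilon,K}(t)\to u^\epsilon(t)$ in $L^2(\Omega;H)$ as $K\to\infty$, and $\varphi$ is Lipschitz, so it suffices to prove
\[
\underset{t\in[0,T]}\sup~\big|\E[\varphi(u^{\epsilon,N}(t))]-\E[\varphi(u^{\epsilon,K}(t))]\big|\le C_\alpha(T,\varphi)\bigl(\epsilon^{\min(2\alpha,1)}+\lambda_N^{-\alpha}\bigr)
\]
uniformly in $K\ge N$ and $\epsilon\in(0,1)$. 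Since $\HH_N\subset\HH_K$, the process $X^{\epsilon,N}$ takes values in $\HH_K$. I would apply Itô's formula to $s\mapsto \Phi^{(\epsilon,K)}(t-s,X^{\epsilon,N}(s))$ and use the Kolmogorov equation~\eqref{eq:kolmogorovPhiepsilonN} at level $K$. This gives
\[
\E[\varphi(u^{\epsilon,N}(t))]-\E[\varphi(u^{\epsilon,K}(t))]=\E\bigl[\Phi^{(\epsilon,K)}(t,\IP_Nx_0^\epsilon)-\Phi^{(\epsilon,K)}(t,\IP_Kx_0^\epsilon)\bigr]+\int_0^t\E\bigl[(\mathcal{L}^{\epsilon,N}-\mathcal{L}^{\epsilon,K})\Phi^{(\epsilon,K)}(t-s,X^{\epsilon,N}(s))\bigr]ds .
\]
From~\eqref{eq:LepsilonN}, the generator difference has a drift part $\frac1\epsilon D_v\Phi^{(\epsilon,K)}.(f_N-f_K)(u)$ and a noise part $-\frac{1}{2\epsilon^2}\sum_{n=N+1}^K q_nD^2_{vv}\Phi^{(\epsilon,K)}.(e_n,e_n)$.

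\textbf{Drift part.} I would use~\eqref{eq:derivative1v-PhiepsilonN} with an exponent $a\in[0,1]$. Since $(P_K-P_N)f(u)$ has only modes above $N$, one gets $\|(P_K-P_N)f(u)\|_{H^{-a}}\le\lambda_N^{-(a+\sigma)/2}\|f(u)\|_{H^\sigma}$. For $\alpha\le\frac12$, take $\sigma=0$ and $a=2\alpha$. For $\alpha>\frac12$, take $a=1$ and $\sigma=2\alpha-1$, controlling $\|f(u^{\epsilon,N})\|_{H^{\sigma}}$ through~\eqref{eq:falpha1} and the moment bounds of Proposition~\ref{propo:Galerkin}. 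Here one must check that $2\alpha-1+\kappa<\beta$, possibly after replacing $\alpha$ by a slightly larger exponent in $(\alpha,\beta)$. The singular factor $(t-s)^{-a/2}$ is integrable because $a\le 1<2$. This yields $C\lambda_N^{-\alpha}$.

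\textbf{Noise part.} I would use~\eqref{eq:derivative2vv-PhiepsilonN} with $\alpha_1=\alpha_2=1-\delta$, which gives $\frac{1}{\epsilon^2}|D^2_{vv}\Phi^{(\epsilon,K)}.(e_n,e_n)|\le C(t-s)^{-(1-\delta)}\lambda_n^{-(1-\delta)}$. Writing $\lambda_n^{-(1-\delta)}=\lambda_n^{\alpha'-1}\lambda_n^{-(\alpha'-\delta)}$ with $\alpha'\in(\alpha,\beta)$ and $\delta=\alpha'-\alpha$, the tail satisfies
\[
\sum_{n>N}q_n\lambda_n^{-(1-\delta)}\le\lambda_N^{-\alpha}\sum_{n}q_n\lambda_n^{\alpha'-1},
\]
and the last sum is finite by~\eqref{eq:condition_beta}. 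The factor $(t-s)^{\delta-1}$ is integrable, so this part is also $O(\lambda_N^{-\alpha})$.

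\textbf{Initial term.} This is where the $\epsilon^{\min(2\alpha,1)}$ term arises, and it is the step I expect to be most delicate. The $t$-singular bounds of Proposition~\ref{propo:regularity-PhiepsilonN} cannot be used, since the estimate must be uniform in $t\in[0,T]$, including $t$ near $0$. Instead I would write the difference through the mean value theorem along the segment from $\IP_Kx_0^\epsilon$ to $\IP_Nx_0^\epsilon$, and bound the derivative directly via $\E[D\varphi(u^{\epsilon,K}(t)).\eta_u(t)]$, using the mild form of $\eta$ as in the proof of~\eqref{eq:claim-regulPhiepsilonN1}.
\begin{itemize}
\item The $u$-component of the increment is handled by Lemma~\ref{lem:expAeps-bound} and~\eqref{eq:errorPNK}: $\|(P_K-P_N)u_0^\epsilon\|_H\le\lambda_N^{-\alpha}\|u_0^\epsilon\|_{H^{2\alpha}}$.
\item The $v$-component is handled by~\eqref{eq:lem_expAeps-smoothing2-1} with $\delta=0$ and exponent $\min(2\alpha,1)$ in place of $\alpha$. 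This gives $\|\Pi_ue^{tA_\epsilon}(0,h_v)\|_H\le C\epsilon^{\min(2\alpha,1)}\|h_v\|_{H^{\min(2\alpha,1)-1}}\le C\epsilon^{\min(2\alpha,1)}\|v_0^\epsilon\|_{H^{2\alpha-1}}$, which is uniform in $t$.
\item Grönwall's inequality absorbs the nonlinear part of $\eta$, exactly as in the derivation of~\eqref{eq:bound-eta}.
\end{itemize}
Assumption~\ref{ass:init-weak} supplies the needed moments of $x_0^\epsilon$ in $\HH^{2\alpha}$. Combining the three contributions and letting $K\to\infty$ concludes the proof.
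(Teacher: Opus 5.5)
Your proof is correct, and it differs from the paper's at the one step that matters. The drift and noise parts of $(\mathcal{L}^{\epsilon,N}-\mathcal{L}^{\epsilon,K})\Phi^{(\epsilon,K)}$ are treated exactly as in the paper's bound on $\mathbf{R}^{\epsilon,N,K}_1$. In the drift step, $\kappa=1-\alpha$ in \eqref{eq:falpha1} already gives $2\alpha-1+\kappa=\alpha<\beta$, so no adjustment of $\alpha$ is needed.

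For the initial-value term, the paper instead uses the non-singular $\alpha=0$ case of Proposition~\ref{propo:regularity-PhiepsilonN}. This yields $\epsilon\,\E[\|(P_K-P_N)v_0^\epsilon\|_H]\le C\epsilon\lambda_K^{\max(\frac12-\alpha,0)}$, which diverges as $K\to\infty$ when $\alpha<\frac12$. The paper therefore keeps $K$ finite, adds the term $\mathbf{R}^{\epsilon,N,K}_3=\E[\varphi(u^{\epsilon,K}(t))]-\E[\varphi(u^{\epsilon}(t))]$ controlled by a strong Galerkin estimate, and balances with $\lambda_K\sim\epsilon^{-2}$. Your $t$-uniform bound, obtained from the mild form of $\eta$ and \eqref{eq:lem_expAeps-smoothing2-1} with $\delta=0$, is uniform in $K$. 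You therefore only need the qualitative convergence $u^{\epsilon,K}(t)\to u^\epsilon(t)$ at fixed $\epsilon$.

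This is more than a simplification. In the proof of \eqref{lem:vartheta-3}, the paper passes from $\E[\|Z^{\epsilon,K}(t)\|_H^2]\le C\lambda_K^{-\alpha}$ to a root-mean-square rate $\lambda_K^{-\min(\alpha,\frac12)}$. Only $\lambda_K^{-\alpha/2}$ follows from that computation, and for space-time white noise no strong rate beyond $\lambda_K^{-\beta/2}$ is possible. So with $\lambda_K\sim\epsilon^{-2}$, the paper's route actually gives only $\epsilon^{\alpha}$ when $\alpha<\frac12$, and at best $\epsilon^{\alpha/(1-\alpha)}$ after optimizing in $K$. Your argument never needs a rate in $K$, so it avoids this problem.

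Your argument even proves more than stated. Choose exponent $0$ rather than $\min(2\alpha,1)$, i.e.\ use Lemma~\ref{lem:expAeps-bound}. This gives $|D\Phi^{(\epsilon,K)}(t,x).\mathbf{h}|\le C(T,\varphi)\|\mathbf{h}\|_{\HH}$ uniformly in $t$, $\epsilon$ and $K$. Then \eqref{eq:errorPNK} bounds the initial term by $C\lambda_N^{-\alpha}\sup_{\epsilon}\E[\|x_0^\epsilon\|_{\HH^{2\alpha}}]$. Hence the lemma holds with $C_\alpha(T,\varphi)\lambda_N^{-\alpha}$ alone, uniformly in $\epsilon\in(0,1)$.
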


Section~\ref{sec:proof-theo-weak-Galerkin} is devoted to the proofs of Lemmas~\ref{lem:weakerror-0-galerkin} and~\ref{lem:weakerror-epsilon-galerkin}. The strategy of the proof of Theorem~\ref{theo:weak-N} is presented in Section~\ref{sec:proof-theo-weak-strategy}, with a decomposition of the weak error appearing in the left-hand side of~\eqref{eq:theo-weak-N} and the statement of upper bounds on auxiliary error terms. Section~\ref{sec:proof-theo-weak-errorterms} is the technical part which contains the proofs of the upper bounds on the auxiliary error terms. Finally, Section~\ref{sec:proof-theo-weak-final} concludes the proof of Theorem~\ref{theo:weak} by the combination of Theorem~\ref{theo:weak-N}, Lemma~\ref{lem:weakerror-0-galerkin} and Lemma~\ref{lem:weakerror-epsilon-galerkin}, for an appropriate choice of the auxiliary parameter $N$ as a function of $\epsilon$.

\subsection{Proofs of Lemmas~\ref{lem:weakerror-0-galerkin} and~\ref{lem:weakerror-epsilon-galerkin}}\label{sec:proof-theo-weak-Galerkin}

\begin{proof}[Proof of Lemma~\ref{lem:weakerror-0-galerkin}]

For all $N\in\N$, the error can be written as
\begin{align*}
 \E[\varphi(u^{0,N}(t))]-\E[\varphi(u^0(t))]=\underset{K\to\infty}\lim~\bigl(\E[\varphi(u^{0,N}(t))]-\E[\varphi(u^{0,K}(t))]\bigr).
\end{align*}
Therefore, it suffices to prove the upper bounds
\[
\underset{K\ge N}\sup~\underset{t\in[0,T]}\sup~\big|\E[\varphi(u^{0,N}(t))]-\E[\varphi(u^{0,K}(t))]\big|\le C_{\alpha}(T,\varphi)\lambda_N^{-\alpha}.
\]
Let $K\ge N$, and consider the function $\Phi^{0,K}$ defined by~\eqref{eq:PhiN}. Then for all $t\in[0,T]$ one has
\begin{equation}\label{eq:decompose-R}
\mathbb{E}[\varphi(u^{0,N}(t))]-\mathbb{E}[\varphi(u^{0,K}(t))]=\E[\Phi^{(0,K)}(0,u^{0,N}(t))]-\E[\Phi^{(0,K)}(t,u^{0,K}(0))]=\mathbf{R}^{0,N,K}_{1}(t)+\mathbf{R}^{0,N,K}_{2}(t),
\end{equation}
where the error terms $\mathbf{R}^{0,N,K}_1(t)$ and $\mathbf{R}^{0,N,K}_2(t)$ are defined by
\begin{align*}
	\mathbf{R}^{0,N,K}_{1}(t)&=\E[\Phi^{(0,K)}(t,u^{0,N}(0))]-\E[\Phi^{(0,K)}(t,u^{0,K}(0))],\\
	\mathbf{R}^{0,N,K}_{2}(t)&=\E[\Phi^{(0,K)}(0,u^{0,N}(t))]-\E[\Phi^{(0,K)}(t,u^{0,N}(0))].
\end{align*}

\textbf{Analysis of the error term $\mathbf{R}^{0,N,K}_{1}(t)$}. Using the mean value theorem and the inequality \eqref{eq:propo-regularity-PhiN} from Proposition~\ref{propo:regularity-PhiN} with $k=1$ and $\alpha_1=0$, there exists $C(T,\varphi)\in(0,\infty)$ such that for all $t\in[0,T]$ one has
\begin{align*}
 |\mathbf{R}^{0,N,K}_{1}(t)|&\le \Bigl|\E\Bigl[\int_0^1D_u\Phi^{(0,K)}\bigl(t,u^{0,K}(0)+\theta(u^{0,N}(0)-u^{0,K}(0))\bigr).(u^{0,N}(0)-u^{0,K}(0))\,d\theta\Bigr]\Bigr|\\
 &\le C(T)\vvvert\varphi\vvvert_1 \E[\|u^{0,K}(0)-u^{0,N}(0)\|_H]\\
 &\le C(T,\varphi)\E[\|(P_K-P_N)u^0_0\|_H].
\end{align*}
Note that $(P_K-P_N)u^0_0=(I-P_N)P_Ku_0^0$. Therefore applying the inequality~\eqref{eq:errorPNK}  and the the condition \eqref{eq:init-weak-bound} on $u^0_0$ from Assumption~\ref{ass:init-weak}, one obtains the following result: there exists $C_\alpha(T,\varphi)\in(0,\infty)$ such that for all $K\ge N$ one has
\begin{equation}\label{eq:decompose-R1}
\underset{t\in[0,T]}\sup~\big|\mathbf{R}^{0,N,K}_{1}(t)\big|\le C_{\alpha}(T,\varphi)\lambda_N^{-\alpha}\E[\|u_0^{0}\|_{H^{2\alpha}}]\le  C_{\alpha}(T,\varphi)\lambda_N^{-\alpha}. 
\end{equation}

\textbf{Analysis of the error term $\mathbf{R}^{0,N,K}_{2}(t)$.} Using It\^{o}'s formula, the fact that mapping $\Phi^{(0,K)}$ is solution of the Kolmogorov equation~\eqref{eq:kolmogorovPhiN}, and the definition \eqref{eq:LN} of the operators $\mathbb{L}^{(N)}$ and $\mathbb{L}^{(K)}$, one has  
\begin{align*}
 \mathbf{R}^{0,N,K}_{2}(t)&=-\int_0^t\E\big[\partial_s \Phi^{(0,K)}(t-s,u^{0,N}(s))\big]ds+\int_0^t\E\big[\mathbb{L}^{(N)}\Phi^{(0,K)}(t-s,u^{0,N}(s))\big]ds\\
 &=\int_0^t\E\big[\bigl(\mathbb{L}^{(N)}-\mathbb{L}^{(K)}\bigr)\Phi^{(0,K)}(t-s,u^{0,N}(s))\big]ds\\
 &=\mathbf{R}^{0,N,K}_{2,1}(t)+\mathbf{R}^{0,N,K}_{2,2}(t)
\end{align*}
where for all $t\in[0,T]$ the auxiliary error terms $\mathbf{R}^{0,N,K}_{2,1}(t)$ and $\mathbf{R}^{0,N,K}_{2,2}(t)$ are given by
\begin{align*}
\mathbf{R}^{0,N,K}_{2,1}(t)&=\int_0^t\E\big[\big\langle D_u\Phi^{(0,K)}(t-s,u^{0,N}(s)), f_N(u^{0,N}(s))-f_K(u^{0,N}(s))\big\rangle\big] \,ds\\
\mathbf{R}^{0,N,K}_{2,2}(t)&=-\frac12\int_0^t \E\big[\sum_{n=N+1}^Kq_n D^2_{uu}\Phi^{(0,K)}(t-s,u^{0,N}(s)).(e_n,e_n)\big]\,ds.
\end{align*}

Let us first treat the error term $\mathbf{R}^{0,N,K}_{2,1}(t)$. For all $\alpha\in(0,\beta)$, applying the inequality \eqref{eq:propo-regularity-PhiN} from Proposition~\ref{propo:regularity-PhiN} with $k=1$ and $\alpha_1=2\alpha$, the inequality~\eqref{eq:errorPNK}, the linear growth property of $f$, and the moment bounds~\eqref{eq:propo-she-momentbounds-Galerkin} from Proposition~\ref{propo:Galerkin} with $\alpha=0$, there exists $C_\alpha(T)\in(0,\infty)$ such that for all $t\in[0,T]$ one has
\begin{align*}
|\mathbf{R}^{0,N,K}_{2,1}(t)|&\le \int_0^t\bigl|\E\big[\big\langle D_u\Phi^{(0,K)}(t-s,u^{0,N}(s)), (P_N-P_K)f(u^{0,N}(s))\big\rangle\big]\bigr| \,ds\\
&\le C_{\alpha}(T)\vvvert\varphi\vvvert_1\int_0^t(t-s)^{-\alpha} \E\bigl[\|(P_K-P_N)f(u^{0,N}(s))\|_{H^{-2\alpha}}\bigr]\,ds\\
&\le C_{\alpha}(T)\vvvert\varphi\vvvert_1\lambda_N^{-\alpha}\int_0^t(t-s)^{-\alpha} \E\bigl[\|f(u^{0,N}(s))\|_{H}\bigr]\,ds\\
&\le C_{\alpha}(T)\vvvert\varphi\vvvert_1\lambda_N^{-\alpha}\sup_{s\in[0,T]}(1+\E[\|u^{0,N}(s)\|_H])\\
&\le C_{\alpha}(T,\varphi)\lambda_N^{-\alpha}.
\end{align*}
Let us then treat the error term $\mathbf{R}^{0,N,K}_{2,2}(t)$. Let $\alpha\in(0,\beta)$ and $\kappa\in(0,2(\beta-\alpha))$, Applying the inequality~\eqref{eq:propo-regularity-PhiN} from Proposition~\ref{propo:regularity-PhiN} with $k=2$, $\alpha_1=1+\alpha-\kappa$, $\alpha_2=1-\alpha$, and the  inequality \eqref{eq:condition_beta} from Assumption~\ref{ass:noise}, there exist $C_{\alpha,\kappa}(T)\in(0,\infty)$ such that for all $t\in[0,T]$ one has
\begin{align*}
|\mathbf{R}^{0,N,K}_{2,2}(t)|&\le\frac12\int_0^t \E\big[\sum_{n=N+1}^Kq_n \big|D^2_{uu}\Phi^{(0,K)}(t-s,u^{0,N}(s)).(e_n,e_n)\big|\big] \,ds\\
&\le 	C_{\alpha,\kappa}(T)\vvvert\varphi\vvvert_2\sum_{n=N+1}^Kq_n\int_0^t(t-s)^{-1+\frac{\kappa}{2}} \,ds  \|e_n\|_{H^{-1-\alpha+\kappa}}\|e_n\|_{H^{-1+\alpha}}\\
&\le C_{\alpha,\kappa}(T)\vvvert\varphi\vvvert_2 \sum_{n=N+1}^Kq_n\lambda_n^{\alpha+\frac{\kappa}{2}-1} \lambda_n^{-\alpha}\\
&\le C_{\alpha,\kappa}(T,\varphi)\lambda_N^{-\alpha}.
\end{align*}

Combining the estimates above, for all $\alpha\in(0,\beta)$ there exists $C_{\alpha}(T,\varphi)\in(0,\infty)$ such that for all $K\ge N$ one has
\begin{align}\label{eq:decompose-R2}
\underset{t\in[0,T]}\sup~\big|\mathbf{R}^{0,N,K}_{2}(t)\big|\le C_{\alpha}(T,\varphi)\lambda_N^{-\alpha}.
\end{align}

It is now straightforward to conclude. Recalling the decomposition~\eqref{eq:decompose-R} of the error and combining the inequalities~\eqref{eq:decompose-R1} and~\eqref{eq:decompose-R2} for the error terms $\mathbf{R}^{0,N,K}_{1}(t)$ and $\mathbf{R}^{0,N,K}_{2}(t)$, there exists  $C_{\alpha}(T,\varphi)\in(0,\infty)$ such that for all $K\ge N$ one has
\begin{align*}
\underset{t\in[0,T]}\sup~\big|\mathbb{E}[\varphi(u^{0,N}(t))]-\mathbb{E}[\varphi(u^{0,K}(t))]\big|\le C_{\alpha}(T,\varphi)\lambda_N^{-\alpha}.
\end{align*}
Letting $K\to\infty$, the proof of Lemma~\ref{lem:weakerror-0-galerkin} is completed.
\end{proof}

\begin{proof}[Proof of Lemma~\ref{lem:weakerror-epsilon-galerkin}]
Let $N\in\N$ and $K\ge N$. For all $T\in(0,\infty)$ and all $\epsilon\in(0,1)$, by the definition \eqref{eq:PhiepsilonN} of function $\Phi^{(\epsilon,K)}$, the error is decomposed as
\begin{equation}\label{eq:decompose-vartheta}
\E[\varphi(u^{\epsilon,N}(t))]-\E[\varphi(u^\epsilon(t))]=\mathbf{R}^{\epsilon,N,K}_1(t)+\mathbf{R}^{\epsilon,N,K}_2(t)+\mathbf{R}^{\epsilon,N,K}_3(t),
\end{equation}
where for all $t\in[0,T]$ the error terms $\mathbf{R}^{\epsilon,N,K}_1$, $\mathbf{R}^{\epsilon,N,K}_2$ and $\mathbf{R}^{\epsilon,N,K}_3$ are defined as
\begin{align*}
\mathbf{R}^{\epsilon,N,K}_1(t)&=\E[\Phi^{(\epsilon,K)}(0,u^{\epsilon,N}(t),v^{\epsilon,N}(t))]-\E[\Phi^{(\epsilon,K)}(t,u^{\epsilon,N}(0),v^{\epsilon,N}(0))],\\
\mathbf{R}^{\epsilon,N,K}_2(t)&=\E[\Phi^{(\epsilon,K)}(t,u^{\epsilon,N}(0),v^{\epsilon,N}(0))]-\E[\Phi^{(\epsilon,K)}(t,u^{\epsilon,K}(0),v^{\epsilon,K}(0))],\\
\mathbf{R}^{\epsilon,N,K}_3(t)&=\E[\varphi(u^{\epsilon,K}(t))]-\E[\varphi(u^{\epsilon}(t))].
\end{align*}
We claim that the following error estimates hold: for all $\alpha\in(0,\beta)$, there exists a constant $C_{\alpha}(T,\varphi)\in(0,\infty)$ such that for all $t\in[0,T]$, all $K\ge N$ and all $\epsilon\in(0,1)$ one has
\begin{align}
 &|\mathbf{R}^{\epsilon,N,K}_1(t)|\le C_{\alpha}(T,\varphi)\lambda_N^{-\alpha},\label{lem:vartheta-1}\\
 &|\mathbf{R}^{\epsilon,N,K}_2(t)|\le C_{\alpha}(T,\varphi)\bigl(\epsilon\lambda_K^{\max(\frac12-\alpha,0)}+\lambda_N^{-\alpha}\bigr),\label{lem:vartheta-2} \\
&|\mathbf{R}^{\epsilon,N,K}_3(t)|\le C_\alpha(T,\varphi)\lambda_K^{-\min(\frac12,\alpha)}.\label{lem:vartheta-3}
\end{align}

$\bullet$ \textit{Proof of the claim~\eqref{lem:vartheta-1}.} Using It\^{o}'s formula, the Kolmogorov equation~\eqref{eq:kolmogorovPhiepsilonN}, and the definition \eqref{eq:LepsilonN} of operator $\mathcal{L}^{\epsilon,N}$, one has
\begin{align*}
\mathbf{R}^{\epsilon,N,K}_1(t)&=-\int_0^t\E\bigl[\partial_s\Phi^{(\epsilon,K)}(t-s,u^{\epsilon,N}(s),v^{\epsilon,N}(s))\bigr]\,ds+\int_0^t\E\bigl[\mathcal{L}^{\epsilon,N}\Phi^{(\epsilon,K)}(t-s,u^{\epsilon,N}(s),v^{\epsilon,N}(s))\bigr]\,ds\\
&=\int_0^t\E\bigl[\bigl(\mathcal{L}^{\epsilon,N}-\mathcal{L}^{\epsilon,K}\bigr)\Phi^{(\epsilon,K)}(t-s,u^{\epsilon,N}(s),v^{\epsilon,N}(s))\bigr]\,ds\\
&=\mathbf{R}^{\epsilon,N,K}_{1,1}(t)+\mathbf{R}^{\epsilon,N,K}_{1,2}(t),
\end{align*}
where for all $t\in[0,T]$ the error terms $\mathbf{R}^{\epsilon,N,K}_{1,1}(t)$ and $\mathbf{R}^{\epsilon,N,K}_{1,2}(t)$ are defined as
\begin{align*}
\mathbf{R}^{\epsilon,N,K}_{1,1}(t)&=\frac{1}{\epsilon}\int_0^t\E\big[\langle D_v\Phi^{(\epsilon,K)}(t-s,u^{\epsilon,N}(s),v^{\epsilon,N}(s)), f_N(u^{\epsilon,N}(s))-f_K(u^{\epsilon,N}(s))\rangle\big]\,ds\\
\mathbf{R}^{\epsilon,N,K}_{1,2}(t)&=-\frac{1}{2\epsilon^2}\int_0^t\E\Big[\sum_{n=N+1}^Kq_nD^2_{vv}\Phi^{(\epsilon,K)}(t-s,u^{\epsilon,N}(s), v^{\epsilon,N}(s)).(e_n,e_n)\Big]\,ds.
\end{align*}

Let us first treat the error term $\mathbf{R}^{\epsilon,N,K}_{1,1}(t)$. For all $\alpha\in(0,\beta)$, using the inequality~\eqref{eq:derivative1v-PhiepsilonN} from Proposition~\ref{propo:regularity-PhiepsilonN} with $\alpha=1$, there exist $C(T), C_{\alpha}(T)\in(0,\infty)$ such that for all $t\in[0,T]$ one has
\begin{align*}
|\mathbf{R}^{\epsilon,N,K}_{1,1}(t)|&\le \frac{1}{\epsilon}\int_0^t\E\Bigl[\Bigl|\bigl\langle D_v\Phi^{(\epsilon,K)}(t-s,u^{\epsilon,N}(s),v^{\epsilon,N}(s)), (P_N-P_K)f(u^{\epsilon,N}(s))\bigr\rangle\Bigr|\Bigr]\,ds\\
&\le C(T)\vvvert\varphi\vvvert_1\int_0^t(t-s)^{-\frac12} \E\bigl[\|(P_N-P_K)f(u^{\epsilon,N}(s))\|_{H^{-1}}\bigr]\,ds\\
&\le C_{\alpha}(T)\lambda_N^{-\alpha}\vvvert\varphi\vvvert_1 \int_0^t(t-s)^{-\frac12}\E\bigl[\|f(u^{\epsilon,N}(s))\|_{H^{2\alpha-1}}\bigr]\,ds
\end{align*}

On the one hand, if $\alpha\in(0,1/2]$, one has $2\alpha-1\le 0$, and since $f$ has at most linear growth, one obtains
\[
\|f(u^{\epsilon,N}(s))\|_{H^{2\alpha-1}}\le \|f(u^{\epsilon,N}(s))\|_{H}\le C\bigl(1+\|u^{\epsilon,N}(s)\|_H\bigr).
\]
On the other hand, if $\alpha\in(1/2,1)$, one has $2\alpha-1\in(0,1]$, and applying the inequality~\eqref{eq:falpha1} from Assumption~\ref{ass:mapping-f} with $\kappa=1-\alpha$ (for which one has $2\alpha-1+\kappa=\alpha$, one obtains
\[
\|f(u^{\epsilon,N}(s))\|_{H^{2\alpha-1}}\le C_\alpha\bigl(1+\|u^{\epsilon,N}(s)\|_{H^{\alpha}}\bigr).
\]
As a result, there exists $C_\alpha(T,\varphi)\in(0,\infty)$ such that for all $t\in[0,T]$ one has
\[
|\mathbf{R}^{\epsilon,N,K}_{1,1}(t)|\le C_{\alpha}(T,\varphi)\lambda_N^{-\alpha}\bigl(1+\underset{s\in[0,T]}\sup~\E[\|X^{\epsilon,N}(s)\|_{\HH}]+\underset{s\in[0,T]}\sup~\E[\|X^{\epsilon,N}(s)\|_{\HH^{\alpha}}]\bigr).
\]
Applying the moment bounds~\eqref{eq:propo-sdwe-momentbounds} from Proposition~\ref{propo:sdwe-momentbounds}, for all $t\in[0,T]$ one has
\[
\underset{K\ge N}\sup~|\mathbf{R}^{\epsilon,N,K}_{1,1}(t)|\le C_{\alpha}(T,\varphi)\lambda_N^{-\alpha}.
\]

Let us next treat the error term $\mathbf{R}^{\epsilon,N,K}_{1,2}(t)$. For all $\alpha\in(0,\beta)$. Applying the inequality \eqref{eq:derivative2vv-PhiepsilonN} from Proposition~\ref{propo:regularity-PhiepsilonN} with $k=2$ and $\alpha_1=\alpha_2=1-\frac{\beta-\alpha}{2}$, there exists $C_{\alpha}(T)\in(0,\infty)$ such that for all $t\in[0,T]$ one has
\begin{align*}
 |\mathbf{R}^{\epsilon,N,K}_{1,2}(t)|&\le  \frac{1}{2\epsilon^2}\int_0^t\E\Big[\sum_{n=N+1}^Kq_n\big|D^2_{vv}\Phi^{(\epsilon,K)}(t-s,u^{\epsilon,N}(s), v^{\epsilon,N}(s)).(e_n,e_n)\big|\Big]ds \\
 &\le C_{\alpha}(T)\vvvert\varphi\vvvert_2\int_0^t(t-s)^{-1+\frac{\beta-\alpha}{2}}\,ds  \sum_{n=N+1}^Kq_n \|e_n\|_{H^{-1+\frac{\beta-\alpha}{2}}}^2 \\
 &\le C_{\alpha}(T)\vvvert\varphi\vvvert_2\sum_{n=N+1}^Kq_n\lambda_n^{-1+\frac{\beta-\alpha}{2}}.
\end{align*} 
Moreover, since the sequence $\bigl(\lambda_n\bigr)_{n\in\N}$ is non-decreasing, one obtains
\[
\sum_{n=N+1}^Kq_n\lambda_n^{-1+\frac{\beta-\alpha}{2}}\le \lambda_N^{-\alpha}\sum_{n=N+1}^Kq_n\lambda_n^{-1+\frac{\beta+\alpha}{2}}\le C_\alpha \lambda_N^{-\alpha}.
\]
As a result, there exists $C_\alpha(T,\varphi)\in(0,\infty)$ such that for all $t\in[0,T]$ one has
\[
\underset{K\ge N}\sup~|\mathbf{R}^{\epsilon,N,K}_{1,2}(t)|\le C_{\alpha}(T,\varphi)\lambda_N^{-\alpha}.
\]
Combining the upper bounds for the error terms $\mathbf{R}^{\epsilon,N,K}_{1,1}(t)$ and $\mathbf{R}^{\epsilon,N,K}_{1,2}(t)$ concludes the proof of the inequality~\eqref{lem:vartheta-1} for the error term $\mathbf{R}^{\epsilon,N,K}_{1}(t)=\mathbf{R}^{\epsilon,N,K}_{1,1}(t)+\mathbf{R}^{\epsilon,N,K}_{1,2}(t)$.

$\bullet$ \textit{Proof of the claim~\eqref{lem:vartheta-2}.} Using Taylor's formula and the inequalities~\eqref{eq:derivative1u-PhiepsilonN}  and~\eqref{eq:derivative1v-PhiepsilonN} from Proposition~\ref{propo:regularity-PhiepsilonN} with $\alpha=0$, one has
\begin{align*}
|\mathbf{R}^{\epsilon,N,K}_2(t)|&\le \big|\E[\Phi^{(\epsilon,K)}(t,u^{\epsilon,N}(0),v^{\epsilon,N}(0))]-\E[\Phi^{(\epsilon,K)}(t,u^{\epsilon,K}(0),v^{\epsilon,N}(0))]\big|\\
 &\quad+\big|\E[\Phi^{(\epsilon,K)}(t,u^{\epsilon,K}(0),v^{\epsilon,N}(0))]-\E[\Phi^{(\epsilon,K)}(t,u^{\epsilon,K}(0),v^{\epsilon,K}(0))]\big|\\
 &\le C(T,\varphi)\bigl(\E\big[\|(P_K\!-\!P_N)u^{\epsilon}_0\|_H\big]+\epsilon\E\big[\|(P_K\!-\!P_N)v^{\epsilon}_0\|_{H}\big]\bigr).
\end{align*}
For the first term in the right-hand side above, applying the inequality~\eqref{eq:errorPNK} and owing to condition~\eqref{eq:init-weak-bound} from Assumption~\ref{ass:init-weak}, one has
\[
\E\big[\|(P_K\!-\!P_N)u^{\epsilon}_0\|_H\big]\le \lambda_N^{-\alpha}\E[\|u_0^\epsilon\|_{H^{2\alpha}}]\le C_\alpha\lambda_{N}^{-\alpha}.
\]
For the second term in the right-hand side above,
one needs to treat separately the cases $\alpha\in(0,1/2)$ and $\alpha\in[1/2,1)$. On the one hand, if $\alpha\in(0,1/2)$, using the inverse inequality~\eqref{eq:inverseinequality}, one obtains the upper bound
\[
\|(P_K\!-\!P_N)v^{\epsilon}_0\|_{H}\le \|P_K v^{\epsilon}_0\|_{H} \le \lambda_K^{\frac12-\alpha}\|v^\epsilon_0\|_{H^{2\alpha-1}}\le \lambda_K^{\frac12-\alpha}\|x^\epsilon_0\|_{\HH^{2\alpha}}.
\]
Therefore using the condition~\eqref{eq:init-weak-bound} from Assumption~\ref{ass:init-weak}, there exists $C_\alpha\in(0,\infty)$ such that for all $t\in[0,T]$ one has
\[
\E\big[\|(P_K\!-\!P_N)v^{\epsilon}_0\|_{H}\big]\le C_\alpha\lambda_K^{\frac12-\alpha}.
\]
On the other hand, if $\alpha\in[1/2,1)$, one has $2\alpha\ge 1$, therefore for all $t\in[0,T]$ one has
\[
\E\big[\|(P_K\!-\!P_N)v^{\epsilon}_0\|_{H}\big]\le \E[\|v^{\epsilon}_0\|_{H}]\le \E[\|x^\epsilon_0\|_{\HH^1}]\le C_\alpha\E[\|x^\epsilon_0\|_{\HH^{2\alpha}}]\le C_\alpha.
\]

This concludes the proof of the inequality~\eqref{lem:vartheta-2}.

$\bullet$ \textit{Proof of the claim~\eqref{lem:vartheta-3}.} 
From the expressions \eqref{eq:mild-sdwe-galerkin} of solution $X^{\epsilon,K}$ and \eqref{eq:mild-sdwe} of solution $X^{\epsilon}$, and identities $u^{\epsilon,K}=\Pi_uX^{\epsilon,K}$ and $u^{\epsilon}=\Pi_uX^{\epsilon}$, it follows that
\begin{align}\label{eq:decompose-u-epsilon,N}
u^{\epsilon,K}(t)-u^{\epsilon}(t)=Y^{\epsilon,K}(t)+Z^{\epsilon,K}(t)
\end{align}
with the error terms defined by
\begin{align*}
Y^{\epsilon,K}(t)&=\Pi_ue^{tA_{\epsilon}}(\IP_K-I)x_0^\epsilon+\frac{1}{\epsilon}\int_0^t\Pi_ue^{(t-s)A_{\epsilon}}\big(F_K(X^{\epsilon,K}(s))-F(X^{\epsilon,K}(s))\big)ds\\
&~~\quad+\frac{1}{\epsilon}\int_0^t\Pi_ue^{(t-s)A_{\epsilon}}\big(F(X^{\epsilon,K}(s))-F(X^{\epsilon}(s))\big)\,ds,\\
Z^{\epsilon,K}(t)&=\frac{1}{\epsilon}\int_0^t\Pi_ue^{(t-s)A_{\epsilon}}(\IP_K-I)d\IW^Q(s).
\end{align*}

Let us first deal with $Z^{\epsilon,K}(t)$. Let $\kappa=\beta-\alpha$. Applying the It\^{o} isometry formula, the inequality~\eqref{eq:lem_expAeps-smoothing2-1} from Lemma~\ref{lem:expAeps-smoothing2} with $\alpha=1,\delta=\frac{1- \frac{\kappa}{2}}{2}$, and the condition~\eqref{eq:condition_beta}  in Assumption~\ref{ass:noise}, one has
\begin{align*}
   \E[\|Z^{\epsilon,K}(t)\|_{H}^2]&\le \frac{1}{\epsilon^2}\int_0^t\sum_{n=K+1}^{\infty}q_n\big\|\Pi_ue^{(t-s)A_\epsilon}(0,e_n)\big\|_H^2\,ds \\
   &\le C_{\kappa}(T)\int_0^t(t-s)^{-1+ \frac{\kappa}{2}}ds \sum_{n=K+1}^{\infty}q_n\|e_n\|_{H^{-1+\frac{\kappa}{2}}}^2\\
   &\le C_{\kappa}(T)\sum_{n=K+1}^{\infty}q_n\lambda_n^{-1+\frac{\kappa}{2}}\\
   &\le C_{\kappa}(T)\lambda_K^{-\beta+\kappa} \sum_{n\ge K+1}q_n\lambda_n^{\beta-\frac{\kappa}{2}-1}\\
   &\le C_{\kappa}(T)\lambda_K^{-\alpha}.
\end{align*}

Let us next deal with the error term $Y^{\epsilon,K}(t)$. Applying the inequality~\eqref{eq:lem_expAeps-smoothing2-1} from Lemma~\ref{lem:expAeps-smoothing2} with the parameters $(\alpha,\delta)=(1,\frac12)$ and $(\alpha,\delta)=(1,0)$, respectively, one obtains
\begin{align*}
\|Y^{\epsilon,K}(t)\|_{H}&\le \|e^{tA_\epsilon}(\IP_K-I)x_0^\epsilon\|_{\HH}+\frac{1}{\epsilon}\int_0^t\big\|\Pi_ue^{(t-s)A_{\epsilon}}\big(0,(P_K-I)f(u^{\epsilon,K}(s))\big)\big\|_H\,ds\\
  &+\frac{1}{\epsilon}\int_0^t\!\big\|\Pi_ue^{(t-s)A_{\epsilon}}\big(0,f(u^{\epsilon,K}(s))-f(u^{\epsilon}(s))\big)\big\|_H\,ds \\
  &\le \|(\IP_K-I)x_0^\epsilon\|_{\HH}+C\int_0^t(t-s)^{-\frac12}\big\|(P_K-I)f(u^{\epsilon,K}(s))\big\|_{H^{-1}}\,ds\\
  &+C\int_0^t\|f(u^{\epsilon,K}(s))-f(u^{\epsilon}(s))\|_H\,ds.
\end{align*}
For the first term in the right-hand side, applying the inequality~\eqref{eq:errorPN}, for all $\alpha\in(0,\beta)$ one has
\[
\|(\IP_K-I)x_0^\epsilon\|_{\HH}\le \lambda_K^{-\alpha}\|x_0^\epsilon\|_{\HH^{2\alpha}}.
\]
For the second term in the right-hand side, applying the inequality~\eqref{eq:errorPN} and recalling that $f$ has at most linear growth, one obtains
\begin{align*}
\int_0^t(t-s)^{-\frac12}\big\|(P_K-I)f(u^{\epsilon,K}(s))\big\|_{H^{-1}}\,ds&\le \lambda_{K}^{-\frac12}\int_0^t(t-s)^{-\frac12}\big\|f(u^{\epsilon,K}(s))\big\|_{H}\,ds\\
&\le C\lambda_{K}^{-\frac12}\int_0^t(t-s)^{-\frac12}\bigl(1+\|u^{\epsilon,K}(s)\|_{H}\bigr)\,ds.
\end{align*}
For the third term in the right-hand side, since $f$ is globally Lipschitz continuous, one has
\[
\int_0^t\|f(u^{\epsilon,K}(s))-f(u^{\epsilon}(s))\|_H\,ds\le C\int_0^t\|u^{\epsilon,K}(s)-u^{\epsilon}(s)\|_H\,ds.
\]
Therefore, applying the Minkowski inequality, and using the condition~\eqref{eq:init-weak-bound} from Assumption~\ref{ass:init-weak} on the initial value $x_0^\epsilon$ and the moment bounds~\eqref{eq:propo-sdwe-momentbounds} from Proposition~\ref{propo:sdwe-momentbounds}, there exists $C(T)\in(0,\infty)$ such that for all $t\in[0,T]$ one has
\begin{align*}
  \big(\E[ \|Y^{\epsilon,K}(t)\|_{H}^2]\big)^{\frac12}
 &\le \lambda_K^{-\alpha}(\E[\|x_0^\epsilon\|_{\HH^{2\alpha}}^2])^{\frac12}+C(T)\lambda_K^{-\frac12}\bigl(1+(\E[\|x_0^\epsilon\|_{\HH}^2])^{\frac12}\bigr)\\
 &\quad +C \int_0^t\big(\E[\|u^{\epsilon,K}(s)-u^{\epsilon}(s)\|_H^2]\big)^{\frac12}\,ds.
\end{align*}
Recalling the decomposition~\eqref{eq:decompose-u-epsilon,N} of $u^{\epsilon,K}(t)-u^\epsilon(t)$ and combining the upper bounds on $Y^{\epsilon,K}(t)$ and $Z^{\epsilon,K}(t)$, there exists $C_{\alpha,\kappa}(T)\in(0,\infty)$ such that for all $t\in[0,T]$ one has
\begin{align*}
 \big(\E[ \|u^{\epsilon,K}(t)-u^{\epsilon}(t)\|_{H}^2]\big)^{\frac12}
 &\le	\big(\E[ \|Y^{\epsilon,K}(t)\|_{H}^2]\big)^{\frac12}+ 	\big(\E[\|\IZ^{\epsilon,K}(t)\|_{H}^2]\big)^{\frac12}\\
 &\le C \int_0^t\big(\E[\|u^{\epsilon,K}(s)-u^{\epsilon}(s)\|_H^2]\big)^{\frac12}\,ds+C_{\alpha}(T)\lambda_K^{-\min(\alpha,\frac12)}.
\end{align*}
Applying the  Gr\"onwall inequality then yields the inequality
\begin{align*}
   \underset{t\in[0,T]}\sup~\big(\E[\|u^{\epsilon,K}(t)-u^{\epsilon}(t)\|_{H}^2]\big)^{\frac12}
 \leq C_{\alpha}(T)\lambda_K^{-\min(\alpha,\frac12)}.
\end{align*}
Finally, since $\varphi$ is assumed to be globally Lipschitz continuous, one obtains
\[
|\E[\varphi(u^{\epsilon,K}(t))]-\E[\varphi(u^{\epsilon}(t))]|\le \vvvert\varphi\vvvert_1\big(\E[\|u^{\epsilon,K}(t)-u^{\epsilon}(t)\|_{H}^2]\big)^{\frac12}\le C_{\alpha,\kappa}(T,\varphi)\lambda_K^{-\min(\alpha,\frac12)}.
\]
This concludes the proof of the claim~\eqref{lem:vartheta-3}.

$\bullet$ \textit{Conclusion.} 
Recalling the decomposition~\eqref{eq:decompose-vartheta} of the error, applying the error estimates \eqref{lem:vartheta-1}, \eqref{lem:vartheta-2}, and \eqref{lem:vartheta-3}, one obtains for all $\epsilon\in(0,1)$, $K\ge N$ and $t\in[0,T]$
\begin{align*}
\big|\E[\varphi(u^{\epsilon,N}(t))]-\E[\varphi(u^\epsilon(t))]\big|
	&\le |\mathbf{R}^{\epsilon,N,K}_1(t)|+|\mathbf{R}^{\epsilon,N,K}_2(t)|+|\mathbf{R}^{\epsilon,N,K}_3(t)|\\
	&\le C_{\alpha}(T,\varphi)\bigl(\lambda_N^{-\alpha}+\epsilon\lambda_K^{\max(\frac12-\alpha,0)}+\lambda_K^{-\min(\alpha,\frac12)}\bigr).
\end{align*}
The cases $\alpha\in(0,1/2)$ and $\alpha\in[1/2,1)$ are treated separetely. On the one hand, if $\alpha\in(0,1/2)$, choosing the parameter $K$ such that $\lambda_K\sim\epsilon^{-2}$, one obtains
\[
\epsilon\lambda_K^{\max(\frac12-\alpha,0)}+\lambda_K^{-\min(\alpha,\frac12)}=\epsilon\lambda_K^{\frac12-\alpha}+\lambda_K^{-\alpha}\sim 2\epsilon^{2\alpha}.
\]
On the other hand, if $\alpha\in[1/2,1)$, one obtains
\[
\epsilon\lambda_K^{\max(\frac12-\alpha,0)}+\lambda_K^{-\min(\alpha,\frac12)}=\epsilon+\lambda_K^{-\frac12}= \epsilon^{\min(2\alpha,1)}+\lambda_K^{-\frac12},
\]
and it suffices to let $K\to\infty$.

The proof of Lemma~\ref{lem:weakerror-epsilon-galerkin} is completed.
\end{proof}

\subsection{Strategy of the proof of Theorem~\ref{theo:weak-N}}\label{sec:proof-theo-weak-strategy}

The objective of this section is to present the strategy of the proof of Theorem~\ref{theo:weak-N}. We explain how to decompose the error using the tools introduced in Section~\ref{sec:auxweak}, and we state technical results on error terms, which are then established in Section~\ref{sec:proof-theo-weak-errorterms}.

Let $T\in(0,\infty)$. For all $t\in[0,T]$, $N\in\N$ and $\epsilon\in(0,1)$, set
\[
E^{\epsilon,N}(t)=\E[\varphi(u^{\epsilon,N}(t))]-\E[\varphi(u^{0,N}(t))].
\]
Recalling the definition~\eqref{eq:PhiN} of the mapping $\Phi^{0,N}$, the weak error $E^{\epsilon,N}(t)$ is written as
\[
E^{\epsilon,N}(t)=\E[\Phi^{(0,N)}(0,u^{\epsilon,N}(t))]-\E[\Phi^{(0,N)}(t,u^{0,N}(0))].
\]
Recalling that the initial values are given by $u^{\epsilon,N}(0)=P_Nu^{\epsilon}_0$ and $u^{0,N}(0)=P_Nu^0_0$, one obtains the decomposition
\begin{equation}\label{eq:decompose-E1}
E^{\epsilon,N}(t)=E_1^{\epsilon,N}(t)+E_2^{\epsilon,N}(t),
\end{equation}
where the error terms $E_1^{\epsilon,N}(t)$ and $E_2^{\epsilon,N}(t)$ are defined as
\begin{align}
    E^{\epsilon,N}_1(t)&=\E[\Phi^{(0,N)}(0,u^{\epsilon,N}(t))]-\E[\Phi^{(0,N)}(t,u^{\epsilon,N}(0))],\label{eq:defEepsN1}\\
    E^{\epsilon,N}_2(t)&=\E[\Phi^{(0,N)}(t,P_Nu^{\epsilon}_0)]-\E[\Phi^{(0,N)}(t,P_Nu_0^0)].\label{eq:defEepsN2}
\end{align}
The treatment of the second error term $E^{\epsilon,N}_2(t)$ is straightforward (see Lemma~\ref{lem:error-T1}), however the treatment of the error term $E^{\epsilon,N}_1(t)$ requires a substantial effort.

Applying the It\^o formula and the stochastic evolution system \eqref{eq:sdwe-galerkin} for $(u^{\epsilon,N},v^{\epsilon,N})$, for all $t\in[0,T]$ one has
\[
 	E^{\epsilon,N}_1(t)=-\int_0^t\E[\partial_s\Phi^{(0,N)}(t-s,u^{\epsilon,N}(s))]\,ds+\frac{1}{\epsilon}\int_0^t\E\bigl[D_u\Phi^{(0,N)}(t-s,u^{\epsilon,N}(s)). v^{\epsilon,N}(s)\bigr]\,ds.
\]
For all $(s,u,v)\in[0,t]\times H_N\times H_N$, set
\begin{align}\label{def-Theta1}
\Theta^{(N)}_1(t,s,u,v)=D_u\Phi^{(0,N)}(t-s,u).v.	
\end{align}
Applying the It\^o formula  and the definition \eqref{def:operator-dampedwave} of the operator $\mathcal{L}^{\epsilon,N}$,  one obtains
\begin{align*}
&\quad\E[\Theta^{(N)}_1(t,t,u^{\epsilon,N}(t),v^{\epsilon,N}(t))]-\E[\Theta^{(N)}_1(t,0,u^{\epsilon,N}(0),v^{\epsilon,N}(0))]\nonumber\\
&=\int_0^t\E\big[\partial_s\Theta^{(N)}_1(t,s,u^{\epsilon,N}(s),v^{\epsilon,N}(s))\big]\,ds+\int_0^t\E\big[\mathcal{L}^{\epsilon,N}\Theta^{(N)}_1(t,s,u^{\epsilon,N}(s),v^{\epsilon,N}(s))\big]\,ds\nonumber\\
	&=\int_0^t\E\big[\partial_s\Theta^{(N)}_1(t,s,u^{\epsilon,N}(s),v^{\epsilon,N}(s))\big]\,ds+\frac{1}{\epsilon}\int_0^t\E\big[\mathcal{A}^{(N)}\Theta^{(N)}_1(t,s,u^{\epsilon,N}(s),v^{\epsilon,N}(s))\big]\,ds\nonumber\\
	&~~~\quad +\frac{1}{\epsilon^2}\int_0^t\E\big[\mathcal{L}^{(N)}\Theta^{(N)}_1(t,s,u^{\epsilon,N}(s),v^{\epsilon,N}(s))\big]\,ds.
\end{align*}
Moreover,  it follows from the definition~\eqref{eq:genLN} of the operator $\mathcal L^{(N)}$ that for all $s\le t$ and all $u,v\in H_N$ one has
\begin{align}
\mathcal L^{(N)}\Theta^{(N)}_1(t,s,u,v)&= \mathcal L^{(N)}(D_u\Phi^{(0,N)}(t-s,u).v)\nonumber\\
&=-D_v(D_u\Phi^{(0,N)}(t-s,u).v).v+\frac12\sum_{n=1}^Nq_nD^2_{vv}(D_u\Phi^{(0,N)}(t-s,u).v).(e_n,e_n)\nonumber\\
&=-D_u\Phi^{(0,N)}(t-s,u).v=-\Theta^{(N)}_1(t,s,u,v).
\end{align}
Therefore, one obtains
\begin{align*}
\frac{1}{\epsilon}\int_0^t\E\bigl[D_u\Phi^{(0,N)}(t-s,u^{\epsilon,N}(s)). v^{\epsilon,N}(s)\bigr]&\,ds=\frac{1}{\epsilon}\int_0^t\E\bigl[\Theta_1^{(N)}\bigl(t,s,u^{\epsilon,N}(s),v^{\epsilon,N}(s)\bigr)\bigr]\,ds\\
&=-\frac{1}{\epsilon}\int_0^t\E\bigl[\mathcal{L}^{(N)}\Theta_1^{(N)}\bigl(t,s,u^{\epsilon,N}(s),v^{\epsilon,N}(s)\bigr)\bigr]\,ds\\
&=\epsilon\Big(\E[\Theta^{(N)}_1(t,0,u^{\epsilon,N}(0),v^{\epsilon,N}(0))]-\E[\Theta^{(N)}_1(t,t,u^{\epsilon,N}(t),v^{\epsilon,N}(t))]\Big)\\
&+\epsilon\int_0^t\E\big[\partial_s\Theta^{(N)}_1(t,s,u^{\epsilon,N}(s),v^{\epsilon,N}(s))\big]\,ds\\
&+\int_0^t\E\big[\mathcal{A}^{(N)}\Theta^{(N)}_1(t,s,u^{\epsilon,N}(s),v^{\epsilon,N}(s))\big]\,ds.
\end{align*}
As a consequence, the error term $E_1^{\epsilon,N}(t)$ is decomposed as
\[
E_1^{\epsilon,N}(t)=E_{1,1}^{\epsilon,N}(t)+E_{1,2}^{\epsilon,N}(t),
\]
where the error terms $E_{1,1}^{\epsilon,N}(t)$ and $E_{1,2}^{\epsilon,N}(t)$ are defined as
\begin{align}
E^{\epsilon,N}_{1,1}(t)&=\int_0^t\E\big[\mathcal{A}^{(N)}(D_u\Phi^{(0,N)}(t-s,u^{\epsilon,N}(s)).v^{\epsilon,N}(s))\big]\,ds-\int_0^t\!\!\E[\partial_s\Phi^{(0,N)}(t\!-\!s,u^{\epsilon,N}(s))]\,ds,\label{eq:defEepsN11}\\
 E^{\epsilon,N}_{1,2}(t)&=  \epsilon\Big(\E[D_u\Phi^{(0,N)}(t,u^{\epsilon,N}(0)).v^{\epsilon,N}(0)]\!-\!\E[D_u\Phi^{(0,N)}(0,u^{\epsilon,N}(t)).v^{\epsilon,N}(t)]\Big)\label{eq:defEepsN12}\\
 &~~~\quad +\epsilon\int_0^t\E\big[\partial_sD_u\Phi^{(0,N)}(t-s,u^{\epsilon,N}(s)).v^{\epsilon,N}(s)\big]\,ds.\nonumber
\end{align}
The second error term $E^{\epsilon,N}_{1,2}(t)$ is in a suitable form for the proof of upper bounds, however the first error term $E^{\epsilon,N}_{1,1}(t)$ needs additional work.

Recall that the mapping $\Phi^{(0,N)}$ is the solution to the Kolmogorov equation~\eqref{eq:kolmogorovPhiN}. Owing to the definitions~\eqref{eq:LN} and~\eqref{eq:genAN} of the operators $\mathbb{L}^{(N)}$ and $\mathcal{A}^{(N)}$, one has
\begin{align*}
 E^{\epsilon,N}_{1,1}(t)&=\int_0^t\E\big[\mathcal{A}^{(N)}(D_u\Phi^{(0,N)}(t-s,u^{\epsilon,N}(s)).v^{\epsilon,N}(s))-\mathbb{L}^{(N)}\Phi^{(0,N)}(t-s,u^{\epsilon,N}(s))\big]\,ds\nonumber\\
 &=\int_0^t\E\Big[D^2_{uu}\Phi^{(0,N)}(t-s,u^{\epsilon,N}(s)).(v^{\epsilon,N}(s), v^{\epsilon,N}(s))-\frac{1}{2}\sum_{n=1}^{N}q_n D^2_{uu}\Phi^{(0,N)}(t-s,u^{\epsilon,N}(s)).(e_n,e_n)\Big]\,ds\\
&=\int_0^t\E\Big[\sum_{n,m=1}^ND^2_{uu}\Phi^{(0,N)}(t-s,u^{\epsilon,N}(s)).(e_n,e_m)\times\Bigl(\langle v^{\epsilon,N}(s), e_n\rangle~\langle v^{\epsilon,N}(s), e_m\rangle-\frac{q_n}{2}\mathds{1}_{n=m}\Bigr)\Big]\,ds.
\end{align*}

For all $n,m\in\{1,\ldots,N\}$, set
\begin{equation}\label{eq:defPsi_nm}
\Psi_{n,m}(u,v)=\langle v, e_n\rangle\langle v, e_m\rangle-\frac{q_n}{2}\mathds{1}_{n=m},\qquad \forall~x=(u,v)\in H_N\times H_N.
\end{equation}
It is straightforward to check that one has $\mathcal{L}^{(N)}\Psi_{n,m}=-2\Psi_{n,m}$.

For all $(s,u,v)\in [0,t]\times H_N\times H_N$, set 
\begin{align}\label{def:Theta2}
\Theta^{(N)}_{2}(t,s,u,v)=\sum_{n,m=1}^ND^2_{uu}\Phi^{(0,N)}(t-s,u).(e_n,e_m)\times\Psi_{n,m}(u,v).
\end{align}
Then one has $\mathcal{L}^{(N)}\Theta^{(N)}_2(t,s,u,v)=-2\Theta^{(N)}_2(t,s,u,v)$ and
\[
E_{1,1}^{\epsilon,N}(t)=\int_0^t\E\Big[\Theta_2^{(N)}\bigl(t,s,u^{\epsilon,N}(s),v^{\epsilon,N}(s)\bigr)\Big]\,ds=-\frac12\int_0^t\E\Big[\mathcal{L}^{(N)}\Theta_2^{(N)}\bigl(t,s,u^{\epsilon,N}(s),v^{\epsilon,N}(s)\bigr)\Big]\,ds.
\]
Applying the It\^o's formula and the definition \eqref{def:operator-dampedwave} of the operator $\mathcal{L}^{\epsilon,N}$, one obtains
\begin{align*}
&~\quad \E[\Theta^{(N)}_{2}(t,t,u^{\epsilon,N}(t),v^{\epsilon,N}(t))]-\E[\Theta^{(N)}_{2}(t,0,u^{\epsilon,N}(0),v^{\epsilon,N}(0))]\nonumber\\
&=\int_0^t\E\big[\partial_s\Theta^{(N)}_{2}(t,s,u^{\epsilon,N}(s),v^{\epsilon,N}(s))\big]\,ds+\int_0^t\E\big[\mathcal{L}^{\epsilon,N}\Theta^{(N)}_{2}(t,s,u^{\epsilon,N}(s),v^{\epsilon,N}(s))\big]\,ds\nonumber\\
	&=\int_0^t\E\big[\partial_s\Theta^{(N)}_{2}(t,s,u^{\epsilon,N}(s),v^{\epsilon,N}(s))\big]\,ds+\frac{1}{\epsilon}\int_0^t\E\big[\mathcal{A}^{(N)}\Theta^{(N)}_{2}(t,s,u^{\epsilon,N}(s),v^{\epsilon,N}(s))\big]\,ds\nonumber\\
	&~~~\quad +\frac{1}{\epsilon^2}\int_0^t\E\big[\mathcal{L}^{(N)}\Theta^{(N)}_{2}(t,s,u^{\epsilon,N}(s),v^{\epsilon,N}(s))\big]\,ds.
\end{align*}
Therefore, one obtains
\begin{align*}
 E^{\epsilon,N}_{1,1}(t)&=-\frac12\int_0^t \E\bigl[\mathcal{L}^{(N)}\Theta^{(N)}_{2}(t,s,u^{\epsilon,N}(s),v^{\epsilon,N}(s))\bigr]\,ds\nonumber\\
 &=\frac{\epsilon^2}{2}\Bigl(\E[\Theta^{(N)}_{2}(t,0,u^{\epsilon,N}(0),v^{\epsilon,N}(0))]\!-\!\E[\Theta^{(N)}_{2}(t,t,u^{\epsilon,N}(t),v^{\epsilon,N}(t))]\Bigr)\nonumber\\
 &\quad+\frac{\epsilon^2}{2}\int_0^t\E\big[\partial_s\Theta^{(N)}_{2}(t,s,u^{\epsilon,N}(s),v^{\epsilon,N}(s))\big]\,ds+\frac{\epsilon}{2}\int_0^t\E\big[\mathcal{A}^{(N)}\Theta^{(N)}_{2}(t,s,u^{\epsilon,N}(s),v^{\epsilon,N}(s))\big]\,ds.
\end{align*}
Finally, for the error term $E^{\epsilon,N}_{1,1}(t)$ one obtains the following decomposition
\begin{equation}\label{eq:decompose-E11}
 E^{\epsilon,N}_{1,1}(t)=E^{\epsilon,N}_{1,1,1}(t)+E^{\epsilon,N}_{1,1,2}(t)+E^{\epsilon,N}_{1,1,3}(t)
\end{equation}
where the error terms $E^{\epsilon,N}_{1,1,1}(t)$ $E^{\epsilon,N}_{1,1,2}(t)$ and $E^{\epsilon,N}_{1,1,3}(t)$ are defined by
\begin{align}
E^{\epsilon,N}_{1,1,1}(t)&=\frac{\epsilon^2}{2}\sum_{n,m=1}^N\E\bigl[D^2_{uu}\Phi^{(0,N)}(t,u^{\epsilon,N}(0)).(e_n,e_m)\Psi_{n,m}(X^{\epsilon,N}(0))\bigr]\nonumber\\
 &-\frac{\epsilon^2}{2}\sum_{n,m=1}^N\E\bigl[D^2_{uu}\Phi^{(0,N)}(0,u^{\epsilon,N}(t)).(e_n,e_m)\Psi_{n,m}(X^{\epsilon,N}(t))\bigr],\label{eq:defEepsN111}\\
E^{\epsilon,N}_{1,1,2}(t)&=\frac{\epsilon^2}{2}\sum_{n,m=1}^N\int_0^t\E\Big[\partial_s\Big(D^2_{uu}\Phi^{(0,N)}(t-s,u^{\epsilon,N}(s)).(e_n,e_m)\Psi_{n,m}(X^{\epsilon,N}(s))\Big)\Big]\,ds,\label{eq:defEepsN112}\\
 E^{\epsilon,N}_{1,1,3}(t)&=	\frac{\epsilon}{2}\sum_{n,m=1}^N\int_0^t\E\Big[\mathcal{A}^{(N)}\Big(D^2_{uu}\Phi^{(0,N)}(t-s,u^{\epsilon,N}(s)).(e_n,e_m)\Psi_{n,m}(X^{\epsilon,N}(s))\Big)\Big]\,ds.\label{eq:defEepsN113}
\end{align}

To summarize, for all $\epsilon\in(0,1)$, $N\in\N$ and $t\in[0,T]$ one obtains the decompositions
\begin{align*}
\E[\varphi(u^{\epsilon,N}(t))]-\E[\varphi(u^{0,N}(t))]&=E^{\epsilon,N}(t)\\
&=E_1^{\epsilon,N}(t)+E_2^{\epsilon,N}(t)\\
&=E_{1,1}^{\epsilon,N}(t)+E_{1,2}^{\epsilon,N}(t)+E_2^{\epsilon,N}(t)\\
&=E^{\epsilon,N}_{1,1,1}(t)+E^{\epsilon,N}_{1,1,2}(t)+E^{\epsilon,N}_{1,1,3}(t)+E_{1,2}^{\epsilon,N}(t)+E_2^{\epsilon,N}(t).
\end{align*}
It then remains to state and prove upper bounds for the five error terms appearing in the last line above. Error terms are grouped according to the assumptions and techniques required for the proofs of the upper bounds. More precisely, the terms $E^{\epsilon,N}_{1,2}(t), E^{\epsilon,N}_{1,1,3}(t)$ appear to be of order $\epsilon$ and are thus treated together. Similarly, the terms $E^{\epsilon,N}_{1,1,1}(t), E^{\epsilon,N}_{1,1,2}(t)$ appear to be of order $\epsilon^2$ and are also treated together. The term $E^{\epsilon,N}_2(t)$ is treated separately.

\begin{lemma}\label{lem:error-T1}
For all $T\in(0,\infty)$, $\alpha\in[0,\beta)$ and any mapping $\varphi\colon H\to\R$ of class $\mathcal{C}^1$, with bounded derivatives up to order $1$, there exists $C_\alpha(T,\varphi)\in(0,\infty)$ such that for all $N\in\N$ and $\epsilon\in(0,1)$ one has
\begin{equation}\label{eq:error-T1}
\underset{t\in[0,T]}\sup~|E^{\epsilon,N}_{2}(t)|\le C_\alpha(T,\varphi)\epsilon^{\min(2\alpha,1)}.      
\end{equation}
\end{lemma}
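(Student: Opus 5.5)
The bound on $E^{\epsilon,N}_2(t)$ is a direct consequence of the Lipschitz regularity of $\Phi^{(0,N)}(t,\cdot)$ combined with the assumption~\eqref{eq:init-weak-error} on the initial values.

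Fix $t\in[0,T]$, $N\in\N$ and $\epsilon\in(0,1)$. Recall the definition~\eqref{eq:defEepsN2} of $E^{\epsilon,N}_2(t)$. The first step is to apply the mean value theorem to the mapping $u\mapsto\Phi^{(0,N)}(t,u)$ along the segment joining $P_Nu_0^0$ and $P_Nu_0^\epsilon$, both of which lie in $H_N$. This gives
\[
|E^{\epsilon,N}_2(t)|\le \E\Bigl[\int_0^1\bigl|D_u\Phi^{(0,N)}\bigl(t,P_Nu_0^0+\theta P_N(u_0^\epsilon-u_0^0)\bigr).P_N(u_0^\epsilon-u_0^0)\bigr|\,d\theta\Bigr].
\]

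The second step is to invoke Proposition~\ref{propo:regularity-PhiN} with $k=1$ and $\alpha_1=0$. It yields $|D_u\Phi^{(0,N)}(t,u).h|\le C(T)\vvvert\varphi\vvvert_1\|h\|_H$, uniformly in $N$, $t\in[0,T]$ and $u\in H_N$. No singularity in $t$ appears, since $\alpha_1=0$, so $t=0$ is also covered; there the derivative is simply $D\varphi$, which is bounded.

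Combining this with the contraction property~\eqref{eq:boundPN} of $P_N$ gives
\[
|E^{\epsilon,N}_2(t)|\le C(T)\vvvert\varphi\vvvert_1\E[\|u_0^\epsilon-u_0^0\|_H].
\]

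The last step is to bound the remaining expectation. By the Cauchy--Schwarz inequality and the condition~\eqref{eq:init-weak-error} from Assumption~\ref{ass:init-weak},
\[
\E[\|u_0^\epsilon-u_0^0\|_H]\le \bigl(\E[\|u_0^\epsilon-u_0^0\|_H^2]\bigr)^{\frac12}\le M_\alpha^{\frac12}\epsilon^{\min(2\alpha,1)}.
\]
This gives~\eqref{eq:error-T1} with $C_\alpha(T,\varphi)=C(T)\vvvert\varphi\vvvert_1M_\alpha^{1/2}$, and the constant is uniform in $t\in[0,T]$, $N$ and $\epsilon$.

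The only point requiring care is to check that the constant in Proposition~\ref{propo:regularity-PhiN} with $\alpha_1=0$ is indeed independent of $N$ and bounded for $t\in[0,T]$ including $t=0$. Both properties hold by the statement of that proposition together with the bounded first derivative of $\varphi$. This is the case because only $\vvvert\varphi\vvvert_1$ and the Lipschitz constant of $f$ enter, via the first variation equation and Gr\"onwall's lemma.
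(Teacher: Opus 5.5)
Your proposal is correct and follows the paper's own route. Both arguments apply the mean value theorem, then the bound from Proposition~\ref{propo:regularity-PhiN} with $k=1$ and $\alpha_1=0$, and then condition~\eqref{eq:init-weak-error}; your explicit Cauchy--Schwarz step and your remarks on $t=0$ and on needing only $\vvvert\varphi\vvvert_1$ fill in details the paper leaves implicit.
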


\begin{lemma}\label{lem:error-T2}
For all $T\in(0,\infty)$, $\alpha\in[0,\beta)$, and any mapping $\varphi\colon H\to\R$ of class $\mathcal{C}^3$, with bounded derivatives up to order $3$, there exists $C_{\alpha}(T,\varphi)\in(0,\infty)$ such that for all $t\in[0,T]$ one has
\begin{align}
|E^{\epsilon,N}_{1,2}(t)|&\le C_{\alpha}(T,\varphi) \epsilon\lambda_N^{\max(\frac12-\alpha,0)},\label{eq:error-T2a}\\
|E^{\epsilon,N}_{1,1,3}(t)|&\le C_{\alpha}(T,\varphi) \epsilon\lambda_N^{\max(\frac12-\alpha,0)}.\label{eq:error-T2b}
\end{align}
\end{lemma}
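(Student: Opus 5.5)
The plan is to bound each of the two error terms separately, using the regularity estimates of Proposition~\ref{propo:regularity-PhiN} for $\Phi^{(0,N)}$ together with moment bounds on $v^{\epsilon,N}$. The factor $\lambda_N^{\max(\frac12-\alpha,0)}$ will come from the inverse inequality~\eqref{eq:inverseinequality}, since $v^{\epsilon,N}$ only lives in $H^{\alpha-1}$ uniformly in $\epsilon$. Proposition~\ref{propo:Galerkin} gives $\sup_{\epsilon,N}\sup_{s\le T}\E[\|v^{\epsilon,N}(s)\|_{H^{\alpha-1}}^{p}]<\infty$ for $\alpha<\beta$, with initial data controlled by Assumption~\ref{ass:init-weak} (note $\HH^{2\alpha}\subset\HH^\alpha$). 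Hence, for $\alpha<\frac12$,
\[
\E[\|v^{\epsilon,N}(s)\|_H^p]^{\frac1p}\le \lambda_N^{\frac12-\alpha}\,\E[\|v^{\epsilon,N}(s)\|_{H^{\alpha-1}}^p]^{\frac1p}\le C\lambda_N^{\frac12-\alpha},
\]
and for $\alpha\ge\frac12$ the bound is uniform in $N$. It is safer, however, to avoid placing $v$ in $H$ whenever a negative Sobolev norm can be absorbed by the smoothing of $\Phi^{(0,N)}$.

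\textbf{Step 1: the term $E^{\epsilon,N}_{1,2}$.} This term has three pieces. The boundary term at time $t$, $\epsilon\E[D_u\Phi^{(0,N)}(t,u^{\epsilon,N}(0)).v^{\epsilon,N}(0)]$, I would bound by Proposition~\ref{propo:regularity-PhiN} with $k=1$ and $\alpha_1=1-2\alpha$ when $\alpha<\frac12$, so that $\|v_0\|_{H^{2\alpha-1}}\le\|x_0^\epsilon\|_{\HH^{2\alpha}}$. This produces a factor $t^{-(1-2\alpha)/2}$, which is singular as $t\to 0$. To avoid it I would instead use $\alpha_1=0$ combined with the inverse inequality, $\|P_Nv_0\|_H\le\lambda_N^{\frac12-\alpha}\|v_0\|_{H^{2\alpha-1}}$, which is acceptable. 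The term at time $0$, $D\varphi(u^{\epsilon,N}(t)).v^{\epsilon,N}(t)$, is bounded by $\vvvert\varphi\vvvert_1\E\|v^{\epsilon,N}(t)\|_H\le C\lambda_N^{\max(\frac12-\alpha,0)}$. For the integral term, the Kolmogorov equation gives $\partial_sD_u\Phi^{(0,N)}(t-s,u).v=-D_u(\mathbb{L}^{(N)}\Phi^{(0,N)})(t-s,u).v$. Expanding this produces the following contributions:
\begin{itemize}
\item $D^2_{uu}\Phi.(v,-\IL u+f_N(u))$, handled with $\alpha_1=0$ on $v$ and $\alpha_2$ close to $2$ minus on $\IL u$. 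Here $\|\IL u\|_{H^{-\alpha_2}}=\|u\|_{H^{2-\alpha_2}}$, and taking $\alpha_2=2-\alpha$ makes $\alpha_1+\alpha_2<2$ and the time singularity integrable.
\item $D_u\Phi.(-\IL v+Df_N(u).v)$, where $\IL v$ is placed in $H^{-\alpha_1}$ with $\alpha_1<2$, costing $\|v\|_{H^{2-\alpha_1}}$. Choosing $\alpha_1=2-\alpha+1-\ldots$ would exceed $2$, so instead I would integrate by parts in time, equivalently write the whole term as $\partial_s$ applied to $D_u\Phi$ and use the third-derivative smoothing $t^{-(\sum\alpha_\ell)/2}$ with $\sum\alpha_\ell<2$ so that the integral $\int_0^t(t-s)^{-\sum\alpha_\ell/2}ds$ is finite.
\item The trace term $\frac12\sum_n q_nD^3_{uuu}\Phi.(e_n,e_n,v)$, using $\alpha_1=\alpha_2=1-\alpha+\kappa/2$ on $e_n$ so that the $q_n$-sum converges by~\eqref{eq:condition_beta}, and $\alpha_3$ small on $v$ together with the inverse inequality.
\end{itemize}

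\textbf{Step 2: the term $E^{\epsilon,N}_{1,1,3}$.} Applying $\mathcal{A}^{(N)}$ to $D^2_{uu}\Phi.(e_n,e_m)\Psi_{n,m}$ gives two kinds of terms. The first is $D^3_{uuu}\Phi.(e_n,e_m,v)\Psi_{n,m}(v)$, which is cubic in $v$. The second is $D^2_{uu}\Phi.(e_n,e_m)\,D_v\Psi_{n,m}.(-\IL u+f_N(u))$, which is linear in $v$ times $\IL u$. After summing over $n,m$ these are rewritten as
\[
D^3\Phi.(v,v,v)-\tfrac12\sum_n q_nD^3\Phi.(e_n,e_n,v)
\quad\text{and}\quad
2D^2\Phi.(v,-\IL u+f_N(u)).
\]
The cubic term, with the prefactor $\epsilon$, needs $\E\|v\|^3$ in negative norms. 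I would apply Proposition~\ref{propo:regularity-PhiN} with each $\alpha_\ell$ close to $\frac23(1-\alpha)$ to keep $\sum\alpha_\ell<2$, and put the remaining deficit on $\lambda_N$ via inverse inequalities.

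\textbf{Main obstacle.} The main obstacle is precisely this cubic term when $\alpha$ is small, since naive bounds give $\lambda_N^{3(\frac12-\alpha)}$. My first attempt would be to exploit the smoothing fully, putting $v$ in $H^{-(1-\alpha)}$ in all three slots. This requires $3(1-\alpha)<2$, which fails for $\alpha<\frac13$. The remaining power $\lambda_N^{\frac32(1-\alpha)-1}\le\lambda_N^{\frac12-\alpha}$ is, however, exactly what is allowed, so the estimate still closes.
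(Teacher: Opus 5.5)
\textbf{There is a genuine gap.} Your opening display miscounts the inverse inequality. Passing from $H^{\alpha-1}$ to $H$ via \eqref{eq:inverseinequality} costs $\lambda_N^{\frac{1-\alpha}{2}}$, not $\lambda_N^{\frac12-\alpha}$. The factor $\lambda_N^{\frac12-\alpha}$ is the cost from $H^{2\alpha-1}$, which is available only for the initial datum $v_0^\epsilon$ because $x_0^\epsilon\in\HH^{2\alpha}$. Likewise, for $\alpha\ge\frac12$ nothing makes $\E\|v^{\epsilon,N}(t)\|_H$ bounded in $N$ unless $\sum_n q_n<\infty$.

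This cannot be repaired by bookkeeping. Take $f=0$ and $x_0^\epsilon=0$. The energy identity for \eqref{eq:fngn} gives $\E[\langle v^{\epsilon,N}(t),e_n\rangle^2]=\frac{q_n}{2}\bigl(1-|g_n^{\epsilon,0,1}(t)|^2-\lambda_n|f_n^{\epsilon,0,1}(t)|^2\bigr)\to\frac{q_n}{2}$ as $\epsilon\to 0$, for fixed $t>0$; this equipartition is the $\frac{q_n}{2}$ in \eqref{eq:defPsi_nm}. Hence $\E\|v^{\epsilon,N}(t)\|_H\asymp(\sum_{n\le N}q_n)^{1/2}$, which is $\asymp\lambda_N^{1/4}$ for space-time white noise in $d=1$, whereas the target factor tends to $\lambda_N^{0}$ as $\alpha\uparrow\frac12$.

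Consequently, bounding $-\epsilon\E[D\varphi(u^{\epsilon,N}(t)).v^{\epsilon,N}(t)]$ by $\epsilon\sup\|D\varphi\|\,\E\|v^{\epsilon,N}(t)\|_H$ only yields $\epsilon\lambda_N^{\frac{1-\alpha}{2}}$. After the choice $\lambda_N\sim\epsilon^{-2}$ this is $\epsilon^{\alpha}$, the strong rate, not $\epsilon^{\min(2\alpha,1)}$. Since $\Phi^{(0,N)}(0,\cdot)=\varphi$, there is no smoothing to trade against here.

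The same obstruction hits $\epsilon\int_0^t\E[D_u\Phi^{(0,N)}(t-s,u^{\epsilon,N}(s)).\IL v^{\epsilon,N}(s)]\,ds$. Every admissible choice ($\alpha_1<2$) puts $v$ in some $H^{\kappa}$ with $\kappa>0$, at cost $\lambda_N^{\frac{1-\alpha+\kappa}{2}}$. Your proposed remedy, integrating by parts in time by writing the term as $\partial_s$ of $D_u\Phi$, is circular: this term is exactly what $\partial_sD_u\Phi^{(0,N)}$ produced.

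\textbf{The missing idea.} What must be small is not $v^{\epsilon,N}$ itself but its correlation with $u^{\epsilon,N}$. The paper substitutes the mild formulation \eqref{eq:mild-sdwe-galerkin} for $v^{\epsilon,N}$ in both terms and treats the three pieces separately:
\begin{itemize}
\item the initial-datum part costs $\lambda_N^{\max(\frac12-\alpha,0)}$, thanks to $x_0^\epsilon\in\HH^{2\alpha}$;
\item the drift part is $O(\epsilon)$ by Lemma~\ref{lem:expAeps-smoothing1} with $(\delta,\rho)=(\frac12,1)$, together with \eqref{eq:falpha1} for the $\IL v$ term;
\item the stochastic-convolution part is converted by the duality formula \eqref{prop:Malliavinderivative-W} into $\sum_n\sqrt{q_n}\int_0^t\E[D^2\varphi(u^{\epsilon,N}(t)).(\mathcal{D}^{e_n}_su^{\epsilon,N}(t),\Pi_ve^{(t-s)A_\epsilon}(0,e_n))]\,ds$, or its analogue with $D^2_{uu}\Phi^{(0,N)}$ and $\IL$.
\end{itemize}
By Lemma~\ref{lem:Malliavinderivative-moment} and \eqref{eq:lem_expAeps-smoothing1}, the last piece is bounded by $C\epsilon\sum_{n\le N}q_n\lambda_n^{-\gamma}$ with $\gamma<\frac12$ as close to $\frac12$ as needed, hence by $C\epsilon\lambda_N^{\max(\frac12-\alpha,0)}$.

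\textbf{Smaller, fixable points.}
\begin{itemize}
\item In $D^2_{uu}\Phi^{(0,N)}.(v,\IL u)$, both in $E^{\epsilon,N}_{1,2}$ and $E^{\epsilon,N}_{1,1,3}$, your choice $\alpha_1=0$ on $v$ again costs $\lambda_N^{\frac{1-\alpha}{2}}$. Put the smoothing on $v$ instead, e.g.\ $v\in H^{-1+\frac{\alpha+\beta}{2}}$, which is uniformly bounded, and let $u\in H^{1-\alpha}$ pay via \eqref{eq:inverseinequality2}.
\item In the trace term $\sum_nq_nD^3\Phi^{(0,N)}.(e_n,e_n,v)$, take $\alpha_3$ close to $2\alpha$ rather than small.
\item The cubic term you single out as the main obstacle is actually fine. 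There you count the cost per slot correctly and obtain $\lambda_N^{\frac12-\frac32\alpha}\le\lambda_N^{\frac12-\alpha}$.
\end{itemize}
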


\begin{lemma}\label{lem:error-T3}
For all $T\in(0,\infty)$, $\alpha\in[0,\beta)$, and any mapping $\varphi\colon H\to\R$ of class $\mathcal{C}^4$, with bounded derivatives up to order $4$, there exists $C_{\alpha}(T,\varphi)\in(0,\infty)$ such that for all $t\in[0,T]$ one has
\begin{align}
|E^{\epsilon,N}_{1,1,1}(t)|&\le C_{\alpha}(T,\varphi)\epsilon^2\lambda_N^{1-\alpha}\label{eq:error-T3a}\\
|E^{\epsilon,N}_{1,1,2}(t)|&\le C_{\alpha}(T,\varphi)\epsilon^2\lambda_N^{1-\alpha}\label{eq:error-T3b}.
\end{align}
\end{lemma}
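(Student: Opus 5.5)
The plan is to handle the two terms separately. Each is $\epsilon^2$ times a quantity built from $D^2_{uu}\Phi^{(0,N)}$ (or its time derivative) evaluated on $v^{\epsilon,N}$, the velocity component. On the finite-dimensional space $H_N$ the inverse inequality~\eqref{eq:inverseinequality} gives
\[
\|v^{\epsilon,N}\|_H\le\lambda_N^{\frac{1-\alpha}{2}}\|v^{\epsilon,N}\|_{H^{\alpha-1}}.
\]
Moreover $\|v^{\epsilon,N}(s)\|_{H^{\alpha-1}}\le\|X^{\epsilon,N}(s)\|_{\HH^\alpha}$ has moments of all orders, uniformly in $\epsilon,N,s$, by Proposition~\ref{propo:Galerkin} together with Assumption~\ref{ass:init-weak}. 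Each factor $v$ therefore costs $\lambda_N^{\frac{1-\alpha}{2}}$, which accounts for the target $\lambda_N^{1-\alpha}$. We may also replace $\alpha$ by some $\alpha'\in(\alpha,\beta)$ whenever an arbitrarily small loss $\lambda_N^{\kappa}$ must be absorbed.

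\textbf{The term $E^{\epsilon,N}_{1,1,1}(t)$.} By definition of $\Psi_{n,m}$, for $\tau\in\{0,t\}$ and $x=(u,v)$ one has
\[
\sum_{n,m}D^2_{uu}\Phi^{(0,N)}(\tau,u).(e_n,e_m)\Psi_{n,m}(x)=D^2_{uu}\Phi^{(0,N)}(\tau,u).(v,v)-\frac12\sum_{n=1}^Nq_nD^2_{uu}\Phi^{(0,N)}(\tau,u).(e_n,e_n).
\]
Apply Proposition~\ref{propo:regularity-PhiN} with $\alpha_1=\alpha_2=0$; at $\tau=0$ one simply uses $\vvvert\varphi\vvvert_2$. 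The first part is then bounded by $C\|v\|_H^2\le C\lambda_N^{1-\alpha}\|x\|_{\HH^\alpha}^2$. For the second part,
\[
\sum_{n\le N}q_n\le\lambda_N^{1-\alpha}\sum_nq_n\lambda_n^{\alpha-1}<\infty .
\]
Taking expectations at $s=0$ and $s=t$ and using the uniform moment bounds yields~\eqref{eq:error-T3a}.

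\textbf{The term $E^{\epsilon,N}_{1,1,2}(t)$.} Here $\partial_s$ acts only on the explicit time argument, so the integrand equals
\[
-\Bigl(\partial_\tau D^2_{uu}\Phi^{(0,N)}(\tau,u).(v,v)-\frac12\sum_{n\le N}q_n\partial_\tau D^2_{uu}\Phi^{(0,N)}(\tau,u).(e_n,e_n)\Bigr)
\]
with $\tau=t-s$. Differentiating the Kolmogorov equation~\eqref{eq:kolmogorovPhiN} twice in $u$ expresses $\partial_\tau D^2\Phi.(h,k)$ through the following terms:
\begin{itemize}
\item $D^3\Phi.(-\IL u+f_N(u),h,k)$;
\item $D^2\Phi.(-\IL h+Df_N(u)h,k)$ and the symmetric term in $k$;
\item $D\Phi.D^2f_N(u).(h,k)$ and $D^2\Phi.(Df_N h,Df_N k)$-type terms (boundedness of $Df,D^2f$ comes from Assumption~\ref{ass:mapping-f});
\item $\frac12\sum_jq_jD^4\Phi.(e_j,e_j,h,k)$.
\end{itemize}
Each term is estimated with Proposition~\ref{propo:regularity-PhiN}, choosing weights with $\sum\alpha_\ell<2$ so that the singularity $\tau^{-\sum\alpha_\ell/2}$ is integrable on $[0,t]$.
\begin{itemize}
\item \emph{Term $D^3\Phi.(\IL u,v,v)$.} Put $\alpha_1=2-\alpha'$ on $\IL u$, so $\|\IL u\|_{H^{\alpha'-2}}=\|u\|_{H^{\alpha'}}$ is controlled by the moment bounds, and put weight $0$ on both copies of $v$. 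This gives $\lambda_N^{1-\alpha}$.
\item \emph{Noise term $D^4\Phi.(e_j,e_j,v,v)$.} Put $1-\alpha'$ on each $e_j$ and $0$ on the copies of $v$. This gives $\sum_jq_j\lambda_j^{\alpha'-1}\lambda_N^{1-\alpha}$.
\item \emph{Term $D^2\Phi.(\IL v,v)$.} Put $\alpha_1=2-\kappa$ on $\IL v$ and $0$ on $v$. By the inverse inequality, $\|\IL v\|_{H^{\kappa-2}}\le\lambda_N^{\frac{1-\alpha+\kappa}{2}}\|v\|_{H^{\alpha-1}}$, so the product is bounded by $\lambda_N^{1-\alpha+\kappa/2}\|x\|^2_{\HH^\alpha}$. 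The loss $\kappa$ is then absorbed by working with $\alpha'>\alpha$ from the start.
\item \emph{The $(e_n,e_n)$ contributions.} These are treated identically, with $\sum_{n\le N}q_n\lambda_n\le\lambda_N^{1-\alpha'+\kappa}\sum_n q_n\lambda_n^{\alpha'-\kappa}$-type bookkeeping replaced by placing weight $1-\alpha'$ on each $e_n$.
\item \emph{Lower-order terms.} Those involving $f_N$, $Df_N$, $D^2f_N$ need only weight $0$ and are harmless.
\end{itemize}
Products of moments are handled by Hölder's inequality and Proposition~\ref{propo:Galerkin}, which gives~\eqref{eq:error-T3b}.

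The main obstacle is the two terms carrying $\IL$, namely $D^2\Phi.(\IL v,v)$ and $D^3\Phi.(\IL u,v,v)$. For these the constraint $\sum\alpha_\ell<2$ in Proposition~\ref{propo:regularity-PhiN} is borderline. One must check that the resulting power of $\lambda_N$ is exactly $1-\alpha$ up to a small $\kappa$, and that this $\kappa$ can be absorbed by the slack $\alpha<\alpha'<\beta$ without spoiling time integrability.
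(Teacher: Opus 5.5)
Your proposal is correct and follows the paper's proof essentially step by step. The shared ingredients are:
\begin{itemize}
\item the same splitting into $(v,v)$ and $(e_n,e_n)$ parts;
\item the same twice-differentiated Kolmogorov identity, which in fact contains no $D^2\Phi.(Df_Nh,Df_Nk)$ term;
\item the weights $2-\alpha$ on $\IL u$, $2-\kappa$ on $\IL v$ and $\IL e_n$, $1-\alpha$ on the noise directions and $0$ elsewhere;
\item the inverse inequality together with $\sum_{n\le N}q_n\le\lambda_N^{1-\alpha}\sum_nq_n\lambda_n^{\alpha-1}$.
\end{itemize}
The paper absorbs the $\kappa$-loss by taking moments in $\HH^{\alpha+\kappa}$ with $\kappa=(\beta-\alpha)/2$, exactly as your $\alpha'$ does.

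One correction: Assumption~\ref{ass:init-weak} only provides third moments, so $X^{\epsilon,N}$ does not have moments of all orders. The cubic term $\|u^{\epsilon,N}\|_{H^{\alpha'}}\|v^{\epsilon,N}\|_H^2$ must therefore be controlled with third moments only, for instance pointwise by $C\lambda_N^{1-\alpha}\|X^{\epsilon,N}\|_{\HH^{\alpha'}}^3$ as the paper does. Cauchy--Schwarz would not work here, since it would require fourth moments.
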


The proof of Theorem~\ref{theo:weak-N} is then a straightforward consequence of the three auxiliary results stated above.
\begin{proof}[Proof of Theorem~\ref{theo:weak-N}]
It suffices to recall that for all $\epsilon\in(0,1)$, $N\in\N$ and $t\in[0,T]$ one has
\[
\E[\varphi(u^{\epsilon,N}(t))]-\E[\varphi(u^{0,N}(t))]=E^{\epsilon,N}_{1,1,1}(t)+E^{\epsilon,N}_{1,1,2}(t)+E^{\epsilon,N}_{1,1,3}(t)+E_{1,2}^{\epsilon,N}(t)+E_2^{\epsilon,N}(t)
\]
and to apply the error estimates from Lemmas~\ref{lem:error-T1},~\ref{lem:error-T2} and~\ref{lem:error-T3} to obtain~\eqref{eq:theo-weak-N}.
\end{proof}

The proofs of Lemmas~\ref{lem:error-T1},~\ref{lem:error-T2} and~\ref{lem:error-T3} are technical and are given in Section~\ref{sec:proof-theo-weak-errorterms}.

\subsection{Proof of technical upper bounds on the auxiliary error terms}\label{sec:proof-theo-weak-errorterms}

\begin{proof}[Proof of Lemma~\ref{lem:error-T1}]
Recall that the error term $E^{\epsilon,N}_2(t)$ is defined by~\eqref{eq:defEepsN2}. Owing to the mean value theorem and applying the regularity property~\eqref{eq:propo-regularity-PhiN} from Proposition~\ref{propo:regularity-PhiN} (with $k=1$ and $\alpha_1=0$), there exists $C(T)\in(0,\infty)$ such that for all $t\in[0,T]$ one has
\begin{align*}
|E^{\epsilon,N}_{2}(t)|&=\bigl|\E[\Phi^{(0,N)}(t,P_Nu^{\epsilon}_0)]-\E[\Phi^{(0,N)}(t,P_Nu_0^0)]\bigr|\\
&\le \Big|\int_0^1 \E\Bigl[D_u\Phi^{(0,N)}(t,P_Nu^\epsilon_0+\theta P_{N}(u^0_0-u^\epsilon_0)). P_{N}(u^0_0-u^\epsilon_0)\Bigr]\,d\theta\Big|\\
&\le C(T)\vvvert\varphi\vvvert_1 \E[\|u^0_0-u^\epsilon_0\|_{H}].
\end{align*}
Finally, applying the condition~\eqref{eq:init-weak-error} from Assumption~\ref{ass:init-weak} on the initial value $u_0^\epsilon$, one thus obtains the inequality~\eqref{eq:error-T1} and the proof of Lemma~\ref{lem:error-T1} is completed.
\end{proof}

\begin{proof}[Proof of the inequality~\eqref{eq:error-T2a} from Lemma~\ref{lem:error-T2}]

The error term $E^{\epsilon,N}_{1,2}(t)$ defined by~\eqref{eq:defEepsN12} is decomposed as
\begin{equation}\label{eq:decompose-E12}
E^{\epsilon,N}_{1,2}(t)=E^{\epsilon,N}_{1,2,1}(t)+E^{\epsilon,N}_{1,2,2}(t)+E^{\epsilon,N}_{1,2,3}(t),
\end{equation}
where the error terms $E^{\epsilon,N}_{1,2,1}(t)$, $E^{\epsilon,N}_{1,2,2}(t)$ and $E^{\epsilon,N}_{1,2,3}(t)$ are defined as
\begin{align}
E^{\epsilon,N}_{1,2,1}(t)&=\epsilon\E[D_u\Phi^{(0,N)}(t,u^{\epsilon,N}(0)).v^{\epsilon,N}(0)],\label{eq:defEepsN121}\\
E^{\epsilon,N}_{1,2,2}(t)&=-\epsilon\E[D_u\Phi^{(0,N)}(0,u^{\epsilon,N}(t)).v^{\epsilon,N}(t)],\label{eq:defEepsN122}\\
E^{\epsilon,N}_{1,2,3}(t)&=\epsilon\int_0^t\E\big[D_u\bigl(\partial_s\Phi^{(0,N)}(t-s,u^{\epsilon,N}(s))\bigr).v^{\epsilon,N}(s)\big]\,ds.\label{eq:defEepsN123}
\end{align}

$\bullet$ \textbf{Treatment of the error term $E^{\epsilon,N}_{1,2,1}(t)$}

Applying the inequality~\eqref{eq:propo-regularity-PhiN} from Proposition~\ref{propo:regularity-PhiN} with $k=1$ and $\alpha_1=0$, and recalling that $X^{\epsilon,N}(0)=\IP_Nx_0^{\epsilon}$, one has
\[
\big|E^{\epsilon,N}_{1,2,1}(t)\big|\le C(T)\epsilon \vvvert \varphi\vvvert_1\E[\|v^{\epsilon,N}(0)\|_H]=C(T)\epsilon \vvvert \varphi\vvvert_1\E[\|x_0^{\epsilon,N}\|_{\mathcal{H}^1}].
\]
Applying the inverse inequality~\eqref{eq:inverseinequality} if $\alpha\in[0,1/2)$, and taking into account the condition~\eqref{eq:init-weak-bound} from  Assumption~\ref{ass:init-weak} on the initial value $x_0^\epsilon$, one obtains the upper bounds
\begin{equation}\label{eq:boundIPNx0HH1}
\E[\|\IP_Nx_0^{\epsilon}\|_{\HH^1}]\le \E[\|x_0^{\epsilon}\|_{\HH^1}]\mathds{1}_{\alpha\ge 1/2}+\lambda_N^{\frac12-\alpha}\E[\|x_0^\epsilon\|_{\HH^{2\alpha}}]\mathds{1}_{\alpha < 1/2}\le C_\alpha \lambda_N^{\max(\frac12-\alpha,0)}.
\end{equation}
One thus obtains the following upper bound for the error term $E^{\epsilon,N}_{1,2,1}(t)$: there exists $C_\alpha(T,\varphi)\in(0,\infty)$ such that for all $t\in[0,T]$, all $\epsilon\in(0,1)$ and all $N\in\N$ one has
\begin{equation}\label{eq:boundEepsN121}
\big|E^{\epsilon,N}_{1,2,1}(t)\big|\le C_\alpha(T,\varphi)\epsilon\lambda_N^{\max(\frac12-\alpha,0)}.
\end{equation}

$\bullet$ \textbf{Treatment of the error term $E^{\epsilon,N}_{1,2,2}(t)$}.

Owing to~\eqref{eq:PhiN} one has $\Phi^{(0,N)}(0,\cdot)=\varphi(\cdot)$. Noting that $v^{\epsilon,N}(t)=\Pi_vX^{\epsilon,N}(t)$ and using the expression~\eqref{eq:mild-sdwe-galerkin} of $X^{\epsilon,N}(t)$, the error term $E^{\epsilon,N}_{1,2,2}(t)$ is decomposed as
\begin{equation}\label{eq:decompose-E122}
E^{\epsilon,N}_{1,2,2}(t)=E^{\epsilon,N}_{1,2,2,1}(t)+E^{\epsilon,N}_{1,2,2,2}(t)+E^{\epsilon,N}_{1,2,2,3}(t)
\end{equation}
where the error terms $E^{\epsilon,N}_{1,2,2,1}(t)$, $E^{\epsilon,N}_{1,2,2,2}(t)$ and $E^{\epsilon,N}_{1,2,2,3}(t)$ are defined as
\begin{align}
E^{\epsilon,N}_{1,2,2,1}(t)&=-\epsilon\E[D_u\varphi(u^{\epsilon,N}(t)).\Pi_ve^{tA_{\epsilon}}\IP_Nx^\epsilon_0],\label{eq:defEepsN1221}\\
E^{\epsilon,N}_{1,2,2,2}(t)&=-\E\Bigl[D_u\varphi(u^{\epsilon,N}(t)). \int_0^t\Pi_ve^{(t-s)A_{\epsilon}}F_N(X^{\epsilon,N}(s))\,ds\Bigr],\label{eq:defEepsN1222}\\
E^{\epsilon,N}_{1,2,2,3}(t)&=-\E\Bigl[D_u\varphi(u^{\epsilon,N}(t)). \int_0^t\Pi_v e^{(t-s)A_{\epsilon}}\IP_N\,d\IW^Q(s)\Bigr].\label{eq:defEepsN1223}
\end{align}

{\it Treatment of the error term $E^{\epsilon,N}_{1,2,2,1}(t)$}.
Applying the inequality~\eqref{eq:lem_expAeps-bound} from Lemma~\ref{lem:expAeps-bound} with $\alpha=1$, for all $t\in[0,T]$ one has
\[
\big|E^{\epsilon,N}_{1,2,2,1}(t)\big|\le C(T)\epsilon\vvvert\varphi\vvvert_1 \E[\|\Pi_ve^{tA_{\epsilon}}\IP_Nx^\epsilon_0\|]\le C(T,\varphi)\epsilon \E[\|e^{tA_{\epsilon}}\IP_Nx^\epsilon_0\|_{\HH^1}]\le C(T,\varphi)\epsilon \E[\|\IP_Nx^\epsilon_0\|_{\HH^1}].
\]
Applying the inequality~\eqref{eq:boundIPNx0HH1} (see the treatment of the error term $E^{\epsilon,N}_{1,2,1}(t)$ above), one thus obtains the following upper bound for the error term $E^{\epsilon,N}_{1,2,2,1}(t)$: there exists $C_\alpha(T,\varphi)\in(0,\infty)$ such that for all $t\in[0,T]$, all $\epsilon\in(0,1)$ and all $N\in\N$ one has
\begin{equation}\label{eq:boundEepsN1221}
\big|E^{\epsilon,N}_{1,2,2,1}(t)\big|\le C_\alpha(T,\varphi)\epsilon\lambda_N^{\max(\frac12-\alpha,0)}.
\end{equation}

{\it Treatment of the error term $E^{\epsilon,N}_{1,2,2,2}(t)$}.
First, for all $t\in[0,T]$ one has
\begin{align*}
|E^{\epsilon,N}_{1,2,2,2}(t)|&\le C(T)\vvvert\varphi\vvvert_1\int_0^t\E\bigl[\bigl\|\Pi_ve^{(t-s)A_{\epsilon}}F_N(X^{\epsilon,N}(s))\bigr\|_H\bigr]\,ds\\
&\le C(T,\varphi)\int_0^t\E\bigl[\bigl\|e^{(t-s)A_{\epsilon}}(0,f_N(u^{\epsilon,N}(s)))\bigr\|_{\HH^1}\bigr]\,ds.
\end{align*}
Applying the inequality~\eqref{eq:lem_expAeps-smoothing1} from Lemma~\ref{lem:expAeps-smoothing1} with $\bigl(\alpha,\delta,\rho\bigr)=\bigl(1,1/2,1\bigr)$ and the linear growth property of the mapping $f$, one obtains
\begin{align*}
|E^{\epsilon,N}_{1,2,2,2}(t)|&\le C(T,\varphi)\epsilon  \int_0^t(t-s)^{-\frac12}(1+e^{-\frac{t-s}{2\epsilon^2}})\times \E[\|f_N(u^{\epsilon,N}(s)\|_H]\,ds\\
&\le C(T,\varphi)\epsilon \bigl(1+\underset{t\in[0,T]}\sup~\E[\|u^{\epsilon,N}(t)\|_{\HH}]\bigr).
\end{align*}
Applying the moment bounds~\eqref{eq:propo-sdwe-momentbounds-Galerkin} from Proposition~\ref{propo:sdwe-momentbounds} (with $\alpha=0$) and taking into account the condition~\eqref{eq:init-weak-bound} from Assumption~\ref{ass:init-weak} on the initial value $x_0^\epsilon$ (with $\alpha=0$), one obtains the following result: there exists $C(T,\varphi)\in(0,\infty)$ such that for all $t\in[0,T]$, all $\epsilon\in(0,1)$ and all $N\in\N$ one has 
\begin{equation}\label{eq:boundEepsN1222}
\big|E^{\epsilon,N}_{1,2,2,2}(t)\big|\le C(T,\varphi)\epsilon.
\end{equation}

{\it Treatment of the error term $E^{\epsilon,N}_{1,2,2,3}(t)$}. The treatment is more delicate and requires to employ the Malliavin calculus techniques presented in Section~\ref{sec:Malliavin}. Applying the identity~\eqref{prop:Malliavinderivative-W}, one obtains
\begin{align*}
E^{\epsilon,N}_{1,2,2,3}(t)&=-\E\Bigl[D\varphi(u^{\epsilon,N}(t)). \int_0^t\Pi_v e^{(t-s)A_{\epsilon}}\IP_N\,d\IW^Q(s)\Bigr]\\
&=-\sum_{n=1}^N\int_0^t\E\bigl[D^2\varphi(u^{\epsilon,N}(t)).\bigl(\mathcal{D}^{e_n}_su^{\epsilon,N}(t),\Pi_v e^{(t-s)A_{\epsilon}}(0,\sqrt{q_n}e_n)\bigr)\bigr]\,ds.
\end{align*}
The second order derivative of $\varphi$ is assumed to be bounded, therefore one has
\[
\big|E^{\epsilon,N}_{1,2,2,3}(t)\big|\le \vvvert\varphi\vvvert_2\sum_{n=1}^{N}\sqrt{q_n}\int_{0}^{t}\E[\|\mathcal{D}^{e_n}_su^{\epsilon,N}(t)\|]\|e^{(t-s)A_{\epsilon}}(0,e_n)\|_{\HH^1}\,ds.
\]
On the one hand, letting $\kappa\in[0,1)$ and applying the inequality~\eqref{eq:lemMalliavinderivative-moment} from Lemma~\ref{lem:Malliavinderivative-moment} on the Malliavin derivative $\mathcal{D}^{e_n}_su^{\epsilon,N}(t)$, one has
\[
\E[\|\mathcal{D}^{e_n}_su^{\epsilon,N}(t)\|]\le C_\kappa(T)(t-s)^{-\frac{\kappa}{2}}\|Q^{\frac12}e_n\|_{H^{-\kappa}}=C_\kappa(T)(t-s)^{-\frac{\kappa}{2}}\sqrt{q_n}\lambda_n^{-\frac{\kappa}{2}}.
\]
On the other hand, applying the inequality~\eqref{eq:lem_expAeps-smoothing1} from Lemma~\ref{lem:expAeps-smoothing1} with $\bigl(\alpha,\rho,\delta\bigr)=\bigl(1,1,1/2\bigr)$, one has
\[
\|e^{(t-s)A_{\epsilon}}(0,e_n)\|_{\HH^1}\le C\epsilon(t-s)^{-\frac12}.
\]
Therefore one obtains the upper bound
\[
\big|E^{\epsilon,N}_{1,2,2,3}(t)\big|\le C_\kappa(T,\varphi)\epsilon\int_{0}^{t}(t-s)^{-\frac{1+\kappa}{2}}\,ds \sum_{n=1}^{N}q_n\lambda_n^{-\frac{\kappa}{2}},
\]
and the treatment of the sum on the right-hand side above requires to be treated differently in two cases.

First, assume that $\beta\in(1/2,1]$. Set $\kappa=\frac32-\beta$. Note that $\kappa\in(0,1)$ and that $\alpha=1-\frac{\kappa}{2}=\frac14+\frac{\beta}{2}<\beta$. As a result, owing to the condition~\eqref{eq:condition_beta} from Assumption~\ref{ass:noise} on the covariance operator $Q$, one has
\[
\sum_{n=1}^{N}q_n\lambda_n^{-\frac{\kappa}{2}}=\sum_{n=1}^{N}q_n\lambda_n^{\alpha-1}\le \sum_{n=1}^{\infty}q_n\lambda_n^{\alpha-1}=C_\alpha.
\]

Second, assume that $\beta\in(0,1/2]$ and let $\alpha\in(0,\beta)$. Set $\kappa=1-(\beta-\alpha)$. Note that $\kappa\in(0,1)$, and that one has
\[
\sum_{n=1}^{N}q_n\lambda_n^{-\frac{\kappa}{2}}=\sum_{n=1}^{N}q_n\lambda_n^{\alpha-\frac12-\frac{\kappa}{2}}\lambda_n^{\frac12-\alpha}\le \lambda_{N}^{\frac12-\alpha}\sum_{n=1}^{N}q_n\lambda_n^{\frac{\alpha+\beta}{2}-1}.
\]
As a result, since $\frac{\alpha+\beta}{2}<\beta$, owing to the condition~\eqref{eq:condition_beta} from Assumption~\ref{ass:noise} on the covariance operator $Q$, one has
\[
\sum_{n=1}^{N}q_n\lambda_n^{-\frac{\kappa}{2}}\le \lambda_N^{\frac12-\alpha}\sum_{n=1}^{\infty}q_n\lambda_n^{\frac{\alpha+\beta}{2}-1}=C_\alpha\lambda_N^{\frac12-\alpha}.
\]

Gathering the upper bounds, one obtains the following result: for all $\alpha\in(0,\beta)$, there exists $C_\alpha(T,\varphi)\in(0,\infty)$ such that for all $t\in[0,T]$, all $\epsilon\in(0,1)$ and all $N\in\N$ one has
\begin{equation}\label{eq:boundEepsN1223}
\big|E^{\epsilon,N}_{1,2,2,3}(t)\big|\le C_\alpha(T,\varphi)\epsilon\lambda_N^{\max(\frac12-\alpha,0)}.
\end{equation}

{\it Conclusion}.
Recalling the decomposition~\eqref{eq:decompose-E122} of the error term and gathering the error bounds~\eqref{eq:boundEepsN1221},~\eqref{eq:boundEepsN1222} and~\eqref{eq:boundEepsN1223}, one thus obtains the following error bound for the error term $E^{\epsilon,N}_{1,2,2}(t)$: there exists $C_\alpha(T,\varphi)\in(0,\infty)$ such that for all $t\in[0,T]$, all $\epsilon\in(0,1)$ and all $N\in\N$ one has
\begin{equation}\label{eq:boundEepsN122}
\big|E^{\epsilon,N}_{1,2,2}(t)\big|\le C_\alpha(T,\varphi)\epsilon\lambda_N^{\max(\frac12-\alpha,0)}.
\end{equation}

$\bullet$ \textbf{Treatment of the error term $E^{\epsilon,N}_{1,2,3}(t)$}.

Since the mapping $\Phi^{(0,N)}$ is solution to the Kolmogorov equation~\eqref{eq:kolmogorovPhiN}, the error term $E^{\epsilon,N}_{1,2,3}$ can be written as
\[
E^{\epsilon,N}_{1,2,3}(t)=-\epsilon\int_0^t\E\big[D_u\bigl(\mathbb{L}^{(N)}\Phi^{(0,N)}\bigr)(t-s,u^{\epsilon,N}(s)).v^{\epsilon,N}(s)\bigr]\,ds.
\]
Recalling the definition~\eqref{eq:LN} of the infinitesimal generator $\mathbb{L}^{(N)}$, the error term $E^{\epsilon,N}_{1,2,3}(t)$ is decomposed as
\begin{equation}\label{eq:decompose-E123}
E^{\epsilon,N}_{1,2,3}(t)=E^{\epsilon,N}_{1,2,3,1}(t)+E^{\epsilon,N}_{1,2,3,2}(t)+E^{\epsilon,N}_{1,2,3,3}(t)+E^{\epsilon,N}_{1,2,3,4}(t)+E^{\epsilon,N}_{1,2,3,5}(t),
\end{equation}
where the error terms $E^{\epsilon,N}_{1,2,3,1}(t)$, $E^{\epsilon,N}_{1,2,3,2}(t)$, $E^{\epsilon,N}_{1,2,3,3}(t)$, $E^{\epsilon,N}_{1,2,3,4}(t)$ and $E^{\epsilon,N}_{1,2,3,5}(t)$ are defined as
\begin{align}
E^{\epsilon,N}_{1,2,3,1}(t)&=\epsilon\int_0^t\E\bigl[D_u\Phi^{(0,N)}(t-s,u^{\epsilon,N}(s)).\bigl(\Lambda v^{\epsilon,N}(s)\bigr)\bigr]\,ds,\label{eq:defEepsN1231}\\
E^{\epsilon,N}_{1,2,3,2}(t)&=-\epsilon\int_0^t\E\bigl[D_u\Phi^{(0,N)}(t-s,u^{\epsilon,N}(s)).\bigl(Df_N(u^{\epsilon,N}(s)).v^{\epsilon,N}(s)\bigr)\bigr]\,ds,\label{eq:defEepsN1232}\\
E^{\epsilon,N}_{1,2,3,3}(t)&=\epsilon\int_0^t\E\bigl[D_{uu}^2\Phi^{(0,N)}(t-s,u^{\epsilon,N}(s)).\bigl(v^{\epsilon,N}(s),\Lambda u^{\epsilon,N}(s)\bigr)\bigr]\,ds,\label{eq:defEepsN1233}\\
E^{\epsilon,N}_{1,2,3,4}(t)&=-\epsilon\int_0^t\E\bigl[D_{uu}^2\Phi^{(0,N)}(t-s,u^{\epsilon,N}(s)).\bigl(v^{\epsilon,N}(s),f_N(u^{\epsilon,N}(s))\bigr)\bigr]\,ds,\label{eq:defEepsN1234}\\
E^{\epsilon,N}_{1,2,3,5}(t)&=-\frac{\epsilon}{2}\int_0^t\sum_{n=1}^{N}q_n\times\E\bigl[D^3_{u}\Phi^{(0,N)}(t-s,u^{\epsilon,N}(s)).(e_n,e_n,v^{\epsilon,N}(s))\bigr]\,ds.\label{eq:defEepsN1235}
\end{align}

{\it Treatment of the error term $E^{\epsilon,N}_{1,2,3,1}(t)$}.

Writing $v^{\epsilon,N}(s)=\Pi_vX^{\epsilon,N}(s)$ and using  the expression~\eqref{eq:mild-sdwe} of $X^{\epsilon,N}(s)$, one obtains the following decomposition of the error term $E^{\epsilon,N}_{1,2,3,1}(t)$:
\begin{equation}\label{eq:decompose-E1231}
E^{\epsilon,N}_{1,2,3,1}(t)=E^{\epsilon,N}_{1,2,3,1,1}(t)+E^{\epsilon,N}_{1,2,3,1,2}(t)+E^{\epsilon,N}_{1,2,3,1,3}(t)
\end{equation}
where the error terms $E^{\epsilon,N}_{1,2,3,1,1}(t)$, $E^{\epsilon,N}_{1,2,3,1,2}(t)$ and $E^{\epsilon,N}_{1,2,3,1,3}(t)$ are defined as
\begin{align}
E^{\epsilon,N}_{1,2,3,1,1}(t)
&=\epsilon\int_0^t \E\bigl[D_u\Phi^{(0,N)}(t-s,u^{\epsilon,N}(s)).(\IL\Pi_ve^{sA_\epsilon}\IP_Nx_0^\epsilon)\bigr]\,ds\label{eq:defEepsN12311}\\
E^{\epsilon,N}_{1,2,3,1,2}(t)
&=\int_0^t\E \bigl[D_u\Phi^{(0,N)}(t-s,u^{\epsilon,N}(s)).\int_0^s\IL\Pi_ve^{(s-r)A_\epsilon}F_N(X^{\epsilon,N}(r))\,dr\bigr]\,ds\label{eq:defEepsN12312}\\
E^{\epsilon,N}_{1,2,3,1,3}(t)
&=\int_0^t\E\bigl[D_u\Phi^{(0,N)}(t-s,u^{\epsilon,N}(s)).\int_0^s\IL\Pi_ve^{(s-r)A_\epsilon}\IP_N\,d\IW^Q(r)\bigr]\,ds
\label{eq:defEepsN12313}
\end{align}

Consider first the error term $E^{\epsilon,N}_{1,2,3,1,1}(t)$. Let $\alpha\in(0,\beta)$, and let $\kappa\in(0,1)$. Applying the inequality~\eqref{eq:propo-regularity-PhiN} from Proposition~\ref{propo:regularity-PhiN} with $k=1$ and $\alpha_1=2(1-\kappa)$, one has
\[
|E^{\epsilon,N}_{1,2,3,1,1}(t)|\le C_\kappa(T)\epsilon\vvvert\varphi\vvvert_1\int_{0}^{t}(t-s)^{-1+\kappa}\E[\|\IL\Pi_ve^{sA_\epsilon}\IP_Nx_0^\epsilon\|_{H^{-2+2\kappa}}]\,ds.
\]
Moreover, one has the upper bounds
\[
\|\IL\Pi_ve^{sA_\epsilon}\IP_Nx_0^\epsilon\|_{H^{-2+2\kappa}}=\|\Pi_ve^{sA_\epsilon}\IP_Nx_0^\epsilon\|_{H^{2\kappa}}\le \|e^{sA_\epsilon}\IP_Nx_0^\epsilon\|_{\HH^{1+2\kappa}},
\]
and applying the inequality~\eqref{eq:lem_expAeps-bound} from Lemma~\ref{lem:expAeps-bound} one obtains the upper bound
\[
|E^{\epsilon,N}_{1,2,3,1,1}(t)|\le C_\kappa(T,\varphi)\epsilon \E[\|\IP_Nx_0^\epsilon\|_{\HH^{1+2\kappa}}].
\]
First, assume that $\beta\in(0,1/2]$. Choosing $\kappa=(\beta-\alpha)/2$ and applying the inverse inequality~\eqref{eq:inverseinequality}, one obtains
\[
\E[\|\IP_Nx_0^\epsilon\|_{\HH^{1+2\kappa}}]\le \lambda_N^{\frac12-\alpha}\E[\|x_0^\epsilon\|_{\HH^{2(\alpha+\kappa)}}]=\lambda_N^{\frac12-\alpha}\E[\|x_0^\epsilon\|_{\HH^{\alpha+\beta}}].
\]
Owing to the condition~\eqref{eq:init-weak-bound} from Assumption~\ref{ass:init-weak} on the initial value $x_0^\epsilon$, one then obtains
\[
\underset{\epsilon\in(0,1)}\sup~\E[\|\IP_Nx_0^\epsilon\|_{\HH^{1+2\kappa}}]\le C_\alpha \lambda_N^{\frac12-\alpha}.
\]
Second, assume that $\beta\in(1/2,1]$. Choosing $\kappa=(2\beta-1)/4$, one has $1+2\kappa=\beta+1/2<2\beta$. Owing to the condition~\eqref{eq:init-weak-bound} from Assumption~\ref{ass:init-weak} on the initial value $x_0^\epsilon$, one then obtains
\[
\underset{\epsilon\in(0,1)}\sup~\E[\|\IP_Nx_0^\epsilon\|_{\HH^{1+2\kappa}}]<\infty.
\]
Combining the results from the two cases, one thus obtains the following inequality for the error term $E^{\epsilon,N}_{1,2,3,1,1}(t)$: for all $\alpha\in(0,\beta)$, there exists $C_\alpha(T,\varphi)\in(0,\infty)$ such that for all $t\in[0,T]$, all $\epsilon\in(0,1)$ and all $N\in\N$ one has
\begin{equation}\label{eq:boundEepsN12311}
\big|E^{\epsilon,N}_{1,2,3,1,1}(t)\big|\le C_\alpha(T,\varphi)\epsilon\lambda_N^{\max(\frac12-\alpha,0)}.
\end{equation}

Consider next the error term $E^{\epsilon,N}_{1,2,3,1,2}(t)$. Let $\alpha\in(0,\beta)$ and $\kappa\in(0,\alpha/2)$. Applying the inequality~\eqref{eq:propo-regularity-PhiN} from Proposition~\ref{propo:regularity-PhiN} with $k=1$ and $\alpha_1=2-\kappa$, one has
\[
|E^{\epsilon,N}_{1,2,3,1,2}(t)|\le C_\kappa(T)\epsilon\vvvert\varphi\vvvert_1\int_{0}^{t}(t-s)^{-1+\frac{\kappa}{2}}\int_0^s\E[\|\IL\Pi_ve^{(s-r)A_\epsilon}F_N(X^{\epsilon,N}(r))\|_{H^{-2+\kappa}}]\,dr\,ds.
\]
Moreover, one has the upper bounds
\[
\|\IL\Pi_ve^{(s-r)A_\epsilon}F_N(X^{\epsilon,N}(r))\|_{H^{-2+\kappa}}=\|\Pi_ve^{(s-r)A_\epsilon}F_N(X^{\epsilon,N}(r))\|_{H^{\kappa}}\le \|e^{(s-r)A_\epsilon}\bigl(0,f_N(u^{\epsilon,N}(r))\bigr)\|_{\HH^{1+\kappa}}.
\]
Applying the inequality~\eqref{eq:lem_expAeps-smoothing1} from Lemma~\ref{lem:expAeps-smoothing1} with $\bigl(\alpha,\delta,\rho\bigr)=\bigl(1+\kappa,1/2,1\bigr)$ and the inequality~\eqref{eq:falpha1} from Assumption~\ref{ass:mapping-f} on the mapping $f$ with $\alpha=\kappa$, one obtains
\begin{align*}
\int_0^s\E[\|\IL\Pi_ve^{(s-r)A_\epsilon}F_N(X^{\epsilon,N}(r))\|_{H^{-2+\kappa}}]\,dr&\le C_\kappa\int_{0}^{s}(s-r)^{-\frac12}\E[\|f_N(u^{\epsilon,N}(r))\|_{H^{\kappa}}]\,dr\\
&\le  C_\kappa\int_{0}^{s}(s-r)^{-\frac12}\bigl(1+\E[\|u^{\epsilon,N}(r)\|_{H^{2\kappa}}]\bigr)\,dr\\
&\le C_\kappa(T)\bigl(1+\underset{r\in[0,T]}\sup~\E[\|X^{\epsilon,N}(r)\|_{H^{\alpha}}]\bigr).
\end{align*}
Owing to the moment bounds~\eqref{eq:propo-sdwe-momentbounds-Galerkin} from Proposition~\ref{propo:sdwe-momentbounds} and to the condition~\eqref{eq:init-weak-bound} from Assumption~\ref{ass:init-weak} on the initial value $x_0^\epsilon$, one thus obtains the following inequality for the error term $E^{\epsilon,N}_{1,2,3,1,2}(t)$: for all $\alpha\in(0,\beta)$, there exists $C_\alpha(T,\varphi)\in(0,\infty)$ such that for all $t\in[0,T]$, all $\epsilon\in(0,1)$ and all $N\in\N$ one has
\begin{equation}\label{eq:boundEepsN12312}
\big|E^{\epsilon,N}_{1,2,3,1,2}(t)\big|\le C_\alpha(T,\varphi)\epsilon\lambda_N^{\max(\frac12-\alpha,0)}.
\end{equation}

It remains to deal with the error term $E^{\epsilon,N}_{1,2,3,1,3}(t)$. The treatment is more delicate and requires to employ the Malliavin calculus techniques presented in Section~\ref{sec:Malliavin}. Applying the identity~\eqref{prop:Malliavinderivative-W}, one obtains
\[
E^{\epsilon,N}_{1,2,3,1,3}(t)=-\sum_{n=1}^{N}\sqrt{q_n}\int_{0}^{t}\int_{0}^{s}\E\bigl[D_{uu}^2\Phi^{(0,N)}(t-s,u^{\epsilon,N}(s)).\Bigl(\mathcal{D}_r^{e_n}u^{\epsilon,N}(s),\IL\Pi_ve^{(s-r)A_\epsilon}\bigl(0,e_n)\Bigr) \bigr]\,dr\,ds.
\]
Let $\kappa\in(0,1/2]$. Applying the inequality~\eqref{eq:propo-regularity-PhiN} from Proposition~\ref{propo:regularity-PhiN} with $k=2$, $\alpha_1=0$ and $\alpha_2=2(1-\kappa)$, one has
\[
|E^{\epsilon,N}_{1,2,3,1,3}(t)|\le C_\kappa(T)\vvvert\varphi\vvvert_2\sum_{n=1}^{N}\sqrt{q_n}\int_{0}^{t}\int_{0}^{s}(t-s)^{-1+\kappa}\E[\|\mathcal{D}_r^{e_n}u^{\epsilon,N}(s)\|_H]\|\IL\Pi_ve^{(s-r)A_\epsilon}\bigl(0,e_n)\|_{H^{-2+2\kappa}}\,dr\,ds.
\]
On the one hand, applying the inequality~\eqref{eq:lemMalliavinderivative-moment} from Lemma~\ref{lem:Malliavinderivative-moment} on the Malliavin derivative, one obtains the upper bound
\[
\E[\|\mathcal{D}_r^{e_n}u^{\epsilon,N}(s)\|_H]\le C_\kappa(T)(s-r)^{-\frac12+\kappa}\|Q^{\frac12}e_n\|_{H^{-1+2\kappa}}= C_\kappa(T)(s-r)^{-\frac12+\kappa}\sqrt{q_n}\lambda_n^{-\frac12+\kappa}.
\]
On the other hand, applying the inequality~\eqref{eq:lem_expAeps-smoothing1} from Lemma~\ref{lem:expAeps-smoothing1} with $\bigl(\alpha,\delta,\rho\bigr)=\bigl(1+2\kappa,1/2,1\bigr)$, one obtains
\begin{align*}
\|\IL\Pi_ve^{(s-r)A_\epsilon}\bigl(0,e_n)\|_{H^{-2+2\kappa}}&\le \|\Pi_ve^{(s-r)A_\epsilon}\bigl(0,e_n)\|_{H^{2\kappa}}\le \|e^{(s-r)A_\epsilon}\bigl(0,e_n)\|_{\HH^{1+2\kappa}}\\
&\le C_\kappa\epsilon(s-r)^{-\frac12}\|e_n\|_{H^{2\kappa}}=C_\kappa\epsilon(s-r)^{-\frac12}\lambda_n^\kappa.
\end{align*}
Gathering the upper bounds, one thus obtains the following: there exists $C_\kappa(T,\varphi)\in(0,\infty)$ such that
\[
|E^{\epsilon,N}_{1,2,3,1,3}(t)|\le C_\kappa(T,\varphi)\sum_{n=1}^{N}q_n\lambda_n^{-\frac12+2\kappa}.
\]
First, assume that $\beta\in(0,1/2]$. Let $\alpha\in(0,\beta)$ and choose $\kappa=(\beta-\alpha)/4$. Then one obtains
\[
\sum_{n=1}^{N}q_n\lambda_n^{-\frac12+2\kappa}\le \lambda_N^{\frac12-\alpha}\sum_{n=1}^{N}q_n\lambda_n^{\alpha-1+2\kappa}\le C_\alpha\lambda_N^{\frac12-\alpha}
\]
with $C_\alpha=\sum_{n=1}^{\infty}\lambda_n^{\frac{\alpha+\beta}{2}-1}<\infty$.

Second, assume that $\beta\in(1/2,1]$ and choose $\kappa=(2\beta-1)/8$. Then one obtains
\[
\sum_{n=1}^{N}q_n\lambda_n^{-\frac12+2\kappa}=\sum_{n=1}^{N}q_n\lambda_n^{\beta-2\kappa-1}\le \sum_{n=1}^{\infty}q_n\lambda_n^{\beta-2\kappa-1}<\infty.
\]
Finally, one thus obtains the following inequality for the error term $E^{\epsilon,N}_{1,2,3,1,3}(t)$: for all $\alpha\in(0,\beta)$, there exists $C_\alpha(T,\varphi)\in(0,\infty)$ such that for all $t\in[0,T]$, all $\epsilon\in(0,1)$ and all $N\in\N$ one has
\begin{equation}\label{eq:boundEepsN12313}
\big|E^{\epsilon,N}_{1,2,3,1,3}(t)\big|\le C_\alpha(T,\varphi)\epsilon\lambda_N^{\max(\frac12-\alpha,0)}.
\end{equation}

Recalling the decomposition~\eqref{eq:decompose-E1231} of the error term~\eqref{eq:defEepsN1231}, and combining the upper bounds~\eqref{eq:boundEepsN12311},~\eqref{eq:boundEepsN12312} and~\eqref{eq:boundEepsN12313}, one thus obtains the following inequality for the error term $E^{\epsilon,N}_{1,2,3,1}(t)$: for all $\alpha\in(0,\beta)$, there exists $C_\alpha(T,\varphi)\in(0,\infty)$ such that for all $t\in[0,T]$, all $\epsilon\in(0,1)$ and all $N\in\N$ one has
\begin{equation}\label{eq:boundEepsN1231}
\big|E^{\epsilon,N}_{1,2,3,1}(t)\big|\le C_\alpha(T,\varphi)\epsilon\lambda_N^{\max(\frac12-\alpha,0)}.
\end{equation}

{\it Treatment of the error term $E^{\epsilon,N}_{1,2,3,2}(t)$}.
Applying the inequality~\eqref{eq:propo-regularity-PhiN} from Proposition~\ref{propo:regularity-PhiN} with $k=1$ and $\alpha_1=1$ and the Cauchy--Schwarz inequality, one has
\[
|E^{\epsilon,N}_{1,2,3,2}(t)|\le C(T)\epsilon\vvvert\varphi\vvvert_1\int_{0}^{t}(t-s)^{-\frac12}\E[\|Df_N(u^{\epsilon,N}(s)).v^{\epsilon,N}(s)\|_{H^{-1}}]\,ds.
\]
First, assume that $\beta\in(0,1/2]$ and let $\alpha\in(0,\beta)$. Owing to the condition~\eqref{eq:DfalphaBis} from Assumption~\ref{ass:mapping-f} on the mapping $f$ and applying the Cauchy--Schwarz inequality, one has
\begin{align*}
\E[\|Df_N(u^{\epsilon,N}(s)).v^{\epsilon,N}(s)\|_{H^{-1}}]&\le C_\alpha\E\bigl[\bigl(1+\|u^{\epsilon,N}(s)\|_{H^\alpha}\bigr)\|v^{\epsilon,N}(s)\|_{H^{-\alpha}}\bigr]\\
&\le C_\alpha\Bigl(1+\bigl(\E[\|X^{\epsilon,N}(s)\|_{\HH^\alpha}^2]\bigr)^{\frac12}\Bigr)\bigl(\E[\|X^{\epsilon,N}(s)\|_{\HH^{1-\alpha}}^2]\bigr)^{\frac12}.
\end{align*}
Applying the inverse inequality~\eqref{eq:inverseinequality}, one thus obtains the upper bound
\[
|E^{\epsilon,N}_{1,2,3,2}(t)|\le C_\alpha(T,\varphi)\epsilon \lambda_N^{\frac12-\alpha}\bigl(1+\underset{s\in[0,T]}\sup~\E[\|X^{\epsilon,N}(s)\|_{\HH^\alpha}^2]\bigr).
\]
Second, assume that $\beta\in(1/2,1]$. Applying the inequality~\eqref{eq:DfalphaBis2} on the mapping $f$ with $\beta=1/2$, one has
\begin{align*}
\E[\|Df_N(u^{\epsilon,N}(s)).v^{\epsilon,N}(s)\|_{H^{-1}}]&\le C\E\bigl[\bigl(1+\|u^{\epsilon,N}(s)\|_{H^{\frac12}}\bigr)\|v^{\epsilon,N}(s)\|_{H^{-\frac12}}\bigr]\\
&\le C\bigl(1+\E[\|X^{\epsilon,N}(s)\|_{\HH^{\frac12}}^2]\bigr).
\end{align*}
One thus obtains the upper bound
\[
|E^{\epsilon,N}_{1,2,3,2}(t)|\le C_{\alpha}(T,\varphi)\epsilon\lambda_N^{\max(\frac12-\alpha,0)} \bigl(1+\underset{s\in[0,T]}\sup~\E[\|X^{\epsilon,N}(s)\|_{\HH^{\frac12}}^2]\bigr).
\]
Owing to the moment bounds~\eqref{eq:propo-sdwe-momentbounds-Galerkin} from Proposition~\ref{propo:sdwe-momentbounds} and to the condition~\eqref{eq:init-weak-bound} from Assumption~\ref{ass:init-weak} on the initial value $x_0^\epsilon$, and combining the two cases, one thus obtains the following inequality for the error term $E^{\epsilon,N}_{1,2,3,2}(t)$: for all $\alpha\in(0,\beta)$, there exists $C_\alpha(T,\varphi)\in(0,\infty)$ such that for all $t\in[0,T]$, all $\epsilon\in(0,1)$ and all $N\in\N$ one has
\begin{equation}\label{eq:boundEepsN1232}
\big|E^{\epsilon,N}_{1,2,3,2}(t)\big|\le C_\alpha(T,\varphi)\epsilon\lambda_N^{\max(\frac12-\alpha,0)}.
\end{equation}

{\it Treatment of the error term $E^{\epsilon,N}_{1,2,3,3}(t)$}.
Let $\alpha\in(0,\beta)$, and set $\kappa=(\beta-\alpha)/2$. Applying the inequality~\eqref{eq:propo-regularity-PhiN} from Proposition~\ref{propo:regularity-PhiN} with $k=2$, $\alpha_1=1-\beta+\kappa$, and $\alpha_2=1+\beta-2\kappa$ and the Cauchy--Schwarz inequality, one has
\begin{align*}
|E^{\epsilon,N}_{1,2,3,3}(t)|
&\le C_{\kappa}(T)\epsilon\vvvert\varphi\vvvert_2\int_0^t(t-s)^{-1+\frac{\kappa}{2}}\E\bigl[\|v^{\epsilon,N}(s)\|_{H^{-1+\beta-\kappa}}\|\Lambda u^{\epsilon,N}(s)\|_{H^{-1-\beta+2\kappa}}\bigr]\,ds\\
&\le C_{\kappa}(T,\varphi)\epsilon\int_0^t(t-s)^{-1+\frac{\kappa}{2}}\times\bigl(\E[\|X^{\epsilon,N}(s)\|_{\HH^{\beta-\kappa}}^2]\bigr)^{\frac12}\bigl(\E[\|X^{\epsilon,N}(s)\|_{\HH^{1-\beta+2\kappa}}^2]\bigr)^{\frac12}\,ds.
\end{align*}
On the other hand, note that $\beta-\kappa=\beta-\frac{\beta-\alpha}{2}=\frac{\alpha+\beta}{2}\in(0,\beta)$. Owing to the moment bounds~\eqref{eq:propo-sdwe-momentbounds-Galerkin} from Proposition~\ref{propo:Galerkin} and to the condition~\eqref{eq:init-weak-bound} from Assumption~\ref{ass:init-weak} on the initial value $x_0^\epsilon$, there exists $C_\alpha(T)\in(0,\infty)$ such that for all $N\in\N$ one has
\[
\underset{s\in[0,T]}\sup~\underset{\epsilon\in(0,1)}\sup~\E[\|X^{\epsilon,N}(s)\|_{\HH^{\frac{\alpha+\beta}{2}}}^2]\le C_\alpha(T).
\]
On the other hand, note that $1-\beta+2\kappa=1-\alpha$. If $\alpha<1/2$, owing to the inverse inequality~\eqref{eq:inverseinequality}, for all $N\in\N$ one has
\[
\|x_N\|_{\HH^{1-\alpha}}\le \lambda_N^{\frac12-\alpha}\|x_N\|_{\HH^\alpha},\quad \forall~x_N\in \HH_N.
\]
Therefore, for all $\alpha\in(0,1)$ and for all $N\in\N$ one has
\begin{equation}\label{eq:inverseinequality2}
\|x_N\|_{\HH^{1-\alpha}}\le \lambda_N^{\max(\frac12-\alpha,0)}\|x_N\|_{\HH^{\min(1-\alpha,\alpha)}},\quad \forall~x_N\in \HH_N.
\end{equation}
Given that $\alpha\in(0,\beta)$, one has $\min(1-\alpha,\alpha)\in(0,\beta)$. Owing to the moment bounds~\eqref{eq:propo-sdwe-momentbounds-Galerkin} from Proposition~\ref{propo:Galerkin} and to the condition~\eqref{eq:init-weak-bound} from Assumption~\ref{ass:init-weak} on the initial value $x_0^\epsilon$, there exists $C_\alpha(T)\in(0,\infty)$ such that for all $N\in\N$ one has
\begin{equation}\label{eq:auxmomentbounds}
\underset{s\in[0,T]}\sup~\underset{\epsilon\in(0,1)}\sup~\E[\|X^{\epsilon,N}(s)\|_{\HH^{1-\alpha}}^2]\le \lambda_N^{2\max(\frac12-\alpha,0)}\underset{s\in[0,T]}\sup~\underset{\epsilon\in(0,1)}\sup~\E[\|X^{\epsilon,N}(s)\|_{\HH^{\min(1-\alpha,\alpha)}}^2] \le C_\alpha(T)\lambda_N^{2\max(\frac12-\alpha,0)}.
\end{equation}
Combining the upper bounds, one thus obtains the following inequality for the error term $E^{\epsilon,N}_{1,2,3,3}(t)$: for all $\alpha\in(0,\beta)$, there exists $C_\alpha(T,\varphi)\in(0,\infty)$ such that for all $t\in[0,T]$, all $\epsilon\in(0,1)$ and all $N\in\N$ one has
\begin{equation}\label{eq:boundEepsN1233}
\big|E^{\epsilon,N}_{1,2,3,3}(t)\big|\le C_\alpha(T,\varphi)\epsilon\lambda_N^{\max(\frac12-\alpha,0)}.
\end{equation}

{\it Treatment of the error term $E^{\epsilon,N}_{1,2,3,4}(t)$}.
Let $\alpha\in[0,\beta)$. Applying the the inequality~\eqref{eq:propo-regularity-PhiN} from Proposition~\ref{propo:regularity-PhiN} with $k=2$, $\alpha_1=1-\alpha$ and $\alpha_2=0$, the linear growth property of the mapping $f$ and the Cauchy--Schwarz inequality, one has
\begin{align*}
|E^{\epsilon,N}_{1,2,3,4}(t)|
&\le C_{\alpha}(T)\epsilon\vvvert\varphi\vvvert_2\int_0^t(t-s)^{-\frac{1-\alpha}{2}}\E\bigl[\|v^{\epsilon,N}(s)\|_{H^{-1+\alpha}}\|f_N(u^{\epsilon,N}(s))\|_H\bigr]\,ds\\
&\le C_{\alpha}(T,\varphi)\epsilon\int_0^t(t-s)^{-\frac{1-\alpha}{2}}\bigl(\E[\|X^{\epsilon,N}(s)\|_{\HH^{\alpha}}^2]\bigr)^{\frac12} \bigl(1+ \E[\|X^{\epsilon,N}(s)\|_{\HH}^2] \bigr)^{\frac12}\,ds.
\end{align*}
Applying the moment bounds~\eqref{eq:propo-sdwe-momentbounds-Galerkin} from Proposition~\ref{propo:Galerkin}, one obtains
\[
\underset{s\in[0,T]}\sup~\bigl(\E[\|X^{\epsilon,N}(s)\|_{\HH^{\alpha}}^2]\bigr)^{\frac12} \bigl(1+ \E[\|X^{\epsilon,N}(s)\|_{\HH}^2] \bigr)^{\frac12}\le C_\alpha(T)\bigl(\E[\|x_0^{\epsilon}\|_{\HH^{\alpha}}^2]\bigr)^{\frac12} \bigl(1+ \E[\|x_0^{\epsilon}\|_{\HH}^2] \bigr)^{\frac12}.
\]
Finally, owing to the condition~\eqref{eq:init-weak-bound} from Assumption~\ref{ass:init-weak} on the initial value $x_0^\epsilon$, one thus obtains the following inequality for the error term $E^{\epsilon,N}_{1,2,3,4}(t)$: for all $\alpha\in(0,\beta)$, there exists $C_\alpha(T,\varphi)\in(0,\infty)$ such that for all $t\in[0,T]$, all $\epsilon\in(0,1)$ and all $N\in\N$ one has
\begin{equation}\label{eq:boundEepsN1234}
\big|E^{\epsilon,N}_{1,2,3,4}(t)\big|\le C_\alpha(T,\varphi)\epsilon.
\end{equation}

{\it Treatment of the error term $E^{\epsilon,N}_{1,2,3,5}(t)$}.
Let $\alpha\in[0,\beta)$ and set $\kappa=(\beta-\alpha)/2$. Applying the the inequality~\eqref{eq:propo-regularity-PhiN} from Proposition~\ref{propo:regularity-PhiN} with $k=3$, $\alpha_1=\alpha_2=1-\alpha-\kappa$ and $\alpha_3=2\alpha$, one has
\begin{align*}
\big|E^{\epsilon,N}_{1,2,3,5}(t)\big|
&\le C_\alpha(T)\epsilon\vvvert\varphi\vvvert_3\int_0^t(t-s)^{-1+\kappa} \sum_{n=1}^{N}q_n\|e_n\|_{H^{-1+\alpha+\kappa}}^2\E[\|v^{\epsilon,N}(s)\|_{H^{-2\alpha}}]\,ds\\
&\le C_\alpha(T,\varphi)\epsilon \sum_{n=1}^{N}q_n\lambda_n^{\frac{\alpha+\beta}{2}-1}\int_0^t(t-s)^{-1+\kappa}\E[\|X^{\epsilon,N}(s)\|_{\HH^{1-2\alpha}}]\,ds.
\end{align*}
Owing to the condition~\eqref{eq:condition_beta} from Assumption~\ref{ass:noise}, one has
\[
\underset{N\in\N}\sup~\sum_{n=1}^{N}q_n\lambda_n^{\frac{\alpha+\beta}{2}-1}<\infty.
\]
In addition, applying the inequalities~\eqref{eq:comparenorms} and~\eqref{eq:auxmomentbounds}, one has
\[
\underset{s\in[0,T]}\sup~\underset{\epsilon\in(0,1)}\sup~\E[\|X^{\epsilon,N}(s)\|_{\HH^{1-2\alpha}}]\le \underset{s\in[0,T]}\sup~\underset{\epsilon\in(0,1)}\sup~\E[\|X^{\epsilon,N}(s)\|_{\HH^{1-\alpha}}]\le  C_\alpha(T)\lambda_N^{\max(\frac12-\alpha,0)}.
\]
Combining the upper bounds, one thus obtains the following inequality for the error term $E^{\epsilon,N}_{1,2,3,5}(t)$: for all $\alpha\in(0,\beta)$, there exists $C_\alpha(T,\varphi)\in(0,\infty)$ such that for all $t\in[0,T]$, all $\epsilon\in(0,1)$ and all $N\in\N$ one has
\begin{equation}\label{eq:boundEepsN1235}
\big|E^{\epsilon,N}_{1,2,3,5}(t)\big|\le C_\alpha(T,\varphi)\epsilon\lambda_N^{\max(\frac12-\alpha,0)}.
\end{equation}

{\it Conclusion}.
Recalling the decomposition~\eqref{eq:decompose-E123} of the error term $E^{\epsilon,N}_{1,2,3}(t)$ and combining the upper bounds~\eqref{eq:boundEepsN1231},~\eqref{eq:boundEepsN1232},
~\eqref{eq:boundEepsN1233},~\eqref{eq:boundEepsN1234} and~\eqref{eq:boundEepsN1235}, one thus obtains the following inequality for the error term $E^{\epsilon,N}_{1,2,3}(t)$: for all $\alpha\in(0,\beta)$, there exists $C_\alpha(T,\varphi)\in(0,\infty)$ such that for all $t\in[0,T]$, all $\epsilon\in(0,1)$ and all $N\in\N$ one has
\begin{equation}\label{eq:boundEepsN123}
\big|E^{\epsilon,N}_{1,2,3}(t)\big|\le C_\alpha(T,\varphi)\epsilon\lambda_N^{\max(\frac12-\alpha,0)}.
\end{equation}

$\bullet$ \textbf{Conclusion}.
Recalling the decomposition~\eqref{eq:decompose-E12} of the error term $E^{\epsilon,N}_{1,2}(t)$ and combining the upper bounds~\eqref{eq:boundEepsN121},~\eqref{eq:boundEepsN122} and~\eqref{eq:boundEepsN123}, one obtains the inequality~\eqref{eq:error-T2a} from Lemma~\ref{lem:error-T2}. The proof is thus completed.
\end{proof}

\begin{proof}[Proof of the inequality~\eqref{eq:error-T2b} from Lemma~\ref{lem:error-T2}]
The error term $E^{\epsilon,N}_{1,1,3}(t)$ is defined by~\eqref{eq:defEepsN113}. Owing to the definition~\eqref{eq:defPsi_nm} of the auxiliary mappings $\Psi_{n,m}$, it is decomposed as
\begin{equation}\label{eq:decompose-E113}
E^{\epsilon,N}_{1,1,3}(t)=E^{\epsilon,N}_{1,1,3,1}(t)+E^{\epsilon,N}_{1,1,3,2}(t),
\end{equation}
where the error terms $E^{\epsilon,N}_{1,1,3,1}(t)$ and $E^{\epsilon,N}_{1,1,3,2}(t)$ are defined as
\begin{align}
E^{\epsilon,N}_{1,1,3,1}(t)
&=\epsilon\int_0^t\E\Big[\mathcal{A}^{(N)}\Big( D^2_{uu}\Phi^{(0,N)}(t-s,u^{\epsilon,N}(s)).(v^{\epsilon,N}(s),v^{\epsilon,N}(s))\Big)\Big]\,ds,\label{eq:defEepsN1131}\\
E^{\epsilon,N}_{1,1,3,2}(t)
&=\frac{\epsilon}{2}\sum_{n=1}^Nq_n\int_0^t\E\Big[\mathcal{A}^{(N)}\Big( D^2_{uu}\Phi^{(0,N)}(t-s,u^{\epsilon,N}(s)).(e_n,e_n)\Big)\Big]\,ds.\label{eq:defEepsN1132}
\end{align}

$\bullet$ {\bf Treatment of the error term $E^{\epsilon,N}_{1,1,3,1}(t)$}.
Recalling that the operator $\mathcal{A}^{(N)}$ is defined by~\eqref{eq:genAN}, the error term $E^{\epsilon,N}_{1,1,3,1}(t)$ is decomposed as
\begin{equation}\label{eq:decompose-E1131}
E^{\epsilon,N}_{1,1,3,1}(t)=E^{\epsilon,N}_{1,1,3,1,1}(t)+E^{\epsilon,N}_{1,1,3,1,2}(t)+E^{\epsilon,N}_{1,1,3,1,3}(t),
\end{equation}
where the error terms $E^{\epsilon,N}_{1,1,3,1,1}(t)$, $E^{\epsilon,N}_{1,1,3,1,2}(t)$ and $E^{\epsilon,N}_{1,1,3,1,3}(t)$ are defined as
\begin{align}
E^{\epsilon,N}_{1,1,3,1,1}(t)
&=\epsilon\int_0^t\E\big[ D^3_{u}\Phi^{(0,N)}(t-s,u^{\epsilon,N}(s)).(v^{\epsilon,N}(s),v^{\epsilon,N}(s)), v^{\epsilon,N}(s)\bigr)\big]\,ds,\label{eq:defEepsN11311}\\
E^{\epsilon,N}_{1,1,3,1,2}(t)
&=-2\epsilon\int_0^t\E\big[ D^2_{uu}\Phi^{(0,N)}(t-s,u^{\epsilon,N}(s)).\bigr(v^{\epsilon,N}(s), \Lambda u^{\epsilon,N}(s)\bigr)\big]\,ds,\label{eq:defEepsN11312}\\
E^{\epsilon,N}_{1,1,3,1,3}(t)
&=2\epsilon\int_0^t\E\big[ D^2_{uu}\Phi^{(0,N)}(t-s,u^{\epsilon,N}(s)).\bigr(v^{\epsilon,N}(s), f_N(u^{\epsilon,N}(s))\bigr)\big]\,ds.\label{eq:defEepsN11313}
\end{align}

\textit{Treatment of the error term $E^{\epsilon,N}_{1,1,3,1,1}(t)$}.
Let $\alpha\in(0,\beta)$ and set $\kappa=(\beta-\alpha)/2$. Applying the inequality~\eqref{eq:propo-regularity-PhiN} from Proposition~\ref{propo:regularity-PhiN} with $k=3$, $\alpha_1=\alpha_2=1-\alpha-\kappa$ and $\alpha_3=2\alpha$, one has
\begin{align*}
|E^{\epsilon,N}_{1,1,3,1,1}(t)|
&\le C_{\alpha}(T)\vvvert\varphi\vvvert_3\epsilon\int_0^t(t-s)^{-1+\kappa}\E\bigl[\|v^{\epsilon,N}(s)\|_{H^{-1+(\alpha+\kappa)}}^2\|v^{\epsilon,N}(s)\|_{H^{-2\alpha}}\bigr]\,ds\\
&\le C_{\alpha}(T,\varphi)\epsilon\int_0^t(t-s)^{-1+\kappa}\E\bigl[\|X^{\epsilon,N}(s)\|_{\HH^{\frac{\alpha+\beta}{2}}}^2\|X^{\epsilon,N}(s)\|_{\HH^{1-2\alpha}}\bigr]\,ds.
\end{align*}
Applying the inequality~\eqref{eq:comparenorms} if $\alpha\ge 1/2$ and the inverse inequality~\eqref{eq:inverseinequality} if $\alpha\le 1/2$, one has the upper bound 
\[
\|X^{\epsilon,N}(s)\|_{\HH^{1-2\alpha}}\le C_\alpha \lambda_N^{\max(\frac12-\alpha,0)}\|X^{\epsilon,N}(s)\|_{\HH}.
\]
Applying the Cauchy--Schwarz inequality, owing to the moment bounds~\eqref{eq:propo-sdwe-momentbounds-Galerkin} from Proposition~\ref{propo:sdwe-momentbounds} and to the condition~\eqref{eq:init-weak-bound} from Assumption~\ref{ass:init-weak} on the initial value $x_0^\epsilon$, one thus obtains the following inequality for the error term $E^{\epsilon,N}_{1,2,3,1,1}(t)$: for all $\alpha\in(0,\beta)$, there exists $C_\alpha(T,\varphi)\in(0,\infty)$ such that for all $t\in[0,T]$, all $\epsilon\in(0,1)$ and all $N\in\N$ one has
\begin{equation}\label{eq:boundEepsN11311}
\big|E^{\epsilon,N}_{1,1,3,1,1}(t)\big|\le C_\alpha(T,\varphi)\epsilon\lambda_N^{\max(\frac12-\alpha,0)}.
\end{equation}

\textit{Treatment of the error term $E^{\epsilon,N}_{1,1,3,1,2}(t)$}.
Let $\alpha\in(0,\beta)$ and set $\kappa=(\beta-\alpha)/2$. Applying the inequality~\eqref{eq:propo-regularity-PhiN} from Proposition~\ref{propo:regularity-PhiN} with $k=2$, $\alpha_1=\alpha$ and $\alpha_2=2-\alpha-\kappa$, one has
\begin{align*}
|E^{\epsilon,N}_{1,1,3,1,2}(t)|
&\le C_\alpha(T)\vvvert\varphi\vvvert_2\epsilon\int_0^t(t-s)^{-1+\frac{\kappa}{2}}\E[\|v^{\epsilon,N}(s)\|_{H^{-\alpha}}\|\IL u^{\epsilon,N}(s)\|_{H^{-2+\alpha+\kappa}}]\,ds\\
&\le C_\alpha(T,\varphi)\epsilon\int_0^t(t-s)^{-1+\frac{\kappa}{2}}\E[\|X^{\epsilon,N}(s)\|_{\HH^{1-\alpha}}\|X^{\epsilon,N}(s)\|_{\HH^{\frac{\alpha+\beta}{2}}}]\,ds.
\end{align*}
Applying the inequality~\eqref{eq:inverseinequality2} and the Cauchy--Schwarz inequality, one obtains
\[
|E^{\epsilon,N}_{1,1,3,1,2}(t)|\le C_\alpha(T,\varphi)\epsilon\lambda_N^{\max(\frac12-\alpha,0)}\underset{s\in[0,T]}\sup~\bigl(\E[\|X^{\epsilon,N}(s)\|_{\HH^{\min(\alpha,1-\alpha)}}]\bigr)^{\frac12}\underset{s\in[0,T]}\sup~\bigl(\E[\|X^{\epsilon,N}(s)\|_{\HH^{\frac{\alpha+\beta}{2}}}]\bigr)^{\frac12}.
\]
Owing to the moment bounds~\eqref{eq:propo-sdwe-momentbounds-Galerkin} from Proposition~\ref{propo:sdwe-momentbounds} and to the condition~\eqref{eq:init-weak-bound} from Assumption~\ref{ass:init-weak} on the initial value $x_0^\epsilon$, one thus obtains the following inequality for the error term $E^{\epsilon,N}_{1,2,3,1,2}(t)$: for all $\alpha\in(0,\beta)$, there exists $C_\alpha(T,\varphi)\in(0,\infty)$ such that for all $t\in[0,T]$, all $\epsilon\in(0,1)$ and all $N\in\N$ one has
\begin{equation}\label{eq:boundEepsN11312}
\big|E^{\epsilon,N}_{1,1,3,1,2}(t)\big|\le C_\alpha(T,\varphi)\epsilon\lambda_N^{\max(\frac12-\alpha,0)}.
\end{equation}

\textit{Treatment of the error term $E^{\epsilon,N}_{1,1,3,1,3}(t)$}.
Let $\alpha\in(0,\beta)$. Applying the inequality~\eqref{eq:propo-regularity-PhiN} from Proposition~\ref{propo:regularity-PhiN} with $k=2$, $\alpha_1=2(1-\alpha)$ and $\alpha_2=0$, and the linear growth property of the mapping $f$, one has
\begin{align*}
|E^{\epsilon,N}_{1,1,3,1,3}(t)|
&\le C_\alpha(T)\vvvert\varphi\vvvert_2\epsilon\int_0^t(t-s)^{-1+\alpha}\E[\|v^{\epsilon,N}(s)\|_{H^{-2+2\alpha}}\|f_N(u^{\epsilon,N}(s))\|_{H}]\,ds\\
&\le C_\alpha(T,\varphi)\epsilon\int_0^t(t-s)^{-1+\alpha}\E[\|X^{\epsilon,N}(s)\|_{\HH^{2\alpha-1}}\bigl(1+\|X^{\epsilon,N}(s)\|_{\HH}\bigr)]\,ds.
\end{align*}
Moreover, applying the inequality~\eqref{eq:comparenorms} if $\alpha\ge 1/2$ and inverse inequality~\eqref{eq:inverseinequality} if $\alpha\le 1/2$, one obtains the upper bound
\[
|E^{\epsilon,N}_{1,1,3,1,3}(t)|\le C_\alpha(T,\varphi)\epsilon\lambda_N^{\max(\frac12-\alpha,0)}\bigl(1+\underset{s\in[0,T]}\sup~\E[\|X^{\epsilon,N}(s)\|_{\HH}^2]\bigr).
\]
Owing to the moment bounds~\eqref{eq:propo-sdwe-momentbounds-Galerkin} from Proposition~\ref{propo:sdwe-momentbounds} and to the condition~\eqref{eq:init-weak-bound} from Assumption~\ref{ass:init-weak} on the initial value $x_0^\epsilon$, one thus obtains the following inequality for the error term $E^{\epsilon,N}_{1,1,3,1,3}(t)$: for all $\alpha\in(0,\beta)$, there exists $C_\alpha(T,\varphi)\in(0,\infty)$ such that for all $t\in[0,T]$, all $\epsilon\in(0,1)$ and all $N\in\N$ one has
\begin{equation}\label{eq:boundEepsN11313}
\big|E^{\epsilon,N}_{1,1,3,1,3}(t)\big|\le C_\alpha(T,\varphi)\epsilon\lambda_N^{\max(\frac12-\alpha,0)}.
\end{equation}

{\it Conclusion}.
Recalling the decomposition~\eqref{eq:decompose-E1131} of the error term $E^{\epsilon,N}_{1,1,3,1}(t)$ and combining the upper bounds~\eqref{eq:boundEepsN11311},~\eqref{eq:boundEepsN11312} and~\eqref{eq:boundEepsN11313}, one thus obtains the following inequality for the error term $E^{\epsilon,N}_{1,1,3,1}(t)$: for all $\alpha\in(0,\beta)$, there exists $C_\alpha(T,\varphi)\in(0,\infty)$ such that for all $t\in[0,T]$, all $\epsilon\in(0,1)$ and all $N\in\N$ one has
\begin{equation}\label{eq:boundEepsN1131}
\big|E^{\epsilon,N}_{1,1,3,1}(t)\big|\le C_\alpha(T,\varphi)\epsilon\lambda_N^{\max(\frac12-\alpha,0)}.
\end{equation}

$\bullet$ {\bf Treatment of the error term $E^{\epsilon,N}_{1,1,3,2}(t)$}.
Recalling that the operator $\mathcal{A}^{(N)}$ is defined by~\eqref{eq:genAN}, the error term $E^{\epsilon,N}_{1,1,3,2}(t)$ is written as
\[
E^{\epsilon,N}_{1,1,3,2}(t)
=\frac{\epsilon}{2}\sum_{n=1}^Nq_n\int_0^t\E\Big[\Big( D^3_{u}\Phi^{(0,N)}(t-s,u^{\epsilon,N}(s)).\bigl(e_n,e_n,v^{\epsilon,N}(s)\bigr)\Big)\Big]\,ds.
\]
Let $\alpha\in(0,\beta)$ and set $\kappa=(\beta-\alpha)/2$. Applying the inequality~\eqref{eq:propo-regularity-PhiN} from Proposition~\ref{propo:regularity-PhiN} with $k=3$, $\alpha_1=\alpha_2=1/2$ and $\alpha_3=1-\alpha-\kappa$, one has
\begin{align*}
|E^{\epsilon,N}_{1,1,3,2}(t)|
&\le C_\alpha(T)\vvvert\varphi\vvvert_3\epsilon\sum_{n=1}^{N}q_n\|e_n\|_{H^{-\frac12}}^2\int_{0}^{t}(t-s)^{-1+\frac{\alpha+\kappa}{2}}\E[\|v^{\epsilon,N}(s)\|_{H^{-1+\alpha+\kappa}}]\,ds\\
&\le C_\alpha(T,\varphi)\epsilon\sum_{n=1}^{N}q_n\lambda_n^{-\frac12}\underset{s\in[0,T]}\sup~\E[\|X^{\epsilon,N}(s)\|_{\HH^{\frac{\alpha+\beta}{2}}}].
\end{align*}
Moreover, note that one has
\[
\sum_{n=1}^{N}q_n\lambda_n^{-\frac12}=\sum_{n=1}^{N}q_n\lambda_n^{\alpha-1}\lambda_n^{\frac12-\alpha}\le C_\alpha\lambda_N^{\max(\frac12-\alpha,0)}\sum_{n=1}^{\infty}q_n\lambda_n^{\alpha-1}
\]
with $\sum_{n=1}^{\infty}q_n\lambda_n^{\alpha-1}<\infty$ owing to the condition~\eqref{eq:condition_beta} from Assumption~\ref{ass:noise}.

Owing to the moment bounds~\eqref{eq:propo-sdwe-momentbounds-Galerkin} from Proposition~\ref{propo:sdwe-momentbounds} and to the condition~\eqref{eq:init-weak-bound} from Assumption~\ref{ass:init-weak} on the initial value $x_0^\epsilon$, one thus obtains the following inequality for the error term $E^{\epsilon,N}_{1,1,3,2}(t)$: for all $\alpha\in(0,\beta)$, there exists $C_\alpha(T,\varphi)\in(0,\infty)$ such that for all $t\in[0,T]$, all $\epsilon\in(0,1)$ and all $N\in\N$ one has
\begin{equation}\label{eq:boundEepsN1132}
\big|E^{\epsilon,N}_{1,1,3,2}(t)\big|\le C_\alpha(T,\varphi)\epsilon\lambda_N^{\max(\frac12-\alpha,0)}.
\end{equation}

$\bullet$ {\bf Conclusion}.
Recalling the decomposition~\eqref{eq:decompose-E113} of the error term $E^{\epsilon,N}_{1,1,3}(t)$ and combining the upper bounds~\eqref{eq:boundEepsN1131} and~\eqref{eq:boundEepsN1132}, one obtains the inequality~\eqref{eq:error-T2b} from Lemma~\ref{lem:error-T2}. The proof is thus completed.

\end{proof}

\begin{proof}[Proof of the inequality~\eqref{eq:error-T3a} from Lemma~\ref{lem:error-T3}]
Owing to the definition~\eqref{eq:defPsi_nm} of the mappings $\Psi_{n,m}$, the error term $E^{\epsilon,N}_{1,1,1}(t)$ defined by~\eqref{eq:defEepsN111} is decomposed as
\begin{equation}\label{eq:decompose-E111}
E^{\epsilon,N}_{1,1,1}(t)=E^{\epsilon,N}_{1,1,1,1}(t)+E^{\epsilon,N}_{1,1,1,2}(t),
\end{equation}
where the error terms $E^{\epsilon,N}_{1,1,1,1}(t)$ and $E^{\epsilon,N}_{1,1,1,2}(t)$ are defined as
\begin{align}
E^{\epsilon,N}_{1,1,1,1}(t)&=\frac{\epsilon^2}{2}\E\bigl[D^2_{uu}\Phi^{(0,N)}(t,u^{\epsilon,N}(0)).\bigl(v^{\epsilon,N}(0),v^{\epsilon,N}(0)\bigr)-D^2_{uu}\Phi^{(0,N)}(0,u^{\epsilon,N}(t)).\bigl(v^{\epsilon,N}(t),v^{\epsilon,N}(t)\bigr)\bigr],\label{eq:defEepsN1111}\\
E^{\epsilon,N}_{1,1,1,2}(t)&=\frac{\epsilon^2}{4}\sum_{n=1}^Nq_n\E\bigl[D^2_{uu}\Phi^{(0,N)}(0,u^{\epsilon,N}(t)).(e_n,e_n)-D^2_{uu}\Phi^{(0,N)}(t,u^{\epsilon,N}(0)).(e_n,e_n)\bigr].\label{eq:defEepsN1112}
\end{align}

$\bullet$ {\bf Treatment of the error term $E^{\epsilon,N}_{1,1,1,1}(t)$}.
Applying the the inequality~\eqref{eq:propo-regularity-PhiN} from Proposition~\ref{propo:regularity-PhiN} with $k=2$ and $\alpha_1=\alpha_2=0$, one has
\begin{align*}
|E^{\epsilon,N}_{1,1,1,1}(t)|&\le C(T)\vvvert\varphi\vvvert_2\epsilon^2\bigl(\E[\|v^{\epsilon,N}(0)\|_{H}^{2}]+\E[\|v^{\epsilon,N}(t)\|_{H}^{2}]\bigr)\\
&\le C(T,\varphi)\epsilon^2\underset{s\in[0,T]}\sup~\E[\|X^{\epsilon,N}(t)\|_{\HH^1}^{2}].
\end{align*}
Let $\alpha\in(0,\beta)$. Applying the inequality~\eqref{eq:comparenorms} if $\alpha\ge 1/2$ or the inverse inequality~\eqref{eq:inverseinequality} if $\alpha\le 1/2$, one obtains
\[
\underset{s\in[0,T]}\sup~\E[\|X^{\epsilon,N}(t)\|_{\HH^1}^{2}]\le \lambda_n^{1-\alpha}\underset{s\in[0,T]}\sup~\E[\|X^{\epsilon,N}(t)\|_{\HH^\alpha}^{2}].
\]
Owing to the moment bounds~\eqref{eq:propo-sdwe-momentbounds-Galerkin} from Proposition~\ref{propo:sdwe-momentbounds} and to the condition~\eqref{eq:init-weak-bound} from Assumption~\ref{ass:init-weak} on the initial value $x_0^\epsilon$, one thus obtains the following inequality for the error term $E^{\epsilon,N}_{1,1,1,1}(t)$: for all $\alpha\in(0,\beta)$, there exists $C_\alpha(T,\varphi)\in(0,\infty)$ such that for all $t\in[0,T]$, all $\epsilon\in(0,1)$ and all $N\in\N$ one has
\begin{equation}\label{eq:boundEepsN1111}
\big|E^{\epsilon,N}_{1,1,1,1}(t)\big|\le C_\alpha(T,\varphi)\epsilon^2\lambda_N^{1-\alpha}.
\end{equation}

$\bullet$ {\bf Treatment of the error term $E^{\epsilon,N}_{1,1,1,2}(t)$}.
Applying the the inequality~\eqref{eq:propo-regularity-PhiN} from Proposition~\ref{propo:regularity-PhiN} with $k=2$ and $\alpha_1=\alpha_2=0$, one has
\begin{align*}
|E^{\epsilon,N}_{1,1,1,2}(t)|&\le C(T)\vvvert\varphi\vvvert_2\epsilon^2\sum_{n=1}^{N}q_n.
\end{align*}
Let $\alpha\in(0,\beta)$, and note that
\[
\sum_{n=1}^{N}q_n=\sum_{n=1}^{N}q_n\lambda_n^{\alpha-1}\lambda_n^{1-\alpha}\le \lambda_{N}^{1-\alpha}\sum_{n=1}^{\infty}q_n\lambda_n^{\alpha-1},
\]
with $\sum_{n=1}^{\infty}q_n\lambda_n^{\alpha-1}<\infty$ owing to the condition~\eqref{eq:condition_beta} from Assumption~\ref{ass:noise}. One thus obtains the following inequality for the error term $E^{\epsilon,N}_{1,1,1,2}(t)$: for all $\alpha\in(0,\beta)$, there exists $C_\alpha(T,\varphi)\in(0,\infty)$ such that for all $t\in[0,T]$, all $\epsilon\in(0,1)$ and all $N\in\N$ one has
\begin{equation}\label{eq:boundEepsN1112}
\big|E^{\epsilon,N}_{1,1,1,2}(t)\big|\le C_\alpha(T,\varphi)\epsilon^2\lambda_N^{1-\alpha}.
\end{equation}

$\bullet$ {\bf Conclusion}.
Recalling the decomposition~\eqref{eq:decompose-E111} of the error term $E^{\epsilon,N}_{1,1,1}(t)$ and combining the upper bounds~\eqref{eq:boundEepsN1111} and~\eqref{eq:boundEepsN1112}, one obtains the inequality~\eqref{eq:error-T3a} from Lemma~\ref{lem:error-T3}. The proof is thus completed.
\end{proof}

\begin{proof}[Proof of the inequality~\eqref{eq:error-T3b} from Lemma~\ref{lem:error-T3}]
Owing to the definition~\eqref{eq:defPsi_nm} of the mappings $\Psi_{n,m}$, the error term $E^{\epsilon,N}_{1,1,2}(t)$ defined by~\eqref{eq:defEepsN112} is decomposed as
\begin{equation}\label{eq:decompose-E112}
E^{\epsilon,N}_{1,1,2}(t)=E^{\epsilon,N}_{1,1,2,1}(t)+E^{\epsilon,N}_{1,1,2,2}(t),
\end{equation}
where the error terms $E^{\epsilon,N}_{1,1,2,1}(t)$ and $E^{\epsilon,N}_{1,1,2,2}(t)$ are defined as
\begin{align}
E^{\epsilon,N}_{1,1,2,1}(t)
&=\frac{\epsilon^2}{2}\int_{0}^{t}\E\bigl[\partial_sD^2_{uu}\Phi^{(0,N)}(t-s,u^{\epsilon,N}(s)).\bigl(v^{\epsilon,N}(s),v^{\epsilon,N}(s)\bigr)\bigr]\,ds,\label{eq:defEepsN1121}\\
E^{\epsilon,N}_{1,1,2,2}(t)
&=-\frac{\epsilon^2}{4}\sum_{n=1}^Nq_n\int_{0}^{t}\E\bigl[\partial_sD^2_{uu}\Phi^{(0,N)}(t-s,u^{\epsilon,N}(s)).(e_n,e_n)\bigr]\,ds.\label{eq:defEepsN1122}
\end{align}

Note that owing to the Kolmogorov equation \eqref{eq:kolmogorovPhiN}, the definition \eqref{eq:LN} of the infinitesimal generator $\mathbb{L}^{(N)}$, and the Leibniz formula, for all $s\ge 0$ and all $u,v\in H_N$, one has
\begin{align}
\partial_sD_{uu}^2\Phi^{(0,N)}(s,u).(v,v)&=D_u\Phi^{(0,N)}(s,u).\bigl(D^2f_N(u).(v,v)\bigr)+2D_{uu}^2\Phi^{(0,N)}(s,u).\bigl(-\IL v+Df_N(u).v,v\bigr)\label{eq:partial_sD2}\\
&+D_u^3\Phi^{(0,N)}(u,s).\bigl(-\IL u+f_N(u),v,v\bigr)+\sum_{n=1}^{N}\frac{q_n}{2}D_u^4\Phi^{(0,N)}(s,u).\bigl(e_n,e_n,v,v\bigr).\nonumber
\end{align}

$\bullet$ {\bf Treatment of the error term $E^{\epsilon,N}_{1,1,2,1}(t)$}. Owing to the expression~\eqref{eq:partial_sD2} for $\partial_sD_{uu}^2\Phi^{(0,N)}$, the error term $E^{\epsilon,N}_{1,1,2,1}(t)$ defined by~\eqref{eq:defEepsN1121} is decomposed as
\begin{equation}\label{eq:decompose-E1121}
E^{\epsilon,N}_{1,1,2,1}(t)=E^{\epsilon,N}_{1,1,2,1,1}(t)+E^{\epsilon,N}_{1,1,2,1,2}(t)+E^{\epsilon,N}_{1,1,2,1,3}(t)+E^{\epsilon,N}_{1,1,2,1,4}(t),
\end{equation}
where the error terms $E^{\epsilon,N}_{1,1,2,1,1}(t)$, $E^{\epsilon,N}_{1,1,2,1,2}(t)$, $E^{\epsilon,N}_{1,1,2,1,3}(t)$ and $E^{\epsilon,N}_{1,1,2,1,4}(t)$ are defined as
\begin{align}
E^{\epsilon,N}_{1,1,2,1,1}(t)
&=\frac{\epsilon^2}{2}\int_{0}^{t}\E\bigl[D_u\Phi^{(0,N)}(t-s,u^{\epsilon,N}(s)).\bigl(D^2f_N(u^{\epsilon,N}(s)).(v^{\epsilon,N}(s),v^{\epsilon,N}(s))\bigr)\bigr]\,ds,\label{eq:defEepsN11211}
\\
E^{\epsilon,N}_{1,1,2,1,2}(t)
&=\epsilon^2\int_{0}^{t}\E\bigl[D_{uu}^2\Phi^{(0,N)}(t-s,u^{\epsilon,N}(s)).\bigl(-\IL v^{\epsilon,N}(s)+Df_N(u^{\epsilon,N}(s)).v^{\epsilon,N}(s),v^{\epsilon,N}(s)\bigr)\bigr]\,ds,\label{eq:defEepsN11212}
\\
E^{\epsilon,N}_{1,1,2,1,3}(t)
&=\frac{\epsilon^2}{2}\int_{0}^{t}\E\bigl[D_u^3\Phi^{(0,N)}(t-s,u^{\epsilon,N}(s)).\bigl(-\IL u^{\epsilon,N}(s)+f_N(u^{\epsilon,N}(s)),v^{\epsilon,N}(s),v^{\epsilon,N}(s)\bigr)\bigr]\,ds,\label{eq:defEepsN11213}
\\
E^{\epsilon,N}_{1,1,2,1,4}(t)
&=\frac{\epsilon^2}{4}\int_{0}^{t}\sum_{n=1}^{N}q_n\E\bigl[D_u^4\Phi^{(0,N)}(t-s,u^{\epsilon,N}(s)).\bigl(e_n,e_n,v^{\epsilon,N}(s),v^{\epsilon,N}(s)\bigr)\bigr]\,ds.\label{eq:defEepsN11214}
\end{align}

\textit{Treatment of the error term $E^{\epsilon,N}_{1,1,2,1,1}(t)$}.
Let $\alpha\in(0,\beta)$. Applying the inequality \eqref{eq:propo-regularity-PhiN} from Proposition~\ref{propo:regularity-PhiN} with $k=1$ and $\alpha_1=0$, and Assumption~\ref{ass:mapping-f}, one has
\begin{align*}
|E^{\epsilon,N}_{1,1,2,1,1}(t)|&\le C(T)\vvvert\varphi\vvvert_1\epsilon^2 \int_0^t \E\bigl[ \bigl\|D^2f_N(u^{\epsilon,N}(s)).(v^{\epsilon,N}(s),v^{\epsilon,N}(s))\bigr\|_H \bigr] \,ds\\
&\le C(T,\varphi)\epsilon^2\int_0^t \E\bigl[\|v^{\epsilon,N}(s)\|^2_H\bigr]\,ds\\
&\le C(T,\varphi)\epsilon^2\underset{s\in[0,T]}\sup~\E\bigl[\|X^{\epsilon,N}(s)\|^2_{\HH^1}\bigr].
\end{align*}
Applying the inverse inequality~\eqref{eq:inverseinequality} and the moment bounds~\eqref{eq:propo-sdwe-momentbounds-Galerkin} from Proposition~\ref{propo:Galerkin}, one has
\[
\underset{s\in[0,T]}\sup~\E\bigl[\|X^{\epsilon,N}(s)\|^2_{\HH^1}\bigr]\le  \lambda_N^{1-\alpha}\underset{s\in[0,T]}\sup~\E\bigl[\|X^{\epsilon,N}(s)\|^2_{\HH^{\alpha}}\bigr]\le C_\alpha(T)\lambda_N^{1-\alpha}.
\]
One thus obtains the following upper bound for the error term $E^{\epsilon,N}_{1,1,2,1,1}(t)$: there exists $C_\alpha(T,\varphi)\in(0,\infty)$ such that for all $t\in[0,T]$, all $\epsilon\in(0,1)$ and all $N\in\N$ one has
\begin{equation}\label{eq:boundEepsN11211}
\big|E^{\epsilon,N}_{1,1,2,1,1}(t)\big|\le C_\alpha(T,\varphi)\epsilon^2\lambda_N^{1-\alpha}.
\end{equation}

\textit{Treatment of the error term $E^{\epsilon,N}_{1,1,2,1,2}(t)$}.
The error term $E^{\epsilon,N}_{1,1,2,1,2}(t)$ defined by~\eqref{eq:defEepsN11212} is decomposed as
\[
E^{\epsilon,N}_{1,1,2,1,2}(t)=E^{\epsilon,N}_{1,1,2,1,2,1}(t)+E^{\epsilon,N}_{1,1,2,1,2,2}(t),
\]
where the error terms $E^{\epsilon,N}_{1,1,2,1,2,1}(t)$ and $E^{\epsilon,N}_{1,1,2,1,2,2}(t)$ are defined as
\begin{align}
E^{\epsilon,N}_{1,1,2,1,2,1}(t)
&=-\epsilon^2\int_{0}^{t}\E\bigl[D_{uu}^2\Phi^{(0,N)}(t-s,u^{\epsilon,N}(s)).\bigl(\IL v^{\epsilon,N}(s),v^{\epsilon,N}(s)\bigr)\bigr]\,ds,\label{eq:defEepsN112121}\\
E^{\epsilon,N}_{1,1,2,1,2,2}(t)
&=\epsilon^2\int_{0}^{t}\E\bigl[D_{uu}^2\Phi^{(0,N)}(t-s,u^{\epsilon,N}(s)).\bigl(Df_N(u^{\epsilon,N}(s)).v^{\epsilon,N}(s),v^{\epsilon,N}(s)\bigr)\bigr]\,ds.\label{eq:defEepsN112122}
\end{align}
Let $\alpha\in(0,\beta)$ and $\kappa=(\beta-\alpha)/2\in(0,1)$.

On the one hand,  applying the inequality \eqref{eq:propo-regularity-PhiN} from Proposition~\ref{propo:regularity-PhiN} with $k=2$, $\alpha_1=-2+\kappa$ and $\alpha_2=0$, one has
\begin{align*}
|E^{\epsilon,N}_{1,1,2,1,2,1}(t)|
&\le C_\kappa(T)\vvvert\varphi\vvvert_2\epsilon^2\int_{0}^{t}(t-s)^{-1+\frac{\kappa}{2}}\E[\|\IL v^{\epsilon,N}(s)\|_{H^{-2+\kappa}}\|v^{\epsilon,N}(s)\|_{H}]\,ds\\
&\le C_{\kappa}(T,\varphi)\epsilon^2\int_{0}^{t}(t-s)^{-1+\frac{\kappa}{2}}\E[\|v^{\epsilon,N}(s)\|_{H^\kappa}\|v^{\epsilon,N}(s)\|_{H}]\,ds\\
&\le C_{\kappa}(T,\varphi)\epsilon^2\underset{s\in[0,T]}\sup~\E[\|X^{\epsilon,N}(s)\|_{\HH^{1+\kappa}}^2].
\end{align*}

On the other hand, applying the inequality \eqref{eq:propo-regularity-PhiN} from Proposition~\ref{propo:regularity-PhiN} with $k=2$, $\alpha_1=\alpha_2=0$, and the Lipschitz continuity condition on the mapping $f$ from Assumption~\ref{ass:fLip}, one has
\begin{align*}
|E^{\epsilon,N}_{1,1,2,1,2,2}(t)|
&\le C(T)\vvvert\varphi\vvvert_2\epsilon^2\int_{0}^{t}\E[\|Df_N(u^{\epsilon,N}(s)).v^{\epsilon,N}(s)\|_{H}\|v^{\epsilon,N}(s)\|_{H}]\,ds\\
&\le C(T,\varphi)\epsilon^2\int_{0}^{t}\E[\|v^{\epsilon,N}(s)\|_{H}^2]\,ds\\
&\le C(T,\varphi)\epsilon^2\underset{s\in[0,T]}\sup~\E[\|X^{\epsilon,N}(s)\|_{\HH^{1}}^2]\\
&\le C(T,\varphi)\epsilon^2\underset{s\in[0,T]}\sup~\E[\|X^{\epsilon,N}(s)\|_{\HH^{1+\kappa}}^2].
\end{align*}
Applying the inverse inequality~\eqref{eq:inverseinequality}, one has
\[
\underset{s\in[0,T]}\sup~\E[\|X^{\epsilon,N}(s)\|_{\HH^{1+\kappa}}^2]\le \lambda_N^{1-\alpha}\underset{s\in[0,T]}\sup~\E[\|X^{\epsilon,N}(s)\|_{\HH^{\alpha+\kappa}}^2]=\lambda_N^{1-\alpha}\underset{s\in[0,T]}\sup~\E[\|X^{\epsilon,N}(s)\|_{\HH^{\frac{\alpha+\beta}{2}}}^2].
\]
Owing to the moment bounds~\eqref{eq:propo-sdwe-momentbounds-Galerkin} from Proposition~\ref{propo:sdwe-momentbounds} and to the condition~\eqref{eq:init-weak-bound} from Assumption~\ref{ass:init-weak} on the initial value $x_0^\epsilon$, one thus obtains the following inequality for the error term $E^{\epsilon,N}_{1,1,2,1,2}(t)$: for all $\alpha\in(0,\beta)$, there exists $C_\alpha(T,\varphi)\in(0,\infty)$ such that for all $t\in[0,T]$, all $\epsilon\in(0,1)$ and all $N\in\N$ one has
\begin{equation}\label{eq:boundEepsN11212}
\big|E^{\epsilon,N}_{1,1,2,1,2}(t)\big|\le C_\alpha(T,\varphi)\epsilon^2\lambda_N^{1-\alpha}.
\end{equation}

\textit{Treatment of the error term $E^{\epsilon,N}_{1,1,2,1,3}(t)$}.
The error term $E^{\epsilon,N}_{1,1,2,1,3}(t)$ defined by~\eqref{eq:defEepsN11212} is decomposed as
\[
E^{\epsilon,N}_{1,1,2,1,3}(t)=E^{\epsilon,N}_{1,1,2,1,3,1}(t)+E^{\epsilon,N}_{1,1,2,1,3,2}(t),
\]
where the error terms $E^{\epsilon,N}_{1,1,2,1,3,1}(t)$ and $E^{\epsilon,N}_{1,1,2,1,3,2}(t)$ are defined as
\begin{align}
E^{\epsilon,N}_{1,1,2,1,3,1}(t)
&=-\frac{\epsilon^2}{2}\int_{0}^{t}\E\bigl[D_u^3\Phi^{(0,N)}(t-s,u^{\epsilon,N}(s)).\bigl(\IL u^{\epsilon,N}(s),v^{\epsilon,N}(s),v^{\epsilon,N}(s)\bigr)\bigr]\,ds,\label{eq:defEepsN112131}\\
E^{\epsilon,N}_{1,1,2,1,3,2}(t)
&=\frac{\epsilon^2}{2}\int_{0}^{t}\E\bigl[D_u^3\Phi^{(0,N)}(t-s,u^{\epsilon,N}(s)).\bigl(f_N(u^{\epsilon,N}(s)),v^{\epsilon,N}(s),v^{\epsilon,N}(s)\bigr)\bigr]\,ds.\label{eq:defEepsN112132}
\end{align}
Let $\alpha\in(0,\beta)$.

On the one hand, applying the inequality \eqref{eq:propo-regularity-PhiN} from Proposition~\ref{propo:regularity-PhiN} with $k=3$, $\alpha_1=2-\alpha$, $\alpha_2=\alpha_3=0$, one has
\begin{align*}
|E^{\epsilon,N}_{1,1,2,1,3,1}(t)|
&\le C_\alpha(T)\vvvert\varphi\vvvert_3\epsilon^2\int_{0}^{t}(t-s)^{-1+\frac{\alpha}{2}}\E[\|\IL u^{\epsilon,N}(s)\|_{H^{-2+\alpha}}\|v^{\epsilon,N}(s)\|_{H}^2]\,ds\\
&\le C_\alpha(T,\varphi)\epsilon^2\int_{0}^{t}(t-s)^{-1+\frac{\alpha}{2}}\E[\|u^{\epsilon,N}(s)\|_{H^{\alpha}}\|v^{\epsilon,N}(s)\|_{H}^2]\,ds\\
&\le C_\alpha(T,\varphi)\epsilon^2\underset{s\in[0,T]}\sup~\E[\|X^{\epsilon,N}(s)\|_{\HH^{\alpha}}\|X^{\epsilon,N}(s)\|_{\HH^1}^2].
\end{align*}

On the other hand, applying the inequality \eqref{eq:propo-regularity-PhiN} from Proposition~\ref{propo:regularity-PhiN} with $k=3$, $\alpha_1=\alpha_2=\alpha_3=0$, and recalling that the mapping $f$ grows at most linearly owing to Assumption~\ref{ass:fLip}, one has
\begin{align*}
|E^{\epsilon,N}_{1,1,2,1,3,2}(t)|
&\le C(T)\vvvert\varphi\vvvert_3\epsilon^2\int_{0}^{t}\E[\|f_N(u^{\epsilon,N}(s))\|_{H}\|v^{\epsilon,N}(s)\|_{H}^2]\,ds\\
&\le C(T,\varphi)\epsilon^2\int_{0}^{t}\E[\bigl(1+\|u^{\epsilon,N}(s)\|_{H}\bigr)\|v^{\epsilon,N}(s)\|_{H}^2]\,ds\\
&\le C(T,\varphi)\epsilon^2\underset{s\in[0,T]}\sup~\E\bigl[\bigl(1+\|X^{\epsilon,N}(s)\|_{\HH}\bigr)\|X^{\epsilon,N}(s)\|_{\HH^1}^2\bigr].
\end{align*}
Applying the inverse inequality~\eqref{eq:inverseinequality}, one obtains the upper bound
\[
|E^{\epsilon,N}_{1,1,2,1,3,1}(t)|+|E^{\epsilon,N}_{1,1,2,1,3,2}(t)|\le C_\alpha(T,\varphi)\epsilon^2\lambda_N^{1-\alpha}\underset{s\in[0,T]}\sup~\E\bigl[\bigl(1+\|X^{\epsilon,N}(s)\|_{\HH^{\alpha}}^3\bigr)\bigr].
\]
Owing to the moment bounds~\eqref{eq:propo-sdwe-momentbounds-Galerkin} from Proposition~\ref{propo:sdwe-momentbounds} and to the condition~\eqref{eq:init-weak-bound} from Assumption~\ref{ass:init-weak} on the initial value $x_0^\epsilon$, one thus obtains the following inequality for the error term $E^{\epsilon,N}_{1,1,2,1,3}(t)$: for all $\alpha\in(0,\beta)$, there exists $C_\alpha(T,\varphi)\in(0,\infty)$ such that for all $t\in[0,T]$, all $\epsilon\in(0,1)$ and all $N\in\N$ one has
\begin{equation}\label{eq:boundEepsN11213}
\big|E^{\epsilon,N}_{1,1,2,1,3}(t)\big|\le C_\alpha(T,\varphi)\epsilon^2\lambda_N^{1-\alpha}.
\end{equation}

\textit{Treatment of the error term $E^{\epsilon,N}_{1,1,2,1,4}(t)$}.
Let $\alpha\in(0,\beta)$. Applying the inequality \eqref{eq:propo-regularity-PhiN} from Proposition~\ref{propo:regularity-PhiN} with $k=4$, $\alpha_1=\alpha_2=1-\alpha$, $\alpha_3=\alpha_4=0$, one has
\begin{align*}
|E^{\epsilon,N}_{1,1,2,1,2,4}(t)|
&\le C_\alpha(T)\vvvert\varphi\vvvert_4\epsilon^2\int_{0}^{t}(t-s)^{-1+\alpha}\sum_{n=1}^{N}q_n\|e_n\|_{H^{\alpha-1}}^2\E[\|v^{\epsilon,N}(s)\|_H^2]\,ds\\
&\le C_\alpha(T,\varphi)\epsilon^2\sum_{n=1}^{N}q_n\lambda_n^{\alpha-1} \underset{s\in[0,T]}\sup~\E[\|X^{\epsilon,N}(s)\|_{\HH^1}^2].
\end{align*}
Applying the condition~\eqref{eq:condition_beta} from Assumption~\ref{ass:noise} and applying the inverse inequality~\eqref{eq:inverseinequality}, one obtains the upper bound
\[
|E^{\epsilon,N}_{1,1,2,1,4}(t)|\le C_\alpha(T,\varphi)\epsilon^2\lambda_N^{1-\alpha}\underset{s\in[0,T]}\sup~\E[\|X^{\epsilon,N}(s)\|_{\HH^\alpha}^2].
\]
Owing to the moment bounds~\eqref{eq:propo-sdwe-momentbounds-Galerkin} from Proposition~\ref{propo:sdwe-momentbounds} and to the condition~\eqref{eq:init-weak-bound} from Assumption~\ref{ass:init-weak} on the initial value $x_0^\epsilon$, one thus obtains the following inequality for the error term $E^{\epsilon,N}_{1,1,2,1,4}(t)$: for all $\alpha\in(0,\beta)$, there exists $C_\alpha(T,\varphi)\in(0,\infty)$ such that for all $t\in[0,T]$, all $\epsilon\in(0,1)$ and all $N\in\N$ one has
\begin{equation}\label{eq:boundEepsN11214}
\big|E^{\epsilon,N}_{1,1,2,1,4}(t)\big|\le C_\alpha(T,\varphi)\epsilon^2\lambda_N^{1-\alpha}.
\end{equation}

{\it Conclusion}.
Recalling the decomposition~\eqref{eq:decompose-E1121} of the error term $E^{\epsilon,N}_{1,1,2,1}(t)$ and combining the upper bounds~\eqref{eq:boundEepsN11211},~\eqref{eq:boundEepsN11212}
,~\eqref{eq:boundEepsN11213} and~\eqref{eq:boundEepsN11214}, one thus obtains the following inequality for the error term $E^{\epsilon,N}_{1,1,2,1}(t)$: for all $\alpha\in(0,\beta)$, there exists $C_\alpha(T,\varphi)\in(0,\infty)$ such that for all $t\in[0,T]$, all $\epsilon\in(0,1)$ and all $N\in\N$ one has
\begin{equation}\label{eq:boundEepsN1121}
\big|E^{\epsilon,N}_{1,1,2,1}(t)\big|\le C_\alpha(T,\varphi)\epsilon^2\lambda_N^{1-\alpha}.
\end{equation}

$\bullet$ {\bf Treatment of the error term $E^{\epsilon,N}_{1,1,2,2}(t)$}.
Owing to the expression~\eqref{eq:partial_sD2} for $\partial_sD_{uu}^2\Phi^{(0,N)}$, the error term $E^{\epsilon,N}_{1,1,2,2}(t)$ defined by~\eqref{eq:defEepsN1122} is decomposed as
\begin{equation}\label{eq:decompose-E1122}
E^{\epsilon,N}_{1,1,2,2}(t)=E^{\epsilon,N}_{1,1,2,2,1}(t)+E^{\epsilon,N}_{1,1,2,2,2}(t)+E^{\epsilon,N}_{1,1,2,2,3}(t)+E^{\epsilon,N}_{1,1,2,2,4}(t),
\end{equation}
where the error terms $E^{\epsilon,N}_{1,1,2,2,1}(t)$, $E^{\epsilon,N}_{1,1,2,2,2}(t)$, $E^{\epsilon,N}_{1,1,2,2,3}(t)$ and $E^{\epsilon,N}_{1,1,2,2,4}(t)$ are defined as
\begin{align}
E^{\epsilon,N}_{1,1,2,2,1}(t)
&=-\frac{\epsilon^2}{4}\sum_{n=1}^{N}q_n\int_{0}^{t}\E\bigl[D_u\Phi^{(0,N)}(t-s,u^{\epsilon,N}(s)).\bigl(D^2f_N(u^{\epsilon,N}(s)).(e_n,e_n)\bigr)\bigr]\,ds,\label{eq:defEepsN11221}
\\
E^{\epsilon,N}_{1,1,2,2,2}(t)
&=-\frac{\epsilon^2}{2}\sum_{n=1}^{N}q_n\int_{0}^{t}\E\bigl[D_{uu}^2\Phi^{(0,N)}(t-s,u^{\epsilon,N}(s)).\bigl(-\IL e_n+Df_N(u^{\epsilon,N}(s)).e_n,e_n\bigr)\bigr]\,ds,\label{eq:defEepsN11222}
\\
E^{\epsilon,N}_{1,1,2,2,3}(t)
&=-\frac{\epsilon^2}{4}\int_{0}^{t}\E\bigl[D_u^3\Phi^{(0,N)}(t-s,u^{\epsilon,N}(s)).\bigl(-\IL u^{\epsilon,N}(s)+f_N(u^{\epsilon,N}(s)),e_n,e_n\bigr)\bigr]\,ds,\label{eq:defEepsN11223}
\\
E^{\epsilon,N}_{1,1,2,2,4}(t)
&=-\frac{\epsilon^2}{8}\int_{0}^{t}\sum_{n,m=1}^{N}q_nq_m\E\bigl[D_u^4\Phi^{(0,N)}(t-s,u^{\epsilon,N}(s)).\bigl(e_n,e_n,e_m,e_m\bigr)\bigr]\,ds.\label{eq:defEepsN11224}
\end{align}

\textit{Treatment of the error term $E^{\epsilon,N}_{1,1,2,2,1}(t)$}.
Applying the inequality \eqref{eq:propo-regularity-PhiN} from Proposition~\ref{propo:regularity-PhiN} with $k=1$ and $\alpha_1=0$, and Assumption~\ref{ass:mapping-f}, one has
\begin{align*}
|E^{\epsilon,N}_{1,1,2,2,1}(t)|&\le C(T)\vvvert\varphi\vvvert_1\epsilon^2\sum_{n=1}^{N}q_n\int_0^t \E\bigl[ \|D^2f_N(u^{\epsilon,N}(s)).(e_n,e_n)\|_H \bigr] \,ds\\
&\le C(T,\varphi)\epsilon^2\sum_{n=1}^Nq_{n}\|e_n\|_H^2\\
&\le C(T,\varphi)\epsilon^2\sum_{n=1}^Nq_{n}.
\end{align*}
Let $\alpha\in(0,\beta)$, then one has
\[
\sum_{n=1}^Nq_{n}\le \lambda_N^{1-\alpha}\sum_{n=1}^{N}q_n\lambda_{n}^{\alpha-1}\le C_\alpha\lambda_{N}^{1-\alpha},
\]
with $C_\alpha=\underset{N\in\N}\sup~\sum_{n=1}^{N}q_n\lambda_{n}^{\alpha-1}<\infty$ owing to the condition~\eqref{eq:condition_beta} from Assumption~\ref{ass:noise}.

One thus obtains the following inequality for the error term $E^{\epsilon,N}_{1,1,2,2,1}(t)$: for all $\alpha\in(0,\beta)$, there exists $C_\alpha(T,\varphi)\in(0,\infty)$ such that for all $t\in[0,T]$, all $\epsilon\in(0,1)$ and all $N\in\N$ one has
\begin{equation}\label{eq:boundEepsN11221}
\big|E^{\epsilon,N}_{1,1,2,2,1}(t)\big|\le C_\alpha(T,\varphi)\epsilon^2\lambda_N^{1-\alpha}.
\end{equation}

\textit{Treatment of the error term $E^{\epsilon,N}_{1,1,2,2,2}(t)$}.
The error term $E^{\epsilon,N}_{1,1,2,2,2}(t)$ defined by~\eqref{eq:defEepsN11222} is decomposed as
\[
E^{\epsilon,N}_{1,1,2,2,2}(t)=E^{\epsilon,N}_{1,1,2,2,2,1}(t)+E^{\epsilon,N}_{1,1,2,2,2,2}(t),
\]
where the error terms $E^{\epsilon,N}_{1,1,2,2,2,1}(t)$ and $E^{\epsilon,N}_{1,1,2,2,2,2}(t)$ are defined as
\begin{align}
E^{\epsilon,N}_{1,1,2,2,2,1}(t)
&=\frac{\epsilon^2}{2}\sum_{n=1}^{N}q_n\int_{0}^{t}\E\bigl[D_{uu}^2\Phi^{(0,N)}(t-s,u^{\epsilon,N}(s)).\bigl(\IL e_n,e_n\bigr)\bigr]\,ds,\label{eq:defEepsN112221}\\
E^{\epsilon,N}_{1,1,2,2,2,2}(t)
&=-\frac{\epsilon^2}{2}\sum_{n=1}^{N}\int_{0}^{t}\E\bigl[D_{uu}^2\Phi^{(0,N)}(t-s,u^{\epsilon,N}(s)).\bigl(Df_N(u^{\epsilon,N}(s)).e_n,e_n\bigr)\bigr]\,ds.\label{eq:defEepsN112222}
\end{align}
Let $\alpha\in(0,\beta)$ and $\kappa=(\beta-\alpha)/2\in(0,1)$.

On the one hand, applying the inequality \eqref{eq:propo-regularity-PhiN} from Proposition~\ref{propo:regularity-PhiN} with $k=2$, $\alpha_1=2-\kappa$ and $\alpha_2=0$, one has
\begin{align*}
|E^{\epsilon,N}_{1,1,2,2,2,1}(t)|&\le C_\kappa(T)\vvvert\varphi\vvvert_2\epsilon^2\sum_{n=1}^{N}q_n\int_{0}^{t}(t-s)^{-1+\frac{\kappa}{2}}\|\IL e_n\|_{H^{-2+\kappa}}\|e_n\|_H\,ds\\
&\le C_\kappa(T,\varphi)\epsilon^2\sum_{n=1}^{N}q_n\lambda_n^{\kappa}.
\end{align*}

On the other hand, applying the inequality \eqref{eq:propo-regularity-PhiN} from Proposition~\ref{propo:regularity-PhiN} with $k=2$ and $\alpha_1=\alpha_2=0$, and recalling that the mapping $f$ grows at most linearly owing to Assumption~\ref{ass:fLip}, one has
\begin{align*}
|E^{\epsilon,N}_{1,1,2,2,2,2}(t)|&\le C_\kappa(T)\vvvert\varphi\vvvert_2\epsilon^2\sum_{n=1}^{N}q_n\int_{0}^{t}\E[\|Df_N(u^{\epsilon,N}(s)).e_n\|]\|e_n\|_H\,ds\\
&\le C_\kappa(T,\varphi)\epsilon^2\sum_{n=1}^{N}q_n.
\end{align*}
Moreover, note that
\[
\lambda_1^{\kappa}\sum_{n=1}^{N}q_n+\sum_{n=1}^{N}q_n\lambda_n^{\kappa}\le 2\sum_{n=1}^{N}q_n\lambda_n^{\frac{\beta-\alpha}{2}}\le 2\lambda_N^{1-\alpha}\sum_{n=1}^{N}q_n\lambda_n^{\frac{\alpha+\beta}{2}-1}\le C_\alpha\lambda_N^{1-\alpha},
\]
with $C_\alpha=\underset{N\in\N}\sup~\sum_{n=1}^{N}q_n\lambda_{n}^{\frac{\alpha+\beta}{2}-1}<\infty$ owing to the condition~\eqref{eq:condition_beta} from Assumption~\ref{ass:noise}.

One thus obtains the following inequality for the error term $E^{\epsilon,N}_{1,1,2,2,2}(t)$: for all $\alpha\in(0,\beta)$, there exists $C_\alpha(T,\varphi)\in(0,\infty)$ such that for all $t\in[0,T]$, all $\epsilon\in(0,1)$ and all $N\in\N$ one has
\begin{equation}\label{eq:boundEepsN11222}
\big|E^{\epsilon,N}_{1,1,2,2,2}(t)\big|\le C_\alpha(T,\varphi)\epsilon^2\lambda_N^{1-\alpha}.
\end{equation}

\textit{Treatment of the error term $E^{\epsilon,N}_{1,1,2,2,3}(t)$}.
The error term $E^{\epsilon,N}_{1,1,2,2,3}(t)$ defined by~\eqref{eq:defEepsN11223} is decomposed as
\[
E^{\epsilon,N}_{1,1,2,2,3}(t)=E^{\epsilon,N}_{1,1,2,2,3,1}(t)+E^{\epsilon,N}_{1,1,2,2,3,2}(t),
\]
where the error terms $E^{\epsilon,N}_{1,1,2,2,3,1}(t)$ and $E^{\epsilon,N}_{1,1,2,2,3,2}(t)$ are defined as
\begin{align}
E^{\epsilon,N}_{1,1,2,2,3,1}(t)
&=\frac{\epsilon^2}{4}\sum_{n=1}^{N}q_n\int_{0}^{t}\E\bigl[D_{u}^3\Phi^{(0,N)}(t-s,u^{\epsilon,N}(s)).\bigl(\IL u^{\epsilon,N}(s), e_n,e_n\bigr)\bigr]\,ds,\label{eq:defEepsN112231}\\
E^{\epsilon,N}_{1,1,2,2,3,2}(t)
&=-\frac{\epsilon^2}{4}\sum_{n=1}^{N}\int_{0}^{t}\E\bigl[D_{u}^3\Phi^{(0,N)}(t-s,u^{\epsilon,N}(s)).\bigl(f_N(u^{\epsilon,N}(s)),e_n,e_n\bigr)\bigr]\,ds.\label{eq:defEepsN112232}
\end{align}
Let $\alpha\in(0,\beta)$.

On the one hand, applying the inequality \eqref{eq:propo-regularity-PhiN} from Proposition~\ref{propo:regularity-PhiN} with $k=3$, $\alpha_1=2-\alpha$ and $\alpha_2=\alpha_3=0$, one has
\begin{align*}
|E^{\epsilon,N}_{1,1,2,2,3,1}(t)|&\le C_\alpha(T)\vvvert\varphi\vvvert_3\epsilon^2\sum_{n=1}^{N}q_n\int_{0}^{t}(t-s)^{-1+\frac{\alpha}{2}}\E[\|\IL u^{\epsilon,N}(s)\|_{H^{-2+\alpha}}]\|e_n\|_H^2\,ds\\
&\le C_\alpha(T,\varphi)\epsilon^2\sum_{n=1}^{N}q_n\underset{s\in[0,T]}\sup~\E[\|X^{\epsilon,N}(s)\|_{\HH^\alpha}].
\end{align*}

On the other hand, applying the inequality \eqref{eq:propo-regularity-PhiN} from Proposition~\ref{propo:regularity-PhiN} with $k=3$ and $\alpha_1=\alpha_2=\alpha_3=0$, and recalling that the mapping $f$ grows at most linearly owing to Assumption~\ref{ass:fLip}, one has
\begin{align*}
|E^{\epsilon,N}_{1,1,2,2,3,2}(t)|&\le C_\alpha(T)\vvvert\varphi\vvvert_3\epsilon^2\sum_{n=1}^{N}q_n\int_{0}^{t}\E[\|f_N(u^{\epsilon,N}(s))\|_H]\|e_n\|_H^2\,ds\\
&\le C_\alpha(T,\varphi)\epsilon^2\sum_{n=1}^{N}q_n\bigl(1+\underset{s\in[0,T]}\sup~\E[\|X^{\epsilon,N}(s)\|_{\HH}]\bigr).
\end{align*}
As above, one has
\[
\sum_{n=1}^Nq_{n}\le \lambda_N^{1-\alpha}\sum_{n=1}^{N}q_n\lambda_{n}^{\alpha-1}\le C_\alpha\lambda_{N}^{1-\alpha},
\]
with $C_\alpha=\underset{N\in\N}\sup~\sum_{n=1}^{N}q_n\lambda_{n}^{\alpha-1}<\infty$ owing to the condition~\eqref{eq:condition_beta} from Assumption~\ref{ass:noise}.

Owing to the moment bounds~\eqref{eq:propo-sdwe-momentbounds-Galerkin} from Proposition~\ref{propo:sdwe-momentbounds} and to the condition~\eqref{eq:init-weak-bound} from Assumption~\ref{ass:init-weak} on the initial value $x_0^\epsilon$, one thus obtains the following inequality for the error term $E^{\epsilon,N}_{1,1,2,2,3}(t)$: for all $\alpha\in(0,\beta)$, there exists $C_\alpha(T,\varphi)\in(0,\infty)$ such that for all $t\in[0,T]$, all $\epsilon\in(0,1)$ and all $N\in\N$ one has
\begin{equation}\label{eq:boundEepsN11223}
\big|E^{\epsilon,N}_{1,1,2,2,3}(t)\big|\le C_\alpha(T,\varphi)\epsilon^2\lambda_N^{1-\alpha}.
\end{equation}

\textit{Treatment of the error term $E^{\epsilon,N}_{1,1,2,2,4}(t)$}.
Let $\alpha\in[0,\beta)$. Applying the inequality \eqref{eq:propo-regularity-PhiN} from Proposition~\ref{propo:regularity-PhiN} with $k=4$, $\alpha_1=\alpha_2=1-\alpha$, and $\alpha_3=\alpha_4=0$, one has
\begin{align*}
|E^{\epsilon,N}_{1,1,2,2,4}(t)|&\le C_{\alpha}(T)\vvvert\varphi\vvvert_4\epsilon^2 \sum_{n,m=1}^Nq_nq_m\int_0^t(t-s)^{-1+\alpha}\|e_n\|_{H^{-1+\alpha}}^2\|e_m\|_H^2\,ds\\
&\le C_\alpha(T,\varphi)\epsilon^2\sum_{n=1}^{N}q_n\lambda_n^{\alpha-1}\sum_{m=1}^{N}q_m.
\end{align*}
As above, one has
\[
\sum_{m=1}^Nq_{m}\le \lambda_N^{1-\alpha}\sum_{m=1}^{N}q_m\lambda_{m}^{\alpha-1}\le C_\alpha\lambda_{N}^{1-\alpha},
\]
with $C_\alpha=\underset{N\in\N}\sup~\sum_{m=1}^{N}q_m\lambda_{m}^{\alpha-1}<\infty$ owing to the condition~\eqref{eq:condition_beta} from Assumption~\ref{ass:noise}.

One thus obtains the following inequality for the error term $E^{\epsilon,N}_{1,1,2,2,4}(t)$: for all $\alpha\in(0,\beta)$, there exists $C_\alpha(T,\varphi)\in(0,\infty)$ such that for all $t\in[0,T]$, all $\epsilon\in(0,1)$ and all $N\in\N$ one has
\begin{equation}\label{eq:boundEepsN11224}
\big|E^{\epsilon,N}_{1,1,2,2,4}(t)\big|\le C_\alpha(T,\varphi)\epsilon^2\lambda_N^{1-\alpha}.
\end{equation}

{\it Conclusion}.
Recalling the decomposition~\eqref{eq:decompose-E1122} of the error term $E^{\epsilon,N}_{1,1,2,2}(t)$ and combining the upper bounds~\eqref{eq:boundEepsN11221},~\eqref{eq:boundEepsN11222}
,~\eqref{eq:boundEepsN11223} and~\eqref{eq:boundEepsN11224}, one thus obtains the following inequality for the error term $E^{\epsilon,N}_{1,1,2,2}(t)$: for all $\alpha\in(0,\beta)$, there exists $C_\alpha(T,\varphi)\in(0,\infty)$ such that for all $t\in[0,T]$, all $\epsilon\in(0,1)$ and all $N\in\N$ one has
\begin{equation}\label{eq:boundEepsN1122}
\big|E^{\epsilon,N}_{1,1,2,2}(t)\big|\le C_\alpha(T,\varphi)\epsilon^2\lambda_N^{1-\alpha}.
\end{equation}

$\bullet$ {\bf Conclusion}.
Recalling the decomposition~\eqref{eq:decompose-E112} of the error term $E^{\epsilon,N}_{1,1,2}(t)$ and combining the upper bounds~\eqref{eq:boundEepsN1121} and~\eqref{eq:boundEepsN1122}, one obtains the inequality~\eqref{eq:error-T3b} from Lemma~\ref{lem:error-T3}. The proof is thus completed.

\end{proof}

\subsection{Proof of Theorem~\ref{theo:weak}}\label{sec:proof-theo-weak-final}

As explained at the beginning of Section~\ref{sec:proof-theo-weak}, Theorem~\ref{theo:weak} is obtained by combining Theorem~\ref{theo:weak-N} (which depends on the parameter $N\in\N$ of the auxiliary spectral Galerkin approximation) and Lemmas~\ref{lem:weakerror-0-galerkin}  and~\ref{lem:weakerror-epsilon-galerkin}.

\begin{proof}[Proof of Theorem~\ref{theo:weak}]
Owing to Theorem~\ref{theo:weak-N} and Lemmas~\ref{lem:weakerror-0-galerkin}  and~\ref{lem:weakerror-epsilon-galerkin}, for all $t\in[0,T]$, and for all $\epsilon\in(0,1)$ and $N\in\N$, one has
\begin{align*}
\big|\E[\varphi(u^{\epsilon}(t))]-\E[\varphi(u^{0}(t))]\big|&\le\big|\E[\varphi(u^{\epsilon,N}(t))]-\E[\varphi(u^{0,N}(t))]\big|\\
&+\big|\E[\varphi(u^{0,N}(t))]-\E[\varphi(u^{0}(t))]\big|\\
&+\big|\E[\varphi(u^{\epsilon}(t))]-\E[\varphi(u^{\epsilon,N}(t))]\big|\\
&\le C_{\alpha}(T,\varphi)\Bigl(\epsilon^{\min(2\alpha,1)}+\epsilon\lambda_N^{\max(\frac12-\alpha,0)}+\epsilon^2\lambda_N^{1-\alpha}+\lambda_N^{-\alpha}\Bigr).
\end{align*}

The weak error appearing in the left-hand side above does not depend on the auxiliary parameter $N\in\N$. If the parameter $N$ is chosen to depend on $\epsilon$ such that $\lambda_N\sim \epsilon^{-2}$, then one has
\[
\epsilon\lambda_N^{\max(\frac12-\alpha,0)}+\epsilon^2\lambda_N^{1-\alpha}+\lambda_N^{-\alpha}\sim 3\epsilon^{2\alpha}.
\]
As a result, for all $\epsilon\in(0,1)$ one obtains
\[
\underset{t\in[0,T]}\sup~\big|\E[\varphi(u^{\epsilon}(t))]-\E[\varphi(u^{0}(t))]\big|\le C_{\alpha}(T,\varphi)\epsilon^{\min(2\alpha,1)}.
\]
The proof of the weak error estimates~\eqref{eq:theo-weak} is thus completed. This concludes the proof of Theorem~\ref{theo:weak}.
\end{proof}


\begin{thebibliography}{10}

\bibitem{MR4632567}
C.-E. Br\'ehier.
\newblock Uniform strong and weak error estimates for numerical schemes applied
  to multiscale {SDE}s in a {S}moluchowski-{K}ramers diffusion approximation
  regime.
\newblock {\em J. Comput. Dyn.}, 10(3):387--424, 2023.

\bibitem{MR4942808}
Z.~a. Brze\'zniak and S.~Cerrai.
\newblock Stochastic wave equations with constraints: well-posedness and
  {S}moluchowski-{K}ramers diffusion approximation.
\newblock {\em Comm. Math. Phys.}, 406(9):Paper No. 223, 59, 2025.

\bibitem{MR4993799}
S.~Cerrai and A.~Debussche.
\newblock Smoluchowski-{K}ramers diffusion approximation for systems of
  stochastic damped wave equations with nonconstant friction.
\newblock {\em Ann. Appl. Probab.}, 35(6):4106--4171, 2025.

\bibitem{MR2240691}
S.~Cerrai and M.~Freidlin.
\newblock On the {S}moluchowski-{K}ramers approximation for a system with an
  infinite number of degrees of freedom.
\newblock {\em Probab. Theory Related Fields}, 135(3):363--394, 2006.

\bibitem{MR2267703}
S.~Cerrai and M.~Freidlin.
\newblock Smoluchowski-{K}ramers approximation for a general class of {SPDE}s.
\newblock {\em J. Evol. Equ.}, 6(4):657--689, 2006.

\bibitem{MR3583470}
S.~Cerrai, M.~Freidlin, and M.~Salins.
\newblock On the {S}moluchowski-{K}ramers approximation for {SPDE}s and its
  interplay with large deviations and long time behavior.
\newblock {\em Discrete Contin. Dyn. Syst.}, 37(1):33--76, 2017.

\bibitem{MR4056993}
S.~Cerrai and N.~Glatt-Holtz.
\newblock On the convergence of stationary solutions in the
  {S}moluchowski-{K}ramers approximation of infinite dimensional systems.
\newblock {\em J. Funct. Anal.}, 278(8):108421, 38, 2020.

\bibitem{MR3245077}
S.~Cerrai and M.~Salins.
\newblock Smoluchowski-{K}ramers approximation and large deviations for
  infinite dimensional gradient systems.
\newblock {\em Asymptot. Anal.}, 88(4):201--215, 2014.

\bibitem{MR3531676}
S.~Cerrai and M.~Salins.
\newblock Smoluchowski-{K}ramers approximation and large deviations for
  infinite-dimensional nongradient systems with applications to the exit
  problem.
\newblock {\em Ann. Probab.}, 44(4):2591--2642, 2016.

\bibitem{MR3575542}
S.~Cerrai and M.~Salins.
\newblock On the {S}moluchowski-{K}ramers approximation for a system with
  infinite degrees of freedom exposed to a magnetic field.
\newblock {\em Stochastic Process. Appl.}, 127(1):273--303, 2017.

\bibitem{MR4142946}
S.~Cerrai, J.~Wehr, and Y.~Zhu.
\newblock An averaging approach to the {S}moluchowski-{K}ramers approximation
  in the presence of a varying magnetic field.
\newblock {\em J. Stat. Phys.}, 181(1):132--148, 2020.

\bibitem{MR4413207}
S.~Cerrai and G.~Xi.
\newblock A {S}moluchowski-{K}ramers approximation for an infinite dimensional
  system with state-dependent damping.
\newblock {\em Ann. Probab.}, 50(3):874--904, 2022.

\bibitem{MR4657218}
S.~Cerrai and M.~Xie.
\newblock On the small noise limit in the {S}moluchowski-{K}ramers
  approximation of nonlinear wave equations with variable friction.
\newblock {\em Trans. Amer. Math. Soc.}, 376(11):7651--7689, 2023.

\bibitem{MR4759624}
S.~Cerrai and M.~Xie.
\newblock On the small-mass limit for stationary solutions of stochastic wave
  equations with state dependent friction.
\newblock {\em Appl. Math. Optim.}, 90(1):Paper No. 7, 48, 2024.

\bibitem{MR4865023}
S.~Cerrai and M.~Xie.
\newblock The small-mass limit for some constrained wave equations with
  nonlinear conservative noise.
\newblock {\em Electron. J. Probab.}, 30:Paper No. 25, 27, 2025.

\bibitem{MR3014105}
M.~Freidlin and W.~Hu.
\newblock Smoluchowski-{K}ramers approximation in the case of variable
  friction.
\newblock volume 179, pages 184--207. 2011.
\newblock Problems in mathematical analysis. No. 61.

\bibitem{GV25}
B.~Guelmame and J.~Vovelle.
\newblock A smoluchowski-kramers approximation for the stochastic variational
  wave equation, 2025.

\bibitem{MR3324144}
S.~Hottovy, A.~McDaniel, G.~Volpe, and J.~Wehr.
\newblock The {S}moluchowski-{K}ramers limit of stochastic differential
  equations with arbitrary state-dependent friction.
\newblock {\em Comm. Math. Phys.}, 336(3):1259--1283, 2015.

\bibitem{MR2916095}
S.~Hottovy, G.~Volpe, and J.~Wehr.
\newblock Noise-induced drift in stochastic differential equations with
  arbitrary friction and diffusion in the {S}moluchowski-{K}ramers limit.
\newblock {\em J. Stat. Phys.}, 146(4):762--773, 2012.

\bibitem{LLX26}
S.~Liu, W.~Liu, and L.~Xu.
\newblock Long term convergence rate of smoluchowski-kramers approximation by
  stein's method, 2026.

\bibitem{LQW24}
X.~Liu, Q.~Jiang, and W.~Wang.
\newblock The smoluchowski-kramers approximation for a system with arbitrary
  friction depending on both state and distribution, 2024.

\bibitem{D.2006The}
D.~Nualart.
\newblock {\em The {M}alliavin calculus and related topics}.
\newblock Probability and its Applications (New York). Springer-Verlag, Berlin,
  second edition, 2006.

\bibitem{DE2018}
D.~Nualart and E.~Nualart.
\newblock {\em Introduction to {M}alliavin calculus}, volume~9 of {\em
  Institute of Mathematical Statistics Textbooks}.
\newblock Cambridge University Press, Cambridge, 2018.

\bibitem{MR4054345}
M.~Rousset, Y.~Xu, and P.-A. Zitt.
\newblock A weak overdamped limit theorem for {L}angevin processes.
\newblock {\em ALEA Lat. Am. J. Probab. Math. Stat.}, 17(1):1--21, 2020.

\bibitem{MR3916264}
M.~Salins.
\newblock Smoluchowski-{K}ramers approximation for the damped stochastic wave
  equation with multiplicative noise in any spatial dimension.
\newblock {\em Stoch. Partial Differ. Equ. Anal. Comput.}, 7(1):86--122, 2019.

\bibitem{SLW24}
C.~Shi, Y.~Lv, and W.~Wang.
\newblock The smoluchowski-kramers approximation for a mckean-vlasov equation
  subject to environmental noise with state-dependent friction, 2024.

\bibitem{MR4235249}
C.~Shi and W.~Wang.
\newblock Small mass limit and diffusion approximation for a generalized
  {L}angevin equation with infinite number degrees of freedom.
\newblock {\em J. Differential Equations}, 286:645--675, 2021.

\bibitem{SW24}
C.~Shi and W.~Wang.
\newblock The smoluchowski-kramers approximation with distribution-dependent
  potential and highly oscillating force, 2024.

\bibitem{MR4713516}
T.~C. Son, D.~Q. Le, and M.~H. Duong.
\newblock Rate of convergence in the {S}moluchowski-{K}ramers approximation for
  mean-field stochastic differential equations.
\newblock {\em Potential Anal.}, 60(3):1031--1065, 2024.

\bibitem{MR4102448}
N.~V. Tan and N.~T. Dung.
\newblock A {B}erry-{E}sseen bound in the {S}moluchowski-{K}ramers
  approximation.
\newblock {\em J. Stat. Phys.}, 179(4):871--884, 2020.

\bibitem{MR4419570}
L.~Xie and L.~Yang.
\newblock The {S}moluchowski-{K}ramers limits of stochastic differential
  equations with irregular coefficients.
\newblock {\em Stochastic Process. Appl.}, 150:91--115, 2022.

\bibitem{ZLW25}
Q.~Zhao, X.~Liu, and W.~Wang.
\newblock Smoluchowski--kramers approximation with state-dependent friction in
  rough path topology, 2025.

\bibitem{ZW24}
Q.~Zhao and W.~Wang.
\newblock Convergence rate of smoluchowski–kramers approximation with stable
  levy noise, 2024.

\bibitem{MR4908783}
Y.~Zine.
\newblock Smoluchowski-{K}ramers approximation for singular stochastic wave
  equations in two dimensions.
\newblock {\em Electron. J. Probab.}, 30:Paper No. 88, 49, 2025.

\end{thebibliography}
\end{document}